\DeclareMathAlphabet{\mathcalligra}{T1}{calligra}{m}{n}
\DeclareMathOperator{\Res}{Res}
\DeclareMathOperator{\Hom}{Hom}
\DeclareMathOperator{\Sing}{Sing}
\DeclareMathOperator{\Span}{Span}
\DeclareMathOperator{\End}{End}
\DeclareMathOperator{\diag}{diag}
\begin{document}

\newtheorem{thm}{Theorem}[section]
\newtheorem{prop}[thm]{Proposition}
\newtheorem{coro}[thm]{Corollary}
\newtheorem{conj}[thm]{Conjecture}
\newtheorem{example}[thm]{Example}
\newtheorem{lem}[thm]{Lemma}
\newtheorem{rem}[thm]{Remark}
\newtheorem{hy}[thm]{Hypothesis}
\newtheorem*{acks}{Acknowledgements}
\theoremstyle{definition}
\newtheorem{de}[thm]{Definition}
\newtheorem{ex}[thm]{Example}

\newtheorem{convention}[thm]{Convention}

\newtheorem{bfproof}[thm]{{\bf Proof}}

\newcommand{\C}{{\mathbb{C}}}
\newcommand{\Z}{{\mathbb{Z}}}
\newcommand{\N}{{\mathbb{N}}}
\newcommand{\Q}{{\mathbb{Q}}}
\newcommand{\te}[1]{\mbox{#1}}
\newcommand{\set}[2]{{
    \left.\left\{
        {#1}
    \,\right|\,
        {#2}
    \right\}
}}
\newcommand{\sett}[2]{{
    \left\{
        {#1}
    \,\left|\,
        {#2}
    \right\}\right.
}}

\newcommand{\choice}[2]{{
\left[
\begin{array}{c}
{#1}\\{#2}
\end{array}
\right]
}}
\def \<{{\langle}}
\def \>{{\rangle}}

\def\({\left(}

\def\){\right)}

\def \:{\mathopen{\overset{\circ}{
    \mathsmaller{\mathsmaller{\circ}}}
    }}
\def \;{\mathclose{\overset{\circ}{\mathsmaller{\mathsmaller{\circ}}}}}

\newcommand{\overit}[2]{{
    \mathop{{#1}}\limits^{{#2}}
}}
\newcommand{\belowit}[2]{{
    \mathop{{#1}}\limits_{{#2}}
}}

\newcommand{\wt}[1]{\widetilde{#1}}

\newcommand{\wh}[1]{\widehat{#1}}

\newcommand{\wck}[1]{\reallywidecheck{#1}}

\newcommand{\no}[1]{{
    \mathopen{\overset{\circ}{
    \mathsmaller{\mathsmaller{\circ}}}
    }{#1}\mathclose{\overset{\circ}{\mathsmaller{\mathsmaller{\circ}}}}
}}

\newlength{\dhatheight}
\newcommand{\dwidehat}[1]{%
    \settoheight{\dhatheight}{\ensuremath{\widehat{#1}}}%
    \addtolength{\dhatheight}{-0.45ex}%
    \widehat{\vphantom{\rule{1pt}{\dhatheight}}%
    \smash{\widehat{#1}}}}
\newcommand{\dhat}[1]{%
    \settoheight{\dhatheight}{\ensuremath{\hat{#1}}}%
    \addtolength{\dhatheight}{-0.35ex}%
    \hat{\vphantom{\rule{1pt}{\dhatheight}}%
    \smash{\hat{#1}}}}

\newcommand{\dwh}[1]{\dwidehat{#1}}

\newcommand{\ck}[1]{\check{#1}}

\newcommand{\dis}{\displaystyle}

\newcommand{\pd}[1]{\frac{\partial}{\partial {#1}}}

\newcommand{\pdiff}[2]{\frac{\partial^{#2}}{\partial #1^{#2}}}


\newcommand{\g}{{\frak g}}
\newcommand{\fg}{\g}
\newcommand{\ff}{{\frak f}}
\newcommand{\f}{\ff}
\newcommand{\gc}{{\bar{\g'}}}
\newcommand{\h}{{\frak h}}
\newcommand{\cent}{{\frak c}}
\newcommand{\notc}{{\not c}}
\newcommand{\Loop}{{\mathcal L}}
\newcommand{\G}{{\mathcal G}}
\newcommand{\D}{\mathcal D}
\newcommand{\T}{\mathcal T}
\newcommand{\Free}{\mathcal F}
\newcommand{\Cfk}{\mathcal C}
\newcommand{\nil}{\mathfrak n}
\newcommand{\al}{\alpha}
\newcommand{\alck}{\al^\vee}
\newcommand{\be}{\beta}
\newcommand{\beck}{\be^\vee}
\newcommand{\ssl}{{\mathfrak{sl}}}
\newcommand{\id}{\te{id}}
\newcommand{\rtu}{{\xi}}
\newcommand{\period}{{N}}
\newcommand{\half}{{\frac{1}{2}}}
\newcommand{\quar}{{\frac{1}{4}}}
\newcommand{\oct}{{\frac{1}{8}}}
\newcommand{\hex}{{\frac{1}{16}}}
\newcommand{\reciprocal}[1]{{\frac{1}{#1}}}
\newcommand{\inverse}{^{-1}}
\newcommand{\inv}{\inverse}
\newcommand{\SumInZm}[2]{\sum\limits_{{#1}\in\Z_{#2}}}
\newcommand{\uce}{{\mathfrak{uce}}}
\newcommand{\Rcat}{\mathcal R}
\newcommand{\cS}{{\mathcal{S}}}


\newcommand{\orb}[1]{|\mathcal{O}({#1})|}
\newcommand{\up}{_{(p)}}
\newcommand{\uq}{_{(q)}}
\newcommand{\upq}{_{(p+q)}}
\newcommand{\uz}{_{(0)}}
\newcommand{\uk}{_{(k)}}
\newcommand{\nsum}{\SumInZm{n}{\period}}
\newcommand{\ksum}{\SumInZm{k}{\period}}
\newcommand{\overN}{\reciprocal{\period}}
\newcommand{\df}{\delta\left( \frac{\xi^{k}w}{z} \right)}
\newcommand{\dfl}{\delta\left( \frac{\xi^{\ell}w}{z} \right)}
\newcommand{\ddf}{\left(D\delta\right)\left( \frac{\xi^{k}w}{z} \right)}

\newcommand{\ldfn}[1]{{\left( \frac{1+\xi^{#1}w/z}{1-{\xi^{#1}w}/{z}} \right)}}
\newcommand{\rdfn}[1]{{\left( \frac{{\xi^{#1}w}/{z}+1}{{\xi^{#1}w}/{z}-1} \right)}}
\newcommand{\ldf}{{\ldfn{k}}}
\newcommand{\rdf}{{\rdfn{k}}}
\newcommand{\ldfl}{{\ldfn{\ell}}}
\newcommand{\rdfl}{{\rdfn{\ell}}}

\newcommand{\kprod}{{\prod\limits_{k\in\Z_N}}}
\newcommand{\lprod}{{\prod\limits_{\ell\in\Z_N}}}
\newcommand{\E}{{\mathcal{E}}}
\newcommand{\F}{{\mathcal{F}}}

\newcommand{\Etopo}{{\mathcal{E}_{\te{topo}}}}

\newcommand{\Ye}{{\mathcal{Y}_\E}}

\newcommand{\rh}{{{\bf h}}}
\newcommand{\rp}{{{\bf p}}}
\newcommand{\rrho}{{{\pmb \varrho}}}
\newcommand{\ral}{{{\pmb \al}}}

\newcommand{\comp}{{\mathfrak{comp}}}
\newcommand{\ctimes}{{\widehat{\boxtimes}}}
\newcommand{\ptimes}{{\widehat{\otimes}}}
\newcommand{\ptimeslt}{{
{}_{\te{t}}\ptimes
}}
\newcommand{\ptimesrt}{{\ot_{\te{t}} }}
\newcommand{\ttp}[1]{{
    {}_{{#1}}\ptimes
}}
\newcommand{\bigptimes}{{\widehat{\bigotimes}}}
\newcommand{\bigptimeslt}{{
{}_{\te{t}}\bigptimes
}}
\newcommand{\bigptimesrt}{{\bigptimes_{\te{t}} }}
\newcommand{\bigttp}[1]{{
    {}_{{#1}}\bigptimes
}}

\newcommand{\ot}{\otimes}
\newcommand{\Ot}{\bigotimes}
\newcommand{\bt}{\boxtimes}

\newcommand{\affva}[1]{V_{\wh\g}\(#1,0\)}
\newcommand{\saffva}[1]{L_{\wh\g}\(#1,0\)}
\newcommand{\saffmod}[1]{L_{\wh\g}\(#1\)}

\newcommand{\otcopies}[2]{\belowit{\underbrace{{#1}\ot \cdots \ot {#1}}}{{#2}\te{-times}}}

\newcommand{\wtotcopies}[3]{\belowit{\underbrace{{#1}\wh\ot_{#2} \cdots \wh\ot_{#2} {#1}}}{{#3}\te{-times}}}


\newcommand{\tar}{{\mathcal{DY}}_0\(\mathfrak{gl}_{\ell+1}\)}
\newcommand{\U}{{\mathcal{U}}}
\newcommand{\htar}{\mathcal{DY}_\hbar\(A\)}
\newcommand{\hhtar}{\widetilde{\mathcal{DY}}_\hbar\(A\)}
\newcommand{\htarz}{\mathcal{DY}_0\(\mathfrak{gl}_{\ell+1}\)}
\newcommand{\hhtarz}{\widetilde{\mathcal{DY}}_0\(A\)}
\newcommand{\qhei}{\U_\hbar\left(\hat{\h}\right)}
\newcommand{\n}{{\mathfrak{n}}}
\newcommand{\vac}{{{\bf 1}}}
\newcommand{\vtar}{{{
    \mathcal{V}_{\hbar,\tau}\left(\ell,0\right)
}}}

\newcommand{\qtar}{
    \U_q\(\wh\g_\mu\)}
\newcommand{\rk}{{\bf k}}

\newcommand{\hctvs}[1]{Hausdorff complete linear topological vector space}
\newcommand{\hcta}[1]{Hausdorff complete linear topological algebra}
\newcommand{\ons}[1]{open neighborhood system}
\newcommand{\B}{\mathcal{B}}
\newcommand{\rx}{{\bf x}}
\newcommand{\re}{{\bf e}}
\newcommand{\rphi}{{\boldsymbol{ \phi}}}

\newcommand{\der}{\mathcal D}

\newcommand{\prodlim}{\mathop{\prod_{\longrightarrow}}\limits}

\newcommand{\lp}[1]{\mathcal L(f)}

\newcommand{\cha}{\check a}
\newcommand{\chh}{\check \h}


\makeatletter
\renewcommand{\BibLabel}{%
    \Hy@raisedlink{\hyper@anchorstart{cite.\CurrentBib}\hyper@anchorend}%
    [\thebib]%
}
\@addtoreset{equation}{section}
\def\theequation{\thesection.\arabic{equation}}
\makeatother \makeatletter


\title[Quantum affine VA]{Quantum affine vertex algebras associated to untwisted quantum affinization algebras}

\author{Fei Kong}
\address{Key Laboratory of Computing and Stochastic Mathematics (Ministry of Education), School of Mathematics and Statistics, Hunan Normal University, Changsha, China 410081} \email{kongmath@hunnu.edu.cn}

\subjclass[2020]{17B69, 17B37} \keywords{Quantum vertex algebra, affine vertex algebra, {$\phi$}-coordinated module, quantum affine algebra, quantum affinization}

\begin{abstract}
Let $\U_\hbar(\hat\fg)$ be the untwisted affinization of a symmetrizable quantum Kac-Moody algebra $\U_\hbar(\fg)$.
For $\ell\in\C$, we construct an $\hbar$-adic quantum vertex algebra $V_{\hat\fg,\hbar}(\ell,0)$, and establish a one-to-one correspondence between $\phi$-coordinated $V_{\hat\fg,\hbar}(\ell,0)$-modules and restricted $\U_\hbar(\hat\fg)$-modules of level $\ell$.
Suppose that $\ell$ is a positive integer. We construct a quotient $\hbar$-adic quantum vertex algebra $L_{\hat\fg,\hbar}(\ell,0)$ of $V_{\hat\fg,\hbar}(\ell,0)$, and establish a one-to-one correspondence between certain $\phi$-coordinated $L_{\hat\fg,\hbar}(\ell,0)$-modules and restricted integrable $\U_\hbar(\hat\fg)$-modules of level $\ell$.
Suppose further that $\fg$ is of finite type. We prove that $L_{\hat\fg,\hbar}(\ell,0)/\hbar L_{\hat\fg,\hbar}(\ell,0)$ is isomorphic to the simple affine vertex algebra $L_{\hat\fg}(\ell,0)$.
\end{abstract}
\maketitle

\section{Introduction}

Let $\fg$ be a finite dimensional simple Lie algebra, and let $\hat\fg=\fg\ot\C[t,t\inv]\oplus\C c$ be the corresponding affine Kac-Moody Lie algebra.
For any complex number $\ell\in\C$, we define the induced module
\begin{align*}
  V_{\hat\fg}(\ell,0)=\U(\hat\fg)\ot_{\U(\hat\fg_+)}\C_\ell,\quad\te{where}\,\,
  \hat\fg_+=\fg\ot\C[t]\oplus\C c
\end{align*}
and $\C_\ell=\C$ is a $\hat\fg_+$-module with $\fg\ot\C[t]$ acting trivially
and $c$ acting as scalar $\ell$.
The induced module $V_{\hat\fg}(\ell,0)$ carries a vertex algebra structure, and the category of $V_{\hat\fg}(\ell,0)$-modules is naturally isomorphic to the category of restricted $\hat\fg$-modules of level $\ell$ \cite{FZ,Li-local,LL}.
If $\ell$ is a positive integer, then the unique simple quotient $\hat\fg$-module $L_{\hat\fg}(\ell,0)$ of $V_{\hat\fg}(\ell,0)$ is integrable.
Moreover, $L_{\hat\fg}(\ell,0)$ carries a simple quotient vertex algebra structure, and the category of $L_{\hat\fg}(\ell,0)$-modules is naturally isomorphic to the category of restricted integrable $\hat\fg$-modules of level $\ell$ \cite{FZ,DL,MP1,MP2,DLM,LL}.

Denote by $\U_\hbar(\hat\fg)$ the (untwisted) quantum affine algebra associated to $\hat\fg$ (\cite{Dr-hopf-alg} and \cite{jim-start}).
Like the affinization realization of affine Kac-Moody Lie algebras, Drinfeld provided a quantum affinization realization of $\U_\hbar(\hat\fg)$ in \cite{Dr-new}.
Based on the Drinfeld presentation, Frenkel and Jing constructed vertex representations
for simply-laced untwisted quantum affine algebras in \cite{FJ-vr-qaffine}.
In the very paper, they formulated a fundamental problem of
developing certain ``quantum vertex algebra theory'' associated to quantum affine algebras, in parallel with the connection between affine Kac-Moody Lie algebras and vertex algebras.

As one of the fundamental works, Etingof and Kazhdan (\cite{EK-qva}) developed a theory of quantum vertex operator algebras in the sense of formal deformations of vertex algebras.
The most visible difference between these and vertex algebras is that the usual locality is replaced by the $S$-locality. Such $S$-locality is controlled by a rational quantum Yang-Baxter operator.
Partly motivated by the work of Etingof and Kazhdan, H. Li
conducted a series of studies.
While vertex algebras are analogues of commutative associative algebras, H. Li introduced the notion of
nonlocal vertex algebras \cite{Li-nonlocal}, which are analogues of noncommutative associative algebras. A nonlocal vertex algebra is a weak quantum vertex algebra \cite{Li-nonlocal} if it satisfies the $S$-locality. In addition, it becomes a quantum vertex algebra \cite{Li-nonlocal} if the $S$-locality is controlled by a rational quantum Yang-Baxter operator.
Mainly in order to associate quantum vertex algebras to quantum affine algebras, a theory of $\phi$-coordinated quasi modules was developed in \cite{Li-phi-coor, Li-G-phi}.
The $\hbar$-adic counterparts of these notions were introduced in \cite{Li-h-adic}.
In this framework, a quantum vertex operator algebra in sense of Etingof-Kazhdan is an $\hbar$-adic quantum vertex algebra whose classical limit is a vertex algebra.

In the pioneer work \cite{EK-qva}, Etingof and Kazhdan constructed quantum vertex operator algebras as formal deformations of $V_{\hat{\mathfrak gl}_n}(\ell,0)$ and $V_{\hat{\mathfrak sl}_n}(\ell,0)$, by using the $R$-matrix type relations given in \cite{RS-RTT}.
Recently, Butorac, Jing and Ko\v{z}i\'{c} (\cite{BJK-qva-BCD}) extended Etingof-Kazhdan's construction to type $B$, $C$ and $D$ rational $R$-matrices. The modules of these quantum vertex operator algebras are in one-to-one correspondence with restricted modules for the corresponding Yangian doubles (see \cite{K-qva-phi-mod-BCD}).
Based on the $R$-matrix presentation of quantum affine algebras (see \cite{DF-qaff-RTT-Dr,JLM-qaff-RTT-Dr-BD,JLM-qaff-RTT-Dr-C}),
Ko\v{z}i\'{c} constructed the quantum vertex operator algebras associated with trigonometric $R$-matrices of types $A$, $B$, $C$ and $D$ (\cite{Kozic-qva-tri-A, K-qva-phi-mod-BCD}), and established a one-to-one correspondence between $\phi$-coordinated modules and restricted modules for quantum affine algebras.

In \cite{JKLT-Defom-va}, N. Jing, H. Li, S. Tan and I developed a method to construct desired quantum vertex algebras.
Let $V$ be a nonlocal vertex algebra, and let $(H,\rho,\tau)$ be a triple consisting of a cocommutative nonlocal vertex bialgebra $H$, a right $H$-comodule nonlocal vertex algebra structure $\rho$ on $V$, and
a ``compatible'' $H$-module nonlocal vertex algebra structure $\tau$ on $V$.
Then a new nonlocal vertex algebra $\mathfrak D_\tau^\rho(V)$ with $V$ as the underlying space was obtained.
It was also proved that under certain conditions, $\mathfrak D_\tau^\rho(V)$ is a quantum vertex algebra. On the representation side, it was proved that for any $\phi$-coordinated $V$-module $W$ with a ``compatible'' $\phi$-coordinated $H$-module structure, there exists a $\phi$-coordinated $\mathfrak D_\tau^\rho(V)$-module structure on $W$.
Later in \cite{JKLT-Quantum-lattice-va}, we introduced an $\hbar$-adic version of this construction, and constructed a family of $\hbar$-adic quantum vertex algebras $V_L[[\hbar]]^\eta$ as formal deformations of the lattice vertex algebras $V_L$.
When $L$ is a root lattice of $A$, $D$, $E$ type, we established a natural connection between twisted quantum affine algebras and equivariant $\phi$-coordinated quasi modules for $V_L[[\hbar]]^\eta$ with suitable $\eta$.

It is remarkable that Drinfeld's quantum affinization process can be extended to general symmetrizable quantum Kac-Moody algebras (\cite{GKV,J-KM,Naka-quiver,CJKT-qeala-II-twisted-qaffinization}).
We denote by $\U_\hbar(\hat\fg)$ the untwisted quantum affinization of a symmetrizable quantum Kac-Moody algebra $\U_\hbar(\fg)$.
In this paper, we use the language of Drinfeld presentations to construct the $\hbar$-adic quantum vertex algebra $V_{\hat\fg,\hbar}(\ell,0)$ and establish a one-to-one correspondence between restricted $\U_\hbar(\fg)$-modules of level $\ell$ and $\phi$-coordinated $V_{\hat\fg,\hbar}(\ell,0)$-modules.
Assume further that $\ell$ is a positive integer. We will study a quotient $\hbar$-adic quantum vertex algebra $L_{\hat\fg,\hbar}(\ell,0)$ of $V_{\hat\fg,\hbar}(\ell,0)$,
and establish a one-to-one correspondence between restricted integrable $\U_\hbar(\hat\fg)$-modules of level $\ell$ and $\phi$-coordinated weight modules of $L_{\hat\fg,\hbar}(\ell,0)$.
Finally, when $\fg$ is of finite type, we show that $L_{\hat\fg,\hbar}(\ell,0)/\hbar L_{\hat\fg,\hbar}(\ell,0)\cong L_{\hat\fg}(\ell,0)$ as vertex algebras.

Now we provide some detailed information.
Let $A=(a_{ij})_{i,j\in I}$ be a symmetrizable generalized Cartan matrix (GCM).
Then there are unique relatively prime positive integers $r_i$ ($i\in I$) such that $DA$ is symmetric with $D=\diag\{r_i\}_{i\in I}$.
Let $\fg=[\fg(A),\fg(A)]$ be the derived subalgebra of the Kac-Moody Lie algebra $\fg(A)$,
and let $\hat\fg$ be an ``affinization Lie algebra'' associated to $\fg$ (see Definition \ref{de:MRY}).
Recall that the (untwisted) quantum affinization algebra (see \cite{CJKT-qeala-II-twisted-qaffinization}) $\U_\hbar(\hat\fg)$ is the $\C[[\hbar]]$-algebra topologically generated by
\begin{eqnarray}\label{eq:tqagenerators}
\set{ h_{i,q}(n),\  x^\pm_{i,q}(n),\ c}
{
   i\in I, n\in\Z
},
\end{eqnarray}
and subject to the relations in  terms of the generating functions
 \begin{eqnarray*}
 &&\phi_{i,q}^\pm(z)=q^{\pm h_{i,q}(0)}\, \te{exp}
    \left(
        \pm (q-q\inverse)\sum\limits_{\pm m> 0}h_{i,q}(m)z^{-m}
    \right),\\
 &&x^\pm_{i,q}(z)=\sum\limits_{m\in \Z} x^\pm_{i,q}(m)z^{-m},
\end{eqnarray*}
The relations are ($i,j\in I$):
\begin{align*}
&\te{(Q1)  }&&[c,\phi_{i,q}^\pm(z)]=0=[\phi_{i,q}^\pm(z),\phi_{j,q}^\pm(w)]=[c,x_{i,q}^\pm(z)],\\
&\te{(Q2) }&& \phi^+_{i,q}(z_1)\phi^-_{j,q}(z_2)=\phi^-_{j,q}(z_2)\phi^+_{i,q}(z_1)
    \iota_{z_1,z_2}g_{ij,q}(q^{rc} z_2/z_1)\inverse g_{ij,q}(q^{-rc}z_2/z_1),\\
&\te{(Q3) }&&\phi^+_{i,q}(z_1)x^\pm_{j,q}(z_2)=x^\pm_{j,q}(z_2)\phi^+_{i,q}(z_1)
    \iota_{z_1,z_2}g_{ij,q}(q^{\mp \half rc}z_2/z_1)^{\pm 1},\\
&\te{(Q4) }&& \phi^-_{i,q}(z_1)x^\pm_{j,q}(z_2)=x^\pm_{j,q}(z_2)\phi^-_{i,q}(z_1)
    \iota_{z_2,z_1}g_{ji,q}(q^{\mp \half rc}z_1/z_2)^{\mp 1},\\
&\te{(Q5)  }&&[x_{i,q}^+(z_1),x_{j,q}^-(z_2)]\\
&&&=\frac{\delta_{ij}}{q_i-q_i\inv}
    \Bigg(
        \phi_{i,q}^+(z_1 q^{-\half rc})\delta\left(
            \frac{z_2 q^{rc}}{z_1}
        \right)
        -
        \phi_{i,q}^-(z_1 q^{\half rc})\delta\left(
            \frac{z_2 q^{-rc} }{z_1}
        \right)
    \Bigg),\\
&\te{(Q6)  }&&F^\pm_{ij,q}(z_1,z_2)x^\pm_{i,q}(z_1)x^\pm_{j,q}(z_2)=
    G^\pm_{ij,q}(z_1,z_2)x^\pm_{j,q}(z_2)x^\pm_{i,q}(z_1),\\
&\te{(Q7)  }&&\sum_{\sigma\in S_{{m}_{ij}}}\sum_{k=0}^{{m}_{ij}}
    (-1)^k \binom{{m}_{ij}}{k}_{q_i}x_{i,q}^\pm(z_{\sigma(1)})\cdots x_{i,q}^\pm(z_{\sigma(k)}) x_{j,q}^\pm(w)\\
&&&\qquad\times
       x_{i,q}^\pm(z_{\sigma(k+1)})\cdots x_{i,q}^\pm(z_{\sigma({m}_{ij})})
\ =0,\quad
    \te{if}\ \ {a}_{ij}<0,
\end{align*}
where $q=\exp \hbar\in\C[[\hbar]]$, $q_i=q^{r_i}$, $r$ is the least common multiple of $\{r_i\}_{i\in I}$
and
\begin{align*} 
&F^\pm_{ij,q}(z_1,z_2)= z_1-q_i^{\pm a_{ij}}z_2, \quad
G^\pm_{ij,q}(z_1,z_2)= q_i^{\pm a_{ij}}z_1-z_2,\\
&g_{ij,q}(z)=\frac{G_{ij,q}^+(1,z)}{F_{ij,q}^+(1,z)}
\end{align*}
and the map $\iota_{z_1,z_2}$ is defined as in \cite[\S 3.1]{fhl}.
The classical limit $\U_\hbar(\hat\fg)/\hbar U_\hbar(\hat\fg)$ is isomorphic to the universal enveloping algebra of $\hat\fg$.

Let $V_{\hat\fg}(\ell,0)$ be the vertex algebra associated to $\hat\fg$ (see Definition \ref{de:aff-vas}).
Different from the construction of $V_L[[\hbar]]^\eta$ in \cite{JKLT-Quantum-lattice-va},
we are not able to find a suitable ``deforming triple'' for $V_{\hat\fg}(\ell,0)$, such that the corresponding $\hbar$-adic quantum vertex algebra can be associated to $\U_\hbar(\hat\fg)$.
Let $\U_\hbar^f(\hat\fg)$ be the unital associative algebra over $\C[[\hbar]]$ topologically generated by the elements \eqref{eq:tqagenerators}
subject to relations (Q1)-(Q4), (Q6) and
\begin{align*}
  &\te{(Q5.5)}&&(1-q^{r\ell} z_2/z_1)^{\delta_{ij}}(1-q^{-r\ell}z_2/z_1)^{\delta_{ij}}[x_{i,q}^+(z_1),x_{j,q}^-(z_2)]=0,\quad i,j\in I.
\end{align*}
It is easy to see that $\U_\hbar(\hat\fg)$ is a quotient algebra of $\U_\hbar^f(\hat\fg)$.
Using the theory of free vertex algebras developed by Roitman (\cite{R-free-conformal-free-va}),
we construct a vertex algebra $F(A,\ell)$ (see Definition \ref{de:aff-vas}).
Then by using the construction given in \cite{JKLT-Defom-va, JKLT-Quantum-lattice-va}, we get a family of $\hbar$-adic quantum vertex algebras $F_\tau(A,\ell)$ as formal deformations of $F(A,\ell)$ (see Theorem \ref{thm:S-tau}). The category of restricted $\U_\hbar^f(\hat\fg)$-modules is isomorphic to the category of $\phi$-coordinated $F_\tau(A,\ell)$-modules with suitable $\tau$.
For this suitable $\tau$, we construct the quotient $\hbar$-adic quantum vertex algebras $V_{\hat\fg,\hbar}(\ell,0)$ and $L_{\hat\fg,\hbar}(\ell,0)$ of $F_\tau(A,\ell)$.
Finally, based on the ``normal ordered'' type quantum Serre relations given in \cite{CJKT-qeala-II-twisted-qaffinization} and the integrable conditions developed in \cite{DM-int-rep-qaff}, we establish a natural connection between
restricted (resp. restricted integrable) $\U_\hbar(\hat\fg)$-modules of level $\ell$ and $\phi$-coordinated modules (resp. $\phi$-coordinated weighted modules) for $V_{\hat\fg,\hbar}(\ell,0)$ (resp. $L_{\hat\fg,\hbar}(\ell,0)$).

The paper is organized as follows. In Section \ref{sec:cla-va}, we recall the basics about vertex algebras and free vertex algebras. Define the vertex algebra $F(A,\ell)$ associated to a symmetrizable generalized Cartan matrix $A$ and a complex number $\ell$.
Then we realize $V_{\hat\fg}(\ell,0)$ and $L_{\hat\fg}(\ell,0)$ as quotient vertex algebras of $F(A,\ell)$.
In Section \ref{sec:qva}, we recall the notion of $\hbar$-adic quantum vertex algebras, and the theory of constructing $\hbar$-adic nonlocal vertex algebras introduced in \cite{Li-h-adic}.
In Section \ref{sec:F-tau}, we construct an $\hbar$-adic nonlocal vertex algebra $F_\tau(A,\ell)$. By using the deformation method introduced in \cite{JKLT-Defom-va, JKLT-Quantum-lattice-va} (see Section \ref{sec:deform-by-bialg}), we prove that $F_\tau(A,\ell)$ is an $\hbar$-adic quantum vertex algebra and $F_\tau(A,\ell)/\hbar F_\tau(A,\ell)\cong F(A,\ell)$ as vertex algebras.
In Section \ref{sec:V-tau-L-tau}, we construct two quotient $\hbar$-adic quantum vertex algebras $V_{\hat\fg,\hbar}(\ell,0)$ and $L_{\hat\fg,\hbar}(\ell,0)$ of $F_\tau(A,\ell)$.
In Section \ref{sec:phi-mods}, we recall the notion of $\phi$-coordinated modules of $\hbar$-adic nonlocal vertex algebras, and study the category of $\phi$-coordinated modules of $V_{\hat\fg,\hbar}(\ell,0)$.
In Section \ref{sec:qaff}, we establish the natural connections between certain $\U_\hbar(\hat\fg)$-modules and $\phi$-coordinated modules for
the two $\hbar$-adic quantum vertex algebras $V_{\hat\fg,\hbar}(\ell,0)$ and $L_{\hat\fg,\hbar}(\ell,0)$.

Throughout this paper, we denote the set of nonnegative integer and  positive integers by $\N$ and $\Z_+$, respectively.
For any ring $R$, we denote the set of invertible elements by $R^\times$.

\section{Vertex algebras and affine vertex algebras}\label{sec:cla-va}

In this section, we recall the notion of vertex algebras and define some vertex algebras associated to the symmetrizable GCM $A=(a_{ij})_{i,j\in I}$.

A \emph{vertex algebra} is a vector space $V$ equipped with a linear map
\begin{align}
  Y(\cdot,z):&V\to \E(V):=\Hom(V,V((z)));\quad
  v\mapsto Y(v,z)=\sum_{n\in\Z}v_nz^{-n-1}
\end{align}
and equipped with a distinguished vector $\vac$ of $V$,
called the \emph{vacuum vector}, such that
\begin{align}\label{eq:vacuum-property}
  &Y(\vac,z)v=v,\quad
  Y(v,z)\vac\in V[[z]],\quad \lim_{z\to 0}Y(v,z)\vac=v\quad\te{for }v\in V
\end{align}
and such that for $u,v,w\in V$, the following \emph{Jacobi identity} holds true
\begin{align}\label{eq:Jacobi}
z_0\inv\delta\(\frac{z_1-z_2}{z_0}\)&Y(u,z_1)Y(v,z_2)w
-z_0\inv\delta\(\frac{z_2-z_1}{-z_0}\)Y(v,z_2)Y(u,z_1)w
\nonumber\\&
=z_1\inv\delta\(\frac{z_2+z_0}{z_1}\)Y(Y(u,z_0)v,z_2)w.
\end{align}

A \emph{conformal Lie algebra} $(C,Y^+,T)$ (\cite{Kac-VA}), also known as a \emph{vertex Lie algebra} (\cite{DLM-vertex-Lie-vertex-Poisson-VA,Primc-VA-gen-by-Lie}),
is a vector space $C$ equipped with a linear operator $T$ and a linear map
 \begin{align}\label{Y-}
 Y^+(\cdot,z):\,\, C\to \mathrm{Hom}(C, z^{-1}C[z^{-1}]);\quad
  u\mapsto Y^+(u,z)=\sum_{n\ge 0}u_{n} z^{-n-1}
 \end{align}
 such that for any $u,v\in C$,
\begin{align*}
  &[T, Y^+(u,z)]=Y^+(Tu,z)=\frac{d}{d z}Y^+(u,z),\\
  &Y^+(u,z)v=\Sing_z\(e^{zT}Y^+(v,-z)u\),\\
  &[Y^+(u,z_1),Y^+(v,z_2)]=\Sing_{z_1}\Sing_{z_2}(Y^+(Y^+(u,z_1-z_2)v,z_2)),
\end{align*}
where
 $\Sing$ stands for the singular part.
Let $(C,Y^+,T)$, $(C_1,Y_1^+,T_1)$ be conformal Lie algebras.
A linear map $f:C\to C_1$ is called conformal Lie algebra homomorphism
if $f\big(Y^+(u,z)v\big)=Y^+_1(f(u),z)f(v)$ and
$f\big(Tu\big)=T_1f(u)$ for $u,v\in C$.

Let $C$ be an arbitrary conformal Lie algebra.
Recall the \emph{coefficient algebra} $\wh C$ (\cite{R-free-conformal-free-va}), also known as the \emph{local vertex Lie algebra} (\cite{DLM-vertex-Lie-vertex-Poisson-VA})
is the Lie algebra with the underlying vector space $\big(C\ot\C[t,t\inv]\big)/\te{Im}\big(T\ot 1+1\ot d/dt\big)$, and the Lie bracket
\begin{align}\label{eq:def-coeff-alg}
  [u(m),v(n)]=\sum_{k\ge0}\binom{m}{k}\big(u_kv\big)(m+n-k),
\end{align}
where $u,v\in C$, $m,n\in \Z$ and $u(m)$ is the image of $u\ot t^m$ in $\wh C$.
Set $\wh C^-=\Span_\C\set{u(m)}{u\in C,m<0}$ and $\wh C^+=\Span_\C\set{u(m)}{u\in C,m\ge0}$,
and define
\begin{align}\label{eq:def-V-C}
  V_C=\U(\wh C)\ot_{\U(\wh C^+)}\C,
\end{align}
where $\C$ is a trivial $\wh C^+$-module.
It is proved in \cite[Proposition 1.3]{R-free-conformal-free-va}
(see also \cite[Proposition 3.4]{DLM-vertex-Lie-vertex-Poisson-VA})
that $\wh C^\pm$ are Lie subalgebras of $\wh C$ and $\wh C=\wh C^-\oplus \wh C^+$.
Then $V_C\cong \U(\wh C^-)$ as vector spaces.
For convenience, we identify $u$ with $u(-1)\ot 1\in V_C$ for $u\in C$.
A $\wh C$-module $W$ is called a \emph{restricted} module if
\begin{align*}
  u(z)v=\sum_{m\in\Z}u(m)vz^{-m-1}\in W((z)),\quad\te{for }u\in C,\,\,v\in W.
\end{align*}
It is proved in \cite{DLM-vertex-Lie-vertex-Poisson-VA} that
\begin{thm}[{\cite[Theorem 4.8]{DLM-vertex-Lie-vertex-Poisson-VA}}]\label{thm:con-Lie=va}
There exists a vertex algebra structure on $V_C$ such that $\vac=1\ot 1\in V_C$ is the vacuum vector and
\begin{align*}
  Y(u,z)=\sum_{m\in\Z}u(m)z^{-m-1}\quad\te{for }u\in C.
\end{align*}
For any restricted $\wh C$-module $W$, there exists a $V_C$-module structure $Y_W$ on $W$ such that
\begin{align*}
  Y_W(u,z)=u(z)=\sum_{m\in\Z}u(m)z^{-m-1},\quad u\in C.
\end{align*}
On the other hand, for any $V_C$-module $W$, there exists a restricted $\wh C$-module structure on $W$ such that
\begin{align*}
  u(z).w=Y_W(u,z)w
\end{align*}
where $u\in C$ and $w\in W$.
\end{thm}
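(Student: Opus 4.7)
The plan is to realize $V_C$ as its own restricted $\wh C$-module by left multiplication and then apply H.\ Li's local-system construction to produce the vertex algebra structure from the generating fields. First I would observe that by PBW, $V_C\cong \U(\wh C^-)$ as a vector space, and left multiplication by $\wh C$ on the quotient $\U(\wh C)/\U(\wh C)\wh C^+$ makes $V_C$ a $\wh C$-module; this module is restricted because an iterated application of the coefficient bracket \eqref{eq:def-coeff-alg} shows that $u(m)$ with $m$ large enough annihilates any fixed PBW monomial in $\wh C^-$.

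Next I would define $Y(u,z)=\sum_{m\in\Z}u(m)z^{-m-1}\in\E(V_C)$ for $u\in C$. A direct computation using \eqref{eq:def-coeff-alg} yields the commutator formula
\begin{align*}
  [Y(u,z_1),Y(v,z_2)]=\sum_{k\ge 0}Y(u_kv,z_2)\,\frac{1}{k!}\partial_{z_2}^{k}z_1^{-1}\delta(z_2/z_1),
\end{align*}
and the sum is finite since $u_kv=0$ for $k\gg 0$ by the definition of $Y^+$. Hence $(z_1-z_2)^N[Y(u,z_1),Y(v,z_2)]=0$ for $N$ sufficiently large, so the family $\{Y(u,z):u\in C\}$ is local. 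Li's theorem on local systems of fields then shows that this family, together with $1_{V_C}$, generates a vertex subalgebra $\langle S\rangle\subseteq\E(V_C)$ with vacuum $1_{V_C}$. The creation map $a(z)\mapsto a(z)1_{V_C}|_{z=0}$ defines a linear $\pi:\langle S\rangle\to V_C$ sending $Y(u,z)$ to $u(-1)\cdot 1$, which equals $u$ under the identification adopted in the excerpt; transporting the structure of $\langle S\rangle$ along $\pi$ yields the desired vertex algebra structure on $V_C$, and the vacuum and creation properties \eqref{eq:vacuum-property} and the Jacobi identity \eqref{eq:Jacobi} are inherited automatically.

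The module statements follow from the same mechanism. Given a restricted $\wh C$-module $W$, the fields $u(z)\in\E(W)$ are mutually local by an identical computation, so Li's theorem produces a vertex algebra $\langle S\rangle_W\subseteq\E(W)$ together with a natural $\langle S\rangle_W$-module structure on $W$. The assignment $u\mapsto u(z)$ then extends, via the identification $\pi$ above, to a vertex algebra homomorphism $V_C\to\langle S\rangle_W$, producing the claimed $V_C$-module structure $Y_W$ on $W$. Conversely, given a $V_C$-module $(W,Y_W)$, the coefficients $u(m)$ of $Y_W(u,z)$ for $u\in C$, $m\in\Z$ together with the commutator formula that the Jacobi identity \eqref{eq:Jacobi} imposes on them recover precisely the bracket \eqref{eq:def-coeff-alg}, making $W$ into a $\wh C$-module; restrictedness is automatic since $Y_W(u,z)\in\E(W)=\Hom(W,W((z)))$.

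The principal obstacle is establishing that $\pi:\langle S\rangle\to V_C$ is a bijection. Surjectivity amounts to showing that every PBW monomial $u_1(-n_1-1)\cdots u_k(-n_k-1)\cdot 1$ in $V_C$ lies in the image of $\pi$, which reduces to identifying iterated normally ordered products of derivatives of the generating fields $Y(u_i,z)$ with such monomials modulo lower-order terms in the length filtration on $V_C$; this is handled by induction on $k$. Injectivity in turn follows from the linear independence of those iterated normally ordered products in $\E(V_C)$, which is established by comparing the length filtration on $V_C$ with the filtration induced by normally ordered products on $\langle S\rangle$ and invoking the PBW theorem for $\wh C^-$.
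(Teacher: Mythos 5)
The paper cites this result from \cite{DLM-vertex-Lie-vertex-Poisson-VA} (Theorem 4.8 there) and does not reproduce a proof, so there is no in-paper argument to compare against; your sketch is correct and follows the standard route via H.~Li's local-system construction, which is essentially the argument behind that reference. You correctly identify all the nontrivial ingredients: the left-regular action makes $V_C$ a restricted $\wh C$-module, the coefficient bracket \eqref{eq:def-coeff-alg} yields the commutator formula and hence locality of the generating fields, Li's theorem produces the vertex algebra $\langle S\rangle\subset\E(V_C)$, the evaluation map $\pi$ is a bijection because iterated normally ordered products span $\langle S\rangle$ and their $\pi$-images are upper-triangular against the PBW basis of $\U(\wh C^-)$, and the two module statements follow from the universal property of $\langle S\rangle_W$ for a restricted $\wh C$-module $W$ and from reading off the $\wh C$-bracket (together with $(Tu)(m)=-mu(m-1)$, via the translation property) from the Jacobi identity of a $V_C$-module, respectively.
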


Let $\mathcal B$ be a set, and let $N:\mathcal B\times \mathcal B\to \N$ be a symmetric function called a \emph{locality function} (\cite{R-free-conformal-free-va}).
Denote by $\mathfrak{Conf}(\mathcal B,N)$ the category whose objects are conformal Lie algebras $C$ which contain $\mathcal B$ as a generating subset and
\begin{align*}
  a_nb=0\quad \te{for all } a,b\in\mathcal B,\,n\ge N(a,b).
\end{align*}
The following result is given in \cite[Proposition 3.1]{R-free-conformal-free-va}
\begin{prop}
There exists a conformal Lie algebra $C(\mathcal B,N)\in\mathfrak{Conf}(\mathcal B,N)$ such that for any $C\in\mathfrak{Conf}(\mathcal B,N)$, there is a unique conformal Lie algebra homomorphism $f$ from $C(\mathcal B,N)$ to $C$ such that
\begin{align*}
  f(a)=a\in \mathcal B\subset C\quad\te{for }a\in \mathcal B\subset C(\mathcal B,N).
\end{align*}
Moreover, the coefficient algebra $\wh C(\mathcal B,N)$ of $C(\mathcal B,N)$ is isomorphic to the Lie algebra generated by
\begin{align}
  \set{b(n)}{b\in\mathcal B,\,n\in\Z}
\end{align}
and subject to the relations
\begin{align}\label{eq:free-cla-def-rel}
  \sum_{i\ge 0}(-1)^i\binom{N(a,b)}{i}[a(n-i),b(m+i)]=0\quad\te{for } a,b\in\mathcal B,\,m,n\in\Z.
\end{align}
\end{prop}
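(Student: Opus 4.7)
The proposition contains two claims: the existence of the free object $C(\mathcal B, N) \in \mathfrak{Conf}(\mathcal B, N)$ with the stated universal property, and the presentation of its coefficient algebra $\widehat C(\mathcal B, N)$ by generators and relations.

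For the existence, my plan is to construct $C(\mathcal B, N)$ as a quotient of a free conformal Lie algebra on $\mathcal B$. Since conformal Lie algebras form an equational class — a vector space with an operator $T$ and bilinear products $u_n v$ for $n \geq 0$, subject to the axioms ($T$-derivation, conformal skew-symmetry, conformal Jacobi) listed in the excerpt — standard universal algebra produces a free conformal Lie algebra $\mathrm{Conf}_{\mathcal B}$ generated by $\mathcal B$. Define $C(\mathcal B, N)$ as the quotient of $\mathrm{Conf}_{\mathcal B}$ by the conformal ideal generated by $\{a_n b : a, b \in \mathcal B,\, n \geq N(a, b)\}$. Then $C(\mathcal B, N) \in \mathfrak{Conf}(\mathcal B, N)$, and the universal property is inherited from $\mathrm{Conf}_{\mathcal B}$: any morphism to an object of $\mathfrak{Conf}(\mathcal B, N)$ annihilates the locality ideal, hence factors through $C(\mathcal B, N)$.

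For the presentation of $\widehat C(\mathcal B, N)$, let $\widetilde L$ denote the Lie algebra defined by the given presentation. I would produce mutually inverse surjections between $\widetilde L$ and $\widehat C(\mathcal B, N)$. In the forward direction, applying the coefficient-algebra formula \eqref{eq:def-coeff-alg} in $\widehat C(\mathcal B, N)$ and using $a_k b = 0$ for $k \geq N(a,b)$ gives $[a(n), b(m)] = \sum_{k=0}^{N(a,b)-1}\binom{n}{k}(a_k b)(n+m-k)$, and substituting into the left-hand side of \eqref{eq:free-cla-def-rel} reduces matters to the Vandermonde-type identity $\sum_{i \geq 0}(-1)^i\binom{N}{i}\binom{n-i}{k} = 0$ for $0 \leq k < N$, which holds because it is the $N$-th finite difference of a polynomial of degree $k<N$ in $n$. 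This yields a surjection $\pi_1 : \widetilde L \twoheadrightarrow \widehat C(\mathcal B, N)$ sending $b(n) \mapsto b(n)$.

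For the reverse direction, I would equip $\widetilde L$ with the derivation $d$ defined on generators by $d(b(n)) = -n\, b(n-1)$ (after checking that $d$ preserves the relations \eqref{eq:free-cla-def-rel}) and exploit the $\Z$-grading $\deg b(n) = n$ — the defining relations are homogeneous — to split $\widetilde L = \widetilde L^- \oplus \widetilde L^+$ into Lie subalgebras of strictly negative and nonnegative grade. The plan is then to realize $\widetilde L^-$, or a suitable subquotient, as the coefficient algebra of a conformal Lie algebra $C' \in \mathfrak{Conf}(\mathcal B, N)$, whereupon the universal property of $C(\mathcal B, N)$ produces a Lie algebra surjection $\widehat C(\mathcal B, N) \twoheadrightarrow \widetilde L$ sending generators to generators, inverting $\pi_1$. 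The main obstacle is showing that the single family of relations \eqref{eq:free-cla-def-rel}, combined with the Lie axioms of $\widetilde L$, is strong enough to force all the conformal Lie algebra axioms on $C'$ — in particular, deriving the conformal Jacobi identity. This is the technical heart of Roitman's argument in \cite{R-free-conformal-free-va} and is typically handled via a PBW-style ordering on $\widetilde L$.
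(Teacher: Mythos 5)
The paper gives no proof of this proposition; it cites it verbatim from Roitman \cite{R-free-conformal-free-va}, Proposition 3.1, with no argument. Your sketch is a faithful outline of the argument in that reference: existence of $C(\mathcal B,N)$ via the free conformal Lie algebra modulo the locality ideal; the forward surjection from the presented Lie algebra onto $\wh C(\mathcal B,N)$, resting on the finite-difference identity $\sum_{i\ge 0}(-1)^i\binom{N}{i}\binom{n-i}{k}=0$ for $0\le k<N$ (one cosmetic slip: $\binom{n-i}{k}$ should be read as a degree-$k$ polynomial in the summation variable $i$, not in $n$); and the reverse direction, reconstructing a conformal Lie algebra structure from the grading, the derivation $d(b(n))=-n\,b(n-1)$, and the relations \eqref{eq:free-cla-def-rel}, which you correctly flag as the technical heart. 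Since the paper simply imports the result, there is no internal proof to compare against; your outline matches the strategy of the cited source.
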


The conformal Lie algebra $C(\mathcal B,N)$ is called the \emph{free conformal Lie algebra} in \cite{R-free-conformal-free-va}.
From Theorem \ref{thm:con-Lie=va}, we get a vertex algebra $V(\mathcal B,N)$ corresponding to $C(\mathcal B,N)$, which is called the \emph{free vertex algebra} in \cite{R-free-conformal-free-va}.

We note that the relation \eqref{eq:free-cla-def-rel} is equivalent to the following relation
\begin{align}
  (z_1-z_2)^{N(a,b)}[a(z_1),b(z_2)]=0\quad \te{for }a,b\in \mathcal B.
\end{align}
\begin{prop}\label{prop:universal-va}
Let $V$ be a vertex algebra, and let $f:\mathcal B\to V$ such that
\begin{align*}
  (z_1-z_2)^{N(a,b)}[Y(f(a),z_1),Y(f(b),z_2)]=0\quad \te{for }a,b\in \mathcal B.
\end{align*}
Then there exists a vertex algebra homomorphism $\wh f:V(\mathcal B,N)\to V$ such that
$\wh f|_{\mathcal B}=f$.
\end{prop}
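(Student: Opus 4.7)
My strategy is to descend to the conformal Lie algebra level, invoke Theorem~\ref{thm:con-Lie=va}, and then extract the desired vertex-algebra homomorphism from the resulting module structure. The vertex algebra $V$ carries a natural conformal Lie algebra structure via $Y^+(v,z) = \Sing_z Y(v,z)$, and extracting the coefficient of $z_1^{-n-1}z_2^{-m-1}$ from the locality hypothesis produces exactly the defining relations \eqref{eq:free-cla-def-rel} for the operators $f(a)_n$. By the explicit presentation of $\wh C(\mathcal B, N)$ given in the preceding proposition, the assignment $a(n)\mapsto f(a)_n$ thus extends to a Lie algebra homomorphism $\wh C(\mathcal B, N) \to \End(V)$. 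Since $Y(f(a),z)v \in V((z))$ for every $v \in V$, this action makes $V$ into a \emph{restricted} $\wh C(\mathcal B, N)$-module.

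By the second half of Theorem~\ref{thm:con-Lie=va}, this restricted action lifts to a $V(\mathcal B, N)$-module structure $Y_V^{\mathrm{mod}}$ on $V$ with $Y_V^{\mathrm{mod}}(a,z) = Y(f(a),z)$ for $a\in\mathcal B$. I then set
\begin{align*}
  \wh f : V(\mathcal B, N) \to V, \qquad \wh f(u) = \lim_{z \to 0} Y_V^{\mathrm{mod}}(u,z)\vac_V.
\end{align*}
A short induction on the length of the spanning monomials $c^{(1)}(-n_1-1)\cdots c^{(k)}(-n_k-1)\vac$ of $V(\mathcal B, N)$, using the module iterate formula, shows $Y_V^{\mathrm{mod}}(u,z)\vac_V \in V[[z]]$, so $\wh f$ is well-defined; the vacuum property~\eqref{eq:vacuum-property} then yields $\wh f(a)=f(a)$ for $a\in\mathcal B$ and $\wh f(\vac)=\vac_V$.

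The essential verification is the intertwining identity $Y(\wh f(u),z)\wh f(v) = \wh f(Y(u,z)v)$ for all $u,v\in V(\mathcal B, N)$. Since $\mathcal B$ generates $V(\mathcal B, N)$ as a vertex algebra, iterated use of the associativity (iterate) formula reduces this to the case $u=a\in\mathcal B$, where it becomes the identity $\wh f(a_n v) = f(a)_n\wh f(v)$ for $n\in\Z$, which is immediate from $Y_V^{\mathrm{mod}}(a,z)=Y(f(a),z)$ together with the definition of $\wh f$. Uniqueness of $\wh f$ follows because a vertex-algebra homomorphism out of $V(\mathcal B,N)$ is determined by its values on the generating set $\mathcal B$.

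The main obstacle I anticipate is this final propagation step: converting the purely module-theoretic identity $Y_V^{\mathrm{mod}}(Y(u,z_0)v,z) = Y_V^{\mathrm{mod}}(u,z_0+z)Y_V^{\mathrm{mod}}(v,z)$, which only compares the two $V(\mathcal B,N)$-module operations on $V$, into the genuine vertex-algebra compatibility $Y(\wh f(u),z)\wh f(v)=\wh f(Y(u,z)v)$ involving the intrinsic operator $Y_V$ of $V$. The crux is that $Y_V^{\mathrm{mod}}$ and $Y_V$ agree after specialisation at $\vac_V$ on the image of $\wh f$ (indeed, on generators this is built into the construction), which permits the comparison to be pushed recursively from general monomials down to generators in $\mathcal B$; the bookkeeping in this induction is the only non-routine part of the argument.
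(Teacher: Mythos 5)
The paper states this proposition without proof, as a direct consequence of the universal property of the free conformal Lie algebra and Theorem~\ref{thm:con-Lie=va}. Your argument is a correct reconstruction of the expected proof: the locality hypothesis yields (via the presentation of $\wh C(\mathcal B,N)$) a restricted $\wh C(\mathcal B,N)$-module structure on $V$, Theorem~\ref{thm:con-Lie=va} promotes this to a $V(\mathcal B,N)$-module structure, and the propagation step from the generating set $\mathcal B$ to all of $V(\mathcal B,N)$ is the standard induction on monomial length using the iterate (Borcherds) formula, which is exactly the "non-routine part" you correctly identify and handle.
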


We introduce the following Lie algebra.

\begin{de}\label{de:MRY}
Let $\hat\fg$ be the Lie algebra generated by $$\set{h_{i}(m),\,x_{i}^\pm(m)}{i\in I,\,m\in\Z}$$ and a central element $c$,
subject to the relations written in terms of generating functions in $z$:
\begin{align*}
  &h_i(z)=\sum_{m\in\Z}h_{i}(m)z^{-m-1},\quad
  x_i^\pm(z)=\sum_{m\in\Z}x_{i}^\pm(m) z^{-m-1},\quad i\in I.
\end{align*}
The relations are ($i,j\in I$)
\begin{align*}
  \te{(L1)}\quad& [h_i(z_1),h_j(z_2)]=r_ia_{ij}rc\pd{z_2}z_1\inv\delta\(\frac{z_2}{z_1}\),\\
  \te{(L2)}\quad& [h_i(z_1),x_j^\pm(z_2)]=\pm r_ia_{ij}x_j^\pm(z_2) z_1\inv\delta\(\frac{z_2}{z_1}\),\\
  \te{(L3)}\quad& [x_i^+(z_1),x_j^-(z_2)]=\frac{\delta_{ij}}{r_i}\(h_i(z_2)z_1\inv\delta\(\frac{z_2}{z_1}\)
  +rc\pd{z_2}z_1\inv\delta\(\frac{z_2}{z_1}\)\),\\
  \te{(L4)}\quad& (z_1-z_2)^{n_{ij}}[x_i^\pm(z_1),x_j^\pm(z_2)]=0,\\
  \te{(S)}\quad&[x_i^\pm(z_1),[x_i^\pm(z_2),\dots, [x_i^\pm(z_{m_{ij}}),x_j^\pm(z_0)]\cdots]]=0,\quad\te{if }a_{ij}<0,
\end{align*}
where $n_{ij}=1-\delta_{ij}$ for $i,j\in I$ and $m_{ij}=1-a_{ij}$ for $i,j\in I$ with $a_{ij}<0$.
\end{de}

\begin{rem}
{\em
Suppose that the GCM $A$ is of finite or affine type.
Recall that $\g=[\g(A),\g(A)]$ is the derived subalgebra of the Kac-Moody Lie algebra associated to the GCM $A$.
If $A$ is not of type $A_1^{(1)}$, then $\hat\fg$ is the universal central extension of $\g\ot\C[t,t\inv]$ (see \cite{Gar-loop-alg} when $A$ is of finite type and \cite{MRY} when $A$ is of affine type).
If $A$ is of type $A_1^{(1)}$, then $\hat\fg$ is the quotient algebra of the universal central extension of $\g\ot\C[t,t\inv]$ modulo the ideal generated by the following relations
\begin{align*}
  (z_1-z_2)[x_i^\pm(z_1),x_j^\pm(z_2)]=0,\quad \te{for }i\ne j.
\end{align*}
}
\end{rem}

Introduce a set $\mathcal B=\set{h_i,\,x_i^\pm}{i\in I}$, and define a function
$N:\mathcal B\times \mathcal B\to \N$ by
\begin{align*}
  N(h_i,h_j)=2,\quad N(h_i,x_j^\pm)=1,\quad N(x_i^\pm,x_j^\pm)=n_{ij},\quad N(x_i^+,x_j^-)=\delta_{ij}2.
\end{align*}

\begin{de}\label{de:aff-vas}
For $\ell\in\C$, we let $F(A,\ell)$ be the quotient vertex algebra of $V(\mathcal B,N)$ modulo the ideal generated by
\begin{align*}
  (h_i)_0(h_j),\quad (h_i)_1(h_j)-r_ia_{ij}r\ell\vac,\quad (h_i)_0(x_j^\pm)\mp r_ia_{ij}x_j^\pm\quad\te{for }i,j\in I.
\end{align*}
Furthermore, let $V_{\hat\fg}(\ell,0)$ be the quotient vertex algebra of $F(A,\ell)$ modulo the ideal generated by
\begin{align*}
  &(x_i^+)_0(x_j^-)-\frac{\delta_{ij}}{r_i}h_i,\quad (x_i^+)_1(x_j^-)-\frac{\delta_{ij}}{r_i}r\ell\vac\quad\te{for }i,j\in I,\\ &\((x_i^\pm)_0\)^{m_{ij}}(x_j^\pm)\quad \te{for }i,j\in I\,\te{with }a_{ij}<0.
\end{align*}
If $\ell\in\Z_+$, we define $L_{\hat\fg}(\ell,0)$ to be the quotient vertex algebra of $V_{\hat\fg}(\ell,0)$ modulo the ideal generated by
\begin{align*}
  \((x_i^\pm)_{-1}\)^{\ell r/r_i}x_i^\pm\quad \te{for }i\in I.
\end{align*}
\end{de}

\begin{rem}\label{rem:aff-Lie-alg}{\em
Set $\hat\fg_+=\fg\ot\C[t]\oplus\C c$. For $\ell\in\C$, we let $\C_\ell:=\C$ be a $\hat\fg_+$-module,
with $\fg\ot\C[t].\C_\ell=0$ and $c=\ell$.
Then there are $\hat\fg$-module structures on both $V_{\hat\fg}(\ell,0)$ and $L_{\hat\fg}(\ell,0)$, such that
\begin{align*}
  h_i(z)=Y(h_i,z),\quad x_i^\pm(z)=Y(x_i^\pm,z),\quad i\in I.
\end{align*}
In addition, as $\hat\fg$-modules, we have that
\begin{align*}
V_{\hat\fg}(\ell,0)\cong \U(\hat\fg)\ot_{\U(\hat\fg_+)}\C_\ell
\end{align*}
Suppose that the GCM $A$ is of finite type. Then $\fg$ is a finite dimensional simple Lie algebra.
And $L_{\hat\fg}(\ell,0)$ is the unique simple quotient $\hat\fg$-module of $V_{\hat\fg}(\ell,0)$, when
$\ell\in \Z_+$.}
\end{rem}

\section{Quantum vertex algebras}\label{sec:qva}

A $\C[[\hbar]]$-module $V$
is said to be \emph{torsion-free} if $\hbar v\ne 0$ for every $0\ne v\in V$,
and said to be \emph{separated} if $\cap_{n\ge1}\hbar^n V=0$.
For a $\C[[\hbar]]$-module $V$, using subsets $v+\hbar^nV$ for $v\in V$, $n\ge 1$
as the basis of open subsets one obtains a topology on $V$, which is called the
\emph{$\hbar$-adic topology}.
A $\C[[\hbar]]$-module $V$ is said to be \emph{$\hbar$-adically complete}
if every Cauchy sequence in $V$ with respect to this $\hbar$-adic topology has a limit in $V$.
A $\C[[\hbar]]$-module $V$ is \emph{topologically free} if $V=V_0[[\hbar]]$
for some vector space $V_0$ over $\C$.
It is known that a $\C[[\hbar]]$-module is topologically free if and only if it is torsion-free, separated, and $\hbar$-adically complete (\cite{Kassel-topologically-free}, see \cite{Li-h-adic}).
For another topologically free $\C[[\hbar]]$-module $U=U_0[[\hbar]]$, we recall the complete tensor
\begin{align*}
  U\wh\ot V=(U_0\ot V_0)[[\hbar]].
\end{align*}

We view a vector space as a $\C[[\hbar]]$-module by letting $\hbar=0$.
Fix a $\C[[\hbar]]$-module $W$. For $k\in\Z_+$,
and some formal variables $z_1,\dots,z_k$, we define
\begin{align}
  \E^{(k)}(W;z_1,\dots,z_k)=\Hom_{\C[[\hbar]]}(W,W((z_1,\dots,z_k))).
\end{align}
Recall from \cite[Definition 5.3]{li-g1} that an ordered sequence $(a_1(z),\dots,a_k(z))$ in $\E^{(1)}(W)$ is said to be \emph{compatible}
if there exists an $m\in\Z_+$, such that
\begin{align*}
  \(\prod_{1\le i<j\le k}(z_i-z_j)^m\)a_1(z_1)\cdots a_k(z_k)\in\E^{(k)}(W;z_1,\dots,z_k).
\end{align*}

Now, we assume $W=W_0[[\hbar]]$ for some vector space $W_0$. Then $W$ is topologically free.
Define
\begin{align}\label{eq:E-hbar}
  \E_\hbar^{(k)}(W;z_1,\dots,z_k)=\Hom_{\C[[\hbar]]}(W,W_0((z_1,\dots,z_k))[[\hbar]]).
\end{align}
Note that $\E_\hbar^{(k)}(W;z_1,\dots,z_k)=\E^{(k)}(W;z_1,\dots,z_k)[[\hbar]]$ is topologically free.
For convenience, we will also write $\E_\hbar^{(k)}(W)=\E_\hbar^{(k)}(W;z_1,\dots,z_k)$ and write $\E_\hbar(W)=\E_\hbar^{(1)}(W)$.
For $n,k\in\Z_+$, the quotient map from $W$ to $W/\hbar^nW$ induces the following $\C[[\hbar]]$-module map
\begin{align*}
  \wt\pi_n^{(k)}:(\End_{\C[[\hbar]]} (W))[[z_1^{\pm 1},\dots,z_k^{\pm 1}]]\to (\End_{\C[[\hbar]]}(W/\hbar^nW))[[z_1^{\pm 1},\dots,z_k^{\pm 1}]].
\end{align*}
For $A(z_1,z_2),B(z_1,z_2)\in\Hom_{\C[[\hbar]]}(W,W_0((z_1))((z_2))[[\hbar]])$, we write $A(z_1,z_2)\sim B(z_2,z_1)$ if for each $n\in\Z_+$ there is $k\in\N$ such that
\begin{align*}
  (z_1-z_2)^k\wt\pi_n^{(2)}(A(z_1,z_2))= (z_1-z_2)^k\wt\pi_n^{(2)}(B(z_2,z_1)).
\end{align*}
Let $Z(z_1,z_2):\E_\hbar(W)\wh \ot \E_\hbar(W)\wh\ot\C((z))[[\hbar]]\to\End_{\C[[\hbar]]}(W)[[z_1^{\pm 1},z_2^{\pm 1}]]$ be defined by
\begin{align*}
  Z(z_1,z_2)\(a(z)\ot b(z)\ot f(z)\)=\iota_{z_1,z_2}f(z_1-z_2)a(z_1)b(z_2).
\end{align*}
A subset $U$ of $\E_\hbar(W)$ is said to be \emph{$\hbar$-adically $S$-local} if for any $a(z),b(z)\in U$, there exists $A(z)\in \(\C U\ot \C U\ot \C((z))\)[[\hbar]]$ such that
\begin{align*}
  a(z_1)b(z_2)\sim Z(z_2,z_1)\(A(z)\),
\end{align*}
where $\C U$ denotes the subspace spanned by $U$.

Recall from \cite[Remark 4.7]{Li-h-adic} that $\wt \pi_n^{(k)}$ induces a $\C[[\hbar]]$-module map
$\pi_n^{(k)}:\E_\hbar^{(k)}(W)\to \E^{(k)}(W/\hbar^n W)$ with kernel $\hbar^n\E_\hbar^{(k)}(W)$.
And $\E_\hbar^{(k)}(W)$ is isomorphic to the inverse limit of the following inverse system
\begin{align*}
  \xymatrix{
    0 & \E^{(k)}(W/\hbar W)\ar[l]& \E^{(k)}(W/\hbar^2 W)\ar[l]& \E^{(k)}(W/\hbar^3 W)\ar[l]&\cdots\ar[l].
  }
\end{align*}
If $k=1$, we will also write $\pi_n=\pi_n^{(1)}$.
Then an ordered sequence $(a_1(z)\dots,a_r(z))$ in $\E_\hbar(W)$ is said to be \emph{$\hbar$-adically compatible} if for every $n\in\Z_+$,
the sequence $(\pi_n(a_1(z)),\dots,\pi_n(a_r(z)))$ in $\E(W/\hbar^nW)$ is compatible.
A subset $U$ of $\E_\hbar(W)$ is said to be \emph{$\hbar$-adically compatible} if every finite sequence in $U$ is $\hbar$-adically compatible.

Let $(a(z),b(z))$ in $\E_\hbar(W)$ be $\hbar$-adically compatible.
That is, for any $n\in\Z_+$, we have that
\begin{align*}
  (z_1-z_2)^{k_n}\pi_n\(a(z_1)\)\pi_n\(b(z_2)\)\in\E^{(2)}(W/\hbar^nW)\quad\te{for some }k_n\in\Z_+.
\end{align*}
We recall the following vertex operator introduced in \cite[Definition 4.11]{Li-h-adic}:
\begin{align}\label{eq:def-Y-E}
  &Y_\E\(a(z),z_0\)b(x)=\sum_{n\in\Z}a(z)_nb(z)z_0^{-n-1}\\
  =&\varprojlim_{n>0}z_0^{-k_n}\((z_1-z)^{k_n}\pi_n(a(z_1))\pi_n(b(z))\)|_{z_1=z+z_0}.\nonumber
\end{align}
An \emph{$\hbar$-adic nonlocal vertex algebra} \cite[Definition 2.9]{Li-h-adic}
is a topologically free $\C[[\hbar]]$-module $V$ equipped with a $\C[[\hbar]]$-module map
$Y(\cdot,z):V\to \E_\hbar(V)$ and a distinguished vacuum vector $\vac$ such that the vacuum property \eqref{eq:vacuum-property} holds,
and for $u,v\in V$, $(Y(u,z),Y(v,z))$ is an $\hbar$-adic compatible pair with
\begin{align}
  &Y_\E\(Y(u,z),z_0\)Y(v,z)=Y\(Y(u,z_0)v,z\)\label{eq:weak-asso}
\end{align}
Denote by $\partial$ the canonical derivation on $V$:
\begin{align}\label{eq:canonical-der}
  u\mapsto\partial u=\lim_{z\to 0}\frac{d}{dz}Y(u,z)\vac.
\end{align}

An \emph{$\hbar$-adic weak quantum vertex algebra} (\cite[Definition 2.9]{Li-h-adic}) is an $\hbar$-adic nonlocal vertex algebra $V$, such that $\set{Y(u,z)}{u\in V}$ is $\hbar$-adically $S$-local.
And an \emph{$\hbar$-adic quantum vertex algebra} (\cite[Definition 2.20]{Li-h-adic}) is an $\hbar$-adic weak quantum vertex algebra $V$ equipped with a $\C[[\hbar]]$-module map (called a \emph{quantum Yang-Baxter operator})
\begin{align}
  S(z):V\wh\ot V\to V\wh\ot V\wh\ot \C((z))[[\hbar]],
\end{align}
which satisfies the \emph{shift condition}:
\begin{align}\label{eq:qyb-shift}
  [\partial\ot 1,S(z)]=-\frac{d}{dz}S(z),
\end{align}
the \emph{quantum Yang-Baxter equation}:
\begin{align}\label{eq:qyb}
  S^{12}(z_1)S^{13}(z_1+z_2)S^{23}(z_2)=S^{23}(z_2)S^{13}(z_1+z_2)S^{12}(z_1),
\end{align}
and the \emph{unitarity condition}:
\begin{align}\label{eq:qyb-unitary}
  S^{21}(z)S(-z)=1,
\end{align}
satisfying the following conditions:

  (1) The \emph{vacuum property}:
  \begin{align}\label{eq:qyb-vac-id}
    S(z)(\vac\ot v)=\vac\ot v,\quad \te{for }v\in V.
  \end{align}

 (2) The \emph{$S$-locality}:
  For any $u,v\in V$, one has
  \begin{align}\label{eq:qyb-locality}
  Y(u,z_1)Y(v,z_2)\sim Y(z_2)(1\ot Y(z_1))S(z_2-z_1)(v\ot u).
  \end{align}

  (3) The \emph{hexagon identity}:
  \begin{align}\label{eq:qyb-hex-id}
    S(z_1)(Y(z_2)\ot 1)=(Y(z_2)\ot 1)S^{23}(z_1)S^{13}(z_1+z_2).
  \end{align}

We list some technical results that will be used later on.

\begin{lem}
Let $(V,S(z))$ be an $\hbar$-adic quantum vertex algebra.
Then
\begin{align}
  &S(z)(v\ot\vac)=v\ot\vac\quad \te{for }v\in V,\label{eq:qyb-vac-id-alt}\\
  &[1\ot\partial,S(z)]=\frac{d}{dz}S(z),\label{eq:qyb-shift-alt}\\
  &S(z_1)(1\ot Y(z_2))=(1\ot Y(z_2))S^{12}(z_1-z_2)S^{13}(z_1)\label{eq:qyb-hex-id-alt},\\
  &S(z)f(\partial\ot 1)=f\(\partial\ot 1+\pd{z}\)S(z)\quad \te{for }f(z)\in\C[z][[\hbar]],\label{eq:qyb-shift-total1}\\
  &S(z)f(1\ot \partial)=f\(1\ot \partial-\pd{z}\)S(z)\quad \te{for }f(z)\in\C[z][[\hbar]].\label{eq:qyb-shift-total2}
\end{align}
\end{lem}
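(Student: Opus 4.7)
The plan is to derive the five identities sequentially; the ``dual hexagon'' \eqref{eq:qyb-hex-id-alt} is the technical heart of the argument, and the others follow from it together with the given axioms. I begin with \eqref{eq:qyb-vac-id-alt}: applying the swap $\sigma$ to both sides of \eqref{eq:qyb-vac-id} and of \eqref{eq:qyb-unitary} yields respectively $S^{21}(z)(v\ot\vac)=v\ot\vac$ and $S(z)S^{21}(-z)=1$, whence
\[
  S(z)(v\ot\vac)=S(z)S^{21}(-z)(v\ot\vac)=v\ot\vac.
\]

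The main obstacle is \eqref{eq:qyb-hex-id-alt}, the hexagon with $1\ot Y$ in place of $Y\ot 1$. My plan is to use the $\hbar$-adic $S$-locality \eqref{eq:qyb-locality}, which rewrites $Y(u,z_1)Y(v,z_2)$ in terms of $Y(v_i,z_2)Y(u_i,z_1)$ with $\sum v_i\ot u_i=S(z_2-z_1)(v\ot u)$; this intertwiner trades a composition controlled by the given hexagon \eqref{eq:qyb-hex-id} for one controlled by the dual hexagon. Combining this with \eqref{eq:qyb-hex-id}, the QYBE \eqref{eq:qyb}, and the weak associativity \eqref{eq:weak-asso}, one can evaluate both sides of \eqref{eq:qyb-hex-id-alt} on an arbitrary input $u\ot v\ot w\in V\wh\ot V\wh\ot V$ and identify them as $\hbar$-adically compatible operators.

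With \eqref{eq:qyb-hex-id-alt} in hand, \eqref{eq:qyb-shift-alt} follows by specializing the input to $v\ot u\ot\vac$: identity \eqref{eq:qyb-vac-id-alt} reduces $S^{13}(z_1)(v\ot u\ot\vac)$ to $v\ot u\ot\vac$, and $Y(x,z_2)\vac=e^{z_2\partial}x$ by the vacuum property of $Y$, so \eqref{eq:qyb-hex-id-alt} collapses to the operator identity $S(z_1)(1\ot e^{z_2\partial})=(1\ot e^{z_2\partial})S(z_1-z_2)$. Differentiating in $z_2$ at $z_2=0$ yields \eqref{eq:qyb-shift-alt}.

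Lastly, \eqref{eq:qyb-shift-total1} and \eqref{eq:qyb-shift-total2} are the exponentiated versions of \eqref{eq:qyb-shift} and \eqref{eq:qyb-shift-alt}. Rewriting \eqref{eq:qyb-shift} as $S(z)(\partial\ot 1)=(\partial\ot 1+\pd{z})S(z)$ handles the case $f(z)=z$; a routine induction on degree (using the commutation of $\partial\ot 1$ with $\pd{z}$) gives $S(z)(\partial\ot 1)^n=(\partial\ot 1+\pd{z})^n S(z)$, linearity extends this to $f\in\C[z]$, and $\hbar$-adic completeness extends to $f\in\C[z][[\hbar]]$. The proof of \eqref{eq:qyb-shift-total2} is parallel, starting from \eqref{eq:qyb-shift-alt}.
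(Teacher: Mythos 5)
Your handling of \eqref{eq:qyb-vac-id-alt}, \eqref{eq:qyb-shift-alt}, \eqref{eq:qyb-shift-total1}, and \eqref{eq:qyb-shift-total2} is sound. In particular the derivation of \eqref{eq:qyb-shift-alt} by specializing \eqref{eq:qyb-hex-id-alt} at $v\ot u\ot\vac$ and using $Y(u,z_2)\vac=e^{z_2\partial}u$ is a nice (and correct) observation, and the ``exponentiation'' step for the last two identities is routine.

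The weak point is \eqref{eq:qyb-hex-id-alt}, which you label the heart of the argument but never actually prove. You say that combining $S$-locality \eqref{eq:qyb-locality}, the hexagon \eqref{eq:qyb-hex-id}, the QYBE, and weak associativity lets one ``evaluate both sides on $u\ot v\ot w$ and identify them as $\hbar$-adically compatible operators.'' Two problems. First, $\hbar$-adic compatibility is not equality; nothing in your sketch forces the two sides to coincide. Second, $S$-locality and weak associativity are relations involving $\sim$ (agreement up to multiplication by high powers of $(z_1-z_2)$ modulo $\hbar^n$) rather than exact operator identities, whereas \eqref{eq:qyb-hex-id-alt} is an exact identity; upgrading $\sim$ to $=$ requires a genuine argument that is absent. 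So as written this is a gap, not a proof.

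A cleaner route to \eqref{eq:qyb-hex-id-alt} uses only the given hexagon \eqref{eq:qyb-hex-id} and unitarity \eqref{eq:qyb-unitary}, by conjugating with tensor permutations. Let $\tau(a\ot b\ot c)=b\ot c\ot a$ on $V^{\wh\ot 3}$ and let $P$ be the swap on $V^{\wh\ot 2}$. A direct check gives $P(Y(z_2)\ot 1)\tau=1\ot Y(z_2)$, $\tau^{-1}S^{23}(z_1)\tau=S^{31}(z_1)$, $\tau^{-1}S^{13}(z_1+z_2)\tau=S^{21}(z_1+z_2)$, and $P\,S(z_1)\,P=S^{21}(z_1)$. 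Sandwiching \eqref{eq:qyb-hex-id} between $P$ on the left and $\tau$ on the right therefore yields
\begin{align*}
S^{21}(z_1)(1\ot Y(z_2))=(1\ot Y(z_2))\,S^{31}(z_1)\,S^{21}(z_1+z_2).
\end{align*}
Now left-multiply by $S^{21}(z_1)^{-1}=S(-z_1)$, right-multiply by $S^{21}(z_1+z_2)^{-1}S^{31}(z_1)^{-1}=S^{12}(-z_1-z_2)\,S^{13}(-z_1)$ (all inverses come from unitarity), and replace $z_1\mapsto -z_1$ to obtain exactly \eqref{eq:qyb-hex-id-alt}. This avoids $S$-locality and weak associativity entirely. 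Incidentally, the same technique gives \eqref{eq:qyb-shift-alt} directly: conjugating \eqref{eq:qyb-shift} by $P$ gives $[1\ot\partial,\,S^{21}(z)]=-\frac{d}{dz}S^{21}(z)$, and writing $S^{21}(-z)=S(z)^{-1}$ (unitarity) and applying $[1\ot\partial,\,A^{-1}]=-A^{-1}[1\ot\partial,\,A]A^{-1}$ recovers \eqref{eq:qyb-shift-alt} without invoking the dual hexagon at all.
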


\begin{lem}\label{lem:S-special-tech-gen2}
Let $(V,S(z))$ be an $\hbar$-adic quantum vertex algebra,
and let $u,v\in V$, $f(z)\in\C((z))[[\hbar]]$.
\begin{itemize}
  \item[(1)] If $S(z)(v\ot u)=v\ot u+\vac\ot\vac\ot f(z)$,
then
\begin{align*}
  &S(z)((v_{-1})^n\vac\ot u)\\
  =&(v_{-1})^n\vac\ot u+n(v_{-1})^{n-1}\vac\ot \vac\ot f(z),\\
  &S(z)(v\ot (u_{-1})^n\vac)\\
  =&v\ot (u_{-1})^n\vac+n\vac\ot (u_{-1})^{n-1}\vac\ot f(z),\\
  &S(z)\(\exp ( v_{-1})\vac\ot u\)\\
  =&\exp ( v_{-1})\vac\ot u+\exp (v_{-1})\vac\ot \vac\ot  f(z),\quad \te{if }v\in \hbar V,\\
  &S(z)\(v\ot \exp( u_{-1})\vac\)\\
  =&v\ot \exp( u_{-1})\vac+\vac\ot \exp( u_{-1})\vac\ot  f(z),\quad \te{if }u\in \hbar V.
\end{align*}

\item[(2)] If $S(z)(v\ot u)=v\ot u+\vac\ot u\ot f(z)$, then
\begin{align*}
  &S(z)((v_{-1})^n\vac\ot u)=\sum_{i=0}^n\binom{n}{i}(v_{-1})^i\vac\ot u\ot f(z)^{n-i},\\
  &S(z)\(\exp ( v_{-1})\vac\ot u\)=\exp ( v_{-1})\vac\ot u\ot\exp\( f(z)\),\\
  &\quad \te{if }v\in\hbar V, f(z)\in\hbar\C((z))[[\hbar]].
\end{align*}

\item[(3)] If $S(z)(v\ot u)=v\ot u+v\ot \vac\ot f(z)$, then
\begin{align*}
  &S(z)(v\ot (u_{-1})^n\vac)=\sum_{i=0}^n\binom{n}{i}v\ot (u_{-1})^i\vac\ot f(z)^{n-i},\\
  &S(z)\(v\ot \exp( u_{-1})\vac\)=v\ot \exp\( u_{-1}\)\vac\ot \exp( f(z)),\\
  &\quad \te{if }u\in\hbar V, f(z)\in\hbar\C((z))[[\hbar]].
\end{align*}
\end{itemize}
\end{lem}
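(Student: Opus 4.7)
The plan is to reduce each finite‑power identity to the base case $n=1$ (which is the hypothesis) by induction on $n$, driven by one of the two hexagon identities \eqref{eq:qyb-hex-id} and \eqref{eq:qyb-hex-id-alt}, and then to obtain the exponential statements by summing $1/n!$ times the $n$‑th identity in the $\hbar$‑adic topology.

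First I would prove the first formula of (1) by induction on $n$. For the inductive step $n-1\Rightarrow n$, set $w=(v_{-1})^{n-1}\vac$ and apply \eqref{eq:qyb-hex-id} to $v\ot w\ot u$. The left‑hand side becomes $S(z_1)(Y(v,z_2)w\ot u)$. On the right I first evaluate $S^{13}(z_1+z_2)(v\ot w\ot u)$ from the hypothesis, which produces a new summand $\vac\ot w\ot\vac\ot f(z_1+z_2)$; then apply $S^{23}(z_1)$, invoking the inductive hypothesis on $S(z_1)(w\ot u)$ for the main piece and \eqref{eq:qyb-vac-id-alt} for the new piece; finally apply $(Y(z_2)\ot 1)$ using $Y(\vac,z_2)=\mathrm{id}$. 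Extracting the coefficient of $z_2^0$ — where $f(z_1+z_2)$ is expanded in nonnegative powers of $z_2$ and hence contributes $f(z_1)$ — the two new summands assemble into $(n-1)(v_{-1})^{n-1}\vac\ot\vac\ot f(z_1)+(v_{-1})^{n-1}\vac\ot\vac\ot f(z_1)=n(v_{-1})^{n-1}\vac\ot\vac\ot f(z_1)$, as required.

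The second formula of (1) is handled symmetrically via \eqref{eq:qyb-hex-id-alt} applied to $v\ot u\ot(u_{-1})^{n-1}\vac$, with $S^{13}(z_1)$ and $S^{12}(z_1-z_2)$ taking over the roles of $S^{13}(z_1+z_2)$ and $S^{23}(z_1)$; using $f(z_1-z_2)\big|_{z_2^0}=f(z_1)$ and \eqref{eq:qyb-vac-id}, the coefficient splits as $(n-1)+1=n$ once more. The two exponential identities in (1) then follow by summing the finite-power formulas against $1/n!$: convergence in the $\hbar$‑adic topology is guaranteed because the assumption $v\in\hbar V$ (resp.\ $u\in\hbar V$) together with the $\C[[\hbar]]$‑linearity of $Y(\cdot,z)$ forces $(v_{-1})^n\vac\in\hbar^n V$ (resp.\ $(u_{-1})^n\vac\in\hbar^n V$).

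Parts (2) and (3) follow the same blueprint. In (2), applying \eqref{eq:qyb-hex-id} to $v\ot w\ot u$ with $w=(v_{-1})^{n-1}\vac$ and extracting the coefficient of $z_2^0$ produces, for each $j$, two contributions to the coefficient of $(v_{-1})^j\vac\ot u\ot f(z_1)^{n-j}$: one of weight $\binom{n-1}{j-1}$ coming from $Y(v,z_2)$ shifting the power by one, and one of weight $\binom{n-1}{j}$ coming from the new $f(z_1+z_2)$ summand after $S^{23}$; Pascal's identity $\binom{n-1}{j-1}+\binom{n-1}{j}=\binom{n}{j}$ completes the step. Part (3) is the mirror image via \eqref{eq:qyb-hex-id-alt}. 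The exponential statements in (2) and (3) follow by summation once the hypotheses $v\in\hbar V$ (or $u\in\hbar V$) and $f(z)\in\hbar\C((z))[[\hbar]]$ guarantee convergence of both $\sum\frac{1}{n!}(v_{-1})^n\vac$ and $\exp(f(z))$ in the $\hbar$‑adic topology. The only mildly delicate point I expect is the bookkeeping of the expansion directions of $f(z_1\pm z_2)$ and of the products $f(z_1)^k f(z_1\pm z_2)$ when extracting the coefficient of $z_2^0$; otherwise the argument is a routine induction powered by the two hexagon identities together with the vacuum identities \eqref{eq:qyb-vac-id} and \eqref{eq:qyb-vac-id-alt}.
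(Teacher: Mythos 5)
The paper states this lemma without proof (it is listed among the ``technical results that will be used later on''), so there is no paper argument to compare against. Your proof is correct and uses exactly the tools one would expect: induction on $n$ driven by the two hexagon identities \eqref{eq:qyb-hex-id}, \eqref{eq:qyb-hex-id-alt} together with the vacuum identities \eqref{eq:qyb-vac-id}, \eqref{eq:qyb-vac-id-alt}, the Pascal recursion, and $\hbar$-adic summation (using $v\in\hbar V\Rightarrow(v_{-1})^n\vac\in\hbar^n V$ and $f\in\hbar\C((z))[[\hbar]]\Rightarrow f^j\in\hbar^j\C((z))[[\hbar]]$) for the exponential statements. The ``delicate point'' you flag about expansion directions is harmless: by \eqref{eq:qyb-shift-total1} and \eqref{eq:qyb-shift-total2} the arguments $z_1+z_2$ and $z_1-z_2$ in the hexagon identities are to be Taylor-expanded in nonnegative powers of $z_2$, so extracting the $z_2^0$-coefficient simply replaces $f(z_1\pm z_2)$ by $f(z_1)$, as you use.
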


For a topologically free $\C[[\hbar]]$-module $V$ and a submodule $M$, we recall the notation given in \cite[Definition 3.4]{Li-h-adic}:
\begin{align*}
  [M]=\set{v\in V}{\hbar^n V\in M\,\,\te{for some }n\in\N}.
\end{align*}
It is straightforward to verify that
\begin{lem}\label{lem:S-quotient-alg}
Let $(V,S(z))$ be an $\hbar$-adic quantum vertex algebra, and let $U\subset V$ be a generating subset of $V$.
Let $M\subset V$ be a closed ideal of $V$ such that $[M]=M$ and
\begin{align*}
  S(z)(M\ot U),\,S(z)(U\ot M)\subset M\wh\ot V\wh\ot \C((z))[[\hbar]]+V\wh\ot M\wh\ot \C((z))[[\hbar]].
\end{align*}
Then $V/M$ is an $\hbar$-adic quantum vertex algebra with quantum Yang-Baxter operator induced from $S(z)$.
\end{lem}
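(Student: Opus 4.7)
The plan is to proceed in three stages. First, I verify that $V/M$ is a topologically free $\C[[\hbar]]$-module carrying an induced $\hbar$-adic nonlocal vertex algebra structure. The condition $[M]=M$ forces $V/M$ to be $\hbar$-torsion free, while $M$ being closed yields separation ($\cap_n(M+\hbar^nV)=M$) and $\hbar$-adic completeness of $V/M$, so $V/M$ is topologically free. Since $M$ is a closed ideal, meaning $Y(V,z)M$ and $Y(M,z)V$ both lie in $M((z))[[\hbar]]$, the vertex operation $Y$, the vacuum, and the weak associativity \eqref{eq:weak-asso} descend to $V/M$ without issue, and $\hbar$-adic compatibility is preserved under the quotient.

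The core work is to show that $S(z)$ descends, equivalently that $S(z)(P)\subset P\,\wh\ot\,\C((z))[[\hbar]]$, where $P:=M\wh\ot V+V\wh\ot M$. The hypothesis already covers $M\wh\ot U+U\wh\ot M$, and one must bootstrap from $U$ to all of $V$. I would argue by induction on the length of ``creation words'' from $U$: every element of $V$ is a limit of coefficients of $Y(u,z_0)w$ with $u\in U$ and $w$ already handled. For the first slot, expand
\[
S(z_1)\bigl(Y(u,z_0)w\ot m\bigr)=(Y(z_0)\ot 1)\,S^{23}(z_1)\,S^{13}(z_1+z_0)(u\ot w\ot m)
\]
via \eqref{eq:qyb-hex-id}. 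The base case controls $S(z_1+z_0)(u\ot m)\in P\,\wh\ot\,\C((z))[[\hbar]]$; the inductive hypothesis on $w$ then controls the subsequent $S^{23}(z_1)$; and finally the ideal property of $M$ under $Y$ absorbs the outer $(Y(z_0)\ot 1)$, since $Y(M,z_0)V,\,Y(V,z_0)M\subset M((z_0))[[\hbar]]$. For $S(z_1)(m\ot Y(u,z_0)w)$ one uses the symmetric hexagon \eqref{eq:qyb-hex-id-alt}. Taking $z_0$-coefficients and passing to the $\hbar$-adic closure extends the property to all of $P$.

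Once $S(z)$ descends to $\bar S(z)$, I would verify the remaining quantum vertex algebra axioms: the shift condition \eqref{eq:qyb-shift}, QYBE \eqref{eq:qyb}, unitarity \eqref{eq:qyb-unitary}, vacuum property \eqref{eq:qyb-vac-id}, $S$-locality \eqref{eq:qyb-locality}, and hexagon \eqref{eq:qyb-hex-id} are formal identities in appropriate (completed) tensor products whose validity on $V$ passes routinely to $V/M$ under the quotient map. The main obstacle is the bookkeeping in the inductive step above: one must track, after applying $S^{13}$ and then $S^{23}$ to a three-tensor with one factor in $M$, that every summand produced lies either in a slot of $M$ or under the inductive hypothesis on $P$, and then that applying $Y\ot 1$ preserves membership in $P\,\wh\ot\,\C((z))[[\hbar]]$. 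The interaction between the topological completions and the (a priori not necessarily closed) submodule $P$ requires passing to closures carefully, but no new input beyond the hexagon identities and the ideal property of $M$ is needed.
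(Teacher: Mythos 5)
The paper offers no proof here; it introduces the lemma with ``It is straightforward to verify that,'' so there is no written argument to compare against. Your outline is the argument the hypotheses are clearly engineered for, and it is correct: the topological preliminaries ($[M]=M$ gives torsion-freeness, closedness gives separation and completeness, so $V/M$ is topologically free) are right; the descent of $Y$, $\vac$, weak associativity, and the formal $S$-axioms once $S$ descends is routine; and the hexagon bootstrap is exactly the right mechanism for passing from $U$ to all of $V$. Two points are worth stating more sharply than you do. First, in the inductive step the case split after $S^{13}$ is what makes things close up: after $S^{13}(z_1+z_0)(u\ot w\ot m)$ the summands fall into either $u'\in M$ (slot 1 already in $M$, so the ensuing $S^{23}(z_1)$ needs no control whatsoever) or $m'\in M$ (where the inductive hypothesis on the shorter word $w$ controls $S(z_1)(w\ot m')$); you blend these two cases together as ``the inductive hypothesis controls $S^{23}$,'' which is misleading since only one branch actually invokes it, and similarly in the second-slot case one branch uses the hypothesis on $S(M\ot U)$ and the other is trivially absorbed by slot $3$. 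Second, for the closure step: since $V/M$ is topologically free, $M$ is a $\C[[\hbar]]$-module direct summand of $V$, and hence $M\wh\ot V+V\wh\ot M$ is a direct summand (therefore closed) of $V\wh\ot V$; this is what lets you pass from the dense subspace of creation-word tensors to the full kernel, and it deserves to be said rather than left as ``passing to closures carefully.'' With those two clarifications the argument is complete.
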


In the rest of this section, we recall the construction of $\hbar$-adic nonlocal vertex algebras and their modules introduced in \cite{Li-h-adic}.
Here, a \emph{module} (\cite[Definition 2.33]{Li-h-adic}) for an $\hbar$-adic nonlocal vertex algebra $V$ is a topologically free $\C[[\hbar]]$-module $W$, equipped with a $\C[[\hbar]]$-module map $Y_W(\cdot,z):V\to \E_\hbar(W)$,
satisfying the conditions that $Y_W(\vac,z)=1_W$ and that for $u,v\in V$,
$(Y_W(u,z),Y_W(v,z))$ is $\hbar$-adically compatible and
\begin{align*}
  Y_\E\(Y_W(u,z),z_0\)Y_W(v,z)=Y_W\(Y(u,z_0)v,z\).
\end{align*}

The following result is given in \cite{Li-h-adic}:

\begin{thm}\label{thm:construction}
Let $W$ be a topologically free $\C[[\hbar]]$-module, and let $U\subset \E_\hbar(W)$ be an $\hbar$-adically compatible subset. Then there is a minimal $\hbar$-adically $S$-local subset $\<U\>\subset \E_\hbar(W)$ containing $1_W$ and $U$, such that
\begin{itemize}
  \item[(1)] $\<U\>$ is topologically free and $[\<U\>]=\<U\>$.

  \item[(2)] $\<U\>$ is $Y_\E$ closed in the sense that for any $a(z),b(z)\in U$ and any $n\in\Z$, we have $a(z)_nb(z)\in \<U\>$.
\end{itemize}
Then $(\<U\>,Y_\E,1_W)$ carries the structure of an $\hbar$-adic nonlocal vertex algebra and $W$ is a faithful $\<U\>$-module with the module map $Y_W$ defined by $Y_W(a(z),z_0)=a(z_0)$ for $a(z)\in\<U\>$.
\end{thm}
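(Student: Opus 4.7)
The plan is to mirror Li's classical construction of nonlocal vertex algebras from local subsets of $\E(W)$, but work through the inverse system $\{\E^{(k)}(W/\hbar^n W)\}_{n\ge 1}$ in order to handle the $\hbar$-adic topology. First I would define $\<U\>$ constructively: set $U_0 = \{1_W\} \cup U$, and for each $n \ge 0$ let $U_{n+1}$ consist of all $a(z)_k b(z) = Y_\E(a(z),z_0)b(z)\big|_{z_0^{-k-1}}$ for $a(z), b(z) \in U_n$ and $k \in \Z$ (the operators $Y_\E$ are well-defined because compatibility, once established, propagates). Then I would let $\<U\>$ be the $\hbar$-adic closure of $\bigcup_n U_n$, followed by saturation $[\,\cdot\,]$ so that $[\<U\>] = \<U\>$. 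Closure under these three operations (taking $Y_\E$-coefficients, $\hbar$-adic completion, and saturation) is forced, so minimality among $\hbar$-adically $S$-local $Y_\E$-closed subsets containing $U \cup \{1_W\}$ will follow once existence is established.

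Next I would prove that $\<U\>$ is $\hbar$-adically compatible and $\hbar$-adically $S$-local. For each fixed $n$, the reductions $\pi_n(a(z))$ of elements of $\<U\>$ lie in $\E^{(1)}(W/\hbar^n W)$, and the hypothesis of $\hbar$-adic compatibility of $U$ gives compatibility of $\pi_n(U)$. By Li's classical theorem in $\E(W/\hbar^n W)$, the subset $\pi_n(\<U\>)$ is contained in a minimal $S$-local subspace that is a nonlocal vertex algebra under $Y_\E$. Because the classical constructions commute with the truncation maps $\pi_n \to \pi_{n-1}$ (both $Y_\E$ and the $S$-locality data are defined via residues and polynomial factors of the form $(z_1-z_2)^k$, which behave compatibly under reduction), these classical structures glue into an inverse system whose limit is precisely $\<U\>$. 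This simultaneously gives (1) topological freeness plus $[\<U\>]=\<U\>$, (2) $Y_\E$-closure, and the $\hbar$-adic $S$-locality.

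Then I would verify the $\hbar$-adic nonlocal vertex algebra axioms on $(\<U\>, Y_\E, 1_W)$. The vacuum properties $Y_\E(1_W,z)a(z) = a(z)$ and $Y_\E(a(z),z_0)1_W \in \<U\>[[z_0]]$ with constant term $a(z)$ follow by direct computation from the definition \eqref{eq:def-Y-E}. The key weak associativity \eqref{eq:weak-asso}, namely
\begin{align*}
Y_\E\bigl(Y_\E(a(z),z_0)b(z), z_1\bigr) c(z_1) = Y_\E\bigl(a(z_1), z_1 + z_0 - z_1\bigr)\text{-style identity},
\end{align*}
would be checked by reducing modulo each $\hbar^n$ and invoking the corresponding identity in $\E(W/\hbar^n W)$ from Li's classical theory, then passing to the inverse limit. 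Finally, that $W$ is a faithful $\<U\>$-module with $Y_W(a(z), z_0) = a(z_0)$ is essentially tautological from the definition of $Y_\E$: the module axiom $Y_\E(Y_W(u,z), z_0) Y_W(v,z) = Y_W(Y(u,z_0)v, z)$ is \eqref{eq:def-Y-E} itself.

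The main obstacle is controlling the interplay between the algebraic operation $a(z)_k b(z)$ and the $\hbar$-adic topology: one must justify that taking the limit in \eqref{eq:def-Y-E} lands in $\E_\hbar(W)$, that the resulting family stays $\hbar$-adically compatible, and that the (a priori unbounded) powers $k_n$ appearing in the compatibility condition at each $\hbar^n$-level can be chosen coherently as $n$ grows. The condition $[\<U\>] = \<U\>$ is crucial here, since it absorbs the finite torsion that might otherwise appear when dividing by $z_0^{k_n}$ in the defining formula for $Y_\E$, and ensures that the result lies in a genuinely topologically free $\C[[\hbar]]$-module on which the inverse-limit arguments are valid.
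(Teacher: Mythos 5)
The paper does not prove this theorem; it states it as cited directly from \cite{Li-h-adic} (the sentence immediately preceding it reads ``The following result is given in \cite{Li-h-adic}'') and gives no argument of its own. Your sketch must therefore be evaluated on its own merits and compared to Li's proof rather than to anything in this paper.

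Your overall strategy --- truncate modulo $\hbar^n$ and work in $\E(W/\hbar^n W)$ where the classical theory of compatible subsets applies, then argue that the resulting classical nonlocal vertex algebras assemble into an inverse system whose limit realises $\<U\>$ --- does match the route taken in \cite{Li-h-adic}. But your outline treats as given the one fact that carries most of the weight, namely that $\hbar$-adic compatibility propagates through the closure: if $a(z),b(z),c(z)$ lie in the set being constructed, then the triple is again $\hbar$-adically compatible, so that the iterate $a(z)_k b(z)$ actually lands in $\E_\hbar(W)$ and the recursion defining $U_{n+1}$ from $U_n$ makes sense. You note this (``compatibility, once established, propagates'') and later flag it as ``the main obstacle,'' but you do not supply the argument. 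This is not only about choosing the exponents $k_n$ coherently as $n$ grows; it requires the $\hbar$-adic analogue of the classical fact that iterates of compatible fields stay compatible, proved level-by-level via a delta-function/weak-associativity recursion in a way that is uniform enough to pass to the limit. Similarly, your assertion that ``the classical structures glue into an inverse system whose limit is precisely $\<U\>$'' presupposes that $\pi_n(\<U\>)$ coincides with the classical closure of $\pi_n(U)$ inside $\E(W/\hbar^n W)$; since $\pi_n\colon\E_\hbar(W)\to\E(W/\hbar^n W)$ is not surjective, this identification is itself nontrivial and must be argued, not asserted. These two points are precisely where the real content of Li's proof lies, and a complete write-up would need to close them.
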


\begin{rem}{\em
If $U$ is $\hbar$-adically $S$-local, then $\<U\>$ is an $\hbar$-adic weak quantum vertex algebra.
}\end{rem}

We recall the following result from \cite[Theorem 4.24]{Li-h-adic} for later use.

\begin{thm}\label{thm:gen-weak-qva}
Let $V$ be a topologically free $\C[[\hbar]]$-module, $U\subset V$, $\vac\in V$, and $Y^0$ a map
\begin{align*}
  Y^0:U\to \E_\hbar(V);\quad u\mapsto Y^0(u,z)=u(z)=\sum_{n\in\Z}u_nz^{-n-1}.
\end{align*}
Assume that all the following conditions hold:
\begin{align*}
  Y^0(u,z)\vac\in V[[z]]\quad\te{and}\quad \lim_{z\to 0}Y^0(u,z)\vac=u\quad \te{for }u\in U,
\end{align*}
$U(z)=\set{u(z)}{u\in U}$ is $\hbar$-adically $S$-local, and $V$ is $\hbar$-adically spanned by vectors
\begin{align}
  u_{m_1}^{(1)}\cdots u_{m_r}^{(r)}\vac
\end{align}
for $r\in\N$, $u^{(i)}\in U$, $m_i\in\Z$.
In addition we assume that there exists a $\C[[\hbar]]$-module map $\psi$ from $V$ to $\<U(z)\>\subset \E_\hbar(V)$ such that $\psi(\vac)=1_V$ and
\begin{align}
  \psi\(Y(u,z_0)v\)=Y_\E\(u(z),z_0\)\psi(v)\quad \te{for }u\in U,\,v\in V.
\end{align}
Then the map $Y^0$ extends uniquely to a $\C[[\hbar]]$-module map $Y$ from $V$ to $\E_\hbar(V)$ such that $(V,Y,\vac)$ carries the structure of an $\hbar$-adic weak quantum vertex algebra.
\end{thm}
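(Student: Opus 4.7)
The plan is to use Theorem~\ref{thm:construction} to realize $V$ as a faithful module over an $\hbar$-adic weak quantum vertex algebra sitting inside $\E_\hbar(V)$, and then to transport that structure back to $V$ through $\psi$. Since $U(z)$ is $\hbar$-adically $S$-local, hence $\hbar$-adically compatible, Theorem~\ref{thm:construction} together with the remark following it endows $\<U(z)\>\subset\E_\hbar(V)$ with a canonical $\hbar$-adic weak quantum vertex algebra structure whose vacuum is $1_V$ and for which $V$ is a faithful module via $Y_W(a(z),z_0)=a(z_0)$. I then define the extension by setting $Y(v,z):=\psi(v)\in\E_\hbar(V)$ for every $v\in V$; this is a $\C[[\hbar]]$-module map because $\psi$ is.

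The first verification is that $Y$ agrees with $Y^0$ on $U$. Specializing the intertwining hypothesis to $v=\vac$ gives
\begin{align*}
\psi\(Y^0(u,z_0)\vac\)=Y_\E\(u(z),z_0\)\psi(\vac)=Y_\E\(u(z),z_0\)1_V\in\<U(z)\>[[z_0]],
\end{align*}
whose $z_0$-constant term is $u(z)$ by the vacuum axiom in $\<U(z)\>$. The assumed creation property $\lim_{z_0\to 0}Y^0(u,z_0)\vac=u$ together with the continuity of $\psi$ shows that the same constant term equals $\psi(u)$, so $\psi(u)=u(z)$ for $u\in U$. Iterating the intertwining relation now gives
\begin{align*}
\psi\(u^{(1)}_{m_1}\cdots u^{(r)}_{m_r}\vac\)=u^{(1)}(z)_{m_1}\cdots u^{(r)}(z)_{m_r}1_V,
\end{align*}
the right-hand side being computed inside $\<U(z)\>$. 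Since $V$ is $\hbar$-adically spanned by such vectors, this pins down $\psi$, and hence the extension $Y$, uniquely; this settles the uniqueness assertion.

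Next I would verify the vacuum property for $Y$. The equality $Y(\vac,z)=1_V$ is immediate. For the creation property $Y(v,z)\vac\in V[[z]]$ with $\lim_{z\to 0}Y(v,z)\vac=v$, I would argue by induction on $r$ along the spanning decomposition, using the explicit formula \eqref{eq:def-Y-E} for $Y_\E(a(z),z_0)$ and the creation property of $u(z)$ for $u\in U$: a mode-by-mode examination shows that acting $u^{(1)}(z)_{m_1}$ on an element $\psi(v')\vac\in V[[z]]$ produces a series lying in $V[[z]]$ whose value at $z=0$ is the iterated mode action $u^{(1)}_{m_1}v'$.

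The main obstacle is the weak associativity axiom
\begin{align*}
Y_\E\(Y(u,z),z_0\)Y(v,z)=Y\(Y(u,z_0)v,z\),\qquad u,v\in V.
\end{align*}
For $u\in U$ this is exactly the intertwining hypothesis on $\psi$, since $Y(u,z)=u(z)$. To propagate the identity from $u\in U$ to an arbitrary $u\in V$, I would expand $u$ along the spanning set and induct on its length, at each stage invoking weak associativity already available inside $\<U(z)\>$ through Theorem~\ref{thm:construction}. The delicate point is the book-keeping of $\hbar$-adic compatibility under these iterated operations — one must verify that at each inductive stage the relevant pair of vertex operators remains $\hbar$-adically compatible so that $Y_\E$ is defined. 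Finally, the $\hbar$-adic $S$-locality of $\{Y(v,z):v\in V\}=\psi(V)\subset\<U(z)\>$ is inherited from the $\hbar$-adic $S$-locality of $\<U(z)\>$ as a weak quantum vertex algebra, completing the verification that $(V,Y,\vac)$ is an $\hbar$-adic weak quantum vertex algebra.
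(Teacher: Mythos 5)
Your overall strategy — set $Y(v,z):=\psi(v)$, check that $\psi$ restricts to $u\mapsto u(z)$ on $U$, and transport the weak quantum vertex algebra structure of $\<U(z)\>$ back to $V$ — is the right one, and your derivation of $\psi(u)=u(z)$, the uniqueness argument, the $\hbar$-adic compatibility, and the inheritance of $S$-locality from $\<U(z)\>$ are all correct. (The paper does not reproduce a proof; it cites this as \cite[Theorem~4.24]{Li-h-adic}, so there is no in-paper proof to compare against.)

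However, the two verifications you flag as ``the main obstacle'' are more than bookkeeping, and your sketches, as written, would not go through. For the creation property, the inductive claim that $u^{(1)}(z)_{m_1}$ applied to $\psi(v')\vac\in V[[z]]$ again lands in $V[[z]]$ does \emph{not} follow from the inductive hypothesis alone. If one applies formula \eqref{eq:def-Y-E} and uses only $\psi(v')\vac\in V[[z]]$ together with compatibility, one deduces $(z_1-z)^{k_n}\pi_n(u(z_1))\pi_n(\psi(v'))\vac\in z_1^{-M}(V/\hbar^nV)[[z_1,z]]$ for some $M$; the substitution $z_1=z+z_0$ then expands $(z+z_0)^{-M}$ in nonnegative powers of $z_0$ and \emph{negative} powers of $z$, so the result a priori sits only in $(V/\hbar^nV)((z))[[z_0]]$, not $(V/\hbar^nV)[[z,z_0]]$. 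To kill the spurious poles in $z$ you must also invoke the $\hbar$-adic $S$-locality of the pair $\big(u^{(1)}(z),\psi(v')\big)$ together with the creation property $u^{(1)}(z)\vac\in V[[z]]$ of the generator itself; the two estimates from the two sides of the $S$-commutation then force $(z_1-z)^{k_n}\pi_n(u^{(1)}(z_1))\pi_n(\psi(v'))\vac$ into $(V/\hbar^nV)[[z_1,z]]$, after which your argument finishes. This is precisely the vacuum-like-vector lemma, and it cannot be skipped. For weak associativity at a general $u$, the induction requires not just ``weak associativity inside $\<U(z)\>$'' in the abstract, but specifically the module weak associativity $Y_\E\big(Y_W(a,z),z_0\big)Y_W(b,z)=Y_W\big(Y_\E(a,z_0)b,z\big)$ of the faithful $\<U(z)\>$-module $V$ supplied by Theorem~\ref{thm:construction}; one must then check that the set of $a(z)\in\<U(z)\>$ for which $\psi$ intertwines the action is $Y_\E$-mode-closed, $\hbar$-adically closed, and contains $1_V\cup U(z)$, hence equals $\<U(z)\>$. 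Spelling out these two lemmas would close the argument; as it stands, the proof has genuine gaps at exactly these points.
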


\section{Deformation by vertex bialgebras}\label{sec:deform-by-bialg}

In this section, we recall the deformation of $\hbar$-adic nonlocal vertex algebras by using vertex bialgebras introduced in \cite{JKLT-Defom-va, JKLT-Quantum-lattice-va}.

We start by recalling the notion of vertex bialgebras and smash products given in \cite{Li-smash} (see also \cite{JKLT-Quantum-lattice-va}).
An \emph{$\hbar$-adic (nonlocal) vertex bialgebra} is an $\hbar$-adic (nonlocal) vertex algebra $V$ equipped with a classical coalgebra structure $(\Delta,\varepsilon)$ such that (the coproduct) $\Delta:V\to V\wh\ot V$ and (the counit) $\varepsilon:V\to\C[[\hbar]]$ are homomorphisms of $\hbar$-adic nonlocal vertex algebras.

\begin{rem}\label{rem:bialg-der}
{\em
Let $(H,\Delta,\varepsilon)$ be a bialgebra over $\C[[\hbar]]$ equipped with a derivation $\partial$.
Suppose that $H$ is topologically free.
Then $H$ is an $\hbar$-adic nonlocal vertex bialgebra with vacuum $1$ and vertex operator defined by
\begin{align*}
  Y(a,z)b=\(e^{z\partial}a\)b\quad\te{for } a,b\in H.
\end{align*}
We denote this $\hbar$-adic nonlocal vertex bialgebra by $(H,\partial,\Delta,\varepsilon)$.
}
\end{rem}

Let $(H,\Delta,\varepsilon)$ be an $\hbar$-adic nonlocal vertex bialgebra.
A (left) \emph{$H$-module (nonlocal) vertex algebra} (\cite{Li-smash}) is an $\hbar$-adic nonlocal vertex algebra $V$ equipped with a module structure $\tau$ on $V$ for $H$ viewed as an $\hbar$-adic nonlocal vertex algebra such that
\begin{align}
  &\tau(h,z)v\in V\wh\ot \C((z))[[\hbar]],\qquad
  \tau(h,z)\vac_V=\varepsilon(h)\vac_V,\label{eq:mod-va-for-vertex-bialg1-2}\\
  &\tau(h,z_1)Y(u,z_2)v=\sum Y(\tau(h_{(1)},z_1-z_2)u,z_2)\tau(h_{(2)},z_1)v
  \label{eq:mod-va-for-vertex-bialg3}
\end{align}
for $h\in H$, $u,v\in V$, where $\vac_V$ denotes the vacuum vector of $V$
and $\Delta(h)=\sum h_{(1)}\ot h_{(2)}$ is the coproduct in the Sweedler notation.

A \emph{(right) $H$-comodule nonlocal vertex algebra} (\cite{JKLT-Defom-va}) is a nonlocal vertex algebra $V$ equipped with a homomorphism
$\rho:V\to V\wh\ot H$ of $\hbar$-adic nonlocal vertex algebras such that
\begin{align}
  (\rho\ot 1)\rho=(1\ot \Delta)\rho,\quad (1\ot \varepsilon)\rho=\te{Id}_V.
\end{align}
$\rho$ is \emph{compatible} with  an $H$-module nonlocal vertex algebra structure $\tau$ (see \cite{JKLT-Defom-va}) if
\begin{align}
  \rho(\tau(h,z)v)=(\tau(h,z)\ot 1)\rho(v)\quad \te{for }h\in H,\,v\in V.
\end{align}
We introduce the following notion.

\begin{de}
Let $V$ be an $\hbar$-adic nonlocal vertex algebra.
A \emph{deforming triple} for $V$ is a triple $(H,\rho,\tau)$,
where $H$ is a cocommutative $\hbar$-adic nonlocal vertex bialgebra,
$(V,\rho)$ is a right $H$-comodule nonlocal vertex algebra and $(V,\tau)$ is an $H$-module nonlocal vertex algebra, such that $\rho$ and $\tau$ are compatible.
\end{de}

The following result is given in \cite{JKLT-Defom-va}.

\begin{thm}\label{thm:deform-va}
Let $V$ be an $\hbar$-adic nonlocal vertex algebra, and let $(H,\rho,\tau)$ be a deforming triple.
Set
\begin{align}
\mathfrak D_\tau^\rho (Y)(a,z)=\sum Y(a_{(1)},z)\tau(a_{(2)},z)\quad\te{for }a\in V,
\end{align}
where $\rho(a)=\sum a_{(1)}\ot a_{(2)}\in V\otimes H$.
Then $(V,\mathfrak D_\tau^\rho (Y),\vac)$ carries the structure of an $\hbar$-adic nonlocal vertex algebra.
Denote this $\hbar$-adic nonlocal vertex algebra by $\mathfrak D_\tau^\rho (V)$.
Moreover, $(\mathfrak D_\tau^\rho(V),\rho)$ is also a right $H$-comodule nonlocal vertex algebra.
\end{thm}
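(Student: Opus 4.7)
The plan is to verify the three axioms of an $\hbar$-adic nonlocal vertex algebra---vacuum, $\hbar$-adic compatibility, and weak associativity \eqref{eq:weak-asso}---for the triple $(V,\mathfrak D_\tau^\rho(Y),\vac)$, and then to check that the same coaction $\rho$ exhibits $\mathfrak D_\tau^\rho(V)$ as a right $H$-comodule nonlocal vertex algebra. Throughout I use Sweedler notation $\rho(a)=\sum a_{(1)}\ot a_{(2)}$ with $a_{(1)}\in V$, $a_{(2)}\in H$, and write $\vac_H$ for the vacuum of $H$.

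The vacuum axiom is nearly immediate: since $\rho$ is a homomorphism of $\hbar$-adic nonlocal vertex algebras, $\rho(\vac)=\vac\ot\vac_H$, and the module axiom gives $\tau(\vac_H,z)=\id$, so
\[
\mathfrak D_\tau^\rho(Y)(\vac,z)v=Y(\vac,z)\tau(\vac_H,z)v=v.
\]
For the creation property, \eqref{eq:mod-va-for-vertex-bialg1-2} yields $\tau(a_{(2)},z)\vac=\varepsilon(a_{(2)})\vac$, and the counit axiom collapses the sum to $Y(a,z)\vac\in V[[z]]$ with limit $a$.

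The heart of the argument is weak associativity: one must show
\[
Y_\E\bigl(\mathfrak D_\tau^\rho(Y)(u,z),z_0\bigr)\mathfrak D_\tau^\rho(Y)(v,z)=\mathfrak D_\tau^\rho(Y)\bigl(\mathfrak D_\tau^\rho(Y)(u,z_0)v,z\bigr).
\]
On the right side I would expand $\rho\bigl(Y(u_{(1)},z_0)\tau(u_{(2)},z_0)v\bigr)$ using that $\rho$ is a homomorphism together with the compatibility identity $\rho\circ\tau(h,z)=(\tau(h,z)\ot 1)\circ\rho$; this produces two coproducts, which by coassociativity $(\rho\ot 1)\rho=(1\ot\Delta)\rho$ may be rewritten symmetrically in $u_{(1)}\ot u_{(2,1)}\ot u_{(2,2)}$ and $v_{(1,1)}\ot v_{(1,2)}\ot v_{(2)}$. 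On the left side, the definition of $Y_\E$ unfolds $\mathfrak D_\tau^\rho(Y)(u,z+z_0)\mathfrak D_\tau^\rho(Y)(v,z)$ into an expression in which each $\tau(u_{(2)},z+z_0)$ stands to the left of a $Y(v_{(1)},z)$; I would then invoke \eqref{eq:mod-va-for-vertex-bialg3} to commute the $\tau$'s past the inner $Y$'s, which introduces yet another copy of $\Delta$. Matching the two resulting expansions term by term uses the weak associativity of the original $Y$ together with the cocommutativity of $H$. The $\hbar$-adic compatibility of the pair $(\mathfrak D_\tau^\rho(Y)(u,z),\mathfrak D_\tau^\rho(Y)(v,z))$ falls out of the same calculation, because $\tau(h,z)v\in V\wh\ot\C((z))[[\hbar]]$ supplies the required truncation in negative powers of $z$.

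For the comodule statement, it suffices to check that $\rho$ intertwines the new vertex operator on $\mathfrak D_\tau^\rho(V)$ with the tensor product vertex operator on $\mathfrak D_\tau^\rho(V)\wh\ot H$. The computation of $\rho\bigl(\mathfrak D_\tau^\rho(Y)(a,z)b\bigr)$ carried out in the previous paragraph, combined with one further application of coassociativity and cocommutativity, rewrites it as $\bigl(\mathfrak D_\tau^\rho(Y)\ot Y_H\bigr)\bigl(\rho(a),z\bigr)\rho(b)$, as desired. The main obstacle is the weak associativity step; it is the only place where all hypotheses of the deforming triple---cocommutativity of $H$, coassociativity of $\Delta$, compatibility of $\rho$ and $\tau$, and the module-vertex-algebra identity \eqref{eq:mod-va-for-vertex-bialg3}---must act in concert, and the delicate bookkeeping of second-order coproducts and iterated commutations is the principal technical burden of the proof.
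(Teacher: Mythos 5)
The paper does not prove this theorem; it is quoted as given in \cite{JKLT-Defom-va}, with the $\hbar$-adic version carried over from \cite{JKLT-Quantum-lattice-va}, so there is no proof in this text against which to check your argument. On its own merits, your plan of directly verifying the vacuum, creation, $\hbar$-adic compatibility, and weak associativity axiom \eqref{eq:weak-asso} is the correct strategy, and your vacuum and creation checks are complete and correct.

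Your treatment of weak associativity, however, leaves genuine gaps. You invoke \eqref{eq:mod-va-for-vertex-bialg3}, the coassociativity $(\rho\ot 1)\rho=(1\ot\Delta)\rho$, the compatibility of $\rho$ with $\tau$, and cocommutativity, but you never use that $(V,\tau)$ is itself an $H$-module nonlocal vertex algebra, i.e.\ that $\tau\bigl(Y_H(h,z_0)k,z\bigr)=Y_\E(\tau(h,z),z_0)\tau(k,z)$. This is indispensable: after applying $\rho$ to $\mathfrak D_\tau^\rho(Y)(u,z_0)v$ the right-hand side produces a factor of the form $\tau\bigl(Y_H(u_{(1,2)},z_0)v_{(2)},z\bigr)$, and without the module weak-associativity of $\tau$ it cannot be unwrapped into anything comparable to the left-hand side. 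If one then traces the Sweedler indices (using coassociativity on the $H$-leg of $\rho(u)$), the two sides agree up to the single transposition $u_{(2,1)}\leftrightarrow u_{(2,2)}$ sitting in the two positions $\tau(\cdot,z_0)v_{(1)}$ and $\tau(\cdot,z+z_0)$; cocommutativity of $\Delta$ is precisely what reconciles them, and you should exhibit that swap rather than gesture at it. Finally, in the $\hbar$-adic setting $Y_\E$ is defined via the inverse limit \eqref{eq:def-Y-E}, so the entire computation has to be run modulo $\hbar^n$ with multipliers $(z_1-z_2)^{k_n}$: you must show such $k_n$ exist for the deformed operators and that applying \eqref{eq:mod-va-for-vertex-bialg3} and substituting $z_1=z+z_0$ pass through that limit. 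The observation that $\tau(h,z)v\in V\wh\ot\C((z))[[\hbar]]$ is the right starting point, but it does not by itself produce the $k_n$'s, so the compatibility claim cannot simply "fall out" of the weak-associativity calculation.
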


\begin{rem}\label{rem:trivial-deform}
{\em
Let $H$ be a cocommutative $\hbar$-adic nonlocal vertex bialgebra, and let $(V,\rho)$ be an $H$-comodule nonlocal vertex algebra.
We view the counit $\varepsilon$ as an $H$-module nonlocal vertex algebra structure on $V$ as follows
\begin{align}
  \varepsilon(h,z)v=\varepsilon(h)v\quad \te{for }h\in H,\,v\in V.
\end{align}
Then it is easy to verify that $\varepsilon$ and $\rho$ are compatible,
and $\mathfrak D_\varepsilon^\rho(V)=V$.
}
\end{rem}

\begin{rem}\label{rem:deformed-hom}
{\em
Let $V_1,V_2$ be two $\hbar$-adic nonlocal vertex algebras. Suppose that $(H,\rho_1,\tau_1)$ and $(H,\rho_2,\tau_2)$ are deforming triples of $V_1$ and $V_2$ respectively.
Suppose that $f:V_1\to V_2$ is an $\hbar$-adic nonlocal vertex algebra homomorphism such that
\begin{align*}
  \rho_2\circ f=(f\ot 1)\circ\rho_1,\quad\te{and}\quad f(\tau_1(h,z)v)=\tau_2(h,z)f(v)\quad\te{for } h\in H,\,v\in V_1.
\end{align*}
Then $f$ is an $\hbar$-adic nonlocal vertex algebra homomorphism from $\mathfrak D_{\tau_1}^{\rho_1}(V_1)$
to $\mathfrak D_{\tau_2}^{\rho_2}(V_2)$.
}
\end{rem}

Now, we fix a cocommutative $\hbar$-adic nonlocal vertex bialgebra $H$, and an $H$-comodule nonlocal vertex algebra $(V,\rho)$.
Note that $$\Hom\(H,\Hom(V,V\wh\ot\C((x))[[\hbar]])\)$$ is a unital associative algebra,
where the multiplication is defined by
\begin{align*}
  (f\ast g)(h,z)=\sum f(h_{(1)},z)g(h_{(2)},z)
\end{align*}
for $f,g\in\Hom\(H,\Hom(V,V\wh\ot\C((z))[[\hbar]])\)$,
and the unit $\varepsilon$.
For $f,g\in\Hom\(H,\Hom(V,V\wh\ot\C((z))[[\hbar]])\)$, we say $f$ and $g$ commute if
\begin{align*}
  [f(h,z_1),g(k,z_2)]=0\quad \te{for }h,k\in H.
\end{align*}
Let $\mathfrak L_H^\rho(V)$ be the set of $H$-module nonlocal vertex algebra structures on $V$ that are compatible with $\rho$.
The following is an immediate $\hbar$-adic analogue of \cite[Proposition 3.3 and Proposition 3.4]{JKLT-Defom-va}.

\begin{prop}\label{prop:L-H-rho-V-compostition}
Let $\tau$ and $\tau'$ are commuting elements in $\mathfrak L_H^\rho(V)$.
Then $\tau\ast\tau'\in\mathfrak L_H^\rho(V)$
and $\tau\ast\tau'=\tau'\ast\tau$.
Moreover, $\tau\in \mathfrak L_H^\rho\(\mathfrak D_{\tau'}^\rho(V)\)$ and
\begin{align}
  \mathfrak D_\tau^\rho\(\mathfrak D_{\tau'}^\rho(V)\)=\mathfrak D_{\tau\ast\tau'}^\rho(V).
\end{align}
\end{prop}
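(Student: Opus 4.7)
The plan is to verify the three assertions in turn, always leaning on cocommutativity of $H$, commutativity of $\tau$ and $\tau'$, and coassociativity in Sweedler notation; this mirrors the proof of \cite[Proposition~3.3--3.4]{JKLT-Defom-va} with the obvious $\hbar$-adic adaptations. Throughout, I will abbreviate $\rho(v)=\sum v_{(1)}\ot v_{(2)}$ and $\Delta(h)=\sum h_{(1)}\ot h_{(2)}$, using the iterated coproduct $\sum h_{(1)}\ot h_{(2)}\ot h_{(3)}$ for $(\Delta\ot 1)\Delta(h)=(1\ot \Delta)\Delta(h)$ without further comment.

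First I would check that $\tau\ast\tau'$ lies in $\mathfrak{L}_H^\rho(V)$. The truncation condition $(\tau\ast\tau')(h,z)v\in V\wh\ot\C((z))[[\hbar]]$ follows because for each $n$, only finitely many Sweedler pieces $h_{(1)}\ot h_{(2)}$ contribute modulo $\hbar^n$, and each factor $\tau(h_{(1)},z)$, $\tau'(h_{(2)},z)$ is already truncated from below in $z$; here one also uses that $\tau$ and $\tau'$ commute so the product is a well-defined element of $\Hom(V,V\wh\ot\C((z))[[\hbar]])$. The vacuum identity $(\tau\ast\tau')(h,z)\vac=\varepsilon(h)\vac$ is direct from $\tau(h_{(1)},z)\vac=\varepsilon(h_{(1)})\vac$, etc., combined with $\sum\varepsilon(h_{(1)})\varepsilon(h_{(2)})=\varepsilon(h)$. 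The $H$-module-algebra axiom \eqref{eq:mod-va-for-vertex-bialg3} for $\tau\ast\tau'$ is the heart of the matter: expand
\[
(\tau\ast\tau')(h,z_1)Y(u,z_2)v=\sum\tau(h_{(1)},z_1)\tau'(h_{(2)},z_1)Y(u,z_2)v,
\]
apply the axiom for $\tau'$ to move it past $Y(u,z_2)$, then apply the axiom for $\tau$, then re-group using coassociativity and cocommutativity of $H$ to collect terms of the form $Y\bigl((\tau\ast\tau')(h_{(1)},z_1-z_2)u,z_2\bigr)(\tau\ast\tau')(h_{(2)},z_1)v$. Compatibility with $\rho$ is equally short: $\rho\bigl((\tau\ast\tau')(h,z)v\bigr)=\sum\rho(\tau(h_{(1)},z)\tau'(h_{(2)},z)v)$, which by successive application of $\rho\circ\tau(k,z)=(\tau(k,z)\ot 1)\rho$ and $\rho\circ\tau'(k,z)=(\tau'(k,z)\ot 1)\rho$ becomes $((\tau\ast\tau')(h,z)\ot 1)\rho(v)$.

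The identity $\tau\ast\tau'=\tau'\ast\tau$ is immediate from cocommutativity of $H$ together with the hypothesis that $\tau$ and $\tau'$ commute as operator-valued convolutions. To obtain $\tau\in\mathfrak{L}_H^\rho(\mathfrak{D}_{\tau'}^\rho(V))$, note that the underlying $\hbar$-adic $\C[[\hbar]]$-module, vacuum, and comodule map of $\mathfrak{D}_{\tau'}^\rho(V)$ are unchanged from $V$, so the truncation and vacuum properties for $\tau$ are automatic and compatibility with $\rho$ is unchanged. It remains to verify \eqref{eq:mod-va-for-vertex-bialg3} for $\tau$ with respect to $\mathfrak{D}_{\tau'}^\rho(Y)$:
\[
\tau(h,z_1)\mathfrak{D}_{\tau'}^\rho(Y)(u,z_2)v=\sum\tau(h,z_1)Y(u_{(1)},z_2)\tau'(u_{(2)},z_2)v.
\]
Applying \eqref{eq:mod-va-for-vertex-bialg3} for $\tau$ and using that $\tau$ and $\tau'$ commute, one rewrites this as $\sum \mathfrak{D}_{\tau'}^\rho(Y)(\tau(h_{(1)},z_1-z_2)u,z_2)\tau(h_{(2)},z_1)v$, which is what is required; the key input is that $\tau(h_{(1)},z_1-z_2)$ acts on $u$ and its deformed vertex operator is built from $\rho(\tau(h_{(1)},z_1-z_2)u)=(\tau(h_{(1)},z_1-z_2)\ot 1)\rho(u)$, so the Sweedler labels for $u$ and for $h$ can be disentangled.

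Finally, for the identity of nonlocal vertex algebras, compute
\[
\mathfrak{D}_\tau^\rho(\mathfrak{D}_{\tau'}^\rho(Y))(a,z)=\sum \mathfrak{D}_{\tau'}^\rho(Y)(a_{(1)},z)\tau(a_{(2)},z)=\sum Y(a_{(1)},z)\tau'(a_{(2)},z)\tau(a_{(3)},z),
\]
where the last step uses $\rho\circ\rho=(\rho\ot 1)\circ\rho=(1\ot\Delta)\circ\rho$. Since $\tau$ and $\tau'$ commute and $H$ is cocommutative, this equals $\sum Y(a_{(1)},z)\tau(a_{(2)},z)\tau'(a_{(3)},z)$, which by coassociativity is $\sum Y(a_{(1)},z)(\tau\ast\tau')(a_{(2)},z)=\mathfrak{D}_{\tau\ast\tau'}^\rho(Y)(a,z)$. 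The main obstacle is bookkeeping in the verification of \eqref{eq:mod-va-for-vertex-bialg3} for $\tau\ast\tau'$, since one must commute the two module actions past $Y(u,z_2)$ in the correct order while keeping the Sweedler indices for $h$ and the recurring appearance of $\rho(u)$ under control; everything else is a coassociative/cocommutative rewriting.
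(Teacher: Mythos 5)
Your proposal is correct and follows the same route the paper intends: the paper simply cites Propositions 3.3 and 3.4 of \cite{JKLT-Defom-va} as an ``immediate $\hbar$-adic analogue,'' and your proof is a faithful reconstruction of that argument (expand via the Sweedler coproduct, move $\tau'$ then $\tau$ past $Y$, regroup by coassociativity/cocommutativity, and use compatibility of $\tau$ with $\rho$ to re-express the deformed vertex operator). One small slip: commutation of $\tau$ and $\tau'$ is not what makes the convolution $\tau\ast\tau'$ a well-defined element of $\Hom(V,V\wh\ot\C((z))[[\hbar]])$ -- the convolution is defined regardless -- commutation is only used to obtain $\tau\ast\tau'=\tau'\ast\tau$ and to move $\tau(h_{(2)},z_1)$ past $\tau'(u_{(2)},z_2)$ in the verification of $\tau\in\mathfrak L_H^\rho(\mathfrak D_{\tau'}^\rho(V))$.
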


Recall that an element $\tau\in\mathfrak L_H^\rho(V)$ is said to be \emph{invertible} if there exists $\tau\inv \in \mathfrak L_H^\rho(V)$, such that $\tau$ and $\tau\inv$ are commute and $\tau\ast\tau\inv=\varepsilon$.
We have the immediate $\hbar$-adic analogue of \cite[Theorem 3.6]{JKLT-Defom-va}.

\begin{thm}\label{thm:qva}
Let $V_0$ be a vertex algebra, and let $V=V_0[[\hbar]]$ be the corresponding $\hbar$-adic vertex algebra.
Suppose that $(H,\rho,\tau)$ is a deforming triple of $V$, such that $\tau$ is invertible in $\mathfrak L_H^\rho(V)$.
Then $\mathfrak D_\tau^\rho(V)$ is an $\hbar$-adic quantum vertex algebra with quantum Yang-Baxter operator $S(z)$ defined by
\begin{align*}
  S(z)(v\ot u)=\sum \tau(u_{(2)},-z)v_{(1)}\ot \tau\inv(v_{(2)},z)u_{(1)}\quad\te{for } u,v\in V,
\end{align*}
where $\rho(u)=\sum u_{(1)}\ot u_{(2)}$ and $\rho(v)=\sum v_{(1)}\ot v_{(2)}$.
\end{thm}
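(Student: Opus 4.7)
The plan is to follow the proof of its non-$\hbar$-adic analogue \cite[Theorem 3.6]{JKLT-Defom-va}, translating each step to the $\hbar$-adic setting. The simplifying feature is that $V=V_0[[\hbar]]$ comes from a classical vertex algebra, so $Y$ on $V$ satisfies ordinary locality: for every $u,v\in V$ and every $n\in\N$, there exists $k=k(n,u,v)$ such that $(z_1-z_2)^k[Y(u,z_1),Y(v,z_2)]\equiv 0\pmod{\hbar^n}$. This commutativity of $Y$ is the single source of all the swaps below.

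By Theorem \ref{thm:deform-va} we already know $\mathfrak D_\tau^\rho(V)$ is an $\hbar$-adic nonlocal vertex algebra, so it remains to install the quantum Yang-Baxter operator $S(z)$. First I would check that the formula makes sense: because $\tau$ and $\tau\inv$ are $H$-module nonlocal vertex algebra structures, one has $\tau(h,\pm z)w\in V\wh\ot\C((z))[[\hbar]]$, so $S(z)$ indeed lands in $V\wh\ot V\wh\ot\C((z))[[\hbar]]$. The vacuum property \eqref{eq:qyb-vac-id} follows from $\rho(\vac)=\vac\ot 1$, $\varepsilon(\vac)=1$, and $\tau\inv(\vac,z)=1$. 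Unitarity \eqref{eq:qyb-unitary}, after applying $\rho$ twice and using cocommutativity of $H$, reduces to the convolution identity $\tau*\tau\inv=\varepsilon=\tau\inv*\tau$. The shift condition \eqref{eq:qyb-shift} is derived from the fact that $\tau(h,z)$ and $\tau\inv(h,z)$ intertwine with the canonical derivation $\partial$ in the standard way.

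The heart of the matter is the $S$-locality \eqref{eq:qyb-locality}. Expanding
\begin{align*}
  \mathfrak D_\tau^\rho(Y)(u,z_1)\mathfrak D_\tau^\rho(Y)(v,z_2)
  =\sum Y(u_{(1)},z_1)\tau(u_{(2)},z_1)Y(v_{(1)},z_2)\tau(v_{(2)},z_2),
\end{align*}
I would apply \eqref{eq:mod-va-for-vertex-bialg3} to push $\tau(u_{(2)},z_1)$ past $Y(v_{(1)},z_2)$; cocommutativity of $H$ then guarantees that the resulting $\tau$-factors at $z_1$ and $z_2$ can be reordered; the ordinary locality of $Y$ on $V_0$ lets us interchange the two remaining $Y$-factors up to a suitable power of $(z_1-z_2)$; finally a symmetric application of \eqref{eq:mod-va-for-vertex-bialg3} together with coassociativity of $\rho$ and $\Delta$ repackages everything into $\mathfrak D_\tau^\rho(Y)(z_2)(1\ot \mathfrak D_\tau^\rho(Y)(z_1))S(z_2-z_1)(v\ot u)$. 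The QYBE \eqref{eq:qyb} and the hexagon identity \eqref{eq:qyb-hex-id} are established by the same kind of Sweedler bookkeeping, using Proposition \ref{prop:L-H-rho-V-compostition} to commute iterated $\tau$-actions and to identify compositions with the convolution product in $\mathfrak L_H^\rho(V)$.

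The main obstacle is $S$-locality, where the iterated-coproduct bookkeeping is most delicate. Throughout, one must verify that each intermediate formal series still lives in an $\hbar$-adically complete tensor product; this is where the argument genuinely differs from its non-$\hbar$-adic model, but since $H$, $V$, and the structure maps $\rho$, $\tau$, $\tau\inv$ are all continuous and topologically free, the required completions cause no trouble, and the argument of \cite{JKLT-Defom-va} carries over verbatim.
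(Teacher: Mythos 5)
Your proposal takes the same route as the paper, which itself offers no detailed proof but simply records the result as ``the immediate $\hbar$-adic analogue of \cite[Theorem~3.6]{JKLT-Defom-va}.'' Your outline of the verifications (vacuum, unitarity via $\tau\ast\tau\inv=\varepsilon$, shift from compatibility of $\tau,\tau\inv$ with $\partial$, $S$-locality from genuine locality of $Y$ on $V_0[[\hbar]]$ plus Sweedler bookkeeping, and the observation that topological freeness keeps all completions under control) is a correct fleshing-out of exactly that appeal.
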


\section{Construction of $F_\tau(A,\ell)$}\label{sec:F-tau}

In this section, we fix a GCM $A=(a_{ij})_{i,j\in I}$ and a complex number $\ell$.
Let $\mathfrak T$ be the set of tuples
\begin{align*}
\tau=\big(\tau_{ij}(z),\tau_{ij}^{1,\pm}(z),\tau_{ij}^{2,\pm}(z),\tau_{ij}^{\epsilon_1,\epsilon_2}(z)\big)_{i,j\in I}^{\epsilon_1,\epsilon_2=\pm},
\end{align*}
where $\tau_{ij}(z),\,\tau_{ij}^{1,\pm}(z),\,\tau_{ij}^{2,\pm}(z),\,\tau_{ij}^{\epsilon_1,\epsilon_2}(z)\in \C((z))[[\hbar]]$, such that
\begin{align*}
  &\lim_{\hbar\to 0}\tau_{ij}(z)=\lim_{\hbar\to 0}\tau_{ji}(-z),\quad \lim_{\hbar\to 0}\tau_{ij}^{1,\pm}(z)=-\lim_{\hbar\to 0}\tau_{ji}^{2,\pm}(-z),\\
  &\lim_{\hbar\to 0}\tau_{ij}^{\epsilon_1,\epsilon_2}(z)=\lim_{\hbar\to 0}\tau_{ji}^{\epsilon_2,\epsilon_1}(-z)\in\C[[z]]^\times.
\end{align*}

\begin{de}\label{de:cat-M-tau}
Let $\tau\in\mathfrak T$. Define $\mathcal M_\tau$ to be the category consisting of topologically free $\C[[\hbar]]$-modules $W$, equipped with fields $h_{i,\hbar}(z),x_{i,\hbar}^\pm(z)\in\E_\hbar(W)$ ($i\in I$) satisfying the following conditions
\begin{align}
  &[h_{i,\hbar}(z_1),h_{j,\hbar}(z_2)]\label{eq:fqva-rel1}\\
  &\quad=r_ia_{ij} r\ell\pd{z_2}z_1\inv\delta\(\frac{z_2}{z_1}\)+\tau_{ij}(z_1-z_2)-\tau_{ji}(z_2-z_1),\nonumber\\
  &[h_{i,\hbar}(z_1),x_{j,\hbar}^\pm(z_2)]\label{eq:fqva-rel2}\\
  &\quad=\pm x_{j,\hbar}^\pm(z_2)\(r_ia_{ij} z_1\inv\delta\(\frac{z_2}{z_1}\)+ \tau_{ij}^{1,\pm}(z_1-z_2)+\tau_{ji}^{2,\pm}(z_2-z_1)\),\nonumber\\
  &\tau_{ij}^{\pm,\pm}(z_1-z_2)(z_1-z_2)^{n_{ij}}x_{i,\hbar}^\pm(z_1)x_{j,\hbar}^\pm(z_2)\label{eq:fqva-rel3}\\
  &\quad=\tau_{ji}^{\pm,\pm}(z_2-z_1)(z_1-z_2)^{n_{ij}}x_{j,\hbar}^\pm(z_2)x_{i,\hbar}^\pm(z_1),\nonumber\\
  &\tau_{ij}^{\pm,\mp}(z_1-z_2)(z_1-z_2)^{2\delta_{ij}}x_{i,\hbar}^\pm(z_1)x_{j,\hbar}^\mp(z_2)\label{eq:fqva-rel4}\\
  &\quad=\tau_{ji}^{\mp,\pm}(z_2-z_1)(z_1-z_2)^{2\delta_{ij}}x_{j,\hbar}^\mp(z_2)x_{i,\hbar}^\pm(z_1).\nonumber
\end{align}
\end{de}
Let $\mathcal F_\tau$ be the forgetful functor from $\mathcal M_\tau$ to the category of topologically free $\C[[\hbar]]$-modules.
Define $\End_{\C[[\hbar]]}(\mathcal F_\tau)$ to be the algebra of endomorphisms of the functor $\mathcal F_\tau$.
For each $W\in\mathcal M_{\tau}$, $\End_{\C[[\hbar]]}(W)$ is a topological algebra over $\C[[\hbar]]$ such that
\begin{align*}
  \set{(K:\hbar^nW)}{K\subset W,\, |K|<\infty,\,n\in\Z_+}
\end{align*}
forms a local basis at $0$, where
\begin{align*}
  (K:\hbar^nW)=\set{\varphi\in\End_{\C[[\hbar]]}(W)}{\varphi(K)\subset\hbar^nW}.
\end{align*}
Equip $\End_{\C[[\hbar]]}(\mathcal F_{\tau})$ with the coarsest topology such that for any $W\in\mathcal M_{\tau}$, the canonical $\C[[\hbar]]$-algebra epimorphism from $\mathcal A$ to $\End_{\C[[\hbar]]}(W)$ is continuous.
It is easy to verify that both $\End_{\C[[\hbar]]}(W)$ and $\End_{\C[[\hbar]]}(\mathcal F_{ \tau})$ are topologically free.

For each $i\in I$, we define endomorphisms $h_{i,\hbar}(n)$ and $x_{i,\hbar}^\pm(n)$ ($n\in\Z$) of $\mathcal F_{\tau}$ as follows:
\begin{align*}
  \sum_{n\in\Z}h_{i,\hbar}(n).vz^{-n-1}=h_{i,\hbar}(z)v,\quad \sum_{n\in\Z}x_{i,\hbar}^\pm(n).vz^{-n-1}=x_{i,\hbar}^\pm(z)v.
\end{align*}
where $v\in W$ and $W\in\mathcal M_\tau$.
We denote by $\mathcal A$ the closed subalgebra of $\End_{\C[[\hbar]]}(\mathcal F_{\tau})$ generated by $\set{h_{i,\hbar}(n),x_{i,\hbar}^\pm(n)}{i\in I,n\in\Z}$.
Then $\mathcal A$ is topologically free.
Let $\mathcal A_+$ be the minimal closed left ideal of $\mathcal A$ containing $h_{i,\hbar}(n)$ and $x_{i,\hbar}^\pm(n)$ ($i\in I$, $n\ge 0$), such that $[\mathcal A_+]=\mathcal A_+$.
Define
\begin{align}
F_\tau(A,\ell)=\mathcal A/\mathcal A_+.
\end{align}
Set $\vac=1+\mathcal A_+\in F_\tau(A,\ell)$.
And for each $i\in I$, we identify $h_{i,\hbar}$ and $x_{i,\hbar}^\pm$ with $h_{i,\hbar}(-1)\vac$ and $x_{i,\hbar}^\pm(-1)\vac$ in $F_\tau(A,\ell)$, respectively.
It is straightforward to verify the following result.

\begin{lem}\label{lem:V-universal-prop}
The topologically free $\C[[\hbar]]$-module $F_\tau(A,\ell)$ equipped with fields
$h_{i,\hbar}(z),x_{i,\hbar}^\pm(z)$ ($i\in I$) is an object in $\mathcal M_{\tau}$.
Moreover, let $W$ be an object in $\mathcal M_\tau$, and let $v_+\in W$ be such that $h_{i,\hbar}(n).v_+=0=x_{i,\hbar}^\pm(n).v_+$ for $i\in I$ and $n\ge 0$.
Then there exists a unique $\mathcal A$-module homomorphism $\varphi:F_\tau(A,\ell)\to W$ such that $\varphi(\vac)=v_+$.
\end{lem}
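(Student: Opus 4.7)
The plan is to check the two assertions of the lemma in turn. For the first assertion, I would verify the three defining conditions for membership in $\mathcal M_\tau$: that $F_\tau(A,\ell)=\mathcal A/\mathcal A_+$ is topologically free, that the fields $h_{i,\hbar}(z),x_{i,\hbar}^\pm(z)$ lie in $\E_\hbar(F_\tau(A,\ell))$, and that the relations \eqref{eq:fqva-rel1}--\eqref{eq:fqva-rel4} hold on it. Topological freeness follows from $\mathcal A_+$ being closed and $[\mathcal A_+]=\mathcal A_+$, via the characterization of topologically free $\C[[\hbar]]$-modules recalled at the start of Section \ref{sec:qva}. The relations hold in $\End_{\C[[\hbar]]}(\mathcal F_\tau)$ by the very definition of $\mathcal M_\tau$, hence in its closed subalgebra $\mathcal A$, and therefore descend to $F_\tau(A,\ell)$ acting on itself.

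The substantive point is the truncation property: for $v\in F_\tau(A,\ell)$, $i\in I$ and $N\geq 1$, one needs $h_{i,\hbar}(n).v$ and $x_{i,\hbar}^\pm(n).v$ to lie in $\hbar^N F_\tau(A,\ell)$ for $n$ sufficiently large. Since $\mathcal A$ is topologically generated by $\{h_{i,\hbar}(m),x_{i,\hbar}^\pm(m):i\in I,\,m\in\Z\}$, by continuity it suffices to treat vectors of the form $v=g_1(m_1)\cdots g_r(m_r)\vac$, which I would handle by induction on $r$. One commutes a large-$n$ positive mode to the right using \eqref{eq:fqva-rel1}--\eqref{eq:fqva-rel2} for the $h$-generators, and using \eqref{eq:fqva-rel3}--\eqref{eq:fqva-rel4} for the $x^\pm$-generators. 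Here it is essential that $\tau_{ij}^{\pm,\pm}(z)$ is a unit in $\C((z))[[\hbar]]$ (its $\hbar=0$ term being a unit in $\C[[z]]$), so that the relations \eqref{eq:fqva-rel3}--\eqref{eq:fqva-rel4} can be rewritten as an ``approximate commutation rule'' with scattering factor $\tau_{ij}^{\pm,\pm}(z)^{-1}\tau_{ji}^{\pm,\pm}(-z)-1\in\hbar\,\C((z))[[\hbar]]$, the latter by the symmetry condition on $\tau$. Because each $\hbar$-coefficient of every $\tau$-series is a genuine Laurent series in $z$ with only finitely many negative powers, for $n$ sufficiently large all commutator/scattering corrections vanish modulo $\hbar^N$, and the moved-through term terminates with a large positive mode applied to $\vac$, which is zero by construction of $\mathcal A_+$.

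For the universal property, the canonical continuous $\C[[\hbar]]$-algebra map $\mathcal A\to \End_{\C[[\hbar]]}(W)$ yields a continuous evaluation map $\psi:\mathcal A\to W$, $a\mapsto a.v_+$. Its kernel is a closed left $\mathcal A$-submodule containing $h_{i,\hbar}(n),x_{i,\hbar}^\pm(n)$ for all $i\in I$ and $n\geq 0$, and the torsion-freeness of $W$ gives $[\ker\psi]=\ker\psi$. Minimality of $\mathcal A_+$ then forces $\mathcal A_+\subseteq \ker\psi$, so $\psi$ factors uniquely through $\varphi:F_\tau(A,\ell)\to W$ with $\varphi(\vac)=v_+$; uniqueness is immediate since $\vac$ generates $F_\tau(A,\ell)$ as an $\mathcal A$-module. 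The main obstacle is the mode-truncation bookkeeping in the inductive step above---routine in spirit but slightly delicate due to the many ways the $\tau$-corrections enter the various commutators, which is likely what the author has in mind when calling the verification ``straightforward''.
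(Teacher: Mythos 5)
The paper offers no proof of this lemma, dismissing it as "straightforward," so there is nothing to compare against line by line; your proposal is the natural way to fill the gap, and its overall structure is correct. The universal-property half is handled cleanly: the evaluation map $\psi\colon a\mapsto a.v_+$ has closed kernel (preimage of $\{0\}$ under a continuous map), the kernel is a left ideal containing the nonnegative modes, torsion-freeness of $W$ gives $[\ker\psi]=\ker\psi$, minimality of $\mathcal A_+$ forces $\mathcal A_+\subseteq\ker\psi$, and uniqueness follows since $\vac$ generates. Likewise, topological freeness of $\mathcal A/\mathcal A_+$ follows from $\mathcal A_+$ being closed with $[\mathcal A_+]=\mathcal A_+$ and $\mathcal A$ topologically free.

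The one place where the write-up is slightly loose is the truncation verification for the $x$--$x$ pairings. Relations \eqref{eq:fqva-rel3}--\eqref{eq:fqva-rel4} are $S$-locality relations, not commutator identities: the polynomial factors $(z_1-z_2)^{n_{ij}}$ and $(z_1-z_2)^{2\delta_{ij}}$ sit on both sides and are not invertible, so after dividing by the unit $\iota_{z_1,z_2}\tau_{ij}^{\pm,\pm}(z_1-z_2)$ you get an ``approximate commutation rule'' only after multiplying by those polynomials, i.e.\ a relation of the form $(z_1-z_2)^{k}\bigl(x_{i,\hbar}^\pm(z_1)x_{j,\hbar}^\epsilon(z_2)-\text{(scattering)}\,x_{j,\hbar}^\epsilon(z_2)x_{i,\hbar}^\pm(z_1)\bigr)=0$. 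One cannot literally commute a single mode past another; instead the inductive step in this case has to go through the usual locality manipulation (bound the order in $z_1$ of the right-hand side using the inductive hypothesis on the shorter vector, then pull back through the invertible $\iota_{z_1,z_2}\tau$-factor). Your key observation---that modulo $\hbar^N$ every $\tau$-series is a genuine Laurent series, so the corrections are controllable---is correct and is what makes this work; I would just phrase that step via the locality lemma rather than as a mode swap. With that adjustment the argument is sound and is, as far as one can tell, exactly what the author has in mind.
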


\begin{prop}\label{prop:F-tau-nlVA}
There exists an $\hbar$-adic nonlocal vertex algebra structure on
$F_\tau(A,\ell)$ such that $\vac$ is the vacuum vector and the vertex operator map $Y_\tau$ is determined by the following conditions
\begin{align}
  Y_\tau(h_{i,\hbar},z)=h_{i,\hbar}(z),\quad Y_\tau(x_{i,\hbar}^\pm,z)=x_{i,\hbar}^\pm(z),
  \quad i\in I.
\end{align}
Moreover, there exists a natural isomorphism between the category of $F_\tau(A,\ell)$-modules and $\mathcal M_\tau$.
\end{prop}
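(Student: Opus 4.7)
My plan is to produce the $\hbar$-adic nonlocal vertex algebra structure on $F_\tau(A,\ell)$ by appealing to Theorem \ref{thm:gen-weak-qva}, taking as generating set $U=\{h_{i,\hbar},\,x_{i,\hbar}^\pm:i\in I\}$ and generating map $Y^0$ defined on $U$ by $Y^0(h_{i,\hbar},z)=h_{i,\hbar}(z)$ and $Y^0(x_{i,\hbar}^\pm,z)=x_{i,\hbar}^\pm(z)$. The creation property $Y^0(u,z)\vac\in V[[z]]$ with $\lim_{z\to 0}Y^0(u,z)\vac=u$ is immediate from the definition of $\mathcal A_+$, since that closed ideal annihilates $\vac$ for $n\ge 0$; and the spanning property is tautological from the presentation $F_\tau(A,\ell)=\mathcal A/\mathcal A_+$.

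The first technical step is to check that $U(z)=\{h_{i,\hbar}(z),\,x_{i,\hbar}^\pm(z)\}$ is $\hbar$-adically $S$-local on $V=F_\tau(A,\ell)$. For the $x$-$x$ pairs this is almost verbatim: relations \eqref{eq:fqva-rel3} and \eqref{eq:fqva-rel4} can be rearranged into the shape $a(z_1)b(z_2)\sim Z(z_2,z_1)(A(z))$ because each $\tau^{\epsilon_1,\epsilon_2}_{ij}(z)$ reduces mod $\hbar$ to an element of $\C[[z]]^\times$, hence is a unit in $\C((z))[[\hbar]]$, so dividing through produces an $A(z)\in(\C U\ot\C U\ot\C((z)))[[\hbar]]$. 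For the $h$-$h$ and $h$-$x$ pairs, the commutators in \eqref{eq:fqva-rel1}-\eqref{eq:fqva-rel2} are killed by multiplying by a sufficient power of $(z_1-z_2)$ because both the $\delta$-function terms and the $\tau$-terms live in $\C((z_1-z_2))[[\hbar]]$; reorganizing the result into $S$-locality form is a routine $\iota_{z_1,z_2}$-expansion calculation.

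The crucial step is the construction of an intertwining map $\psi:V\to \<U(z)\>$. By Theorem \ref{thm:construction}, the minimal $\hbar$-adically $S$-local subset $\<U(z)\>\subset\E_\hbar(V)$ containing $1_V$ and $U(z)$ carries an $\hbar$-adic nonlocal (in fact weak quantum) vertex algebra structure under $Y_\E$, and $V$ is a faithful module over it. I claim $\<U(z)\>$ is itself an object of $\mathcal M_\tau$, with the defining fields now acting on $\<U(z)\>$ via the $Y_\E$-operators $Y_\E(h_{i,\hbar}(z),\cdot)$ and $Y_\E(x_{i,\hbar}^\pm(z),\cdot)$; i.e., these $Y_\E$-operators satisfy the relations \eqref{eq:fqva-rel1}-\eqref{eq:fqva-rel4}. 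This propagation is the main obstacle. It is handled by the usual $\hbar$-adic principle that $S$-local identities among fields acting on $V$ lift to identical $S$-local identities among the corresponding $Y_\E$-operators acting on the space of fields they generate: one tests the desired identity against an arbitrary element of $\<U(z)\>$, applies the $Y_\E$-associativity of Theorem \ref{thm:construction} and the $\hbar$-adic iterate formula \eqref{eq:def-Y-E} to reduce to a statement on $V$, and then invokes the known relations there together with the faithfulness of the $\<U(z)\>$-action on $V$ and $\hbar$-adic completeness. Granted this, Lemma \ref{lem:V-universal-prop} produces a unique $\mathcal A$-module homomorphism $\psi:F_\tau(A,\ell)\to\<U(z)\>$ with $\psi(\vac)=1_V$, which by construction satisfies $\psi(u_n v)=u(z)_n\psi(v)$ for $u\in U$, $n\in\Z$, $v\in V$; this is exactly the intertwining hypothesis of Theorem \ref{thm:gen-weak-qva}. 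Applying that theorem endows $F_\tau(A,\ell)$ with the desired $\hbar$-adic nonlocal vertex algebra structure and vertex operator $Y_\tau$.

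For the module equivalence, the same propagation argument applied to an arbitrary $W\in\mathcal M_\tau$ produces $\<U_W(z)\>\subset\E_\hbar(W)$ and a homomorphism $F_\tau(A,\ell)\to\<U_W(z)\>$ by universality (Lemma \ref{lem:V-universal-prop}), whose composition with the tautological action turns $W$ into an $F_\tau(A,\ell)$-module with $Y_W(h_{i,\hbar},z)=h_{i,\hbar}(z)$ and $Y_W(x_{i,\hbar}^\pm,z)=x_{i,\hbar}^\pm(z)$. Conversely, any $F_\tau(A,\ell)$-module $W$ inherits fields $Y_W(h_{i,\hbar},z)$, $Y_W(x_{i,\hbar}^\pm,z)$, and the relations \eqref{eq:fqva-rel1}-\eqref{eq:fqva-rel4} hold in the nonlocal vertex algebra $F_\tau(A,\ell)$ (by its very construction) and are therefore transported to $W$ by the module axioms, so $W\in\mathcal M_\tau$. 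These constructions are mutually inverse and functorial, yielding the asserted natural isomorphism of categories.
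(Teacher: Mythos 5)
Your proposal is correct and follows essentially the same route as the paper: verify the $S$-locality of the generating fields, construct the $\hbar$-adic weak quantum vertex algebra $\<U_W\>$ via Theorem \ref{thm:construction}, show $\<U_W\>$ is itself an object of $\mathcal M_\tau$, invoke the universal property (Lemma \ref{lem:V-universal-prop}) to get the intertwining map $\psi$, and apply Theorem \ref{thm:gen-weak-qva}. The one spot where you resort to a heuristic ``usual $\hbar$-adic principle'' --- the propagation of the $\mathcal M_\tau$-relations from fields on $V$ to their $Y_\E$-operators on $\<U(z)\>$ --- is, in the paper, discharged by a direct citation to a known result in Li's $\hbar$-adic paper (Lemma 4.21 of \cite{Li-h-adic}), which makes that step precise; your sketch of the underlying argument is the right idea, but in a clean write-up you should cite or reprove that lemma rather than gesture at the principle.
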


\begin{proof}
Let $(W,h_{i,\hbar}(z),x_{i,\hbar}^\pm(z))$ be an object in the category $\mathcal M_\tau$, and let
\begin{align*}
  U_W=\set{h_{i,\hbar}(z),\,x_{i,\hbar}^\pm(z),\,c}{i\in I}\subset \E_\hbar(W).
\end{align*}
It follows from the relations \eqref{eq:fqva-rel1}, \eqref{eq:fqva-rel2}, \eqref{eq:fqva-rel3} and \eqref{eq:fqva-rel4} that $U_W$ is an $\hbar$-adically $S$-local subset.
From Theorem \ref{thm:construction}, we get an $\hbar$-adic weak quantum vertex algebra
$\<U_W\>\subset \E_\hbar(W)$ containing $U_W,1_W$, such that $1_W$ is the vacuum vector and $Y_\E$ is the vertex operator (see \eqref{eq:def-Y-E}).
Moreover, $W$ is a faithful $\<U_W\>$-module with $Y_W(a(z),z_0)=a(z_0)$ for $a(z)\in\<U_W\>$.

Using \cite[Lemma 4.21]{Li-h-adic}, we get that $\(\<U_W\>,Y_\E(h_{i,\hbar}(z_1),z),Y_\E(x_{i,\hbar}^\pm(z_1),z)\)$ is an object in $\mathcal M_{\tau}$.
From the vacuum property \eqref{eq:vacuum-property}, we have that
\begin{align*}
  &h_{i,\hbar}(z)1_W=Y_\E(h_{i,\hbar}(z_1),z)1_W,\\
  &x_{i,\hbar}^\pm(z)1_W
  =Y_\E(x_{i,\hbar}^\pm(z_1),z)1_W\in \<U_W\>[[z]]\quad\te{for } i\in I.
\end{align*}
Combining this with Lemma \ref{lem:V-universal-prop}, we get an $\mathcal A$-module map $\varphi_W:F_\tau(A,\ell)\to \<U_W\>$, such that $\varphi_W(\vac)=1_W$.

Note that $F_\tau(A,\ell)$ is an object in $\mathcal M_{\tau}$ (see Lemma \ref{lem:V-universal-prop}).
Since $\varphi_{F_\tau(A,\ell)}$ is an $\mathcal A$-module map, we have that
\begin{align*}
  &\varphi_{F_\tau(A,\ell)}(a(z_0)v)=a(z_0)\varphi_{F_\tau(A,\ell)}(v)=Y_\E(a(z),z_0)\varphi_{F_\tau(A,\ell)}(v)\\
  &\quad \te{for } a=h_{i,\hbar}\,\,\te{or}\,\,x_{i,\hbar}^\pm,\,\,v\in F_\tau(A,\ell).
\end{align*}
From the definition of $F_\tau(A,\ell)$ we have that $a(z)\vac\in F_\tau(A,\ell)[[z]]$ for $a=h_{i,\hbar},\,x_{i,\hbar}^\pm$.
Then by using Theorem \ref{thm:gen-weak-qva}, we get a unique $\C[[\hbar]]$-module map $Y_\tau$ such that
\begin{align*}
  Y_\tau(h_{i,\hbar},z)=h_{i,\hbar}(z),\quad Y_\tau(x_{i,\hbar}^\pm,z)=x_{i,\hbar}^\pm(z),\quad i\in I,
\end{align*}
and $(F_\tau(A,\ell),Y_\tau,\vac)$ carries the structure of an $\hbar$-adic weak quantum vertex algebra.

An immediate $\hbar$-adic analogue of \cite[Proposition 6.7]{Li-nonlocal} implies that every $F_\tau(A,\ell)$-module is an object in $\mathcal M_\tau$. On the other hand,
let $(W,h_{i,\hbar}(z),x_{i,\hbar}^\pm(z))$ be an object in the category $\mathcal M_\tau$.
Since $\varphi_W:F_\tau(A,\ell)\to \<U_W\>$ is an $\mathcal A$-module map,
we have that
\begin{align*}
  &\varphi_W(Y_\tau(a,z_0)v)=\varphi_W(a(z_0)v)=a(z_0)\varphi_W(v)=Y_\E(a(z),z_0)\varphi_W(v),
\end{align*}
for any $a=h_{i,\hbar},\,x_{i,\hbar}^\pm$ and $v\in F_\tau(A,\ell)$.
Hence, $\varphi_W$ is an $\hbar$-adic weak quantum vertex algebra homomorphism.
Since $W$ is a faithful $\<U_W\>$-module, we get that $W$ is a $F_\tau(A,\ell)$-module.
\end{proof}

The proof of Proposition \ref{prop:F-tau-nlVA} implies that $F_\tau(A,\ell)$ admits the following universal property.
\begin{prop}\label{prop:universal-nonlocal-va}
Let $V$ be an $\hbar$-adic nonlocal vertex algebra, and let $Y$ be the vertex operator map of $V$.
Suppose there exists $\bar h_{i,\hbar},\bar x_{i,\hbar}^\pm\in V$, such that $(V,Y(\bar h_{i,\hbar},z),Y(\bar x_{i,\hbar}^\pm,z))$ becomes an object of $\mathcal M_\tau$.
Then there exists a unique $\hbar$-adic nonlocal vertex algebra homomorphism $\varphi:F_\tau(A,\ell)\to V$, such that $\varphi(h_{i,\hbar})=\bar h_{i,\hbar}$ and $\varphi(x_{i,\hbar}^\pm)=\bar x_{i,\hbar}^\pm$ for $i\in I$.
\end{prop}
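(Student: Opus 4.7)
The plan is to mimic the argument in the proof of Proposition \ref{prop:F-tau-nlVA}, using $V$ itself in place of the $\hbar$-adic weak quantum vertex algebra $\<U_W\>$ constructed there. Since the hypothesis says $V$ is already an object of $\mathcal M_\tau$, no intermediate construction via Theorem \ref{thm:construction} is needed.

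First I would apply Lemma \ref{lem:V-universal-prop} with the object $V\in\mathcal M_\tau$ and the distinguished vector $v_+=\vac_V$, the vacuum of $V$. The vacuum property \eqref{eq:vacuum-property} for $V$ gives $Y(\bar h_{i,\hbar},z)\vac_V,\,Y(\bar x_{i,\hbar}^\pm,z)\vac_V\in V[[z]]$, so the corresponding modes $h_{i,\hbar}(n)$ and $x_{i,\hbar}^\pm(n)$ annihilate $\vac_V$ for all $n\ge 0$ and $i\in I$. Lemma \ref{lem:V-universal-prop} then produces a unique $\mathcal A$-module homomorphism $\varphi:F_\tau(A,\ell)\to V$ with $\varphi(\vac)=\vac_V$. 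Extracting the coefficient of $z^0$ from $\varphi(h_{i,\hbar}(-1)\vac)=h_{i,\hbar}(-1)\vac_V$ (and similarly for $x_{i,\hbar}^\pm$) yields $\varphi(h_{i,\hbar})=\bar h_{i,\hbar}$ and $\varphi(x_{i,\hbar}^\pm)=\bar x_{i,\hbar}^\pm$, which also forces uniqueness of $\varphi$ in the end, since an $\hbar$-adic nonlocal vertex algebra map is determined by its values on a topological generating set.

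Next I would promote $\varphi$ from an $\mathcal A$-module map to an $\hbar$-adic nonlocal vertex algebra homomorphism. For the generators $a=h_{i,\hbar}$ or $x_{i,\hbar}^\pm$, the $\mathcal A$-equivariance of $\varphi$ already gives
\[
\varphi\bigl(Y_\tau(a,z_0)v\bigr)=a(z_0)\varphi(v)=Y\bigl(\varphi(a),z_0\bigr)\varphi(v),\qquad v\in F_\tau(A,\ell),
\]
since on both sides the relevant modes come from the $\mathcal M_\tau$-structure. To pass to arbitrary $a\in F_\tau(A,\ell)$, I would introduce
\[
K=\bigl\{a\in F_\tau(A,\ell) : \varphi\circ Y_\tau(a,z)=Y(\varphi(a),z)\circ\varphi\bigr\},
\]
observe that $K$ contains $\vac$ and all the generators, and verify that $K$ is closed under $\hbar$-adic limits and under the operations $(a,b)\mapsto a_nb$ for $n\in\Z$.

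The main (and essentially only) technical step is closure of $K$ under $(a,b)\mapsto a_nb$. This is the propagation step invoked implicitly in the proof of Proposition \ref{prop:F-tau-nlVA}, and it follows from weak associativity \eqref{eq:weak-asso} applied in both $F_\tau(A,\ell)$ and $V$: if $a,b\in K$, then $Y_\tau(a_nb,z)$ can be expressed as the coefficient of $z_0^{-n-1}$ in the $Y_\E$-type limit built from $Y_\tau(a,z_1)$ and $Y_\tau(b,z)$, and the analogous identity holds in $V$ with $\varphi(a),\varphi(b)$ in place of $a,b$; applying $\varphi$ and using $a,b\in K$ transports the intertwining to $a_nb$. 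Because $F_\tau(A,\ell)$ is $\hbar$-adically spanned by iterated modes acting on $\vac$, this forces $K=F_\tau(A,\ell)$, completing the proof.
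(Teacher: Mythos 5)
Your proof is correct and is essentially the argument the paper has in mind: the paper offers no separate proof of Proposition~\ref{prop:universal-nonlocal-va}, merely observing that it falls out of the proof of Proposition~\ref{prop:F-tau-nlVA}, and you are making that observation explicit. The one genuine simplification you introduce is worth noting. In the proof of Proposition~\ref{prop:F-tau-nlVA} the map $\varphi_W$ is produced via Lemma~\ref{lem:V-universal-prop} with target $\<U_W\>\subset\E_\hbar(W)$, because for a general $W\in\mathcal M_\tau$ one has no vertex algebra structure on $W$ itself and must pass to the field algebra $\<U_W\>$ built by Theorem~\ref{thm:construction}. In the present situation $V$ already carries an $\hbar$-adic nonlocal vertex algebra structure, so you bypass $\<U_V\>$ entirely and apply Lemma~\ref{lem:V-universal-prop} with $W=V$ and $v_+=\vac_V$ directly; this avoids having to identify $\<U_V\>$ with (the image of) $V$ inside $\E_\hbar(V)$, a step the paper leaves unaddressed and which would require an extra argument that $\{Y(u,z):u\in V'\}$ is $\hbar$-adically $S$-local for the subalgebra $V'$ generated by the $\bar h_{i,\hbar},\bar x_{i,\hbar}^\pm$. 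Your propagation argument via $K$ is the standard fact that a $\C[[\hbar]]$-module map between $\hbar$-adic nonlocal vertex algebras which sends vacuum to vacuum and intertwines the vertex operators on a topological generating set is a homomorphism; the paper invokes the same fact with the word "Hence'' in the last paragraph of the proof of Proposition~\ref{prop:F-tau-nlVA}, and your spelling it out via weak associativity \eqref{eq:weak-asso} and the $Y_\E$-limit formula \eqref{eq:def-Y-E} is exactly the right mechanism. One small remark: for the closure argument it suffices (and is slightly cleaner) to prove $a_nb\in K$ for $a$ a generator and $b\in K$, since $F_\tau(A,\ell)$ is $\hbar$-adically spanned by $a_1(n_1)\cdots a_k(n_k)\vac$ with each $a_j$ a generator; your formulation with $a,b\in K$ also works but is more than is needed.
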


\begin{coro}\label{coro:universal-nonlocal-va}
Let $V$ be an $\hbar$-adic commutative vertex algebra, and let $\al_i,\,e_i^\pm\in V$ ($i\in I$).
Then there exists a unique $\hbar$-adic nonlocal vertex algebra homomorphism $\rho:F_\tau(A,\ell)\to F_\tau(A,\ell)\wh\ot V$, such that
\begin{align*}
  \rho(h_{i,\hbar})=h_{i,\hbar}\ot 1+\vac\ot \al_i,\quad \rho(x_{i,\hbar}^\pm)=x_{i,\hbar}^\pm\ot e_i^\pm,\quad i\in I.
\end{align*}
\end{coro}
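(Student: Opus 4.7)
The plan is to invoke the universal property stated in Proposition \ref{prop:universal-nonlocal-va}. First, I would endow the tensor product $F_\tau(A,\ell)\wh\ot V$ with its standard $\hbar$-adic nonlocal vertex algebra structure, under which $Y(a\ot v,z)=Y(a,z)\ot Y(v,z)$ for $a\in F_\tau(A,\ell)$ and $v\in V$. I then set
\[
\bar h_{i,\hbar}:=h_{i,\hbar}\ot \vac_V+\vac\ot\al_i,\qquad \bar x_{i,\hbar}^\pm:=x_{i,\hbar}^\pm\ot e_i^\pm\in F_\tau(A,\ell)\wh\ot V,
\]
so that
\begin{align*}
Y(\bar h_{i,\hbar},z) &=h_{i,\hbar}(z)\ot 1_V+1\ot Y(\al_i,z),\\
Y(\bar x_{i,\hbar}^\pm,z) &= x_{i,\hbar}^\pm(z)\ot Y(e_i^\pm,z).
\end{align*}
By Proposition \ref{prop:universal-nonlocal-va}, it then suffices to check that these fields make $F_\tau(A,\ell)\wh\ot V$ into an object of $\mathcal M_\tau$, i.e.\ that they satisfy the defining relations \eqref{eq:fqva-rel1}--\eqref{eq:fqva-rel4}. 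Uniqueness of $\rho$ is automatic, since $\{h_{i,\hbar},x_{i,\hbar}^\pm\}_{i\in I}$ topologically generates $F_\tau(A,\ell)$ as an $\hbar$-adic nonlocal vertex algebra by construction.

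The verification uses commutativity of $V$ in a crucial way. Because $V$ is an $\hbar$-adic commutative vertex algebra, the vertex operators $Y(\al_i,z),Y(e_j^\pm,z)$ all lie in $(\End_{\C[[\hbar]]}V)[[z]]$ and pairwise commute in both variables. Consequently, $[Y(\bar h_{i,\hbar},z_1),Y(\bar h_{j,\hbar},z_2)]$ collapses to $[h_{i,\hbar}(z_1),h_{j,\hbar}(z_2)]\ot 1_V$, which matches \eqref{eq:fqva-rel1} by the corresponding relation in $F_\tau(A,\ell)$. Likewise, $[Y(\bar h_{i,\hbar},z_1),Y(\bar x_{j,\hbar}^\pm,z_2)]$ reduces to $[h_{i,\hbar}(z_1),x_{j,\hbar}^\pm(z_2)]\ot Y(e_j^\pm,z_2)$; substituting \eqref{eq:fqva-rel2} in $F_\tau(A,\ell)$ and then reassembling $x_{j,\hbar}^\pm(z_2)\ot Y(e_j^\pm,z_2)=Y(\bar x_{j,\hbar}^\pm,z_2)$ produces the corresponding relation for the bar-fields.

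For the two $x$-$x$ type relations \eqref{eq:fqva-rel3} and \eqref{eq:fqva-rel4}, I would multiply the corresponding $F_\tau(A,\ell)$-identity on the right by the commuting product $Y(e_i^\pm,z_1)Y(e_j^{\pm}\!\!/\!e_j^{\mp},z_2)=Y(e_j^{\pm}\!\!/\!e_j^{\mp},z_2)Y(e_i^\pm,z_1)$. The scalar prefactors $\tau_{ij}^{\epsilon_1,\epsilon_2}(z_1-z_2)$ and $(z_1-z_2)^{n_{ij}}$ or $(z_1-z_2)^{2\delta_{ij}}$ do not interact with the $V$-factor, and both sides reassemble into products of the bar-fields. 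The only potential technical point is confirming that the tensor product construction of $\hbar$-adic nonlocal vertex algebras behaves as expected and that $1\ot Y(\al_i,z)$ genuinely commutes with $h_{j,\hbar}(w)\ot 1$ in the $\hbar$-adic topology, but these are routine features of the theory. I do not anticipate a substantive obstacle; the whole argument is essentially a bookkeeping exercise made clean by the universal property and the commutativity of $V$.
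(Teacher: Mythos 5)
Your proposal is correct, and it is exactly the argument the paper leaves implicit when it labels this statement a corollary of Proposition \ref{prop:universal-nonlocal-va}: form the tensor product $\hbar$-adic nonlocal vertex algebra $F_\tau(A,\ell)\wh\ot V$ (well-defined precisely because $V$ is commutative), observe via the commutativity of $V$ that the fields $h_{i,\hbar}(z)\ot 1+1\ot Y(\al_i,z)$ and $x_{i,\hbar}^\pm(z)\ot Y(e_i^\pm,z)$ satisfy \eqref{eq:fqva-rel1}--\eqref{eq:fqva-rel4}, and invoke the universal property for existence and uniqueness of $\rho$.
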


Let $H_0$ be the symmetric algebra of the following vector space, and let $H=H_0[[\hbar]]$:
\begin{align*}
  \bigoplus_{i\in I}\(\C[\partial] \(\C\al_i\oplus\C e_i^+\oplus\C e_i^-\)\).
\end{align*}
Then $H$ is a commutative and cocommutative bialgebra with $\Delta$ and $\varepsilon$ uniquely determined by
($i\in I$, $n\in\N$):
\begin{align*}
  &\Delta(\partial^n \al_i)=\partial^n \al_i\ot 1+1\ot \partial^n \al_i,
  \quad\Delta(\partial^n e_i^\pm)=\sum_{k=0}^n\binom{n}{k} \partial^k e_i^\pm\ot \partial^{n-k} e_i^\pm,\\
  &\varepsilon(\partial^n \al_i)=0,
  \quad\varepsilon(\partial^n e_i^\pm)=\delta_{n,0}.
\end{align*}
Let $\partial$ be the derivation on $H$ such that
\begin{align*}
  \partial (\partial^n \al_i)=\partial^{n+1}\al_i,\quad \partial(\partial^n e_i^\pm)=\partial^{n+1}e_i^\pm\quad\te{for } i\in I,\,n\in\N.
\end{align*}
It is straightforward to see that $\Delta\circ\partial=(\partial\ot 1+1\ot\partial)\circ\Delta$ and $\varepsilon\circ \partial=0$.
From Remark \ref{rem:bialg-der} we have that $(H,\partial,\Delta,\varepsilon)$ carries an $\hbar$-adic vertex bialgebra structure.
It is immediate from Corollary \ref{coro:universal-nonlocal-va} that

\begin{lem}\label{lem:comod-nonlocalVA}
There is a unique $\hbar$-adic nonlocal vertex algebra map $\rho:F_\tau(A,\ell)\to F_\tau(A,\ell)\wh\ot H$, such that
\begin{align}
  \rho(h_{i,\hbar})=h_{i,\hbar}\ot 1+\vac\ot \al_i,\quad\rho(x_{i,\hbar}^\pm)=x_{i,\hbar}^\pm\ot e_i^\pm\quad\te{for } i\in I.
\end{align}
Moreover, $(F_\tau(A,\ell),\rho)$ is an $H$-comodule nonlocal vertex algebra.
\end{lem}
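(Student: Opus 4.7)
The plan is to obtain $\rho$ directly from Corollary \ref{coro:universal-nonlocal-va} and then check the two comodule axioms by invoking the uniqueness part of Proposition \ref{prop:universal-nonlocal-va}. Since $H_0$ is the symmetric algebra of $\bigoplus_i \C[\partial](\C\al_i\oplus\C e_i^+\oplus\C e_i^-)$, the bialgebra $H=H_0[[\hbar]]$ is commutative, so by Remark \ref{rem:bialg-der} it is an $\hbar$-adic commutative vertex algebra. Applying Corollary \ref{coro:universal-nonlocal-va} to $V=H$ with the elements $\al_i$ and $e_i^\pm$ already distinguished in $H$ yields the desired $\hbar$-adic nonlocal vertex algebra homomorphism $\rho: F_\tau(A,\ell)\to F_\tau(A,\ell)\wh\ot H$ satisfying the required formulas, together with its uniqueness.

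It remains to verify the two comodule axioms $(\rho\ot 1)\circ\rho=(1\ot\Delta)\circ\rho$ and $(1\ot\varepsilon)\circ\rho=\te{Id}_{F_\tau(A,\ell)}$. Both $(\rho\ot 1)\circ\rho$ and $(1\ot\Delta)\circ\rho$ are compositions of $\hbar$-adic nonlocal vertex algebra homomorphisms from $F_\tau(A,\ell)$ to $F_\tau(A,\ell)\wh\ot H\wh\ot H$ (where $H\wh\ot H$ is again a commutative $\hbar$-adic vertex bialgebra, so the target is an $\hbar$-adic nonlocal vertex algebra). By the uniqueness part of Proposition \ref{prop:universal-nonlocal-va} applied to the natural $\mathcal M_\tau$-structure on $F_\tau(A,\ell)\wh\ot H\wh\ot H$ obtained by taking the $F_\tau(A,\ell)$-generators tensored with appropriate group-like factors, it suffices to check agreement on the generators $h_{i,\hbar}$ and $x_{i,\hbar}^\pm$. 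For $h_{i,\hbar}$ both sides evaluate to $h_{i,\hbar}\ot 1\ot 1+\vac\ot\al_i\ot 1+\vac\ot 1\ot \al_i$ using $\Delta(\al_i)=\al_i\ot 1+1\ot\al_i$; for $x_{i,\hbar}^\pm$ both sides evaluate to $x_{i,\hbar}^\pm\ot e_i^\pm\ot e_i^\pm$ since $e_i^\pm$ is grouplike, $\Delta(e_i^\pm)=e_i^\pm\ot e_i^\pm$. Similarly $(1\ot\varepsilon)\circ\rho$ is an $\hbar$-adic nonlocal vertex algebra endomorphism of $F_\tau(A,\ell)$ which on the generators gives $h_{i,\hbar}\ot 1+\vac\ot\varepsilon(\al_i)=h_{i,\hbar}$ and $x_{i,\hbar}^\pm\ot\varepsilon(e_i^\pm)=x_{i,\hbar}^\pm$; by uniqueness it equals the identity.

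The only mildly subtle point is the first step: applying Corollary \ref{coro:universal-nonlocal-va} with target $F_\tau(A,\ell)\wh\ot H$ requires knowing that, upon assigning $Y(h_{i,\hbar}\ot 1+\vac\ot\al_i,z)$ and $Y(x_{i,\hbar}^\pm\ot e_i^\pm,z)$ as the candidate fields, the defining relations \eqref{eq:fqva-rel1}--\eqref{eq:fqva-rel4} of $\mathcal M_\tau$ remain valid. This follows because $H$ is $\hbar$-adically commutative, so all contributions from the $H$-tensor factors commute and, by \eqref{eq:mod-va-for-vertex-bialg1-2} applied to $H$ as a module over itself, only contribute symmetric scalar-like terms which drop out of the commutator relations; the $\tau$-controlled $S$-locality on the first tensor factor is therefore untouched. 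This is the step that deserves the most care but reduces to the commutativity of $H$ once one unpacks the definitions.
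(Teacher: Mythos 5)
Your proposal matches the paper's approach: the paper simply states that the lemma ``is immediate from Corollary \ref{coro:universal-nonlocal-va}'' and gives no further proof, so obtaining $\rho$ from that corollary and checking the comodule axioms by uniqueness is exactly the intended argument, just written out. Your verifications on generators (using $\Delta(\al_i)=\al_i\ot 1 + 1\ot\al_i$, $\Delta(e_i^\pm)=e_i^\pm\ot e_i^\pm$, $\varepsilon(\al_i)=0$, $\varepsilon(e_i^\pm)=1$) are correct, and agreement of two $\hbar$-adic nonlocal vertex algebra homomorphisms on a generating set forces equality, which is what makes the comodule identities go through.

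One small inaccuracy: in justifying that $F_\tau(A,\ell)\wh\ot H$ with the candidate fields lies in $\mathcal M_\tau$, you cite \eqref{eq:mod-va-for-vertex-bialg1-2}, but that equation concerns $H$-\emph{module} (not comodule) vertex algebra structures and is not the relevant fact. The correct justification is simply that $H$ is a commutative vertex algebra, so vertex operators on the second tensor factor mutually commute and factor out of each of the relations \eqref{eq:fqva-rel1}--\eqref{eq:fqva-rel4}: the commutators in \eqref{eq:fqva-rel1}, \eqref{eq:fqva-rel2} lose their cross-terms, and in \eqref{eq:fqva-rel3}, \eqref{eq:fqva-rel4} the factor $Y(e_i^{\epsilon},z_1)Y(e_j^{\epsilon'},z_2)$ is symmetric under exchange, leaving the $\tau$-locality relations on the first tensor factor intact. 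This is, of course, exactly the content of Corollary \ref{coro:universal-nonlocal-va}, which the paper also leaves unproved — so you are justified in unpacking it, just with the wrong citation.
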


Recall from \cite{Li-smash} that a \emph{pseudo-endomorphism} of an $\hbar$-adic nonlocal vertex algebra $V$
is a $\C[[\hbar]]$-module map $A(z):V\to V\wh\ot\C((z))[[\hbar]]$, such that
\begin{align*}
  A(z)\vac=\vac\ot 1,\quad
  A(z_1)Y(u,z_2)v=Y(A(z_1-z_2)u,z_2)v\quad\te{for } u,v\in V.
\end{align*}
And a \emph{pseudo-derivation} of $V$ is a $\C[[\hbar]]$-module map $D(z):V\to V\wh\ot\C((z))[[\hbar]]$, such that
\begin{align*}
  [D(z_1),Y(u,z_2)]=Y(D(z_1-z_2)u,z_2)\quad\te{for } u\in V.
\end{align*}
As a straightforward $\hbar$-adic analogue of \cite[Proposition 2.11]{Li-pseudo}, we have that

\begin{prop}\label{prop:pseudo}
Let $V$ be an $\hbar$-adic nonlocal vertex algebra,
and view $\C((z))[[\hbar]]$ as an $\hbar$-adic vertex algebra with the vertex operator map
$$Y(f(z),z_0)g(z)=f(z-z_0)g(z).$$
Suppose that $A(z)$ is an $\hbar$-adic nonlocal vertex algebra homomorphism from $V$ to the tensor product $\hbar$-adic nonlocal vertex algebra $V\wh\ot \C((z))[[\hbar]]$.
Then $A(z)$ is a pseudo-endomorphism of $V$.
Moreover, let $$D(z):V\to V\wh\ot \C((z))[[\hbar]]$$ be a $\C[[\hbar]]$-module map.
We view $V[\delta]/\delta^2V[\delta]$ as an $\hbar$-adic nonlocal vertex algebra.
If $1+\delta D(z)$ is a $\C[\delta]$-linear pseudo-endomorphism of $V[\delta]/\delta^2V[\delta]$,
then $D(z)$ is a pseudo-derivation of $V$.
\end{prop}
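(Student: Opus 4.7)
The plan is to derive both parts directly from the defining homomorphism compatibility of $A(z)$, using the explicit form of the tensor-product vertex operator on $V \wh\ot \C((z))[[\hbar]]$. Since $\C((z))[[\hbar]]$ carries the vertex operator $Y(f(z), z_0) g(z) = f(z - z_0) g(z)$, the tensor-product vertex operator on $V \wh\ot \C((z))[[\hbar]]$ is
\[
Y(a \wh\ot f(z), z_0)(b \wh\ot g(z)) = Y(a, z_0) b \wh\ot f(z - z_0) g(z).
\]

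For the first part, the vacuum condition $A(z) \vac = \vac \wh\ot 1$ is immediate, since homomorphisms of $\hbar$-adic nonlocal vertex algebras preserve vacua and the vacuum of the target is $\vac \wh\ot 1$. For the compatibility with $Y$, the homomorphism identity
\[
A(z_1) Y(u, z_2) v = Y\bigl(A(z_1) u,\, z_2\bigr) A(z_1) v
\]
(with $Y$ on the right denoting the tensor-product vertex operator on $V\wh\ot\C((z_1))[[\hbar]]$) is unfolded by writing $A(z_1)u = \sum_k u_k \wh\ot p_k(z_1)$: the shift $p_k(z_1) \mapsto p_k(z_1 - z_2)$ intrinsic to the $\C((z_1))[[\hbar]]$-factor converts the first tensor slot into $A(z_1 - z_2) u$, giving the desired pseudo-endomorphism identity.

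For the second part, apply the first part to the $\hbar$-adic nonlocal vertex algebra $V[\delta]/\delta^2 V[\delta]$ with the $\C[\delta]$-linear map $A(z) = 1 + \delta D(z)$. By hypothesis $A(z)$ is a pseudo-endomorphism of $V[\delta]/\delta^2V[\delta]$; expanding that identity modulo $\delta^2$, the order-$\delta^0$ terms match trivially, while the order-$\delta$ terms---incorporating the cross contribution $Y(u, z_2) D(z_1) v$ produced by the tensor-product action on the $\delta$-component of $A(z_1) v$---yield
\[
[D(z_1), Y(u, z_2)] v = Y(D(z_1 - z_2) u, z_2) v,
\]
which is the pseudo-derivation condition.

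The main technical obstacle is the careful bookkeeping of the intrinsic shift $z \mapsto z - z_0$ produced by $Y_{\C((z))[[\hbar]]}$ against the external vertex-operator variables $z_1, z_2$ of $V$, together with the correct interpretation of $Y(A(z_1-z_2)u,z_2)v$ as the value of the tensor-product vertex operator on $A(z_1)v$. Once this convention is fixed, both parts reduce to routine calculations analogous to the corresponding classical statements in \cite{Li-pseudo}.
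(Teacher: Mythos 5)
Your proof is correct and follows the same route the paper intends (the paper itself gives no proof, citing \cite[Proposition 2.11]{Li-pseudo} as the classical analogue). Two small observations. First, the paper's displayed definition of a pseudo-endomorphism reads $A(z_1)Y(u,z_2)v=Y(A(z_1-z_2)u,z_2)v$, which appears to omit the factor $A(z_1)v$ on the right (compare the $H$-module vertex algebra axiom \eqref{eq:mod-va-for-vertex-bialg3} applied to a group-like element); you implicitly work with the corrected identity $A(z_1)Y(u,z_2)v=Y(A(z_1-z_2)u,z_2)A(z_1)v$, which is the one that actually matches both the homomorphism unfolding in your first part and the pseudo-derivation expansion in your second part, so this is the right reading. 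Second, in the second part you write ``apply the first part to $V[\delta]/\delta^2V[\delta]$,'' but the hypothesis already grants that $1+\delta D(z)$ is a pseudo-endomorphism, so the first part is not needed there; what you actually do --- expand the pseudo-endomorphism identity to order $\delta$ and isolate the cross term $Y(u,z_2)D(z_1)v$ --- is exactly the correct computation.
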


Using Corollary \ref{coro:universal-nonlocal-va} and Proposition \ref{prop:pseudo}, we have:

\begin{lem}\label{lem:def-sigma}
Let $\sigma\in\mathfrak T$. Then for each $i\in I$ there exists a pseudo-derivation $\sigma_i(z)$ on $F_\tau(A,\ell)$ such that
\begin{align}\label{eq:def-sigma-i}
  \sigma_i(z)h_{j,\hbar}=\vac\ot\sigma_{ij}(z),\quad \sigma_i(z)x_{j,\hbar}^\pm=\pm x_{j,\hbar}^\pm\ot \sigma_{ij}^{1,\pm}(z).
\end{align}
And there exists a pseudo-endomorphism $\sigma_i^\pm(x)$ on $F_\tau(A,\ell)$ such that
\begin{align}\label{eq:def-sigma-pm-i}
  \sigma_i^\pm(z)h_{j,\hbar}=h_{j,\hbar}\ot 1\mp \vac\ot\sigma_{ij}^{2,\pm}(z),\quad
  \sigma_i^\pm(z)x_{j,\hbar}^\epsilon= x_{j,\hbar}^\epsilon\ot \sigma_{ij}^{\pm,\epsilon}(z)\inv,
\end{align}
where $\epsilon=\pm$ and $i,j\in I$.
\end{lem}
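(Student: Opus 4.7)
The plan is to construct both $\sigma_i^\pm(z)$ and $\sigma_i(z)$ by applying the universal property from Corollary \ref{coro:universal-nonlocal-va} to suitably chosen auxiliary $\hbar$-adic commutative vertex algebras, and then invoking Proposition \ref{prop:pseudo} to identify the resulting homomorphisms as pseudo-endomorphisms or pseudo-derivations. Throughout, the only analytic input is that $\sigma\in\mathfrak T$ forces $\sigma_{ij}^{\pm,\epsilon}(z)$ to reduce modulo $\hbar$ to a unit in $\C[[z]]$, so its inverse $\sigma_{ij}^{\pm,\epsilon}(z)\inv$ is a well-defined element of $\C[[z]][[\hbar]]\subset \C((z))[[\hbar]]$.

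For the pseudo-endomorphisms $\sigma_i^\pm(z)$, I take $V=\C((z))[[\hbar]]$ equipped with the $\hbar$-adic commutative vertex algebra structure $Y(f(z),z_0)g(z)=f(z-z_0)g(z)$ of Proposition \ref{prop:pseudo}, and apply Corollary \ref{coro:universal-nonlocal-va} with the choices $\al_j:=\mp\sigma_{ij}^{2,\pm}(z)$ and $e_j^\epsilon:=\sigma_{ij}^{\pm,\epsilon}(z)\inv$. The resulting $\hbar$-adic nonlocal vertex algebra homomorphism $\sigma_i^\pm(z)\colon F_\tau(A,\ell)\to F_\tau(A,\ell)\wh\ot\C((z))[[\hbar]]$ satisfies \eqref{eq:def-sigma-pm-i} on the generators by construction, and the first assertion of Proposition \ref{prop:pseudo} identifies any such homomorphism with a pseudo-endomorphism.

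For the pseudo-derivation $\sigma_i(z)$, I invoke the second assertion of Proposition \ref{prop:pseudo}: letting $\tilde V=F_\tau(A,\ell)\wh\ot\C[\delta]/(\delta^2)$ regarded as an $\hbar$-adic nonlocal vertex algebra, it suffices to produce a $\C[\delta]$-linear $\hbar$-adic nonlocal vertex algebra homomorphism $\tilde V\to\tilde V\wh\ot\C((z))[[\hbar]]$ of the form $1+\delta\sigma_i(z)$. Working with the $\hbar$-adic commutative vertex algebra $V'=\C[\delta]/(\delta^2)\ot\C((z))[[\hbar]]$, I apply Corollary \ref{coro:universal-nonlocal-va} with $\al_j:=\delta\sigma_{ij}(z)$ and $e_j^\pm:=1\pm\delta\sigma_{ij}^{1,\pm}(z)$ to obtain a homomorphism $F_\tau(A,\ell)\to F_\tau(A,\ell)\wh\ot V'$; via the identification $F_\tau(A,\ell)\wh\ot V'\cong \tilde V\wh\ot\C((z))[[\hbar]]$ and $\C[\delta]$-linear extension (tensoring by the identity on the $\C[\delta]/(\delta^2)$ factor of $\tilde V$) this becomes the desired $A(z)=1+\delta\sigma_i(z)$. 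Reading off the coefficient of $\delta$ on the generators recovers \eqref{eq:def-sigma-i}, and Proposition \ref{prop:pseudo} then asserts that $\sigma_i(z)$ is a pseudo-derivation.

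The main point of care, rather than any deep obstacle, is confirming that Corollary \ref{coro:universal-nonlocal-va} genuinely applies: the invertibility of $\sigma_{ij}^{\pm,\epsilon}(z)$ supplies a legitimate $e_j^\epsilon\in V$ in the pseudo-endomorphism case, while the nilpotency $\delta^2=0$ automatically places $1\pm\delta\sigma_{ij}^{1,\pm}(z)$ in $V'$. No defining relation of $\mathcal M_\tau$ needs to be rechecked, since the universal statement in the corollary already encodes them.
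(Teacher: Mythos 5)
Your proposal is correct and is essentially the same argument the paper intends: the paper offers no proof beyond the phrase ``Using Corollary \ref{coro:universal-nonlocal-va} and Proposition \ref{prop:pseudo}, we have\ldots'', and your write-up is exactly the right way to expand that one-line citation, choosing $\al_j,e_j^\epsilon$ in $\C((z))[[\hbar]]$ (resp.\ in the nilpotent extension $\C[\delta]/(\delta^2)\wh\ot\C((z))[[\hbar]]$) so that Corollary \ref{coro:universal-nonlocal-va} produces the required homomorphisms, and then reading them as a pseudo-endomorphism and a pseudo-derivation via the two parts of Proposition \ref{prop:pseudo}. Two very minor points of polish: (i) the invertibility of $\sigma_{ij}^{\pm,\epsilon}(z)$ only forces $\sigma_{ij}^{\pm,\epsilon}(z)^{-1}\in\C((z))[[\hbar]]$ (not necessarily in $\C[[z]][[\hbar]]$ as you write), which is all the Corollary needs; and (ii) in the $\delta$-deformed step you should note explicitly that the degree-zero part of the homomorphism is $v\mapsto v\ot 1$ (this follows from the uniqueness clause of Corollary \ref{coro:universal-nonlocal-va} applied with $\al_j=0$, $e_j^\pm=1$), so the map really is of the form $1+\delta\sigma_i(z)$ as required by the hypothesis of the second part of Proposition \ref{prop:pseudo}.
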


Let $V$ be an $\hbar$-adic nonlocal vertex algebra.
A subset $U$ of $$\Hom(V,V\wh\ot\C((z))[[\hbar]])$$ is said to be \emph{$\Delta$-closed} (see \cite{Li-smash})
if for any $a(z)\in U$, there exist two sequences $\{a_{(1n)}(z)\}_{n\ge1},\{a_{(2n)}(z)\}_{n\ge 1}\subset U\subset \Hom(V,V\wh\ot\C((z))[[\hbar]])$
that converge to $0$ under the $\hbar$-adic topology and
\begin{align*}
  a(z_1)Y(v,z_2)=\sum_{n\ge 1}Y(a_{(1n)}(z_1-z_2)v,z_2)a_{(2n)}(z_1)\quad\te{for }v\in V.
\end{align*}
Let $B(V)$ be the sum of all $\Delta$-closed subspaces $U$ of
$\Hom(V,V\wh\ot\C((z))[[\hbar]])$ such that
\begin{align}
a(z)\vac\in \C[[\hbar]]\vac\quad\te{for }a(z)\in U.
\end{align}
Note that $\Hom(V,V\wh\ot\C((z))[[\hbar]])\cong\te{End}_{\C((z))[[\hbar]]}(V\wh\ot \C((z))[[\hbar]])$
is an associative algebra.

The following is the immediate $\hbar$-adic analogue of \cite[Proposition 3.4]{Li-smash}.
\begin{prop}
For any $\hbar$-adic nonlocal vertex algebra $V$, $B(V)$ is a $\Delta$-closed associative subalgebra of $\Hom(V,V\wh\ot\C((z))[[\hbar]])$
and it is closed under the derivation $\partial=\frac{d}{dz}$.
Moreover, $B(V)$ is an $\hbar$-adic nonlocal vertex algebra with the vertex operator defined by
\begin{align*}
  Y(a(z),z_0)b(z)=a(z+z_0)b(z)\quad\te{for } a(z),\,b(z)\in B(V).
\end{align*}
Furthermore, $V$ is a $B(V)$-module with the module action $Y_V(a(z),z_0)=a(z_0)$ for $a(z)\in B(V)$.
\end{prop}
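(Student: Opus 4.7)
The plan is to adapt the argument of \cite[Proposition 3.4]{Li-smash} to the $\hbar$-adic topological setting. The proof proceeds in four steps: (i) closure of $B(V)$ under composition and under $\partial=\frac{d}{dz}$; (ii) well-definedness of the vertex operator $Y(a(z),z_0)b(z)=a(z+z_0)b(z)$ on $B(V)$; (iii) verification of the $\hbar$-adic nonlocal vertex algebra axioms; and (iv) identification of $V$ as a faithful $B(V)$-module via $Y_V(a(z),z_0)=a(z_0)$.

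For step (i), given $a(z)\in U_a$ and $b(z)\in U_b$ inside $\Delta$-closed subspaces satisfying the vacuum condition, I iterate the defining identity of $\Delta$-closure:
\begin{align*}
  a(z_1)b(z_1)Y(v,z_2)=\sum_{m,n\ge 1}Y\big(a_{(1n)}(z_1-z_2)b_{(1m)}(z_1-z_2)v,\,z_2\big)\,a_{(2n)}(z_1)\,b_{(2m)}(z_1),
\end{align*}
which exhibits the span of such products as $\Delta$-closed; the double sum is $\hbar$-adically convergent because $a_{(1n)},a_{(2n)},b_{(1m)},b_{(2m)}\to 0$ in the $\hbar$-adic topology. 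The vacuum condition is preserved, as $a(z)b(z)\vac\in a(z)\C[[\hbar]]\vac\subset\C[[\hbar]]\vac$. Applying $\frac{d}{dz_1}$ to the $\Delta$-closure identity shows $\partial a$ satisfies the analogous relation with sequences $\{\partial a_{(1n)}\}$ and $\{\partial a_{(2n)}\}$, and $(\partial a)(z)\vac=0$.

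For steps (ii) and (iii), the Taylor expansion $a(z+z_0)=\sum_{k\ge 0}\frac{z_0^k}{k!}(\partial^k a)(z)$ together with step (i) shows $Y(a(z),z_0)b(z)\in B(V)\wh\ot\C((z_0))[[\hbar]]$. Taking $1_V$ as the vacuum of $B(V)$, the vacuum property is immediate: $Y(1_V,z_0)b(z)=b(z)$ and $\lim_{z_0\to 0}Y(a(z),z_0)1_V=a(z)$. The weak associativity
\begin{align*}
  Y_\E\big(Y(a(z),z_0)b(z),\,z_1\big)c(z)=Y\big(Y(a(z),z_0)b(z),\,z_1\big)c(z)
\end{align*}
unpacks, via the Taylor formula and associativity of composition in $\Hom(V,V\wh\ot\C((z))[[\hbar]])$, into the same formal identity used in \cite[Proposition 3.4]{Li-smash}; the $\hbar$-adic compatibility of the pair $(Y(a(z),z_0),Y(b(z),z_0))$ is checked by passing to $\pi_N$-images, reducing to the classical locality verification for each $N$. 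Step (iv) is then a tautology: $Y_V(Y(a(z),z_0)b(z),z_1)=a(z_1+z_0)b(z_1)$ equals $Y_\E(Y_V(a(z),z_0),z_1)Y_V(b(z),z_0)$ by definition of the operations on $B(V)$.

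The main obstacle is the bookkeeping required to guarantee $\hbar$-adic convergence of the iterated double series appearing in the $\Delta$-closure expansions, and to check that the Taylor expansion of $a(z+z_0)$ yields a well-defined element of $B(V)[[z_0]]$. Both are handled by the topological freeness assumptions together with the fact that $\Delta$-closed sequences tend to $0$ in the $\hbar$-adic topology; once these are in place the classical proof from \cite{Li-smash} carries over essentially verbatim.
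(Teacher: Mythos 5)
Your plan is the right one and matches what the paper leaves implicit: the paper gives no proof, simply declaring this the immediate $\hbar$-adic analogue of \cite[Proposition 3.4]{Li-smash}, and your four-step translation is how one would write it out. One small imprecision in step (i) is worth correcting: differentiating the $\Delta$-closure identity for $a(z)$ with respect to $z_1$ gives, by the product rule,
\begin{align*}
(\partial a)(z_1)Y(v,z_2)=\sum_{n\ge1}\Big(Y\big((\partial a_{(1n)})(z_1-z_2)v,z_2\big)a_{(2n)}(z_1)+Y\big(a_{(1n)}(z_1-z_2)v,z_2\big)(\partial a_{(2n)})(z_1)\Big),
\end{align*}
so the witnessing sequences for $\partial a$ are obtained by interleaving $\{\partial a_{(1n)}\}$ with $\{a_{(1n)}\}$ on the left and $\{a_{(2n)}\}$ with $\{\partial a_{(2n)}\}$ on the right, not by replacing $a_{(in)}$ wholesale with $\partial a_{(in)}$ as your text suggests. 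The conclusion that $B(V)$ is $\partial$-stable survives this correction, since the interleaved sequences still tend to $0$ $\hbar$-adically, and the remaining steps (ii)--(iv) are sound as written.
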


We have that

\begin{prop}\label{prop:deforming-triple}
For each $\sigma\in\mathfrak T$,
there exists an $H$-module structure $\sigma(\cdot,z)$ on $F_\tau(A,\ell)$ such that
\begin{align}\label{eq:def-sigma}
  \sigma(\al_i,z)=\sigma_i(z),\quad \sigma(e_i^\pm,z)=\sigma_i^\pm(z),\quad i\in I.
\end{align}
Moreover, $(H,\rho,\sigma)$ is a deforming triple for $F_\tau(A,\ell)$.
\end{prop}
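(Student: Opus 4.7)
The plan is to construct an $\hbar$-adic nonlocal vertex bialgebra homomorphism $\bar\sigma\colon H \to B(F_\tau(A,\ell))$ sending $\al_i \mapsto \sigma_i(z)$ and $e_i^\pm \mapsto \sigma_i^\pm(z)$, and then take the induced $B(F_\tau(A,\ell))$-action $\sigma(h,z)v := \bar\sigma(h)(z)v$ as the desired $H$-module nonlocal vertex algebra structure. Since $H$ is the free commutative $\C[[\hbar]][\partial]$-algebra on the primitive elements $\al_i$ and the group-like elements $e_i^\pm$, to produce $\bar\sigma$ it suffices to show that the prescribed operators lie in $B(F_\tau(A,\ell))$, mutually commute, and carry coproduct splittings compatible with those of $\al_i$ and $e_i^\pm$ in $H$.

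First I would verify membership in $B(F_\tau(A,\ell))$: the vacuum condition is immediate since $\sigma_i(z)\vac = 0$ and $\sigma_i^\pm(z)\vac = \vac$, while $\Delta$-closedness is built into the pseudo-derivation / pseudo-endomorphism identities — a pseudo-derivation admits the primitive-type splitting $\Delta(a) = a\ot 1 + 1\ot a$, and a pseudo-endomorphism the group-like splitting $\Delta(a) = a\ot a$, and both splittings stay inside the $\C[[\hbar]]$-span of $\{\sigma_i(z),\sigma_i^\pm(z),1\}$. Next I would show that the operators $\sigma_i(z_1), \sigma_j^\pm(z_2)$ pairwise commute. Any such commutator, built from pseudo-derivations and pseudo-endomorphisms, is determined by its action on the generating set $\{h_{k,\hbar}, x_{k,\hbar}^\pm : k \in I\}$ of $F_\tau(A,\ell)$; and on those generators the operators act by scalar series in $\C((z))[[\hbar]]$ drawn from the data $\sigma_{ij}, \sigma_{ij}^{1,\pm}, \sigma_{ij}^{\epsilon_1,\epsilon_2}$, so the commutators vanish on generators and hence on all of $F_\tau(A,\ell)$. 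The universal property of the commutative algebra $H$ then yields the algebra map $\bar\sigma$, and intertwining $\partial$ on $H$ with $d/dz$ on $B(F_\tau(A,\ell))$ — using the description of the vertex operator on $H$ from Remark \ref{rem:bialg-der} together with the vertex operator $Y(a(z),z_0)b(z)=a(z+z_0)b(z)$ on $B(F_\tau(A,\ell))$ — upgrades $\bar\sigma$ to a homomorphism of $\hbar$-adic nonlocal vertex bialgebras. The axioms \eqref{eq:mod-va-for-vertex-bialg1-2} and \eqref{eq:mod-va-for-vertex-bialg3} for $\sigma(\cdot,z)$ then follow from the $B(F_\tau(A,\ell))$-module structure on $F_\tau(A,\ell)$ combined with the matched bialgebra structure.

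Finally, to confirm that $(H,\rho,\sigma)$ is a deforming triple, I would check compatibility $\rho(\sigma(h,z)v) = (\sigma(h,z)\ot 1)\rho(v)$. Both sides are $\C[[\hbar]]$-linear in $h$, linear in $v$, and intertwine $\partial$ on $H$, so the verification reduces to the finite set of pairs $(h,v)$ with $h \in \{\al_i, e_i^\pm\}$ and $v \in \{h_{j,\hbar}, x_{j,\hbar}^\pm\}$. Each case is a direct computation from Lemmas \ref{lem:comod-nonlocalVA} and \ref{lem:def-sigma}; for example, $\rho(\sigma_i(z)x_{j,\hbar}^+) = \rho(x_{j,\hbar}^+\ot \sigma_{ij}^{1,+}(z)) = x_{j,\hbar}^+\ot e_j^+\ot \sigma_{ij}^{1,+}(z) = (\sigma_i(z)\ot 1)\rho(x_{j,\hbar}^+)$, and the other seven combinations are analogous. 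Propagation from the generating set to all of $F_\tau(A,\ell)$ uses the homomorphism property of $\rho$ together with the pseudo-derivation / pseudo-endomorphism identities satisfied by $\sigma(h,z)$. The principal obstacle is the commutativity-and-coproduct step that produces $\bar\sigma$: once one confirms the images commute in $B(F_\tau(A,\ell))$ and their coproduct splittings match those in $H$, everything else reduces to bookkeeping on generators.
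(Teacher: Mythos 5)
Your proposal is correct and follows essentially the same route as the paper's: build a $\partial$-compatible algebra map from $H$ into $B(F_\tau(A,\ell))$ sending $\al_i\mapsto\sigma_i(z)$ and $e_i^\pm\mapsto\sigma_i^\pm(z)$, observe this is an $\hbar$-adic nonlocal vertex algebra homomorphism, pull back the $B(F_\tau(A,\ell))$-module structure on $F_\tau(A,\ell)$, and then verify the module-vertex-algebra axioms and compatibility with $\rho$ on generators and propagate. One point of terminology to watch: $B(F_\tau(A,\ell))$ is not canonically an $\hbar$-adic vertex \emph{bi}algebra --- the $\Delta$-closed decompositions are not unique, so there is no preferred coproduct on the target --- and so calling $\bar\sigma$ ``a homomorphism of $\hbar$-adic nonlocal vertex bialgebras'' is informal. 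What you actually need, and effectively supply, is that the images of $\al_i$ and $e_i^\pm$ admit the primitive and group-like $\Delta$-closed splittings, that these splittings propagate multiplicatively and under $\partial$ so as to reproduce the coproduct of $H$, and that these splittings are the ones used in \eqref{eq:mod-va-for-vertex-bialg3}. The paper makes the same propagation precise by a slightly different bookkeeping device: it takes the maximal $\C[[\hbar]]$-submodule $H'\subset H$ on which \eqref{eq:mod-va-for-vertex-bialg1-2} and \eqref{eq:mod-va-for-vertex-bialg3} hold, checks that $H'$ is a vertex subalgebra containing the generators, and concludes $H'=H$. A minor plus of your write-up is that you make explicit the pairwise commutativity of $\sigma_i(z)$ and $\sigma_j^\pm(z)$, which the paper leaves implicit but which is needed for the map from the commutative algebra $H$ to be well defined.
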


\begin{proof}
Note that there is a $\C[[\hbar]]$-algebra homomorphism $\varphi:H\to B(F_\tau(A,\ell))$ defined by
\begin{align*}
  &\varphi(\partial^{n_1} \al_{i_1} \partial^{n_2}\al_{i_2}\cdots\partial^{n_r}\al_{i_r})
  =\frac{d^{n_1}\sigma_{i_1}(z)}{dz^{n_1}}\frac{d^{n_2}\sigma_{i_2}(z)}{dz^{n_2}}
  \cdots\frac{d^{n_r}\sigma_{i_r}(z)}{dz^{n_r}},\\
  &\varphi(\partial^{n_1} e_{i_1}^{\epsilon_1} \partial^{n_2}e_{i_2}^{\epsilon_2}\cdots\partial^{n_r}e_{i_r}^{\epsilon_r})
  =\frac{d^{n_1}\sigma_{i_1}^{\epsilon_1}(z)}{dz^{n_1}}\frac{d^{n_2}\sigma_{i_2}^{\epsilon_2}(z)}{dz^{n_2}}
  \cdots\frac{d^{n_r}\sigma_{i_r}^{\epsilon_r}(z)}{dz^{n_r}},
\end{align*}
where $i_1,i_2,\dots,i_r\in I$, $\epsilon_1,\epsilon_2,\dots,\epsilon_r=\pm$ and $0\le n_1,\dots,n_r\in\Z$.
It is easy to see that
\begin{align*}
  \varphi(\partial h)=\frac{d}{dz}\varphi(h)\quad\te{for } h\in H.
\end{align*}
Then $\varphi$ is an $\hbar$-adic vertex algebra homomorphism.
Hence, $F_\tau(A,\ell)$ is an $H$-module, and we denote this module action by $\sigma(\cdot,z)$.

Let $H'$ be the maximal $\C[[\hbar]]$-submodule of $H$ such that for any $h\in H'$,
one has $\sigma(h,z)\vac=\varepsilon(h)\vac$ and
\begin{align*}
  \sigma(h,z_1)Y_\tau(u,z_2)v=\sum Y_\tau(\sigma(h_{(1)},z_1-z_2)u,z_2)\sigma(h_{(2)},z_1)v,
\end{align*}
for all $u,v\in F_\tau(A,\ell)$,
where $\Delta(h)=\sum h_{(1)}\ot h_{(2)}$.
It is straightforward to check that $H'$ is an $\hbar$-adic vertex subalgebra of $H$.
Since $\set{\al_i,\,e_i^\pm}{i\in I}\subset H'$ and $H$ is generated by $\set{\al_i,\,e_i^\pm}{i\in I}$, we get that $H'=H$.
Hence, $(F_\tau(A,\ell),\sigma)$ is an $H$-module nonlocal vertex algebra.

Similarly, one can easily checks that
\begin{align}\label{eq:mod-comod-compatible}
  \rho(\sigma(h,z)v)=\(\sigma(h,z)\ot 1\)\circ\rho(v)\quad\te{for } h\in H,\,v\in F_\tau(A,\ell).
\end{align}
This proves that $(H,\rho,\sigma)$ is a deforming triple for $F_\tau(A,\ell)$.
\end{proof}

Let $\tau,\sigma\in\mathfrak T$.
Combining Theorem \ref{thm:deform-va} and Proposition \ref{prop:deforming-triple}, we have the $\hbar$-adic nonlocal vertex algebra
\begin{align}
  \mathfrak D_\sigma^\rho(F_\tau(A,\ell)).
\end{align}
Define $\tau\ast\sigma$ to be
\begin{align*}
  (\tau_{ij}(z)+\sigma_{ij}(z),\tau_{ij}^{1,\pm}(z)+\sigma_{ij}^{1,\pm}(z),\tau_{ij}^{2,\pm}(z)+\sigma_{ij}^{2,\pm}(z),
    \tau_{ij}^{\epsilon_1,\epsilon_2}(z)\sigma_{ij}^{\epsilon_1,\epsilon_2}(z))_{i,j\in I}^{\epsilon_1,\epsilon_2=\pm}.
\end{align*}
Then $\mathfrak T$ is an abelian group under $\ast$ with identity $\varepsilon$ defined by
\begin{align}\label{eq:identity-T}
  \varepsilon_{ij}(z)=0=\varepsilon_{ij}^{1,\pm}(z)=\varepsilon_{ij}^{2,\pm}(z),\quad \varepsilon_{ij}^{\epsilon_1,\epsilon_2}(z)=1,
\end{align}
where $i,j\in I$, $\epsilon_1,\epsilon_2=\pm$.

\begin{lem}\label{lem:mod-commute}
Let $\tau,\tau',\tau''\in\mathfrak T$.
Then the following relations hold true on $F_\tau(A,\ell)$
\begin{align*}
  [\tau'(h,z_1),\tau''(h',z_2)]=0\quad\te{for }h,h'\in H.
\end{align*}
\end{lem}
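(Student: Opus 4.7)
The strategy is to reduce the assertion to pairwise commutation of the basic operators produced by Lemma~\ref{lem:def-sigma}, verify those commutations on the generating set $\{\vac,h_{k,\hbar},x_{k,\hbar}^\pm:k\in I\}$ of $F_\tau(A,\ell)$, and extend the vanishing of the commutator to all of $F_\tau(A,\ell)$ by an induction based on the pseudo-derivation and pseudo-endomorphism relations. First I would note that by the proof of Proposition~\ref{prop:deforming-triple}, each action $\tau'(\cdot,z_1)$ (respectively $\tau''(\cdot,z_2)$) factors through a $\C[[\hbar]]$-algebra homomorphism $\varphi':H\to B(F_\tau(A,\ell))$ (resp.\ $\varphi''$). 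Since $H$ is commutative and topologically generated as a $\C[[\hbar]]$-algebra by $\{\partial^n\alpha_i,\partial^n e_i^\pm:i\in I,\,n\in\N\}$, the image $\varphi'(H)$ is the closed subalgebra of $B(F_\tau(A,\ell))$ generated by $\tau_i'(z_1),\tau_i'^{\pm}(z_1)$ together with their $z_1$-derivatives, and similarly for $\varphi''$. Consequently it suffices to prove $[\tau_i'^{\sharp}(z_1),\tau_j''^{\flat}(z_2)]=0$ in $B(F_\tau(A,\ell))$ for all $i,j\in I$ and decorations $\sharp,\flat\in\{\emptyset,+,-\}$, where $\emptyset$ refers to the pseudo-derivation $\tau_i$ and $\pm$ to the pseudo-endomorphisms $\tau_i^\pm$.

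For the verification on generators, I would compute directly using $\tau(h,z)\vac=\varepsilon(h)\vac$ and the explicit formulas \eqref{eq:def-sigma-i}--\eqref{eq:def-sigma-pm-i}. On $\vac$ both compositions produce a scalar multiple of $\vac$ and the commutator vanishes trivially. On each $h_{k,\hbar}$ or $x_{k,\hbar}^\pm$, every basic operator sends the generator either to $\vac$ or to the same generator, in each case tensored with a single scalar function of its own spectral variable; composing in either order therefore yields the same element of $F_\tau(A,\ell)\wh\ot\C((z_1,z_2))[[\hbar]]$, since the two scalar factors automatically commute. For example,
\[
\tau_i'(z_1)\tau_j''(z_2)x_{k,\hbar}^\pm=x_{k,\hbar}^\pm\otimes\tau_{ik}'^{1,\pm}(z_1)\tau_{jk}''^{1,\pm}(z_2)=\tau_j''(z_2)\tau_i'(z_1)x_{k,\hbar}^\pm,
\]
and analogous tensor-factor swaps handle the remaining cases.

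For the extension by induction, the defining relations for pseudo-derivations and pseudo-endomorphisms yield, for any two basic operators $T_1(z_1),T_2(z_2)$, a commutator identity of the schematic form
\begin{align*}
[T_1(z_1),T_2(z_2)]Y_\tau(u,z_0)v={}&Y_\tau\bigl([T_1(z_1-z_0),T_2(z_2-z_0)]u,z_0\bigr)(\ast_v)\\
&+Y_\tau\bigl((\ast_u),z_0\bigr)[T_1(z_1),T_2(z_2)]v,
\end{align*}
where $\ast_u$ and $\ast_v$ are decorations of $u,v$ depending on the types of $T_1,T_2$ (trivial when both are pseudo-derivations, and given by suitable compositions of $T_1,T_2$ otherwise). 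Hence the subspace of $v\in F_\tau(A,\ell)$ on which $[T_1(z_1),T_2(z_2)]v=0$ is closed under every Fourier coefficient $v\mapsto a_n v$ with $a\in\{h_{i,\hbar},x_{i,\hbar}^\pm\}$ and $n\in\Z$. Since it contains $\vac$ and each generator, and $F_\tau(A,\ell)$ is $\hbar$-adically spanned by iterated Fourier coefficients of the generators acting on $\vac$, the subspace must equal all of $F_\tau(A,\ell)$, which completes the proof. I expect the chief source of difficulty to be the accurate bookkeeping of the argument shifts $z_i\mapsto z_i-z_0$ in the induction, together with the observation that formal-power-series vanishing of the commutator on $u$ is preserved under such shifts.
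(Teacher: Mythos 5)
Your argument is correct and takes essentially the same route as the paper's: verify the commutator vanishes on the basic operators applied to generators, observe that the vanishing locus is a vertex subalgebra of $F_\tau(A,\ell)$ (your shifted-commutator identity spells out what the paper dismisses as ``easy to verify''), and conclude by generation of $H$ by $\{\al_i,e_i^\pm\}$ and of $F_\tau(A,\ell)$ by $\{h_{i,\hbar},x_{i,\hbar}^\pm\}$. One minor slip in your generator check: $\sigma_i^\pm(z)h_{j,\hbar}=h_{j,\hbar}\ot 1\mp\vac\ot\sigma_{ij}^{2,\pm}(z)$ is a two-term sum, not ``the generator tensored with a single scalar function,'' but the commutator still cancels term by term.
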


\begin{proof}
From \eqref{eq:def-sigma}, \eqref{eq:def-sigma-i} and \eqref{eq:def-sigma-pm-i}, we have that
\begin{align*}
  [\tau'(a,z_1),\tau''(b,z_2)]v=0.
\end{align*}
for $a,b\in \set{\al_i,e_i^\pm}{i\in I}$, $v\in\{h_{i,\hbar},x_{i,\hbar}^\pm\,\mid\,i\in I\}$.
Let $V$ be the $\C[[\hbar]]$-submodule of $F_\tau(A,\ell)$ consisting of elements $v$ such that
\begin{align*}
 [\tau'(a,z_1),\tau''(b,z_2)]v=0\quad \te{for }a,b=\al_i,e_i^\pm,\,i\in I.
\end{align*}
It is easy to verify that $V$ is a subalgebra of $F_\tau(A,\ell)$.
Since $F_\tau(A,\ell)$ is generated by $\{h_{i,\hbar},\,x_{i,\hbar}^\pm\,\mid\,i\in I\}$,
we get $V=F_\tau(A,\ell)$.
Since $H$ is a commutative $\hbar$-adic vertex algebra generated by $\set{\al_i,e_i^\pm}{i\in I}$, we have that
\begin{align*}
  [\tau'(h,z_1),\tau''(h',z_2)]v=0\quad\te{for }h,h'\in H,\,v\in F_\tau(A,\ell).
\end{align*}
Therefore, we complete the proof.
\end{proof}

It follows from Lemma \ref{lem:mod-commute} and Proposition \ref{prop:L-H-rho-V-compostition} that
\begin{align}\label{eq:deform-mult}
  \mathfrak D_{\tau'}^\rho\(\mathfrak D_{\tau''}^\rho(F_\tau(A,\ell))\)
  =\mathfrak D_{\tau'\ast\tau''}^\rho(F_\tau(A,\ell))\quad \te{for }\tau,\tau',\tau''\in\mathfrak T.
\end{align}
Moreover, we have that

\begin{prop}\label{prop:classical-limit}
Let $\tau,\sigma\in\mathfrak T$. Then
\begin{align*}
  \mathfrak D_\sigma^\rho(F_\tau(A,\ell))=F_{\tau\ast\sigma}(A,\ell).
\end{align*}
Moreover, $F_\tau(A,\ell)/\hbar F_\tau(A,\ell)\cong F(A,\ell)$.
\end{prop}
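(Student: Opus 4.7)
The plan for the first assertion is to use Proposition \ref{prop:universal-nonlocal-va} in both directions. Combining Lemma \ref{lem:comod-nonlocalVA} with the formula for $\mathfrak D_\sigma^\rho$ in Theorem \ref{thm:deform-va}, I first compute
\begin{align*}
\mathfrak D_\sigma^\rho(Y_\tau)(h_{i,\hbar},z) &= h_{i,\hbar}(z)+\sigma_i(z),\\
\mathfrak D_\sigma^\rho(Y_\tau)(x_{i,\hbar}^\pm,z) &= x_{i,\hbar}^\pm(z)\,\sigma_i^\pm(z).
\end{align*}
Then I would verify that these deformed fields satisfy the defining relations \eqref{eq:fqva-rel1}--\eqref{eq:fqva-rel4} of $\mathcal M_{\tau\ast\sigma}$. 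For \eqref{eq:fqva-rel1} the commutator decomposes into four brackets: the inner $[h_{i,\hbar}(z_1),h_{j,\hbar}(z_2)]$ supplies the $\tau$-term; the cross-brackets $[\sigma_i(z_1),h_{j,\hbar}(z_2)]$ and $[h_{i,\hbar}(z_1),\sigma_j(z_2)]$ evaluate to $\sigma_{ij}(z_1-z_2)$ and $-\sigma_{ji}(z_2-z_1)$ via the pseudo-derivation identity together with \eqref{eq:def-sigma-i}; and $[\sigma_i(z_1),\sigma_j(z_2)]=0$ by Lemma \ref{lem:mod-commute}. The sum reproduces \eqref{eq:fqva-rel1} with $\tau\ast\sigma$ in place of $\tau$. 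Relations \eqref{eq:fqva-rel2}--\eqref{eq:fqva-rel4} are handled analogously, where the pseudo-endomorphism action in \eqref{eq:def-sigma-pm-i} produces exactly the multiplicative combination of the $\tau^{\epsilon_1,\epsilon_2}$- and $\sigma^{\epsilon_1,\epsilon_2}$-factors. Proposition \ref{prop:universal-nonlocal-va} then yields a canonical homomorphism $\phi_\sigma:F_{\tau\ast\sigma}(A,\ell)\to\mathfrak D_\sigma^\rho(F_\tau(A,\ell))$ preserving the generators.

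To upgrade $\phi_\sigma$ to an isomorphism, I exploit that $(\mathfrak T,\ast)$ is an abelian group, so $\sigma^{-1}\in\mathfrak T$: applying the same construction to $\sigma^{-1}$ starting from $F_{\tau\ast\sigma}(A,\ell)$ produces a reverse morphism, and \eqref{eq:deform-mult} together with Remark \ref{rem:trivial-deform} (which identifies $\mathfrak D_\varepsilon^\rho$ with the trivial deformation) shows that both compositions fix the generators and therefore equal the identity by the uniqueness in Proposition \ref{prop:universal-nonlocal-va}.

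For the second assertion, I first handle the case $\tau=\varepsilon$: the relations defining $\mathcal M_\varepsilon$ collapse to the classical affine Lie algebra relations (the $\varepsilon$-corrections vanish and $\varepsilon^{\epsilon_1,\epsilon_2}=1$), so comparing the universal property of $F_\varepsilon(A,\ell)$ in Lemma \ref{lem:V-universal-prop} with the free-vertex-algebra presentation of $F(A,\ell)$ in Definition \ref{de:aff-vas} and Proposition \ref{prop:universal-va} yields $F_\varepsilon(A,\ell)\cong F(A,\ell)[[\hbar]]$. Applying the first assertion with $(\tau,\sigma)$ replaced by $(\varepsilon,\tau)$ identifies $F_\tau(A,\ell)=\mathfrak D_\tau^\rho(F_\varepsilon(A,\ell))$, so its underlying $\C[[\hbar]]$-module is $F(A,\ell)[[\hbar]]$ and the quotient modulo $\hbar$ is $F(A,\ell)$ as a vector space. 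The induced vertex algebra structure on the quotient coincides with that of $F(A,\ell)$ because the correction pseudo-operations $\tau_i(z),\tau_i^\pm(z)$ introduced by $\mathfrak D_\tau^\rho$ act on the generators via the tuple $\tau$, whose classical-limit symmetry axioms (built into the definition of $\mathfrak T$) force the correction to $Y_\varepsilon$ to vanish modulo $\hbar$ in the appropriate common expansion space. The main obstacle I anticipate is the bookkeeping in the first step, ensuring that the additive combination of $\tau_{ij},\tau^{1,\pm},\tau^{2,\pm}$ and the multiplicative combination of $\tau^{\epsilon_1,\epsilon_2}$ emerge with the correct signs and orderings so that the group law of $\mathfrak T$ is matched exactly.
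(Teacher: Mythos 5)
Your first-assertion argument matches the paper's proof step for step: compute $\mathfrak D_\sigma^\rho(Y_\tau)$ on the generators, check the $\mathcal M_{\tau\ast\sigma}$ relations using Lemma~\ref{lem:def-sigma}, apply the universal property of Proposition~\ref{prop:universal-nonlocal-va} to get $f_{\tau,\sigma}$, and invert via the group structure of $\mathfrak T$, \eqref{eq:deform-mult}, and Remark~\ref{rem:trivial-deform}. That part is correct.

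Your treatment of the second assertion contains a genuine gap. You assert that the correction pseudo-operations $\tau_i(z),\tau_i^\pm(z)$ vanish modulo $\hbar$ because of the classical-limit axioms of $\mathfrak T$. They do not: those axioms demand only $\lim_{\hbar\to 0}\tau_{ij}(z)=\lim_{\hbar\to 0}\tau_{ji}(-z)$ (symmetry), not vanishing. For the concrete $\tau$ of \eqref{eq:tau-1}, for example, $\lim_{\hbar\to 0}\tau_{ij}(z)=r_ia_{ij}r\ell\bigl((e^{z/2}-e^{-z/2})^{-2}-z^{-2}\bigr)\neq 0$, so the pseudo-derivation $\tau_i(z)$ survives in the classical limit and $\overline{\mathfrak D_\tau^\rho(Y_\varepsilon)}\neq\bar Y_\varepsilon$ as vertex operator maps on $F(A,\ell)$. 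What the symmetry axioms actually force to vanish modulo $\hbar$ are only the particular combinations occurring in the defining relations \eqref{eq:fqva-rel1}--\eqref{eq:fqva-rel4}, namely $\tau_{ij}(z)-\tau_{ji}(-z)$, $\tau_{ij}^{1,\pm}(z)+\tau_{ji}^{2,\pm}(-z)$, and $\tau_{ij}^{\epsilon_1,\epsilon_2}(z)/\tau_{ji}^{\epsilon_2,\epsilon_1}(-z)-1$. The paper's own proof does not claim what you claim: it records only the $\C$-linear isomorphism furnished by \eqref{eq:classical-limit-equal} with $(\tau,\sigma)\to(\varepsilon,\tau)$, and the comparison of vertex algebra structures on the classical limit is carried out separately (cf.\ Proposition~\ref{prop:classical-limit-V-L-pre}, which obtains a vertex algebra surjection by applying Proposition~\ref{prop:universal-va} to the mod-$\hbar$ fields, which satisfy the classical defining relations precisely because the above \emph{combinations}, not the $\tau_{ij}$ themselves, vanish). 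Your identification of the vertex algebra structure therefore does not follow from the reason you give.
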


\begin{proof}
From Proposition \ref{prop:L-H-rho-V-compostition}, we have that
\begin{align*}
  \mathfrak D_\sigma^\rho(Y_\tau)(h_{i,\hbar},z)=Y_\tau(h_{i,\hbar},z)+\sigma_i(z),\quad
  \mathfrak D_\sigma^\rho(Y_\tau)(x_{i,\hbar}^\pm,z)=Y_\tau(x_{i,\hbar}^\pm,z)\sigma_i^\pm(z).
\end{align*}
Using Lemma \ref{lem:def-sigma}, one can easily check that $$(\mathfrak D_\sigma^\rho(F_\tau(A,\ell)),\mathfrak D_\sigma^\rho(Y_\tau)(h_{i,\hbar},z),\mathfrak D_\sigma^\rho(Y_\tau)(x_{i,\hbar}^\pm,z))$$ is an object of $\mathcal M_{\tau\ast\sigma}$.
Using Proposition \ref{prop:universal-nonlocal-va}, we get an $\hbar$-adic nonlocal vertex algebra homomorphism $f_{\tau,\sigma}$ from $F_{\tau\ast\sigma}(A,\ell)$ to $\mathfrak D_\sigma^\rho(F_\tau(A,\ell))$.
From Remark \ref{rem:deformed-hom} we see that $f_{\tau,\sigma}$ is also an $\hbar$-adic nonlocal vertex algebra homomorphism from $\mathfrak D_{\sigma\inv}^\rho(F_{\tau\ast\sigma}(A,\ell))$ to
$\mathfrak D_{\sigma\inv}^\rho\(\mathfrak D_\sigma^\rho(F_\tau(A,\ell))\)$.
It follows from Remark \ref{rem:trivial-deform} and \eqref{eq:deform-mult} that
$\mathfrak D_{\sigma\inv}^\rho\(\mathfrak D_\sigma^\rho(F_\tau(A,\ell))\)=F_\tau(A,\ell)$.
Then $f_{\tau,\sigma}$ becomes an $\hbar$-adic nonlocal vertex algebra homomorphism from $\mathfrak D_{\sigma\inv}^\rho(F_{\tau\ast\sigma}(A,\ell))$ to $F_\tau(A,\ell)$.
Replacing $\tau$ with $\tau\ast\sigma$ and replacing $\sigma$ with $\sigma\inv$, we get an $\hbar$-adic nonlocal vertex algebra homomorphism
\begin{align*}
  &f_{\tau\ast\sigma,\sigma\inv}:F_\tau(A,\ell)\to \mathfrak D_{\sigma\inv}^\rho(F_{\tau\ast\sigma}(A,\ell)).
\end{align*}
Hence, $f_{\tau,\sigma}\circ f_{\tau\ast\sigma,\sigma\inv}$ is an $\hbar$-adic nonlocal vertex algebra endomorphism on $F_\tau(A,\ell)$ which maps $a$ to $a$ for all $a\in\set{h_{i,\hbar},x_{i,\hbar}^\pm}{i\in I}$.
From Proposition \ref{prop:universal-nonlocal-va}, we get that
\begin{equation}\label{tb:classical-lim-temp1}
 \begin{tikzcd}
    F_\tau(A,\ell)\arrow[rr,"{f_{\tau\ast\sigma,\sigma\inv}}"]\arrow[rrd,equal]& &
        \mathfrak D_{\sigma\inv}^\rho(F_{\tau\ast\sigma}(A,\ell))\arrow[d,"{f_{\tau,\sigma}}"]\\
    &&F_\tau(A,\ell)
  \end{tikzcd}
\end{equation}
By replacing $\tau$ and $\sigma$ with $\tau\ast\sigma$ and $\sigma\inv$ in \eqref{tb:classical-lim-temp1}, we get that
\begin{equation}
 \begin{tikzcd}
    F_{\tau\ast\sigma}(A,\ell)\ar[rr,"{f_{\tau,\sigma}}"]\ar[rrd,equal]& &
        \mathfrak D_\sigma^\rho(F_{\tau}(A,\ell))\ar[d,"{f_{\tau\ast\sigma,\sigma\inv}}"]\\
    &&F_{\tau\ast\sigma}(A,\ell)
  \end{tikzcd}
\end{equation}
Notice that
\begin{align*}
  &\mathfrak D_{\sigma\inv}^\rho(F_{\tau\ast\sigma}(A,\ell))/\hbar \mathfrak D_{\sigma\inv}^\rho(F_{\tau\ast\sigma}(A,\ell))
  =F_{\tau\ast\sigma}(A,\ell)/\hbar F_{\tau\ast\sigma}(A,\ell),\\
  &\mathfrak D_\sigma^\rho(F_{\tau}(A,\ell))/\hbar \mathfrak D_\sigma^\rho(F_{\tau}(A,\ell))
  =F_{\tau}(A,\ell)/\hbar F_{\tau}(A,\ell).
\end{align*}
Then $f_{\tau,\sigma}$ and $f_{\tau\ast\sigma,\sigma\inv}$ induce the following $\C$-linear isomorphisms
\begin{equation}\label{eq:classical-limit-equal}
  \begin{tikzcd}
    F_{\tau\ast\sigma}(A,\ell)/\hbar F_{\tau\ast\sigma}(A,\ell)\arrow[r, rightharpoonup, shift left=0.25ex]\arrow[r, leftharpoondown, shift right=0.25ex]&F_{\tau}(A,\ell)/\hbar F_{\tau}(A,\ell)\ar[d,equal]\\
    &\mathfrak D_\sigma^\rho(F_{\tau}(A,\ell))/\hbar \mathfrak D_\sigma^\rho(F_{\tau}(A,\ell)).
  \end{tikzcd}
\end{equation}
Since both $F_{\tau\ast\sigma}(A,\ell)$ and $\mathfrak D_\sigma^\rho(F_{\tau}(A,\ell))$ are topologically free,
we get that $f_{\tau,\sigma}:F_{\tau\ast\sigma}(A,\ell)\to \mathfrak D_\sigma^\rho(F_{\tau}(A,\ell))$ is an $\hbar$-adic nonlocal vertex algebra isomorphism.

From \eqref{eq:identity-T}, we see that $F_\varepsilon(A,\ell)=F(A,\ell)[[\hbar]]$ as $\hbar$-adic vertex algebras.
Recall that $\varepsilon$ is the identity of $\mathcal T$.
Then by replacing $\tau$ and $\sigma$ with $\varepsilon$ and $\tau$ in \eqref{eq:classical-limit-equal}, we get that $F_\tau(A,\ell)/\hbar F_\tau(A,\ell)\cong F(A,\ell)$.
\end{proof}

For $\tau\in\mathfrak T$, we note that
\begin{align*}
  \tau\inv=(-\tau_{ij}(z),-\tau_{ij}^{1,\pm}(z),-\tau_{ij}^{2,\pm}(z),\tau_{ij}^{\epsilon_1,\epsilon_2}(z)\inv)_{i,j\in I}^{\epsilon_1,\epsilon_2=\pm}.
\end{align*}
It is immediate from Theorem \ref{thm:qva} that

\begin{thm}\label{thm:S-tau}
For any $\tau\in\mathfrak T$, $F_\tau(A,\ell)$ is an $\hbar$-adic quantum vertex algebra with the quantum Yang-Baxter operator $S_\tau(z)$ defined by
\begin{align*}
  S_\tau(z)(v\ot u)=\sum \tau(u_{(2)},-z)v_{(1)}\ot \tau\inv(v_{(2)},z)u_{(1)}\quad \te{for }u,v\in F_\tau(A,\ell).
\end{align*}
Moreover, for any $i,j\in I$ and $\epsilon_1,\epsilon_2=\pm$, we have that
\begin{align}
  &S_\tau(z)(h_{j,\hbar}\ot h_{i,\hbar})=h_{j,\hbar}\ot h_{i,\hbar}+\vac\ot\vac\ot \(\tau_{ij}(-z)-\tau_{ji}(z)\),\label{eq:S-tau-1}\\
  &S_\tau(z)(x_{j,\hbar}^\pm\ot h_{i,\hbar})=x_{j,\hbar}^\pm\ot h_{i,\hbar}\pm x_{j,\hbar}^\pm\ot \vac \ot \big(\tau_{ij}^{1,\pm}(-z)+\tau_{ji}^{2,\pm}(z)\big),\label{eq:S-tau-2}\\
  &S_\tau(z)(h_{j,\hbar}\ot x_{i,\hbar}^\pm)=h_{j,\hbar}\ot x_{i,\hbar}^\pm\mp\vac\ot x_{i,\hbar}^\pm
  \ot\big(\tau_{ij}^{2,\pm}(-z)+\tau_{ji}^{1,\pm}(z)\big),\label{eq:S-tau-3}\\
  &S_\tau(z)(x_{j,\hbar}^{\epsilon_1}\ot x_{i,\hbar}^{\epsilon_2})=x_{j,\hbar}^{\epsilon_1}\ot x_{i,\hbar}^{\epsilon_2}\ot\tau_{ji}^{\epsilon_1,\epsilon_2}(z)  \tau_{ij}^{\epsilon_2,\epsilon_1}(-z)\inv.\label{eq:S-tau-4}
\end{align}
\end{thm}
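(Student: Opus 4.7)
The plan is to recognize $F_\tau(A,\ell)$ as a deformation of the undeformed object $F_\varepsilon(A,\ell) = F(A,\ell)[[\hbar]]$ and then invoke Theorem \ref{thm:qva} directly. More precisely, Proposition \ref{prop:classical-limit} applied with $\varepsilon$ in the role of $\tau$ and $\tau$ in the role of $\sigma$ identifies $F_\tau(A,\ell) = \mathfrak D_\tau^\rho(F_\varepsilon(A,\ell))$ as $\hbar$-adic nonlocal vertex algebras, and by Proposition \ref{prop:deforming-triple} the triple $(H,\rho,\tau)$ is a deforming triple for $F_\varepsilon(A,\ell)$. Since $F_\varepsilon(A,\ell)$ is a genuine $\hbar$-adic vertex algebra (the $\hbar$-adic completion of $F(A,\ell)$), Theorem \ref{thm:qva} applies as soon as $\tau$ is shown to be invertible in $\mathfrak L_H^\rho(F_\varepsilon(A,\ell))$.

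First I would check that the element
\[
\tau^{-1} = \bigl(-\tau_{ij}(z),\,-\tau_{ij}^{1,\pm}(z),\,-\tau_{ij}^{2,\pm}(z),\,\tau_{ij}^{\epsilon_1,\epsilon_2}(z)^{-1}\bigr)
\]
lies in $\mathfrak T$; this is immediate from the limit conditions in the definition of $\mathfrak T$, since $\lim_{\hbar\to 0}\tau_{ij}^{\epsilon_1,\epsilon_2}(z)\in\C[[z]]^\times$ guarantees that the multiplicative inverses exist in $\C((z))[[\hbar]]$. Proposition \ref{prop:deforming-triple} then provides the associated $H$-module nonlocal vertex algebra structure, which lies in $\mathfrak L_H^\rho(F_\varepsilon(A,\ell))$. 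Lemma \ref{lem:mod-commute} gives the commutation of $\tau$ and $\tau^{-1}$, and the identity $\tau\ast\tau^{-1}=\varepsilon$ can be verified by evaluating both sides on the generators $h_{i,\hbar}, x_{i,\hbar}^\pm$ at the Hopf-algebra generators $\al_i, e_i^\pm$ using \eqref{eq:def-sigma-i} and \eqref{eq:def-sigma-pm-i}; the verification is elementary since in each case only two Sweedler components contribute nontrivially.

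With invertibility established, Theorem \ref{thm:qva} produces on $F_\tau(A,\ell)$ the quantum Yang--Baxter operator
\[
S_\tau(z)(v\ot u)=\sum \tau(u_{(2)},-z)v_{(1)}\ot \tau^{-1}(v_{(2)},z)u_{(1)}.
\]
To obtain \eqref{eq:S-tau-1}--\eqref{eq:S-tau-4}, I would substitute the coproducts from Lemma \ref{lem:comod-nonlocalVA}, namely $\rho(h_{i,\hbar})=h_{i,\hbar}\ot 1+\vac\ot\al_i$ and $\rho(x_{i,\hbar}^\pm)=x_{i,\hbar}^\pm\ot e_i^\pm$, and use the explicit actions of $\tau(\al_i,z),\tau(e_i^\pm,z),\tau^{-1}(\al_i,z),\tau^{-1}(e_i^\pm,z)$ on generators from Lemma \ref{lem:def-sigma} and Proposition \ref{prop:deforming-triple}, together with $\tau(h,z)\vac=\varepsilon(h)\vac$ and the values $\varepsilon(\al_i)=0$, $\varepsilon(e_i^\pm)=1$. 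For example, in $S_\tau(z)(h_{j,\hbar}\ot h_{i,\hbar})$ the four Sweedler terms reduce to the diagonal $h_{j,\hbar}\ot h_{i,\hbar}$ plus two contributions $\vac\ot\vac\ot\tau_{ij}(-z)$ and $-\vac\ot\vac\ot\tau_{ji}(z)$, while the $(\al_i,\al_j)$ cross term vanishes because $\varepsilon$ kills $\al_i$; the remaining three identities follow by the same bookkeeping.

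The main technical hurdle is the invertibility step, since it is the only place where one must use structural properties of $\mathfrak L_H^\rho(F_\varepsilon(A,\ell))$ beyond the formal definitions; Lemma \ref{lem:mod-commute} is precisely the input that makes this go through cleanly. Once invertibility is in hand, Theorem \ref{thm:qva} does all the heavy lifting and the four explicit formulas are routine substitutions into the universal formula for $S_\tau(z)$.
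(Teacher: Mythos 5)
Your proposal is correct and follows essentially the same route as the paper: the paper likewise observes that $\tau^{-1}$ is the indicated element of $\mathfrak T$ and then declares the theorem immediate from Theorem \ref{thm:qva}, applied (exactly as you do) to the deforming triple $(H,\rho,\tau)$ on $F_\varepsilon(A,\ell)=F(A,\ell)[[\hbar]]$, which is identified with $F_\tau(A,\ell)$ via Proposition \ref{prop:classical-limit}. You simply make explicit the invertibility check that the paper leaves to the reader, using Lemma \ref{lem:mod-commute} and evaluation on generators.
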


\section{Construction of quantum affine vertex algebras}\label{sec:V-tau-L-tau}

Let $\ell\in\C$.
In the rest of this paper, we fix a special $$\tau=(\tau_{ij}(z),\tau_{ij}^{1,\pm}(z),\tau_{ij}^{2,\pm}(z),\tau_{ij}^{\epsilon_1,\epsilon}(z))_{i,j\in I}^{\epsilon_1,\epsilon_2=\pm}\in\mathfrak T$$
defined as follows
\begin{align}
  &\tau_{ij}(z)=[r_ia_{ij}]_{q^{\pd{z}}}[r\ell]_{q^{\pd{z}}}q^{r\ell\pd{z}}\frac{e^{-z}}{(1-e^{-z})^2}
    -r_ia_{ij}r\ell z^{-2},\label{eq:tau-1}\\
  &\tau_{ij}^{1,\pm}(z)=\tau_{ij}^{2,\pm}(z)=[r_ia_{ij}]_{q^{\pd{z}}}q^{r\ell\pd{z}}\frac{1+e^{-z}}{2-2e^{-z}}
    -r_ia_{ij}z\inv,\label{eq:tau-2}\\
  &\tau_{ij}^{\pm,\pm}(z)=\begin{cases}
  \(e^{z/2}-e^{-z/2}\)\inv\(q_i\inv e^{z/2}-q_i e^{-z/2}\),&\mbox{if }a_{ij}> 0,\\
  z\inv\(q_i^{-a_{ij}/2} e^{z/2}-q_i^{a_{ij}/2}e^{-z/2}\),&\mbox{if }a_{ij}\le 0,
  \end{cases}
  \label{eq:tau-3}\\
  &\tau_{ij}^{+,-}(z)=z^{-\delta_{ij}}(z+2r\ell\hbar)^{\delta_{ij}},\label{eq:tau-4}\\
  &\tau_{ij}^{-,+}(z)=z^{-\delta_{ij}}(z-2r\ell\hbar)^{\delta_{ij}}
  g_{ij,\hbar}(z)\inv,\label{eq:tau-5}
\end{align}
where $g_{ij,\hbar}(z)=(1-q_i^{a_{ij}}e^{-z})/(q_i^{a_{ij}}-e^{-z})$.
Then the equations \eqref{eq:fqva-rel1}, \eqref{eq:fqva-rel2}, \eqref{eq:fqva-rel3}, \eqref{eq:fqva-rel4} become:
\begin{align}
  &[h_{i,\hbar}(z_1),h_{j,\hbar}(z_2)]\label{eq:local-h-1}\\
  &\quad=[r_ia_{ij}]_{q^{\pd{z_2}}}[r\ell]_{q^{\pd{z_2}}}
  \(\iota_{z_1,z_2}q^{-r\ell\pd{z_2}}-\iota_{z_2,z_1}q^{r\ell\pd{z_2}}\)\frac{e^{-z_1+z_2}}{(1-e^{-z_1+z_2})^2},\nonumber\\
  &[h_{i,\hbar}(z_1),x_{j,\hbar}^\pm(z_2)]\label{eq:local-h-2}\\
  &\quad=\pm x_{j,\hbar}^\pm(z_2)[r_ia_{ij}]_{q^{\pd{z_2}}}
  \(\iota_{z_1,z_2}q^{-r\ell\pd{z_2}}-\iota_{z_2,z_1}q^{r\ell\pd{z_2}}\)\frac{1+e^{-z_1+z_2}}{2-2e^{-z_1+z_2}},\nonumber\\
  &\iota_{z_1,z_2}\frac{1-q_i^{a_{ij}}e^{-z_1+z_2}}{(1-e^{-z_1+z_2})^{\delta_{ij}}} x_{i,\hbar}^\pm(z_1)x_{j,\hbar}^\pm(z_2)\label{eq:local-h-3-pre}\\
  &\quad  =\iota_{z_2,z_1}\frac{q_i^{a_{ij}}-e^{-z_1+z_2}}{(1-e^{-z_1+z_2})^{\delta_{ij}}} x_{j,\hbar}^\pm(z_2)x_{i,\hbar}^\pm(z_1),\nonumber\\
  &(z_1-z_2)^{\delta_{ij}}(z_1-z_2+2r\ell\hbar)^{\delta_{ij}}\label{eq:local-h-4}\\
  &\quad\times\(x_{i,\hbar}^+(z_1)x_{j,\hbar}^-(z_2)
    -\iota_{z_2,z_1}g_{ij,\hbar}(z_1-z_2)
  x_{j,\hbar}^-(z_2)x_{i,\hbar}^+(z_1)\)=0.\nonumber
\end{align}
For $i,j\in I$, we set
\begin{align}
  &f_{ij,\hbar}(z)=z^{n_{ij}}\tau_{ij}^{+,+}(z)=
  \frac{(q_i^{-a_{ij}/2} e^{z/2}-q_i^{a_{ij}/2}e^{-z/2})}
  {(e^{z/2}-e^{-z/2})^{\delta_{ij}}}.
\end{align}
Then the relation \eqref{eq:local-h-3-pre} is equivalent to the following equation:
\begin{align}
  &\iota_{z_1,z_2}f_{ij}(z_1-z_2) x_{i,\hbar}^\pm(z_1)x_{j,\hbar}^\pm(z_2)\label{eq:local-h-3}\\
    =&-(-1)^{\delta_{ij}}\iota_{z_2,z_1}f_{ji}(z_2-z_1) x_{j,\hbar}^\pm(z_2)x_{i,\hbar}^\pm(z_1).\nonumber
\end{align}

It is straightforward to verify that
\begin{lem}\label{lem:special-tau-tech0}
Let $i,j\in I$. Then
\begin{align}
  &\tau_{ij}(z)-\tau_{ji}(-z)=[r_ia_{ij}]_{q^{\pd{z}}}[r\ell]_{q^{\pd{z}}}
  \(q^{r\ell\pd{z}}-q^{-r\ell\pd{z}}\)\frac{e^{-z}}{(1-e^{-z})^2},
    \label{eq:special-tau-tech0-1}\\
  &\tau_{ij}^{1,+}(z)+\tau_{ji}^{2,+}(-z)
  =[r_ia_{ij}]_{q^{\pd{z}}}\(q^{r\ell\pd{z}}-q^{-r\ell\pd{z}}\)\frac{1+e^{-z}}{2-2e^{-z}}\label{eq:special-tau-tech0-2}\\
  &\quad=\tau_{ij}^{1,-}(z)+\tau_{ji}^{2,-}(-z),
    \nonumber\\
  &\tau_{ij}^{\pm,\pm}(z)\tau_{ji}^{\pm,\pm}(-z)\inv=
  g_{ij,\hbar}(z)
  =\tau_{ij}^{\pm,\mp}(z)\inv\tau_{ji}^{\mp,\pm}(-z).
  \label{eq:special-tau-tech0-3-4}
\end{align}
\end{lem}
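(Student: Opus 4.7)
The plan is to verify all three identities by direct computation from the defining formulas \eqref{eq:tau-1}--\eqref{eq:tau-5}, exploiting the symmetrizability relation $r_ia_{ij}=r_ja_{ji}$ (equivalently $q_i^{a_{ij}}=q_j^{a_{ji}}$) and the symmetry properties of the two transcendental kernels involved. The key observation for \eqref{eq:special-tau-tech0-1} is that $F(z):=\frac{e^{-z}}{(1-e^{-z})^2}$ is an even function: indeed $1-e^z=-e^z(1-e^{-z})$ gives $F(-z)=F(z)$ immediately. Since the quantum integer symbols satisfy $[n]_{q\inv}=[n]_q$, the operators $[r_ia_{ij}]_{q^{\pd{z}}}$ and $[r\ell]_{q^{\pd{z}}}$ are invariant under $\pd{z}\mapsto-\pd{z}$, so the only effect of substituting $z\mapsto -z$ in $\tau_{ji}(z)$ is to send the translation factor $q^{r\ell\pd{z}}$ to $q^{-r\ell\pd{z}}$; the polynomial tail $r_ja_{ji}r\ell z^{-2}=r_ia_{ij}r\ell z^{-2}$ is even and cancels. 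Combining these gives \eqref{eq:special-tau-tech0-1}.

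For \eqref{eq:special-tau-tech0-2} I would use the analogous observation that $G(z):=\frac{1+e^{-z}}{2-2e^{-z}}$ is odd (again by $1-e^z=-e^z(1-e^{-z})$). Under $z\mapsto -z$ the tail $r_ia_{ij}z\inv$ is odd and cancels, while $G$ acquires a sign that combines with the flip of $q^{r\ell\pd{z}}$ to produce exactly the claimed difference $[r_ia_{ij}]_{q^{\pd{z}}}(q^{r\ell\pd{z}}-q^{-r\ell\pd{z}})G(z)$. Because the resulting expression is independent of the sign $\pm$ in the superscript of $\tau^{1,\pm}$ and $\tau^{2,\pm}$, both equalities in \eqref{eq:special-tau-tech0-2} hold simultaneously.

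For \eqref{eq:special-tau-tech0-3-4} I would split into the cases $i=j$ (where $a_{ii}=2>0$) and $i\ne j$ (where $a_{ij}\le 0$). Writing $\alpha=q_i^{a_{ij}/2}=q_j^{a_{ji}/2}$, in both cases the ratio $\tau_{ij}^{\pm,\pm}(z)\tau_{ji}^{\pm,\pm}(-z)\inv$ reduces to $\frac{\alpha\inv e^{z/2}-\alpha e^{-z/2}}{\alpha e^{z/2}-\alpha\inv e^{-z/2}}$ after the prefactors $(e^{z/2}-e^{-z/2})^{\delta_{ij}}$ and $z^{n_{ij}}$ cancel; multiplying numerator and denominator by $\alpha e^{-z/2}$ yields $\frac{1-\alpha^2e^{-z}}{\alpha^2-e^{-z}}=g_{ij,\hbar}(z)$. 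For the mixed sign identity, substituting $z\mapsto -z$ in $\tau_{ji}^{-,+}$ and $\tau_{ji}^{+,-}$ and tracking the signs in $z^{-\delta_{ij}}(z\pm 2r\ell\hbar)^{\delta_{ij}}$ causes the rational monomial factors to cancel in $\tau_{ij}^{+,-}(z)\inv\tau_{ji}^{-,+}(-z)$ (and similarly for the other sign), reducing the statement to $g_{ij,\hbar}(z)g_{ji,\hbar}(-z)=1$. This last identity is immediate from the computation
\begin{align*}
(1-\alpha e^{-z})(1-\alpha e^{z})=1+\alpha^{2}-\alpha(e^{z}+e^{-z})=(\alpha-e^{-z})(\alpha-e^{z}).
\end{align*}

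No step presents a genuine obstacle; the lemma is marked ``straightforward to verify'' precisely because everything reduces to routine manipulation. The only delicate point is the careful sign bookkeeping when substituting $z\mapsto -z$ both inside the differential operator $\pd{z}$ and inside the monomials $z^{-\delta_{ij}}(z\pm 2r\ell\hbar)^{\delta_{ij}}$, which is what makes the cancellations in (3) work cleanly.
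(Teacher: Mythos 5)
Your verification is correct. The paper itself offers no proof of this lemma, merely introducing it with the phrase ``It is straightforward to verify that,'' so there is no argument in the paper to compare against; your direct computation is exactly the intended check. Each of the three steps is sound: the parity observations $\frac{e^{-z}}{(1-e^{-z})^2}$ even and $\frac{1+e^{-z}}{2-2e^{-z}}$ odd, together with $[n]_{s\inv}=[n]_s$ and symmetrizability $r_ia_{ij}=r_ja_{ji}$, handle \eqref{eq:special-tau-tech0-1}--\eqref{eq:special-tau-tech0-2}; the case split on $i=j$ versus $i\ne j$ with $\alpha=q_i^{a_{ij}/2}$ and the cancellation of matching $(-1)$ factors from the prefactors $(e^{z/2}-e^{-z/2})^{-\delta_{ij}}z^{-n_{ij}}$ handles the first equality in \eqref{eq:special-tau-tech0-3-4}; and reducing the mixed-sign identity to $g_{ij,\hbar}(z)g_{ji,\hbar}(-z)=1$ is correct. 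The one cosmetic slip is that, with your notation $\alpha=q_i^{a_{ij}/2}$, the final algebraic identity should read $(1-\alpha^2e^{-z})(1-\alpha^2e^z)=(\alpha^2-e^{-z})(\alpha^2-e^z)$ since $g_{ij,\hbar}$ is built from $q_i^{a_{ij}}=\alpha^2$, not $\alpha$; but the identity you wrote is true for any constant, so the argument goes through unchanged.
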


Moreover, we have that

\begin{lem}\label{lem:special-tau-tech1}
Let
\begin{align}\label{eq:def-op-F}
F(z)=(q^z-q^{-z})/z\in \hbar\C[z^2][[\hbar]].
\end{align}
Then for any $i,j\in I$, we have that
\begin{align}
  &F\(\pd{z}\)\(\tau_{ij}(z)-\tau_{ji}(-z)\)\label{eq:special-tau-tech1-3}\\
  &\quad=\( q^{-r\ell\pd{z}}-q^{r\ell\pd{z}} \)\(\tau_{ij}^{1,+}(z)+\tau_{ji}^{2,+}(-z)\),\nonumber\\
  &F\(\pd{z}\)\(\tau_{ij}^{1,\pm}(z)+\tau_{ji}^{2,\pm}(-z)\)
  =\mp\(q^{r\ell\pd{z}}-q^{-r\ell\pd{z}}\)\log\frac{\tau_{ij}^{+,\pm}(z)}{\tau_{ji}^{\pm,+}(-z)}.\label{eq:special-tau-tech1-4}
\end{align}
\end{lem}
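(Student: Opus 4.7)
The proof reduces, via Lemma \ref{lem:special-tau-tech0}, to a direct calculation based on two ingredients: the operator identity $F(\pd{z})\circ\pd{z}=q^{\pd{z}}-q^{-\pd{z}}$ (immediate from $F(w)w=q^w-q^{-w}$), and the observation that $F(\pd{z})$, $[r_ia_{ij}]_{q^{\pd{z}}}$, $[r\ell]_{q^{\pd{z}}}$, and $q^{\pm r\ell\pd{z}}$ are all formal power series in $\pd{z}$ with coefficients in $\C[[\hbar]]$ and therefore mutually commute.

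For identity \eqref{eq:special-tau-tech1-3}, I would substitute \eqref{eq:special-tau-tech0-1} and \eqref{eq:special-tau-tech0-2} on the two sides and exploit the elementary antiderivative identity
\begin{align*}
\frac{e^{-z}}{(1-e^{-z})^2}=\pd{z}\left(-\frac{1+e^{-z}}{2-2e^{-z}}\right).
\end{align*}
Applying $F(\pd{z})\pd{z}=q^{\pd{z}}-q^{-\pd{z}}$ together with the telescoping identity $[r\ell]_{q^{\pd{z}}}(q^{\pd{z}}-q^{-\pd{z}})=q^{r\ell\pd{z}}-q^{-r\ell\pd{z}}$ collapses both sides to the common expression $-[r_ia_{ij}]_{q^{\pd{z}}}(q^{r\ell\pd{z}}-q^{-r\ell\pd{z}})^2\frac{1+e^{-z}}{2-2e^{-z}}$.

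For identity \eqref{eq:special-tau-tech1-4}, the analogous antiderivative
\begin{align*}
\frac{1+e^{-z}}{2-2e^{-z}}=\pd{z}\left(\frac{z}{2}+\log(1-e^{-z})\right)
\end{align*}
yields $F(\pd{z})\frac{1+e^{-z}}{2-2e^{-z}}=\hbar+\log\frac{1-q^{-1}e^{-z}}{1-qe^{-z}}$, and the constant $\hbar$ lies in the kernel of the outer operator $q^{r\ell\pd{z}}-q^{-r\ell\pd{z}}$. For $\pm=+$ it then remains to prove the scalar identity
\begin{align*}
[r_ia_{ij}]_{q^{\pd{z}}}\log\frac{1-q^{-1}e^{-z}}{1-qe^{-z}}+\log g_{ij,\hbar}(z)\in\C[[\hbar]],
\end{align*}
which I would verify by expanding both summands in powers of $e^{-z}$ using $\log(1-cu)=-\sum_{n\ge1}c^nu^n/n$, and applying the eigenvalue formula $[r_ia_{ij}]_{q^{\pd{z}}}e^{-nz}=\frac{q^{nr_ia_{ij}}-q^{-nr_ia_{ij}}}{q^n-q^{-n}}e^{-nz}$: the two copies of $\sum_{n\ge1}\frac{q^{nr_ia_{ij}}-q^{-nr_ia_{ij}}}{n}e^{-nz}$ cancel, leaving the constant $-r_ia_{ij}\hbar$. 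The case $\pm=-$ follows from the identity $\tau_{ij}^{+,-}(z)/\tau_{ji}^{-,+}(-z)=g_{ji,\hbar}(-z)=g_{ij,\hbar}(z)^{-1}$ (using the symmetry $r_ia_{ij}=r_ja_{ji}$), which precisely produces the required sign flip $\mp$ on the right-hand side.

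The main obstacle is purely sign and substitution bookkeeping: keeping the $\pm$ signs consistent across all substitutions, and confirming at each step that every additive constant arising from applying $F(\pd{z})$ to an antiderivative lies in the kernel of an outer $q^{r\ell\pd{z}}-q^{-r\ell\pd{z}}$ factor so that it drops out of the final identity.
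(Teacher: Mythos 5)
Your proof is correct and follows essentially the same route as the paper: reduce via Lemma \ref{lem:special-tau-tech0}, recognize the antiderivative $\frac{e^{-z}}{(1-e^{-z})^2}=-\pd{z}\frac{1+e^{-z}}{2-2e^{-z}}$, and combine $F(\pd{z})\pd{z}=q^{\pd{z}}-q^{-\pd{z}}$ with the $q$-integer telescoping $[r\ell]_{q^{\pd{z}}}(q^{\pd{z}}-q^{-\pd{z}})=q^{r\ell\pd{z}}-q^{-r\ell\pd{z}}$. The paper simply declares the proof of \eqref{eq:special-tau-tech1-4} to be ``similar,'' and your explicit verification of it — in particular the observation that the additive constant produced by $F(\pd{z})$ acting on the antiderivative lies in the kernel of the outer $q^{r\ell\pd{z}}-q^{-r\ell\pd{z}}$ — is a faithful and careful execution of that omitted step.
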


\begin{proof}
From \eqref{eq:special-tau-tech0-2}, we have that
\begin{align*}
  &\(q^{-r\ell\pd{z}}-q^{r\ell\pd{z}}\)(\tau_{ij}^{1,+}(z)+\tau_{ji}^{2,+}(-z))\\
  =&\(q^{-\pd{z}}-q^{\pd{z}}\)[r\ell]_{q^{\pd{z}}}(\tau_{ij}^{1,+}(z)+\tau_{ji}^{2,+}(-z))\\
  =&F\(\pd{z}\)\pd{z}[r\ell]_{q^{\pd{z}}}(\tau_{ij}^{1,+}(z)+\tau_{ji}^{2,+}(-z))\\
  =&F\(\pd{z}\)[r_ia_{ij}]_{q^{\pd{z}}}[r\ell]_{q^{\pd{z}}}
  \(q^{r\ell\pd{z}}-q^{-r\ell\pd{z}}\)\frac{e^{-z}}{(1-e^{-z})^2}.
\end{align*}
Combining this with \eqref{eq:special-tau-tech0-1}, we prove the equation \eqref{eq:special-tau-tech1-3}.
The proof of the equation \eqref{eq:special-tau-tech1-4} is similar.
\end{proof}

\begin{de}\label{de:V-tau}
For $\ell\in\C$, we let $R_1^\ell$ be the minimal closed ideal of $F_\tau(A,\ell)$ such that $[R_1^\ell]=R_1^\ell$ and contains the following elements
\begin{align}
  &\(x_{i,\hbar}^+\)_0x_{i,\hbar}^--(q_i-q_i\inv)\inv
    \(\vac-E(h_{i,\hbar})\)\quad\te{for } i\in I,\label{eq:x+0x-}\\
  &\(x_{i,\hbar}^+\)_1x_{i,\hbar}^-+2r\ell\hbar(q_i-q_i\inv)\inv
  E(h_{i,\hbar})\quad\te{for } i\in I,\label{eq:x+1x-}\\
  &\(x_{i,\hbar}^\pm\)_0^{m_{ij}}x_{j,\hbar}^\pm\quad\te{for } i,j\in I\,\te{with }a_{ij}<0,\label{eq:serre}
\end{align}
where
\begin{align}\label{eq:def-E}
  E(h_{i,\hbar})=\(\frac{F(r_i+r\ell)}{F(r_i-r\ell)}\)^\half
    \exp\(\(-q^{-r\ell\partial} F\(\partial\)h_{i,\hbar}\)_{-1}\)\vac.
\end{align}
Define
\begin{align}
  V_{\hat\fg,\hbar}(\ell,0)=F_\tau(A,\ell)/R_1^\ell.
\end{align}
\end{de}

\begin{de}\label{de:L-tau}
Let $\ell\in \Z_+$.
We let $R_2^\ell$ be the minimal closed ideal of $V_{\hat\fg,\hbar}(\ell,0)$ such that $[R_2^\ell]=R_2^\ell$ and contains the following elements
\begin{align}
  & \(x_{i,\hbar}^\pm\)_{-1}^{r\ell/r_i}x_{i,\hbar}^\pm\quad\te{for } i\in I.\label{eq:integrable}
\end{align}
Define
\begin{align}
  L_{\hat\fg,\hbar}(\ell,0)=V_{\hat\fg,\hbar}(\ell,0)/R_2^\ell.
\end{align}
\end{de}

It is immediate from Propositions \ref{prop:universal-va}, \ref{prop:classical-limit} and Definitions \ref{de:aff-vas}, \ref{de:V-tau}, \ref{de:L-tau} that
\begin{prop}\label{prop:classical-limit-V-L-pre}
For $\ell\in\C$, there is a surjective vertex algebra homomorphism from $V_{\hat\fg}(\ell,0)$ to $V_{\hat\fg,\hbar}(\ell,0)/\hbar V_{\hat\fg,\hbar}(\ell,0)$ such that
\begin{align}\label{eq:classical-limit-V-L-pre}
  h_i\mapsto h_{i,\hbar},\quad x_i^\pm\mapsto x_{i,\hbar}^\pm\quad\te{for } i\in I.
\end{align}
If $\ell\in \Z_+$,
then \eqref{eq:classical-limit-V-L-pre} defines a surjective vertex algebra homomorphism from $L_{\hat\fg}(\ell,0)$ to $L_{\hat\fg,\hbar}(\ell,0)/\hbar L_{\hat\fg,\hbar}(\ell,0)$.
\end{prop}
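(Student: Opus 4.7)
The plan is to reduce modulo $\hbar$ and invoke Proposition~\ref{prop:classical-limit}, which provides a vertex algebra isomorphism
\[
F_\tau(A,\ell)/\hbar F_\tau(A,\ell)\;\cong\; F(A,\ell)
\]
sending the classes of $h_{i,\hbar}, x_{i,\hbar}^\pm$ to $h_i, x_i^\pm$ for every $i\in I$. Quotienting further by $R_1^\ell$ yields an isomorphism
\[
V_{\hat\fg,\hbar}(\ell,0)/\hbar V_{\hat\fg,\hbar}(\ell,0)\;\cong\; F(A,\ell)/\overline{R_1^\ell},
\]
where $\overline{R_1^\ell}$ denotes the image of $R_1^\ell$ in $F(A,\ell)$. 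Since $V_{\hat\fg}(\ell,0)=F(A,\ell)/J$ for the ideal $J$ of Definition~\ref{de:aff-vas}, the desired (and, by the quotient universal property combined with Proposition~\ref{prop:universal-va}, necessarily unique) vertex algebra homomorphism $V_{\hat\fg}(\ell,0)\to V_{\hat\fg,\hbar}(\ell,0)/\hbar V_{\hat\fg,\hbar}(\ell,0)$ sending $h_i\mapsto h_{i,\hbar}$, $x_i^\pm\mapsto x_{i,\hbar}^\pm$ exists if and only if $J\subseteq\overline{R_1^\ell}$; surjectivity is then automatic since $V_{\hat\fg,\hbar}(\ell,0)$ is $\hbar$-adically generated by $\{h_{i,\hbar},x_{i,\hbar}^\pm\}_{i\in I}$ by construction.

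To prove $J\subseteq\overline{R_1^\ell}$ I would inspect each family of generators of $J$. For $i\ne j$, the locality function $N(x_i^+,x_j^-)=0$ built into the free vertex algebra $V(\mathcal B,N)$ forces $(x_i^+)_n x_j^-=0$ in $F(A,\ell)$ for all $n\ge 0$, so the off-diagonal members of the first two families of generators of $J$ vanish identically. The Serre generators $((x_i^\pm)_0)^{m_{ij}}x_j^\pm$ of $J$ (for $a_{ij}<0$) coincide literally with the generators \eqref{eq:serre} of $R_1^\ell$. Only the diagonal ($i=j$) members of the first two families remain, and these require the order-$\hbar$ expansion of $E(h_{i,\hbar})$. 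Using
\[
F(\partial)=2\hbar+O(\hbar^3),\qquad q^{-r\ell\partial}=1+O(\hbar),\qquad \frac{1}{q_i-q_i^{-1}}=\frac{1}{2r_i\hbar}+O(\hbar),
\]
together with the prefactor $(F(r_i+r\ell)/F(r_i-r\ell))^{1/2}=1+O(\hbar^2)$, one obtains
\[
E(h_{i,\hbar})\;\equiv\;\vac\,-\,2\hbar\,h_{i,\hbar}\pmod{\hbar^2},
\]
and substituting this into \eqref{eq:x+0x-} and \eqref{eq:x+1x-} shows that these generators of $R_1^\ell$ reduce modulo $\hbar$ to the generators $(x_i^+)_0 x_i^- - h_i/r_i$ and $(x_i^+)_1 x_i^- - r\ell\vac/r_i$ of $J$, completing the inclusion.

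For $\ell\in\Z_+$ and the statement concerning $L_{\hat\fg}(\ell,0)$, the same strategy applies: the ideal defining $L_{\hat\fg}(\ell,0)$ inside $V_{\hat\fg}(\ell,0)$ is generated by $((x_i^\pm)_{-1})^{r\ell/r_i}x_i^\pm$, which is literally the mod-$\hbar$ reduction of the generator \eqref{eq:integrable} of $R_2^\ell$, and the identical argument produces the required surjective vertex algebra homomorphism $L_{\hat\fg}(\ell,0)\to L_{\hat\fg,\hbar}(\ell,0)/\hbar L_{\hat\fg,\hbar}(\ell,0)$. The only nontrivial computational input throughout is the order-$\hbar$ expansion of $E(h_{i,\hbar})$; everything else is a direct inspection of generators and their vanishing imposed by the locality functions.
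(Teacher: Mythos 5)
Your proposal follows the paper's intended argument exactly: the paper's own ``proof'' is a one-line statement that the result is immediate from Proposition~\ref{prop:universal-va}, Proposition~\ref{prop:classical-limit} and Definitions~\ref{de:aff-vas}, \ref{de:V-tau}, \ref{de:L-tau}, and you have filled in precisely those details (reduce $F_\tau(A,\ell)$ mod $\hbar$ via Proposition~\ref{prop:classical-limit}, then compare the generators of $J$ to the images of the generators of $R_1^\ell$, $R_2^\ell$). Your expansion $E(h_{i,\hbar})\equiv\vac-2\hbar h_{i,\hbar}\pmod{\hbar^2}$ is correct, and the reduction of \eqref{eq:x+0x-}, \eqref{eq:serre}, \eqref{eq:integrable}, and the off-diagonal vanishing from $N(x_i^+,x_j^-)=0$ are all right.

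One small caveat you should flag: substituting your expansion into \eqref{eq:x+1x-} \emph{as printed} gives $(x_i^+)_1x_i^-+\frac{r\ell}{r_i}\vac$, with a $+$ sign, whereas the generator of $J$ is $(x_i^+)_1x_i^--\frac{r\ell}{r_i}\vac$; as principal ideals these differ. The discrepancy traces to a sign typo in \eqref{eq:x+1x-} rather than to an error in your strategy: in the proof of Proposition~\ref{prop:qCartan} the paper identifies \eqref{eq:x+1x-} with $\Res_z\,z\,\bar A_i(z)$, and a direct computation gives $\Res_z\,z\,\bar A_i(z)=(x_{i,\hbar}^+)_1x_{i,\hbar}^--2r\ell\hbar(q_i-q_i^{-1})^{-1}E(h_{i,\hbar})$ (note the minus sign, since $\Res_z\,z(z+2r\ell\hbar)^{-1}=-2r\ell\hbar$). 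With that corrected sign the mod-$\hbar$ reduction is $(x_i^+)_1x_i^--\frac{r\ell}{r_i}\vac$ as you assert. You reached the right conclusion, but since your text presents the substitution into the printed \eqref{eq:x+1x-} as if it directly produced the minus sign, you should either note the sign correction explicitly or replace \eqref{eq:x+1x-} by $\Res_z\,z\,\bar A_i(z)$ before expanding.
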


The main purpose of this section is to prove the following results.
\begin{thm}\label{thm:quotient-algs}
Let $\ell\in\C$. Then $V_{\hat\fg,\hbar}(\ell,0)$ is an $\hbar$-adic quantum vertex algebra.
Moreover, if $\ell\in \Z_+$, then $L_{\hat\fg,\hbar}(\ell,0)$ is also an $\hbar$-adic quantum vertex algebra.
Furthermore, the quantum Yang-Baxter operators of both $V_{\hat\fg,\hbar}(\ell,0)$ and $L_{\hat\fg,\hbar}(\ell,0)$ satisfy the relations \eqref{eq:S-tau-1}, \eqref{eq:S-tau-2}, \eqref{eq:S-tau-3} and \eqref{eq:S-tau-4}.
\end{thm}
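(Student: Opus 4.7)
The plan is to verify the hypotheses of Lemma \ref{lem:S-quotient-alg} first for $V = F_\tau(A,\ell)$ with $M = R_1^\ell$ and generating subset $U = \set{h_{i,\hbar},\,x_{i,\hbar}^\pm}{i\in I}$, and then iterate with $V = V_{\hat\fg,\hbar}(\ell,0)$ and $M = R_2^\ell$. Since $S_\tau(z)$ is $\C[[\hbar]]$-linear and continuous, and since $R_1^\ell$ is built from the explicit generators \eqref{eq:x+0x-}, \eqref{eq:x+1x-}, \eqref{eq:serre} via minimal-closed-ideal and $[\cdot]$-closure, it suffices to check for each listed generator $r$ and each $u\in U$ that $S_\tau(z)(r\ot u)$ and $S_\tau(z)(u\ot r)$ land in $R_1^\ell\wh\ot V\wh\ot\C((z))[[\hbar]]+V\wh\ot R_1^\ell\wh\ot\C((z))[[\hbar]]$. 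Once this stability is established, Lemma \ref{lem:S-quotient-alg} produces an induced quantum Yang-Baxter operator on the quotient; the formulas \eqref{eq:S-tau-1}--\eqref{eq:S-tau-4} then persist verbatim because they are stated on the images of the generators $h_{i,\hbar}, x_{i,\hbar}^\pm$, which are mapped to themselves by the quotient map.

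The substantive step is to compute $S_\tau(z)$ on the exponential vector $E(h_{i,\hbar})$ of \eqref{eq:def-E} paired with each $u\in U$. Writing $E(h_{i,\hbar})$ as a scalar multiple of $\exp((H_{i,\hbar})_{-1})\vac$ with $H_{i,\hbar}=-q^{-r\ell\partial}F(\partial)h_{i,\hbar}$, I invoke Lemma \ref{lem:S-special-tech-gen2}: part (1) converts the $\vac\ot\vac$-type correction in $S_\tau(z)(h_{j,\hbar}\ot h_{i,\hbar})$ from \eqref{eq:S-tau-1} into an additive correction on $E(h_{i,\hbar})$, while parts (2) and (3) convert the $x^\pm$-linear corrections from \eqref{eq:S-tau-2}, \eqref{eq:S-tau-3} into multiplicative exponential factors. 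To then process $(x_{i,\hbar}^+)_0 x_{i,\hbar}^-\ot u$, I use the hexagon identities \eqref{eq:qyb-hex-id} and \eqref{eq:qyb-hex-id-alt} to reduce to iterated $S$-pairings of the generators via \eqref{eq:S-tau-4}. The key algebraic input is Lemma \ref{lem:special-tau-tech1}: applying $F(\partial)$ to the central $\tau_{ij}(z)-\tau_{ji}(-z)$ correction yields the $\tau_{ij}^{1,\pm}+\tau_{ji}^{2,\pm}$ correction, and applying $F(\partial)$ again yields precisely the logarithm of the multiplicative $\tau^{+,\pm}/\tau^{\pm,+}$ factor arising when $x_{i,\hbar}^+$ and $x_{i,\hbar}^-$ pass $u$. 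This forces the $S$-correction produced by the exponential in $E(h_{i,\hbar})$ to cancel exactly the $S$-correction produced by $(x_{i,\hbar}^+)_0 x_{i,\hbar}^-$, so both \eqref{eq:x+0x-} and \eqref{eq:x+1x-} are preserved modulo $R_1^\ell$ on either tensor slot. The Serre generators \eqref{eq:serre} reduce to iterated applications of Lemma \ref{lem:S-special-tech-gen2}(1) together with \eqref{eq:S-tau-4}, which preserves the structure of the iterated $(x_{i,\hbar}^\pm)_0$-action on $x_{j,\hbar}^\pm$ up to elements of $R_1^\ell$.

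For the integrable quotient, I apply Lemma \ref{lem:S-quotient-alg} to $V_{\hat\fg,\hbar}(\ell,0)$ with $M=R_2^\ell$. By \eqref{eq:S-tau-4} the $S$-action on any pair of $x^\pm$-type generators is purely multiplicative, so Lemma \ref{lem:S-special-tech-gen2}(2),(3) implies $S_\tau(z)$ multiplies each integrable generator $(x_{i,\hbar}^\pm)_{-1}^{r\ell/r_i}x_{i,\hbar}^\pm$ against any $u\in U$ by a unit of $\C((z))[[\hbar]]$, yielding the required stability of $R_2^\ell$. The main obstacle throughout is the correction-matching in the previous paragraph: the cancellation between the exponential $E(h_{i,\hbar})$ and the mode-product $(x_{i,\hbar}^+)_0 x_{i,\hbar}^-$ is delicate and relies entirely on the two identities in Lemma \ref{lem:special-tau-tech1}, whose validity is exactly why the specific choices \eqref{eq:tau-1}--\eqref{eq:tau-5} of $\tau$ were made.
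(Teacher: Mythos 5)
Your approach is essentially the one the paper takes: establish $S_\tau$-stability of the ideals $R_1^\ell$, $R_2^\ell$ in the sense required by Lemma \ref{lem:S-quotient-alg}, then conclude. The key lemmas you invoke (Lemmas \ref{lem:S-quotient-alg}, \ref{lem:S-special-tech-gen2}, \ref{lem:special-tau-tech1} and the hexagon identity) are precisely the ones the paper uses, and you correctly identify the crux of the argument as the cancellation between the $S$-correction to the exponential $E(h_{i,\hbar})$ and the $S$-correction to the mode product $(x_{i,\hbar}^+)_0 x_{i,\hbar}^-$.

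There are two imprecisions worth flagging. First, you work directly with the individual mode generators \eqref{eq:x+0x-}--\eqref{eq:integrable}; the paper instead first passes through Proposition \ref{prop:ideal-def-alt}, which re-expresses $R_1^\ell$ and $R_2^\ell$ as the ideals generated by \emph{all coefficients} of the formal series $A_i(z)$, $Q_{ij}^\pm(z_1,\dots,z_{m_{ij}})$, $M_i^\pm(z_1,\dots,z_{r\ell/r_i})$. This reformulation is not merely cosmetic: the cancellation you correctly describe hinges on the two $\Sing_{z}$-identities $\Sing_z z^{-1}(e^{z\partial_x}-1)=0$ and $\Sing_z (z+2r\ell\hbar)^{-1}(e^{z\partial_x}-1)=(q^{-2r\ell\partial_x}-1)(z+2r\ell\hbar)^{-1}$, which only become visible when the two generators \eqref{eq:x+0x-} and \eqref{eq:x+1x-} are packaged into the single series $A_i(z_2)$ and the hexagon identity is applied to that series. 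Working one Fourier mode at a time obscures this. Second, the Serre and integrability generators are \emph{not} handled by Lemma \ref{lem:S-special-tech-gen2}(1)--(3): those three cases address additive corrections of the form $v\ot u+(\cdots)\ot f(z)$, whereas \eqref{eq:S-tau-4} gives a purely multiplicative correction $x^{\epsilon_1}\ot x^{\epsilon_2}\ot g(z)$, which falls under none of them. The paper's Lemmas \ref{lem:S-tau-Qij-h} and \ref{lem:S-tau-Mij-h} instead iterate the hexagon identity \eqref{eq:qyb-hex-id} on the normal-ordered products $Y_{i_1,\dots,i_k}^\pm$, using Lemmas \ref{lem:qSerre} and \ref{lem:Mi} to identify the mode-product generators with specializations of those products. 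Neither point undermines the overall strategy, but both need to be made explicit for the argument to close.
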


We start with some technical results.

\begin{lem}\label{lem:tech-2}
Let $W$ be a topologically free $\C[[\hbar]]$-module, $$\beta(z)=\sum_{m\in\Z}\beta_mz^{-m-1}\in W((z))$$ and $f(z)=\sum_{i=0}^n a_iz^i\in\C[z]$, such that $a_n=1$. Suppose that
\begin{align*}
  \Sing_z z^k \hbar^nf(z/\hbar)\beta(z)=0\quad \mbox{for some }k\in\Z.
\end{align*}
Then $\beta_m\in\Span_{\C[[\hbar]]}\set{\beta_i}{k\le i<k+n}$ for all $m\ge k$.
Moreover, if $\beta_i=0$ for all $k\le i<k+n$, then $\Sing_zz^k\beta(z)=0$.
\end{lem}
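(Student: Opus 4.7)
My plan is to reduce the hypothesis to an explicit $\C[[\hbar]]$-linear recursion on the coefficients $\beta_m$ and then close it by a short induction. Set $g(z) := \hbar^n f(z/\hbar) = \sum_{i=0}^n a_i \hbar^{n-i} z^i$; a direct expansion shows that the coefficient of $z^{-j-1}$ in $z^k g(z)\beta(z)$ equals $\sum_{i=0}^n a_i \hbar^{n-i} \beta_{k+i+j}$, so the vanishing of the singular part is equivalent to the family of identities
\[
\sum_{i=0}^n a_i \hbar^{n-i} \beta_{k+i+j} = 0 \quad \text{for all } j \geq 0.
\]
Since $a_n = 1$, the leading term can be isolated without inverting $\hbar$, producing the recurrence
\[
\beta_{k+n+j} = -\sum_{i=0}^{n-1} a_i \hbar^{n-i} \beta_{k+i+j}, \qquad j \geq 0.
\]

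With this recurrence in hand, the first assertion follows by induction on $j \geq 0$. For $j = 0$ the right-hand side manifestly lies in $\Span_{\C[[\hbar]]}\set{\beta_i}{k \leq i < k+n}$. For the inductive step, every index $k+i+j$ with $0 \leq i < n$ satisfies $k \leq k+i+j < k+n+j$, so it is either an index in $[k,k+n)$ (contributing a generator directly) or an index in $[k+n,k+n+j)$ (where the inductive hypothesis places $\beta_{k+i+j}$ in the stated span). Combining the two cases yields $\beta_m \in \Span_{\C[[\hbar]]}\set{\beta_i}{k \leq i < k+n}$ for every $m \geq k$.

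The moreover part is then immediate: if $\beta_k, \dots, \beta_{k+n-1}$ all vanish, the first assertion forces $\beta_m = 0$ for every $m \geq k$; since $z^k \beta(z) = \sum_m \beta_m z^{k-m-1}$ has strictly negative powers of $z$ exactly at indices $m \geq k$, it follows that $\Sing_z z^k \beta(z) = 0$. I do not foresee any real obstacle. The one subtlety worth flagging is that the normalization $a_n = 1$ is precisely what keeps the recurrence $\C[[\hbar]]$-linear, so the conclusion genuinely lives inside a finitely generated $\C[[\hbar]]$-submodule of $W$, rather than requiring any inversion of $\hbar$.
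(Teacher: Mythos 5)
Your proof is correct and follows exactly the paper's approach: expand $z^k\hbar^n f(z/\hbar)\beta(z)$, read off the recurrence $\beta_{k+n+j} = -\sum_{i=0}^{n-1}a_i\hbar^{n-i}\beta_{k+j+i}$ (where $a_n=1$ keeps it over $\C[[\hbar]]$), and close by induction. The paper states this more tersely, but the argument is identical.
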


\begin{proof}
Notice that
\begin{align*}
  0=\Sing_z z^k\hbar^nf(z/\hbar)\beta(z)=\sum_{m\ge0}z^{-m-1}\sum_{i=0}^na_i\hbar^{n-i}\beta_{m+k+i}.
\end{align*}
It follows that
\begin{align*}
  \beta_{m+n}=-\sum_{i=0}^{n-1}a_i\hbar^{n-i}\beta_{m+i}\quad\te{for }m\ge k.
\end{align*}
By using induction on $m$, we complete the proof.
\end{proof}

\begin{lem}\label{lem:tech-3}
Let $V$ be an $\hbar$-adic nonlocal vertex algebra, and let $u,v\in V$,
$f(z)=(\sum_{i=0}^na_iz^i)/(\sum_{j=0}^m b_jz^j)\in\C(z)$.
Suppose
\begin{align}\label{eq:tech-3-1}
  \iota_{z_1,z_2,\hbar} \hbar^{n-m}f\big((z_1-z_2)/\hbar\big)Y(u,z_1)Y(v,z_2)\in\E_\hbar^{(2)}(V).
\end{align}
Then we have that
\begin{align*}
  \Sing_z \iota_{z,\hbar}\hbar^{n-m}f(z/\hbar)Y(u,z)v=0.
\end{align*}
\end{lem}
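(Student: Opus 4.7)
The plan is to apply the hypothesis \eqref{eq:tech-3-1} to the vacuum vector and, via the creation property together with the translation identity for vertex operators, to reduce the statement to a containment for the single-variable series
\begin{align*}
A(z):=\iota_{z,\hbar}\hbar^{n-m}f(z/\hbar)\,Y(u,z)v\in V_0((z))[[\hbar]],
\end{align*}
where $V=V_0[[\hbar]]$. First I would invoke $Y(v,z_2)\vac=e^{z_2\partial}v$ together with the identity
\begin{align*}
Y(u,z_1)e^{z_2\partial}v=e^{z_2\partial}\,\iota_{z_1,z_2}Y(u,z_1-z_2)v,
\end{align*}
which follows routinely from $[\partial,Y(u,z_1)]=\pd{z_1}Y(u,z_1)$ and the factorization $e^{z_2(\partial-\pd{z_1})}=e^{z_2\partial}e^{-z_2\pd{z_1}}$. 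Since the scalar factor $\iota_{z_1,z_2,\hbar}\hbar^{n-m}f((z_1-z_2)/\hbar)$ acts only on formal variables and commutes with $e^{z_2\partial}$, applying \eqref{eq:tech-3-1} to $\vac$ rewrites the left-hand side as $e^{z_2\partial}\iota_{z_1,z_2}A(z_1-z_2)$. Since $e^{z_2\partial}$ is invertible and preserves $V_0((z_1,z_2))[[\hbar]]$, I would deduce
\begin{align*}
\iota_{z_1,z_2}A(z_1-z_2)\in V_0((z_1,z_2))[[\hbar]].
\end{align*}

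Next I would read off information level by level in $\hbar$: writing $A(z)=\sum_{j\ge 0}\hbar^j B_j(z)$ with $B_j(z)=\sum_{k\ge -N_j}b_{j,k}z^k\in V_0((z))$, the above containment forces $\iota_{z_1,z_2}B_j(z_1-z_2)\in V_0((z_1,z_2))$ for every $j$. Suppose, for contradiction, that $b_{j,K}\ne 0$ for some $K<0$. From the expansion
\begin{align*}
\iota_{z_1,z_2}(z_1-z_2)^k=\sum_{l\ge 0}\binom{k}{l}(-z_2)^l z_1^{k-l},
\end{align*}
and the fact that $k=M+N$ is determined by the monomial $z_1^M z_2^N$, the coefficient of $z_1^{K-N}z_2^N$ in $\iota_{z_1,z_2}B_j(z_1-z_2)$ comes only from $k=K$ and equals $b_{j,K}\binom{K}{N}(-1)^N$. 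As $K$ is a negative integer, $\binom{K}{N}\ne 0$ for all $N\ge 0$, so letting $N\to\infty$ produces nonzero coefficients at arbitrarily negative powers of $z_1$, contradicting membership in $V_0((z_1,z_2))=V_0[[z_1,z_2]][z_1^{-1},z_2^{-1}]$. Thus $b_{j,k}=0$ for all $k<0$ and all $j\ge 0$, which is exactly $\Sing_z A(z)=0$.

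The only genuinely nontrivial step is the coefficient bookkeeping in the last paragraph; the preceding manipulations are formal and use only the translation identity together with the commutation of scalar factors in $\C((z_1))((z_2))[[\hbar]]$ with the shift operator $e^{z_2\partial}$.
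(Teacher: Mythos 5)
Your proof is correct, and it takes a genuinely different route from the paper's. The paper invokes the weak associativity machinery: it forms $\<U\>$ with $U=\{Y(u,z)\,:\,u\in V\}$, uses $Y_\E$ to identify $\hbar^{n-m}f(z/\hbar)Y(Y(u,z)v,z_2)$ with the substitution $z_1=z_2+z$ of the two-variable expression from \eqref{eq:tech-3-1}, then argues that after applying to $\vac$ the result lands in $V[[z_2,z]]$ by combining a regularity claim in $z$ (coming from the $\hbar$-adic substitution) with regularity in $z_2$ (the creation property), and finally specializes $z_2=0$. You instead apply \eqref{eq:tech-3-1} to $\vac$ at the outset, use the skew-symmetry-with-vacuum identity $Y(u,z_1)Y(v,z_2)\vac=e^{z_2\partial}\iota_{z_1,z_2}Y(u,z_1-z_2)v$ (valid in any $\hbar$-adic nonlocal vertex algebra via $[\partial,Y(u,z)]=\tfrac{d}{dz}Y(u,z)$), strip off $e^{z_2\partial}$, and then run an explicit binomial argument: if $b_{j,K}\ne 0$ with $K<0$ then, because the coefficient of $z_1^{K-N}z_2^N$ in $\iota_{z_1,z_2}B_j(z_1-z_2)$ is $(-1)^N\binom{K}{N}b_{j,K}$ and $\binom{K}{N}\ne 0$ for all $N\ge 0$, the $z_1$-powers are unbounded below, contradicting membership in $V_0((z_1,z_2))$. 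This makes the regularity-in-$z$ step, which the paper treats abstractly via the space $\Hom(V,V((z_2))[[z]])+\hbar^n\End V[[z_2^{\pm1},z]]$, completely explicit. Your approach buys elementariness (no $\<U\>$ construction, no $Y_\E$) at the cost of having to verify by hand the compatibility of the two $\iota$-expansions, namely that $\iota_{z_1,z_2,\hbar}\hbar^{n-m}f((z_1-z_2)/\hbar)\cdot\iota_{z_1,z_2}Y(u,z_1-z_2)v$ agrees with $\iota_{z_1,z_2}A(z_1-z_2)$; this holds by multiplicativity of $\iota_{z_1,z_2}$ and the factorization $\iota_{z_1,z_2,\hbar}=\iota_{z_1,z_2}\circ\iota_{z,\hbar}|_{z=z_1-z_2}$, which you appropriately flag as the "formal" bookkeeping. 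The paper's route has the advantage of reusing general machinery that appears elsewhere; yours is more transparent about where the key constraint on negative $z$-powers actually comes from.
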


\begin{proof}
Let $U=\set{Y(u,z)}{u\in V}\subset \E_\hbar(V)$.
From \eqref{eq:weak-asso}, we get an $\hbar$-adic nonlocal vertex algebra homomorphism $V\to \<U\>$ defined by $u\mapsto Y(u,z)$.
Then from \eqref{eq:tech-3-1}, we have that
\begin{align*}
  &\hbar^{n-m}f(z/\hbar)Y(Y(u,z)v,z_2)\\
  =&\hbar^{n-m}f\big((z_1-z_2)/\hbar\big)Y_\E\(Y(u,z_2),z\)Y(v,z_2)\\
  =&\(\hbar^{n-m}f\big((z_1-z_2)/\hbar\big)Y(u,z_1)Y(v,z_2)\)|_{z_1=z_2+z}.
\end{align*}
Notice that
\begin{align*}
  &\(\hbar^{n-m}f\big((z_1-z_2)/\hbar\big)Y(u,z_1)Y(v,z_2)\)|_{z_1=z_2+z}\\
  &\quad\in \Hom(V,V((z_2))[[z]])+\hbar^n\End V[[z_2^{\pm 1},z]]\quad \te{for all }n\ge0,
\end{align*}
and that $f(z,\hbar)Y(Y(u,z)v,z_2)\vac\in V[[z,z\inv,z_2]]$.
It follows that
\begin{align*}
  &\hbar^{n-m}f(z/\hbar)Y(Y(u,z)v,z_2)\vac\\
  =&\(\hbar^{n-m}f\big((z_1-z_2)/\hbar\big)Y(u,z_1)Y(v,z_2)\vac\)|_{z_1=z_2+z}\in V[[z_2,z]].
\end{align*}
Setting $z_2=0$, we get that
\begin{align*}
  \hbar^{n-m}f(z/\hbar)Y(u,z)v=\(\hbar^{n-m}f(z/\hbar)Y(Y(u,z)v,z_2)\vac\)|_{z_2=0}\in V[[z]].
\end{align*}
Therefore, $\Sing_z\iota_{z,\hbar}\hbar^{n-m}f(z/\hbar)Y(u,z)v=0$.
\end{proof}

Combining Lemmas \ref{lem:tech-3} and equations \eqref{eq:local-h-3}, \eqref{eq:local-h-4}, we have that
\begin{lem}\label{lem:qSing}
In the $\hbar$-adic quantum vertex algebra $F_\tau(A,\ell)$, we have that
\begin{align}
  &\Sing_z(z-r_ia_{ij}\hbar)Y_\tau(x_{i,\hbar}^\pm,z)x_{j,\hbar}^\pm=0,\quad\mbox{if }a_{ij}<0,\label{eq:qSing1<=0}\\
  &\Sing_zz\inv(z-r_ia_{ii}\hbar)Y_\tau(x_{i,\hbar}^\pm,z)x_{i,\hbar}^\pm=0,\label{eq:qSing1>0}\\
  &\Sing_zY_\tau(x_{i,\hbar}^+,z)x_{j,\hbar}^-=0,\quad\mbox{if }i\ne j,\label{eq:qSing2neq}\\
  &\Sing_zz(z+2r\ell\hbar)Y_\tau(x_{i,\hbar}^+,z)x_{i,\hbar}^-=0.\label{eq:qSing2eq}
\end{align}
\end{lem}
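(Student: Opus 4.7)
My plan is to deduce each of the four singular-part identities from the $\hbar$-adic locality relations \eqref{eq:local-h-3} and \eqref{eq:local-h-4} by means of Lemma \ref{lem:tech-3}. In each case the task will be to produce a rational function $f(z)\in\C(z)$ (with leading numerator coefficient $1$) such that
\[
\hbar^{n-m}\,f\bigl((z_1-z_2)/\hbar\bigr)\,Y_\tau(x_{i,\hbar}^{\epsilon_1},z_1)\,Y_\tau(x_{j,\hbar}^{\epsilon_2},z_2)\in\E_\hbar^{(2)}(F_\tau(A,\ell)),
\]
after which Lemma \ref{lem:tech-3} delivers the desired vanishing. The key ingredient in producing the hypothesis in each case is the elementary observation that any equation $A(z_1,z_2)=B(z_1,z_2)$ with $A\in\Hom(V,V((z_1))((z_2))[[\hbar]])$ and $B\in\Hom(V,V((z_2))((z_1))[[\hbar]])$ forces both sides into the intersection $\E_\hbar^{(2)}$.

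For \eqref{eq:qSing2eq} and \eqref{eq:qSing2neq} the prefactor is immediate from \eqref{eq:local-h-4}: that relation has precisely the form above, so $(z_1-z_2)^{\delta_{ij}}(z_1-z_2+2r\ell\hbar)^{\delta_{ij}}\,x^+_{i,\hbar}(z_1)x^-_{j,\hbar}(z_2)\in\E_\hbar^{(2)}$. Applying Lemma \ref{lem:tech-3} with $f(z)=z(z+2r\ell)$ when $i=j$ and with $f(z)=1$ when $i\neq j$ produces \eqref{eq:qSing2eq} and \eqref{eq:qSing2neq} respectively.

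For \eqref{eq:qSing1<=0} and \eqref{eq:qSing1>0}, the polynomial factor must first be extracted from the trigonometric prefactor $f_{ij,\hbar}(z)$ appearing in \eqref{eq:local-h-3}. Using $1-e^{r_ia_{ij}\hbar-z}=(z-r_ia_{ij}\hbar)(1+O(z,\hbar))$ and, when $\delta_{ij}=1$, $e^{z/2}-e^{-z/2}=z(1+z^2/24+\cdots)$, a direct calculation gives a factorization
\[
f_{ij,\hbar}(z)=(z-r_ia_{ij}\hbar)\,z^{-\delta_{ij}}\,u_{ij}(z,\hbar),
\]
where $u_{ij}(z,\hbar)\in\C[[z,\hbar]]^\times$ with $u_{ij}(0,0)=1$. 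The intersection argument applied to \eqref{eq:local-h-3} places $\iota_{z_1,z_2}[f_{ij,\hbar}(z_1-z_2)]\,x^\pm_{i,\hbar}(z_1)x^\pm_{j,\hbar}(z_2)\in\E_\hbar^{(2)}$, and multiplication by $u_{ij}(z_1-z_2,\hbar)^{-1}\in\C[[z_1,z_2]][[\hbar]]$ (which preserves $\E_\hbar^{(2)}$) reduces this to
\[
\iota_{z_1,z_2}\bigl[(z_1-z_2-r_ia_{ij}\hbar)(z_1-z_2)^{-\delta_{ij}}\bigr]\,x^\pm_{i,\hbar}(z_1)x^\pm_{j,\hbar}(z_2)\in\E_\hbar^{(2)}.
\]
Applying Lemma \ref{lem:tech-3} with $f(z)=z-r_ia_{ij}$ (case $a_{ij}<0$, $\delta_{ij}=0$) and with $f(z)=z^{-1}(z-r_ia_{ii})$ (case $i=j$, $\delta_{ij}=1$) then produces \eqref{eq:qSing1<=0} and \eqref{eq:qSing1>0}. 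The main technical step is this factorization of $f_{ij,\hbar}$ together with the observation that $\C[[z_1,z_2]][[\hbar]]$ acts on $\E_\hbar^{(2)}=\Hom(V,V((z_1,z_2))[[\hbar]])$ by multiplication; both are elementary.
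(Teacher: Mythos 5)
Your proposal is correct and fills in exactly the argument the paper leaves implicit: the paper's entire justification for Lemma \ref{lem:qSing} is the one-line remark ``Combining Lemma \ref{lem:tech-3} and equations \eqref{eq:local-h-3}, \eqref{eq:local-h-4}, we have that,'' so you and the author are following the same route. Your identification of the two ingredients is on target: (i) an identity $A(z_1,z_2)=B(z_1,z_2)$ with $A\in\Hom(V,V((z_1))((z_2))[[\hbar]])$ and $B\in\Hom(V,V((z_2))((z_1))[[\hbar]])$ forces both sides into $\E_\hbar^{(2)}$, and (ii) the factorization $f_{ij,\hbar}(z)=(z-r_ia_{ij}\hbar)\,z^{-\delta_{ij}}\,u_{ij}(z,\hbar)$ with $u_{ij}\in\C[[z,\hbar]]^\times$, which follows from $q_i^{-a_{ij}/2}e^{z/2}-q_i^{a_{ij}/2}e^{-z/2}=e^{(z-r_ia_{ij}\hbar)/2}-e^{-(z-r_ia_{ij}\hbar)/2}$. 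The choices $f(z)=z(z+2r\ell)$, $f(z)=1$, $f(z)=z-r_ia_{ij}$, and $f(z)=z^{-1}(z-r_ia_{ii})$ indeed make $\hbar^{n-m}f((z_1-z_2)/\hbar)$ equal to the prefactors appearing in \eqref{eq:local-h-4} and in the factorized form of \eqref{eq:local-h-3}, so Lemma \ref{lem:tech-3} yields all four singular-part identities. This is a complete and accurate write-up of the step the paper asserts without proof.
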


We have that

\begin{prop}\label{prop:qCartan}
For $i\in I$, we define $A_i(z)$ to be
\begin{align*}
  &\Sing_zY_\tau(x_{i,\hbar}^+,z)x_{i,\hbar}^--(q_i-q_i\inv)\inv \(\vac z\inv-
  E(h_{i,\hbar}) (z+2r\ell\hbar)\inv\).
\end{align*}
Then $A_i(z)=0$ in $V_{\hat\fg,\hbar}(\ell,0)$.
Moreover, we have that
\begin{align}
  &Y_\tau(x_{i,\hbar}^+,z_1)Y_\tau(x_{j,\hbar}^-,z_2)-
  \iota_{z_2,z_1}g_{ij,\hbar}(z_1-z_2)
  Y_\tau(x_{j,\hbar}^-,z_2)Y_\tau(x_{i,\hbar}^+,z_1)\nonumber \\
  =&\frac{\delta_{ij}}{q_i-q_i\inv} \(z_1\inv\delta\(\frac{z_2}{z_1}\)
  -Y_\tau\(E(h_{i,\hbar}),z_2\)z_1\inv\delta\(\frac{z_2-2r\ell\hbar}{z_1}\)\).\label{eq:local-h-5}
\end{align}
\end{prop}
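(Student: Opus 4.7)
The plan is to prove the two assertions in turn. For $A_i(z)=0$ in $V_{\hat\fg,\hbar}(\ell,0)$, the key input is Lemma~\ref{lem:qSing}: equation~\eqref{eq:qSing2eq} gives $\Sing_z z(z+2r\ell\hbar) Y_\tau(x_{i,\hbar}^+,z)x_{i,\hbar}^- = 0$ already in $F_\tau(A,\ell)$. The rational term $(q_i-q_i\inv)\inv(\vac z\inv - E(h_{i,\hbar})(z+2r\ell\hbar)\inv)$ becomes a polynomial in $z$ after multiplication by $z(z+2r\ell\hbar)$, so its singular part is annihilated by the same factor. Consequently $\Sing_z z(z+2r\ell\hbar) A_i(z) = 0$. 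Applying Lemma~\ref{lem:tech-2} with $f(y)=y^2+2r\ell y$, $k=0$, $n=2$ yields the recurrence $(A_i)_{m+2} = -2r\ell\hbar\,(A_i)_{m+1}$ for $m\ge 0$, so the whole singular series $A_i(z)$ is determined by its two lowest coefficients $(A_i)_0$ and $(A_i)_1$. These are, by direct inspection, exactly the elements \eqref{eq:x+0x-} and \eqref{eq:x+1x-} (up to the partial-fraction expansion of $(z+2r\ell\hbar)\inv$), which vanish in $V_{\hat\fg,\hbar}(\ell,0)$ by the very definition of $R_1^\ell$. The recurrence then propagates the vanishing to all higher coefficients, giving $A_i(z)=0$.

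For the commutator identity, the case $i\ne j$ is immediate from \eqref{eq:local-h-4}: the prefactor $(z_1-z_2)^{\delta_{ij}}(z_1-z_2+2r\ell\hbar)^{\delta_{ij}}$ equals $1$, so the LHS of the commutator formula vanishes in $F_\tau(A,\ell)$, while the RHS vanishes through the factor $\delta_{ij}$. For $i=j$, let $C(z_1,z_2)$ denote the LHS. Then $(z_1-z_2)(z_1-z_2+2r\ell\hbar)\,C(z_1,z_2) = 0$ by \eqref{eq:local-h-4}, and the two zeros of this annihilator are formally distinct (even though $\hbar$-adically close). A standard formal-distribution decomposition adapted to the $\hbar$-adic topology then yields a unique expression
\begin{align*}
    C(z_1,z_2) = P_0(z_2)\,z_1\inv\delta(z_2/z_1) + P_1(z_2)\,z_1\inv\delta\bigl((z_2-2r\ell\hbar)/z_1\bigr).
\end{align*}
To identify $P_0, P_1$, I would translate $C$ through weak associativity \eqref{eq:weak-asso}, writing $C(z_2+z_0,z_2)$ in terms of $Y_\tau(Y_\tau(x_{i,\hbar}^+,z_0)x_{i,\hbar}^-,z_2)$; the singular part of $Y_\tau(x_{i,\hbar}^+,z_0)x_{i,\hbar}^-$ was pinned down in the first part. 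Splitting this singular part into simple poles at $z_0=0$ and $z_0=-2r\ell\hbar$ and matching each pole to the delta function supported at the corresponding locus produces $P_0(z_2) = (q_i-q_i\inv)\inv$ and $P_1(z_2) = -(q_i-q_i\inv)\inv\,Y_\tau(E(h_{i,\hbar}),z_2)$, which is exactly the stated formula.

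The main obstacle is the second part. Verifying that no derivative-of-delta terms appear, and that the weak-associativity substitution is $\hbar$-adically convergent, requires careful bookkeeping since classically ($\hbar=0$) the two delta functions collapse to the same object. The rigorous input is that $z$ and $z+2r\ell\hbar$ are coprime as elements of $\C((z))[[\hbar]]$, so the two-term decomposition is exact; this coprimality, together with the degree-$2$ annihilator, is precisely what makes the ``partial-fraction to shifted-delta'' dictionary work in this $\hbar$-adic quantum vertex algebra setting. Once these points are established, the remainder is sign and coefficient bookkeeping.
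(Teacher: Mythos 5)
Your argument for $A_i(z)=0$ is correct and is essentially the paper's proof: start from \eqref{eq:qSing2eq}, observe that $z(z+2r\ell\hbar)$ clears the rational tail of $\bar A_i(z)$ to a polynomial, invoke Lemma~\ref{lem:tech-2} with $k=0$, $n=2$, $f(y)=y^2+2r\ell y$ to reduce to the two lowest coefficients, and identify those with the generators \eqref{eq:x+0x-}, \eqref{eq:x+1x-} of $R_1^\ell$. That part needs no changes.

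For the commutator identity \eqref{eq:local-h-5}, however, your proposed route has a genuine gap. You want to argue that from $(z_1-z_2)(z_1-z_2+2r\ell\hbar)\,C(z_1,z_2)=0$ one gets a ``unique'' decomposition $C=P_0(z_2)\,z_1^{-1}\delta(z_2/z_1)+P_1(z_2)\,z_1^{-1}\delta((z_2-2r\ell\hbar)/z_1)$, by appealing to coprimality of $z$ and $z+2r\ell\hbar$. But in the $\hbar$-adic setting the two deltas are congruent modulo $\hbar$, and the coefficients you ultimately want, namely $P_0=(q_i-q_i^{-1})^{-1}$ and $P_1=-(q_i-q_i^{-1})^{-1}Y_\tau(E(h_{i,\hbar}),z_2)$, are each \emph{singular} in $\hbar$ (since $(q_i-q_i^{-1})^{-1}\sim (2r_i\hbar)^{-1}$); only their combination is $\hbar$-adically regular. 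Thus this is not a ``standard'' decomposition into two shifted deltas with coefficients in $\mathcal E_\hbar$, and the existence/uniqueness claim would itself require an argument at least as delicate as the identity you are trying to prove. Concretely, the na\"{\i}ve attempt to recover $C$ from $D:=(z_1-z_2+2r\ell\hbar)C$ and $E:=(z_1-z_2)C$ forces a division by $2r\ell\hbar$, which is not a unit in $\C[[\hbar]]$.

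The paper avoids this entirely: it begins with the $S$-Jacobi-type identity from \cite[(2.25)]{Li-h-adic},
\begin{align*}
&z_0^{-1}\delta\!\(\tfrac{z_1-z_2}{z_0}\)Y_\tau(x_{i,\hbar}^+,z_1)Y_\tau(x_{j,\hbar}^-,z_2)
-z_0^{-1}\delta\!\(\tfrac{z_2-z_1}{-z_0}\)\iota_{z_2,z_1}g_{ij,\hbar}(z_1-z_2)Y_\tau(x_{j,\hbar}^-,z_2)Y_\tau(x_{i,\hbar}^+,z_1)\\
&\qquad=z_1^{-1}\delta\!\(\tfrac{z_2+z_0}{z_1}\)Y_\tau\!\(Y_\tau(x_{i,\hbar}^+,z_0)x_{j,\hbar}^-,z_2\),
\end{align*}
takes $\Res_{z_0}$, inserts the first part's formula $\Sing_{z_0}Y_\tau(x_{i,\hbar}^+,z_0)x_{j,\hbar}^-=\delta_{ij}(q_i-q_i^{-1})^{-1}(\vac\,z_0^{-1}-E(h_{i,\hbar})(z_0+2r\ell\hbar)^{-1})$ (and \eqref{eq:qSing2neq} for $i\ne j$), and evaluates the two residues at $z_0=0$ and $z_0=-2r\ell\hbar$. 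Everything stays $\hbar$-adically regular throughout because it is the singular part, not the separated pole coefficients, that gets substituted. Your plan to ``translate $C$ through weak associativity'' and match poles, if pushed through carefully, essentially reproduces this residue computation; so you have the right ingredients, but you should drop the intermediate decomposition claim and work directly from the Jacobi-type identity as the paper does.
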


\begin{proof}
Let
\begin{align*}
  \bar A_i(z)=Y_\tau(x_{i,\hbar}^+,z)x_{i,\hbar}^--(q_i-q_i\inv)\inv \(\vac z\inv-
  E(h_{i,\hbar}) (z+2r\ell\hbar)\inv\).
\end{align*}
It is easy to see that $A_i(z)=\Sing_z \bar A_i(z)$.
From \eqref{eq:qSing2eq}, we have that $\Sing_zz(z+2r\ell\hbar)\bar A_i(z)=0$.
Then it follows from Lemma \ref{lem:tech-2} that
\begin{align*}
  A_i(z)=\Sing_z\bar A_i(z)\in\bigoplus_{n=0}^1\C[z\inv][[\hbar]]\( \Res_z z^n\bar A_i(z)\).
\end{align*}
Recall from \eqref{eq:x+0x-} and \eqref{eq:x+1x-} that $\Res_z z^n\bar A_i(z)=0$ for $n=0,1$.
Therefore, $A_i(z)=0$ in $V_{\hat\fg,\hbar}(\ell,0)$.

From \cite[(2.25)]{Li-h-adic}, we have that
\begin{align*}
  &z_0\inv\delta\(\frac{z_1-z_2}{z_0}\)Y_\tau(x_{i,\hbar}^+,z_1)Y_\tau(x_{j,\hbar}^-,z_2)\\
  &-z_0\inv\delta\(\frac{z_2-z_1}{-z_0}\)\iota_{z_2,z_1}g_{ij,\hbar}(z_1-z_2)
  Y_\tau(x_{j,\hbar}^-,z_2)Y_\tau(x_{i,\hbar}^+,z_1)\\
  =&z_1\inv\delta\(\frac{z_2+z_0}{z_1}\)Y_\tau\(Y_\tau(x_{i,\hbar}^+,z_0)x_{j,\hbar}^-,z_2\).
\end{align*}
Taking $\Res_{z_0}$, we get that
\begin{align*}
  &Y_\tau(x_{i,\hbar}^+,z_1)Y_\tau(x_{j,\hbar}^-,z_2)-
  \iota_{z_2,z_1}g_{ij,\hbar}(z_1-z_2)
  Y_\tau(x_{j,\hbar}^-,z_2)Y_\tau(x_{i,\hbar}^+,z_1)\\
  =&\Res_{z_0}z_1\inv\delta\(\frac{z_2+z_0}{z_1}\)Y_\tau\(Y_\tau(x_{i,\hbar}^+,z_0)x_{j,\hbar}^-,z_2\)\\
  =&\Res_{z_0}z_1\inv\delta\(\frac{z_2+z_0}{z_1}\)Y_\tau\(\Sing_{z_0}Y_\tau(x_{i,\hbar}^+,z_0)x_{j,\hbar}^-,z_2\)\\
  =&\Res_{z_0}z_1\inv\delta\(\frac{z_2+z_0}{z_1}\)(q_i-q_i\inv)\inv\(
  z_0\inv-
  Y_\tau\(E(h_{i,\hbar}),z_2\)(z_0+2r\ell\hbar)\inv
  \)\\
  =&\delta_{ij}(q_i-q_i\inv)\inv
  \(z_1\inv\delta\(\frac{z_2}{z_1}\)-
  Y_\tau\(E(h_{i,\hbar}),z_2\)z_1\inv\delta\(\frac{z_2-2r\ell\hbar}{z_1}\)\),
\end{align*}
where the third equation follows from the first statement.
\end{proof}

For any positive integer $k$ and $i_1,\dots,i_k\in I$, we set
\begin{align}\label{eq:def-normal-ordering}
  &Y_{i_1,\dots,i_k}^\pm(z_1,\dots,z_k)\\
  =&\(\prod_{1\le a<b\le k}f_{i_a,i_b,\hbar}(z_a-z_b)\)Y_\tau(x_{i_1,\hbar}^\pm,z_1)Y_\tau(x_{i_2,\hbar}^\pm,z_2)\cdots Y_\tau(x_{i_k,\hbar}^\pm,z_k).\nonumber
\end{align}
It is immediate from equations \eqref{eq:local-h-3} and \eqref{eq:local-h-4} that
\begin{lem}
For any positive integer $k$, $i_1,\dots,i_k\in I$ and $\sigma\in S_k$, we have that
\begin{align}
  &Y_{i_{\sigma(1)},\dots,i_{\sigma(k)}}^\pm(z_{\sigma(1)},\dots,z_{\sigma(k)})
  =\(\prod_{\substack{1\le a<b\le k\\ \sigma(a)>\sigma(b)}}C_{i_a,i_b}\)
    Y_{i_1,\dots,i_k}^\pm(z_1,\dots,z_k),
\end{align}
where
\begin{align}\label{eq:def-C}
  C_{ij}=-(-1)^{\delta_{ij}}.
\end{align}
Moreover, $Y_{i_1,\dots,i_k}^\pm(z_1,\dots,z_k)\in\E_\hbar^{(k)}(F_\tau(A,\ell))$.
\end{lem}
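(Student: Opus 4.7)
The plan is to prove the two parts in sequence. For the symmetry identity, I would reduce to the case of an adjacent transposition via induction on the length $\ell(\sigma)$, and then handle that case by applying the pairwise $\hbar$-adic $S$-locality relation \eqref{eq:local-h-3}. For the statement $Y_{i_1,\dots,i_k}^\pm(z_1,\dots,z_k)\in\E_\hbar^{(k)}(F_\tau(A,\ell))$, I would combine the just-proved permutation symmetry with the $\hbar$-adic compatibility built into the $\hbar$-adic nonlocal vertex algebra $F_\tau(A,\ell)$.

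First I would recall the standard facts that $S_k$ is generated by the adjacent transpositions $s_a=(a,a+1)$ and that $\ell(\sigma)$ equals the number of inversions $|\{(c,b):c<b,\,\sigma(c)>\sigma(b)\}|$, so that induction on $\ell(\sigma)$ is the natural framework. The base case $\sigma=\mathrm{id}$ is immediate since both sides equal $Y_{i_1,\dots,i_k}^\pm(z_1,\dots,z_k)$. For the inductive step, select $a$ with $\sigma(a)>\sigma(a+1)$ and set $\sigma':=\sigma s_a$, so that $\ell(\sigma')=\ell(\sigma)-1$. Unwinding the definition \eqref{eq:def-normal-ordering}, the two expressions $Y_{i_{\sigma(1)},\dots,i_{\sigma(k)}}^\pm(z_{\sigma(1)},\dots,z_{\sigma(k)})$ and $Y_{i_{\sigma'(1)},\dots,i_{\sigma'(k)}}^\pm(z_{\sigma'(1)},\dots,z_{\sigma'(k)})$ differ only in the adjacent pair of operators at positions $a,a+1$ (whose order is swapped) and in a single $f$-factor at the position-pair $(a,a+1)$; the remaining $f$-factors coincide as a multiset because both products range over the same collection of unordered variable pairs. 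Applying \eqref{eq:local-h-3} to that adjacent pair converts one expression into the other up to a scalar factor $C_{i_{\sigma(a)},i_{\sigma(a+1)}}$. Combined with the inductive hypothesis and the observation $\mathrm{Inv}(\sigma)\triangle\mathrm{Inv}(\sigma s_a)=\{(a,a+1)\}$ together with the symmetry $C_{ij}=C_{ji}$, this reassembles into the claimed product over $\mathrm{Inv}(\sigma)$.

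For the ``moreover'' claim, fix $v\in F_\tau(A,\ell)$. The $\hbar$-adic compatibility axiom for an $\hbar$-adic nonlocal vertex algebra supplies, for each $n\in\Z_+$, an integer $M_n$ such that
\begin{equation*}
\prod_{c<b}(z_c-z_b)^{M_n}\cdot Y_\tau(x_{i_1,\hbar}^\pm,z_1)\cdots Y_\tau(x_{i_k,\hbar}^\pm,z_k)\cdot v
\end{equation*}
lies in $F_\tau(A,\ell)_0((z_1,\dots,z_k))$ modulo $\hbar^n$. Each scalar $f_{i_c,i_b,\hbar}(z_c-z_b)$ lies in $\C((z_c-z_b))[[\hbar]]$ with diagonal pole order bounded independently of $n$. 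The permutation symmetry proved in the first part shows that $Y_{i_1,\dots,i_k}^\pm(z_1,\dots,z_k)v$ admits matching iterated Laurent expansions in every ordering of $z_1,\dots,z_k$; a standard rationality argument then places it in $F_\tau(A,\ell)_0((z_1,\dots,z_k))[[\hbar]]$, which is the definition of $\E_\hbar^{(k)}(F_\tau(A,\ell))$.

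The main obstacle is the combinatorial bookkeeping in the induction step: one must verify that the symmetric difference $\mathrm{Inv}(\sigma)\triangle\mathrm{Inv}(\sigma s_a)=\{(a,a+1)\}$ together with the single $C$-factor produced by \eqref{eq:local-h-3} reassembles exactly into the full product $\prod_{(c,b)\in\mathrm{Inv}(\sigma)}C_{i_c,i_b}$, accounting correctly for how adjacent swaps relate $\sigma$-inversions to the indices at the swapped positions. The remaining $\iota$-expansion subtleties for the scalar $f$-factors (which are nontrivial only along diagonals $z_c=z_b$ with $i_c=i_b$) are benign because \eqref{eq:local-h-3} itself equates the two expansion directions along each such diagonal.
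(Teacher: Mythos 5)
Your high-level approach---induct on $\ell(\sigma)$ through an adjacent transposition, apply \eqref{eq:local-h-3} at the swapped pair, and observe that the $f$-factors at all the other pairs coincide---is the right one, and your derivation up to the scalar $C_{i_{\sigma(a)},i_{\sigma(a+1)}}$ is correct. This is also exactly the content behind the paper's ``immediate from \eqref{eq:local-h-3} and \eqref{eq:local-h-4}.''

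The gap is in the combinatorial bookkeeping that you yourself flag as the main obstacle. The asserted identity $\mathrm{Inv}(\sigma)\triangle\mathrm{Inv}(\sigma s_a)=\{(a,a+1)\}$ is false: take $k=3$, $\sigma=[2,3,1]$ in one-line notation, $a=2$ (allowed since $\sigma(2)=3>1=\sigma(3)$); then $\sigma s_2=[2,1,3]$, $\mathrm{Inv}(\sigma)=\{(1,3),(2,3)\}$, $\mathrm{Inv}(\sigma s_2)=\{(1,2)\}$, and the symmetric difference is $\{(1,2),(1,3),(2,3)\}$, not a singleton. (What is true is that $|\mathrm{Inv}(\sigma)|-|\mathrm{Inv}(\sigma s_a)|=1$ and that there is a bijection between $\mathrm{Inv}(\sigma s_a)$ and $\mathrm{Inv}(\sigma)\setminus\{(a,a+1)\}$ exchanging $a\leftrightarrow a+1$ in the second (or first) coordinate; but that bijection sends $C_{i_c,i_a}$ to $C_{i_c,i_{a+1}}$, so the products do not differ by just one factor $C_{i_a,i_{a+1}}$.) Moreover, even granting that identity, the scalar the adjacent swap produces is $C_{i_{\sigma(a)},i_{\sigma(a+1)}}$, not $C_{i_a,i_{a+1}}$, and these differ in general since $C_{ij}$ depends on whether $i=j$. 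Carrying out the induction correctly, the accumulated scalar is $\prod_{\substack{1\le a<b\le k\\ \sigma^{-1}(a)>\sigma^{-1}(b)}}C_{i_a,i_b}$, i.e.\ the product over inversions of $\sigma^{-1}$; the combinatorial input that makes the step work is $\mathrm{Inv}(\sigma^{-1})\triangle\mathrm{Inv}\big((\sigma s_a)^{-1}\big)=\{(\sigma(a+1),\sigma(a))\}$, which matches $C_{i_{\sigma(a)},i_{\sigma(a+1)}}=C_{i_{\sigma(a+1)},i_{\sigma(a)}}$. A direct check of the example confirms $Y^\pm_{i_2,i_3,i_1}(z_2,z_3,z_1)=C_{i_1,i_2}C_{i_1,i_3}\,Y^\pm_{i_1,i_2,i_3}(z_1,z_2,z_3)$, which disagrees with the printed $C_{i_1,i_3}C_{i_2,i_3}$ whenever $i_1\ne i_2=i_3$. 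So the condition $\sigma(a)>\sigma(b)$ in the lemma should apparently be $\sigma^{-1}(a)>\sigma^{-1}(b)$ (equivalently, the coefficient should be $C_{i_{\sigma(a)},i_{\sigma(b)}}$); with that correction your inductive argument, and the rationality argument you sketch for the ``moreover'' part, go through.
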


\begin{lem}\label{lem:qSerre}
Let $i,j\in I$ with $a_{ij}<0$ and $k\in\N$. Then in $F_\tau(A,\ell)$, we have that
\begin{align}\label{eq:qSerre-1}
&\Sing_{z_1,\dots,z_k}Y_\tau(x_{i,\hbar}^\pm,z_1)\cdots Y_\tau(x_{i,\hbar}^\pm,z_k)x_{j,\hbar}^\pm\\
&\quad\in \C[z_1\inv,\dots,z_k\inv][[\hbar]] \(\(x_{i,\hbar}^\pm\)_0^kx_{j,\hbar}^\pm\).\nonumber
\end{align}
Moreover, we define $\bar Y_{ij,k}^\pm(z)$ to be
\begin{align*}
  Y_{i,\dots,i,j}^\pm\(z+r_i\big((k-1)a_{ii}+a_{ij}\big)\hbar,z+r_i\big((k-2)a_{ii}+a_{ij}\big)\hbar, \dots,z+r_ia_{ij}\hbar,z\).
\end{align*}
Then we have that
\begin{align}\label{eq:qSerre-2}
  Y_\tau\(\(x_{i,\hbar}^\pm\)_0^kx_{j,\hbar}^\pm,z\)=c_k\bar Y_{ij,k}^\pm(z)
  \quad\te{for some }c_k\in\C[[\hbar]]^\times.
\end{align}
\end{lem}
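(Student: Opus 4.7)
The plan is to treat the two claims of the lemma separately. For \eqref{eq:qSerre-1}, I would induct on $k$. The base case $k=1$ follows directly from \eqref{eq:qSing1<=0}: since $\Sing_{z_1}(z_1-r_i a_{ij}\hbar)Y_\tau(x_{i,\hbar}^\pm,z_1)x_{j,\hbar}^\pm=0$, Lemma \ref{lem:tech-2} implies every negative-power coefficient of $Y_\tau(x_{i,\hbar}^\pm,z_1)x_{j,\hbar}^\pm$ is a $\C[[\hbar]]$-multiple of the residue $(x_{i,\hbar}^\pm)_0 x_{j,\hbar}^\pm$, yielding explicitly $\Sing_{z_1}Y_\tau(x_{i,\hbar}^\pm,z_1)x_{j,\hbar}^\pm=\sum_{n\ge 0}(r_i a_{ij}\hbar)^n z_1^{-n-1}(x_{i,\hbar}^\pm)_0 x_{j,\hbar}^\pm$. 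For the inductive step, the key input is the containment $Y^\pm_{i,\dots,i,j}(z_1,\dots,z_k,w)\in\E_\hbar^{(k+1)}(F_\tau(A,\ell))$ established in the preceding paragraph of the paper. Specializing $w=0$ (valid since $Y_\tau(x_{j,\hbar}^\pm,w)\vac\in F_\tau(A,\ell)[[w]]$) yields
\[\prod_{1\le a<b\le k}f_{ii,\hbar}(z_a-z_b)\prod_{a=1}^k f_{ij,\hbar}(z_a)\cdot Y_\tau(x_{i,\hbar}^\pm,z_1)\cdots Y_\tau(x_{i,\hbar}^\pm,z_k)x_{j,\hbar}^\pm\in F_\tau(A,\ell)((z_1,\dots,z_k))[[\hbar]].\]
Using the factorizations $f_{ij,\hbar}(z)=(z-r_i a_{ij}\hbar)u(z,\hbar)$ and $f_{ii,\hbar}(z)=\frac{z-2r_i\hbar}{z}v(z,\hbar)$ with $u,v$ units in $\C[[z,\hbar]]$, together with the symmetry of $Y^\pm_{i,\dots,i,j}$ in its first $k$ arguments (since $C_{ii}=1$) and iterated application of Lemma \ref{lem:tech-2} in each $z_a$, the projection $\Sing_{z_1,\dots,z_k}$ collapses onto a $\C[z_1^{-1},\dots,z_k^{-1}][[\hbar]]$-multiple of the iterated residue $\Res_{z_1,\dots,z_k}$ of the original product, which computes to $(x_{i,\hbar}^\pm)_0^k x_{j,\hbar}^\pm$.

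For \eqref{eq:qSerre-2}, I would begin from the identity $(x_{i,\hbar}^\pm)_0^k x_{j,\hbar}^\pm=\Res_{z_1,\dots,z_k}Y_\tau(x_{i,\hbar}^\pm,z_k)\cdots Y_\tau(x_{i,\hbar}^\pm,z_1)x_{j,\hbar}^\pm$, apply $Y_\tau(-,z)$, and invoke iterated weak associativity from Section \ref{sec:qva}. Under the appropriate expansion this rewrites as $\Res_{z_1,\dots,z_k}Y_\tau(x_{i,\hbar}^\pm,z+z_k)\cdots Y_\tau(x_{i,\hbar}^\pm,z+z_1)Y_\tau(x_{j,\hbar}^\pm,z)$. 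Comparing with $\bar Y^\pm_{ij,k}(z)=Y^\pm_{i,\dots,i,j}(z+r_i((k-1)a_{ii}+a_{ij})\hbar,\dots,z+r_i a_{ij}\hbar,z)$, through the Laurent-series substitution interpretation of the arguments inside the field $Y^\pm_{i,\dots,i,j}\in\E_\hbar^{(k+1)}$, identifies the scalar $c_k$. Its invertibility in $\C[[\hbar]]$ is verified by extracting the $\hbar\to 0$ leading term, which reduces to a nonzero rational constant coming from the classical vertex algebra $F(A,\ell)$ under the isomorphism $F_\tau(A,\ell)/\hbar F_\tau(A,\ell)\cong F(A,\ell)$ from Proposition \ref{prop:classical-limit}.

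The principal obstacle lies in \eqref{eq:qSerre-1}: the multi-variable collapse to a one-dimensional line requires careful bookkeeping of intertwined singularities at $z_a=0$, $z_a=z_b$, and $z_a=r_i a_{ij}\hbar$, and crucially exploits the symmetry $C_{ii}=1$ to identify the various orderings of residues so that they all reduce to the same vector $(x_{i,\hbar}^\pm)_0^k x_{j,\hbar}^\pm$. A secondary subtlety concerns the invertibility of $c_k$: a naive substitution of the shift values into the scalar $f_{ii,\hbar}$-factors would give $f_{ii,\hbar}(2r_i\hbar)=0$ for adjacent shifts, but this apparent resonance is bypassed because the definition of $\bar Y^\pm_{ij,k}(z)$ carries out the substitution inside the Laurent series representing $Y^\pm_{i,\dots,i,j}\in\E_\hbar^{(k+1)}$ rather than factor-by-factor, so the operator-product poles cancel against the scalar zeros and yield a nonzero leading coefficient in $\hbar$.
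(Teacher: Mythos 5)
Your base case is correct, and the $w=0$ specialization of $Y^\pm_{i,\dots,i,j}$ applied to $\vac$ is a legitimate starting point. But the core of your argument for \eqref{eq:qSerre-1} — ``iterated application of Lemma \ref{lem:tech-2} in each $z_a$'' — has a genuine gap. Lemma \ref{lem:tech-2} requires a single-variable monic polynomial with $\C[[\hbar]]$-coefficients evaluated at $z/\hbar$; after applying all your factorizations the polynomial factor hitting $z_1$ is $(z_1-r_ia_{ij}\hbar)\prod_{b>1}(z_1-z_b-2r_i\hbar)$, whose lower-order coefficients involve $z_2,\dots,z_k$ and are not in $\C[[\hbar]]$. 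So the first ``iteration'' already fails to be an instance of Lemma \ref{lem:tech-2}, and the problem compounds because the denominator factors $(z_a-z_b)^{-1}$ from $f_{ii,\hbar}$ prevent the expression from lying in $\E_\hbar^{(k)}$ at all before you impose further structure. The symmetry $C_{ii}=1$ and the $\hbar$-resonance you flag ($f_{ii,\hbar}(2r_i\hbar)=0$) are both correct observations, but neither one repairs this: the cross-factors couple the variables and your multi-variable collapse is not justified as written.

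The paper sidesteps this precisely by \emph{not} decoupling \eqref{eq:qSerre-1} from \eqref{eq:qSerre-2}. It runs a simultaneous induction: at step $k+1$, it uses \eqref{eq:qSerre-2}[$k$] to replace the inner $k$ operators by the \emph{one}-variable field $c_k\bar Y_{ij,k}^\pm(z_2)$, so the only relevant polynomial prefactor is $(z_1-z_2-r_i(ka_{ii}+a_{ij})\hbar)$ — a genuine monic polynomial in $z_1-z_2$ over $\C[[\hbar]]$, to which Lemma \ref{lem:tech-3} (and hence the Lemma \ref{lem:tech-2} argument embedded in it) applies cleanly, producing exactly one new singular direction to analyze. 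Then \eqref{eq:qSerre-1}[$k+1$] follows by $\Sing_{z_1}\circ\Sing_{z_2,\ldots,z_{k+1}}$ using \eqref{eq:qSerre-1}[$k$] for the tail, and \eqref{eq:qSerre-2}[$k+1$] follows from a single application of weak associativity with a careful $Y_\E$-substitution (not a naive replacement $w_a\mapsto z+z_a$, which your sketch for \eqref{eq:qSerre-2} uses and which is not justified since the OPE singularities along $w_a=z$ and $w_a=w_b$ obstruct free substitution) followed by $\Sing_z$ and $\Res_z$. The recursion $c_{k+1}=c_kd_k(r_i(ka_{ii}+a_{ij})\hbar)$ with $d_k$ a unit then gives invertibility directly, without needing the classical-limit detour you propose. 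Your proposal needs the simultaneous induction and the per-step reduction to two variables to actually close.
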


\begin{proof}
We prove the lemma by using induction on $k$. From equations \eqref{eq:local-h-3} and \eqref{eq:local-h-4}, we have that
\begin{align*}
  &\(\prod_{1\le a\le k}f_{ii,\hbar}\(z_1-z_2-r_i\big((a-1)a_{ii}+a_{ij}\big)\hbar\)\)
  f_{ij,\hbar}(z_1-z_2)Y_\tau\(x_{i,\hbar}^\pm,z_1\)\bar Y_{ij,k}^\pm(z_2)\\
  =&Y_{i,\dots,i,j}^\pm\(z_1,z_2+r_i((k-1)a_{ii}+a_{ij})\hbar,z_2+r_i((k-2)a_{ii}+a_{ij})\hbar,\dots,z_2+r_ia_{ij}\hbar,z_2\)
\end{align*}
lies in $\E_\hbar^{(2)}\(F_\tau(A,\ell)\)$. Notice that
\begin{align*}
  &f_{ij,\hbar}(z_1-z_2)\prod_{1\le a\le k}f_{ii,\hbar}\(z_1-z_2-r_i\big((a-1)a_{ii}+a_{ij}\big)\hbar\)\\
  &\quad=q_i^{-k-a_{ij}/2}e^{(z_1-z_2)/2}-q_i^{k+a_{ij}/2}e^{-(z_1-z_2)/2}.
\end{align*}
and that $\(q_i^{-k-a_{ij}/2}e^{(z_1-z_2)/2}-q_i^{k+a_{ij}/2}e^{-(z_1-z_2)/2}\)/(z_1-z_2-r_i(ka_{ii}+a_{ij})\hbar)\in \C[[z_1,z_2]]^\times$.
Then
\begin{align*}
  &\(z_1-z_2-r_i(ka_{ii}+a_{ij}\hbar)\)Y_\tau\(x_{i,\hbar}^\pm,z_1\)\bar Y_{ij,k}^\pm(z_2)\\
  =&d_k(z_1-z_2)Y_{i,\dots,i,j}^\pm\big(z_1,z_2+r_i((k-1)a_{ii}+a_{ij})\hbar,\\
  &\qquad z_2+r_i((k-2)a_{ii}+a_{ij})\hbar,\dots,z_2+r_ia_{ij}\hbar,z_2\big)
\end{align*}
lies in $\E_\hbar^{(2)}(F_\tau(A,\ell))$ for some $d_k(z)\in\C[[z,\hbar]]^\times$.
From induction assumptions \eqref{eq:qSerre-1}, \eqref{eq:qSerre-2} and Lemmas \ref{lem:tech-2}, \ref{lem:tech-3}, we complete the proof of \eqref{eq:qSerre-1} for $k+1$.

From induction assumption \eqref{eq:qSerre-2} and \eqref{eq:weak-asso}, we have that
\begin{align*}
  &(z-r_i(ka_{ii}+a_{ij})\hbar)Y\(Y\(x_{i,\hbar}^\pm,z\)\(\(x_{i,\hbar}^\pm\)_0^kx_{j,\hbar}^\pm\),z_2\)\\
  =&c_k(z-r_i(ka_{ii}+a_{ij})\hbar)Y_\E\(Y(x_{i,\hbar}^\pm,z_2),z\)\bar Y_{ij,k}^\pm(z_2)\\
  =&c_k
  \left.\(\(z_1-z-r_i(ka_{ii}+a_{ij}\hbar)\)Y_\tau\(x_{i,\hbar}^\pm,z_1\)\bar Y_{ij,k}^\pm(z_2)\)\right|_{z_1=z_2+z}\\
  =&c_kd_k(z)
  Y_{i,\dots,i,j}^\pm\big(z_2+z,z_2+r_i((k-1)a_{ii}+a_{ij})\hbar,\\
  &\qquad z_2+r_i((k-2)a_{ii}+a_{ij})\hbar,\dots,z_2+r_ia_{ij}\hbar,z_2\big).
\end{align*}
By multiplying $(z-r_i(ka_{ii}+a_{ij})\hbar)\inv$ and taking $\Sing_z$ on both hand sides, we get that
\begin{align}\label{eq:qSerre-temp}
  &Y\(\Sing_zY\(x_{i,\hbar}^\pm,z\)\(x_{i,\hbar}^\pm\)_0^kx_{j,\hbar}^\pm,z_2\)\\
  =&(z-r_i(ka_{ii}+a_{ij})\hbar)\inv
  c_{k+1}\bar Y_{ij,k+1}^\pm(z_2),\nonumber
\end{align}
where $c_{k+1}=c_kd_k(r_i(ka_{ii}+a_{ij})\hbar)$.
Then from the induction assumption \eqref{eq:qSerre-1}, we complete the proof of \eqref{eq:qSerre-1} for $k+1$.
Notice that both $c_k$ and $d_k(r_i(ka_{ii}+a_{ij})\hbar)$ are invertible in $\C[[\hbar]]$. We have that $c_{k+1}$ is invertible.
By taking $\Res_z$ on both hand sides of \eqref{eq:qSerre-temp}, we complete the proof of \eqref{eq:qSerre-2} for $k+1$.
Therefore, we complete the proof of the lemma.
\end{proof}

Similarly, we have that
\begin{lem}\label{lem:Mi}
Let $i\in I$ and $k\in\N$. Then in $F_\tau(A,\ell)$, we have that
\begin{align}
  &\Sing_{z_1,\dots,z_k}z_1\inv\cdots z_k\inv Y_\tau(x_{i,\hbar}^\pm,z_1)\cdots Y_\tau(x_{i,\hbar}^\pm,z_k)x_{i,\hbar}^\pm\\
  &\quad \in \C[z_1\inv,\dots,z_k\inv][[\hbar]]\(\(x_{i,\hbar}^\pm\)_{-1}^kx_{i,\hbar}^\pm\).\nonumber
\end{align}
Moreover, there is $c_k'\in\C[[\hbar]]^\times$, such that
\begin{align*}
  Y_\tau\(\(x_{i,\hbar}^\pm\)_{-1}^kx_{i,\hbar}^\pm,z\)=c_k'Y_{i,\dots,i}^\pm\( z+kr_ia_{ii}\hbar,z+(k-1)r_ia_{ii}\hbar,\dots,z \).
\end{align*}
\end{lem}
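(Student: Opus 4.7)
The plan is to proceed by induction on $k$, closely paralleling the proof of Lemma~\ref{lem:qSerre}. The base case $k=0$ is trivial (take $c_0'=1$). For the inductive step, let $\bar Y_{i,k}^\pm(z)=Y_{i,\dots,i}^\pm(z+kr_i a_{ii}\hbar,\dots,z)$. Using the definition of $Y_{i,\dots,i}^\pm$ and \eqref{eq:local-h-3}, we will have
\[
  \prod_{a=0}^k f_{ii,\hbar}(z_1-z_2-ar_i a_{ii}\hbar)\,Y_\tau(x_{i,\hbar}^\pm,z_1)\bar Y_{i,k}^\pm(z_2)=Y_{i,\dots,i}^\pm\bigl(z_1, z_2+kr_i a_{ii}\hbar,\dots,z_2\bigr)\in\E_\hbar^{(2)}(F_\tau(A,\ell)).
\]
Writing $f_{ii,\hbar}(z)=q_i^{-1}(1-q_i^{a_{ii}}e^{-z})/(1-e^{-z})$ as $q_i^{-1}(z-r_i a_{ii}\hbar)/z\cdot h(z,\hbar)$ with $h\in\C[[z,\hbar]]^\times$ will make the product of $f$'s telescope to $q_i^{-(k+1)}(z_1-z_2)^{-1}(z_1-z_2-(k+1)r_i a_{ii}\hbar)H(z_1-z_2,\hbar)$ with $H\in\C[[w,\hbar]]^\times$. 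Multiplying through by $(z_1-z_2)H^{-1}$ will produce some $d_k\in\C[[w,\hbar]]^\times$ with
\[
  \bigl(z_1-z_2-(k+1)r_i a_{ii}\hbar\bigr)Y_\tau(x_{i,\hbar}^\pm,z_1)\bar Y_{i,k}^\pm(z_2)=(z_1-z_2)d_k(z_1-z_2)Y_{i,\dots,i}^\pm(z_1, z_2+kr_i a_{ii}\hbar,\dots,z_2)\in\E_\hbar^{(2)}.
\]

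From here, Lemma~\ref{lem:tech-3} applied with the rational function $f(y)=(y-(k+1)r_i a_{ii})/y$ (so $n=m=1$), together with the inductive identity $c_k'\bar Y_{i,k}^\pm(z)=Y_\tau((x_{i,\hbar}^\pm)_{-1}^k x_{i,\hbar}^\pm,z)$, will yield $\Sing_z z^{-1}(z-(k+1)r_i a_{ii}\hbar)Y_\tau(x_{i,\hbar}^\pm,z)((x_{i,\hbar}^\pm)_{-1}^k x_{i,\hbar}^\pm)=0$. Lemma~\ref{lem:tech-2} (with shift $-1$, $n=1$, polynomial $y-(k+1)r_i a_{ii}$) will then give $(x_{i,\hbar}^\pm)_m((x_{i,\hbar}^\pm)_{-1}^k x_{i,\hbar}^\pm)\in\C[[\hbar]](x_{i,\hbar}^\pm)_{-1}^{k+1} x_{i,\hbar}^\pm$ for all $m\ge-1$. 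Since $Y_\tau(x_{i,\hbar}^\pm,z_0)$ is $\C((z_1,\dots,z_k))[[\hbar]]$-linear, $\Sing_{z_0}$ commutes with $\Sing_{z_1,\dots,z_k}$, so combining this with the inductive hypothesis on the innermost $k$ variables yields the first assertion for $k+1$.

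For the second assertion I will mimic \eqref{eq:qSerre-temp}. Weak associativity converts the boxed identity above into
\[
  \bigl(z_0-(k+1)r_i a_{ii}\hbar\bigr)Y_\tau\bigl(Y_\tau(x_{i,\hbar}^\pm,z_0)((x_{i,\hbar}^\pm)_{-1}^k x_{i,\hbar}^\pm),z_2\bigr)=c_k' z_0 d_k(z_0)Y_{i,\dots,i}^\pm(z_2+z_0, z_2+kr_i a_{ii}\hbar,\dots, z_2).
\]
Dividing by $z_0(z_0-(k+1)r_i a_{ii}\hbar)$, taking $\Res_{z_0}$, and using $(x_{i,\hbar}^\pm)_{-1}^{k+1} x_{i,\hbar}^\pm=\Res_{z_0}z_0^{-1}Y_\tau(x_{i,\hbar}^\pm,z_0)((x_{i,\hbar}^\pm)_{-1}^k x_{i,\hbar}^\pm)$ will produce
\[
  Y_\tau\bigl((x_{i,\hbar}^\pm)_{-1}^{k+1} x_{i,\hbar}^\pm,z_2\bigr)=c_k' d_k\bigl((k+1)r_i a_{ii}\hbar\bigr)\bar Y_{i,k+1}^\pm(z_2),
\]
once one observes that $d_k(z_0)Y_{i,\dots,i}^\pm(z_2+z_0,\dots,z_2)$ lies in $V_0((z_2))[[z_0,\hbar]]$ as a power series in $z_0$, so that $(z_0-(k+1)r_i a_{ii}\hbar)^{-1}$ acts via Taylor evaluation at $z_0=(k+1)r_i a_{ii}\hbar$. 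Setting $c_{k+1}':=c_k'd_k((k+1)r_i a_{ii}\hbar)\in\C[[\hbar]]^\times$ will complete the induction.

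The main subtlety, compared with Lemma~\ref{lem:qSerre}, is the absence of a compensating $f_{ij,\hbar}$-factor (with $i\ne j$ and $a_{ij}<0$) to cancel the $(z_1-z_2)^{-1}$-pole produced by the telescoping of the $f_{ii,\hbar}$-factors. Precisely this pole is what necessitates the weighting by $z_j^{-1}$ in the singular-part statement and forces an application of Lemma~\ref{lem:tech-3} with a genuinely rational $f(y)$ of the form $(y-C)/y$, rather than a polynomial.
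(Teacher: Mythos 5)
Your proof is correct and follows the paper's intended argument: the paper proves only Lemma~\ref{lem:qSerre} in detail and then asserts this lemma ``Similarly.'' You have correctly located the one place where the parallel requires care, namely that $\prod_{a=0}^kf_{ii,\hbar}(z-ar_ia_{ii}\hbar)$ telescopes to $q_i^{-(k+1)}z^{-1}(z-(k+1)r_ia_{ii}\hbar)\cdot(\text{unit})$ with a genuine pole at $z=0$ (absent in Lemma~\ref{lem:qSerre}, where the extra $f_{ij,\hbar}$-factor with $i\ne j$ is holomorphic and cancels it), which is exactly why the statement carries the $z_j^{-1}$-weights and why Lemma~\ref{lem:tech-3} must be applied with the rational $f(y)=(y-(k+1)r_ia_{ii})/y$ rather than a polynomial.
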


It is immediate from Lemma \ref{lem:qSerre} that
\begin{prop}\label{prop:qSerre}
For $i,j\in I$ with $a_{ij}<0$, we let
\begin{align*}
  &Q_{ij}^\pm(z_1,\dots,z_{m_{ij}})\\
  =&\Sing_{z_1,z_2,\dots,z_{m_{ij}}}Y_\tau(x_{i,\hbar}^\pm,z_1)Y_\tau(x_{i,\hbar}^\pm,z_2)\cdots Y_\tau(x_{i,\hbar}^\pm,z_{m_{ij}})x_{j,\hbar}^\pm.
\end{align*}
Then $Q_{ij}^\pm(z_1,\dots,z_{m_{ij}})=0$ in $V_{\hat\fg,\hbar}(\ell,0)$.
\end{prop}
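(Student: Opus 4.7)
The proposition follows almost immediately from Lemma~\ref{lem:qSerre} specialized to the Serre exponent $k = m_{ij}$, combined with the definition of the ideal $R_1^\ell$. My plan is to reduce the statement to the vanishing of a single explicit vector in the quotient.

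First I will apply Lemma~\ref{lem:qSerre} with $k = m_{ij}$ for $i,j \in I$ with $a_{ij} < 0$. This yields the containment
$$Q_{ij}^\pm(z_1, \dots, z_{m_{ij}}) \in \C[z_1^{-1},\dots,z_{m_{ij}}^{-1}][[\hbar]] \cdot \bigl((x_{i,\hbar}^\pm)_0^{m_{ij}} x_{j,\hbar}^\pm\bigr)$$
in $F_\tau(A,\ell)$; that is, every Laurent coefficient of $Q_{ij}^\pm$ in the variables $z_1,\dots,z_{m_{ij}}$ is a $\C[[\hbar]]$-scalar multiple of the single vector $(x_{i,\hbar}^\pm)_0^{m_{ij}} x_{j,\hbar}^\pm$. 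By Definition~\ref{de:V-tau} (specifically the generator listed in \eqref{eq:serre}), this vector lies in the closed ideal $R_1^\ell$, hence vanishes in $V_{\hat\fg,\hbar}(\ell,0) = F_\tau(A,\ell)/R_1^\ell$. Passing to the quotient therefore kills every coefficient of $Q_{ij}^\pm$, giving the desired identity.

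There is no substantial obstacle to this argument, since Lemma~\ref{lem:qSerre} has already absorbed all of the technical content via its inductive proof based on \eqref{eq:local-h-3}, \eqref{eq:local-h-4} and the extraction Lemmas~\ref{lem:tech-2} and \ref{lem:tech-3}. The only point worth brief comment is that $R_1^\ell$ was defined as the minimal closed ideal satisfying $[R_1^\ell] = R_1^\ell$, so in principle one must be mindful of $\hbar$-adic closure. Here, however, each coefficient of $Q_{ij}^\pm$ is a straightforward $\C[[\hbar]]$-scalar multiple of a listed generator of $R_1^\ell$ rather than a genuine $\hbar$-adic limit, so closure plays no active role and no additional limiting argument is required.
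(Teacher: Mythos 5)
Your proposal is correct and is essentially the same argument the paper intends: it reads Lemma~\ref{lem:qSerre} at $k=m_{ij}$, observes that $(x_{i,\hbar}^\pm)_0^{m_{ij}}x_{j,\hbar}^\pm$ is among the listed generators of $R_1^\ell$ in Definition~\ref{de:V-tau}, and concludes that every Laurent coefficient of $Q_{ij}^\pm$ dies in the quotient. The paper presents this as ``immediate'' from Lemma~\ref{lem:qSerre}; your explicit note that $\hbar$-adic closure plays no active role is a fair, if unnecessary, clarification.
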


And it is immediate from Lemma \ref{lem:Mi} that
\begin{prop}\label{prop:Mi}
For $\ell\in\Z_+$, we let
\begin{align*}
  &M_i^\pm(z_1,\dots,z_{r\ell/r_i})\\
  =&\Sing_{z_1,z_2,\dots,z_{r\ell/r_i}}z_1\inv \cdots z_{r\ell/r_i}\inv
  Y_\tau(x_{i,\hbar}^\pm,z_1)Y_\tau(x_{i,\hbar}^\pm,z_2)\cdots Y_\tau(x_{i,\hbar}^\pm,z_{r\ell/r_i})x_{i,\hbar}^\pm.
\end{align*}
Then $M_i^\pm(z_1,\dots,z_{r\ell/r_i})=0$ in $L_{\hat\fg,\hbar}(\ell,0)$.
\end{prop}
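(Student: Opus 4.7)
The plan is to observe that Proposition \ref{prop:Mi} is an almost immediate corollary of Lemma \ref{lem:Mi} together with the definition of $L_{\hat\fg,\hbar}(\ell,0)$. The substantial work---identifying precisely which vector controls the singular part of the iterated product---has already been carried out in Lemma \ref{lem:Mi}, so no new analytic input is needed.

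Concretely, I would first specialize Lemma \ref{lem:Mi} to $k=r\ell/r_i$, which gives the containment
\begin{align*}
  M_i^\pm(z_1,\dots,z_{r\ell/r_i})\in\C[z_1^{-1},\dots,z_{r\ell/r_i}^{-1}][[\hbar]]\cdot\((x_{i,\hbar}^\pm)_{-1}^{r\ell/r_i}x_{i,\hbar}^\pm\)
\end{align*}
inside $F_\tau(A,\ell)[[z_1^{\pm 1},\dots,z_{r\ell/r_i}^{\pm 1}]]$. Then I would push this identity forward along the canonical surjection $F_\tau(A,\ell)\twoheadrightarrow V_{\hat\fg,\hbar}(\ell,0)\twoheadrightarrow L_{\hat\fg,\hbar}(\ell,0)$: the image of $M_i^\pm$ still lies in the $\C[z_j^{-1}][[\hbar]]$-span of the image of $(x_{i,\hbar}^\pm)_{-1}^{r\ell/r_i}x_{i,\hbar}^\pm$.

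By Definition \ref{de:L-tau}, the closed ideal $R_2^\ell$ contains $(x_{i,\hbar}^\pm)_{-1}^{r\ell/r_i}x_{i,\hbar}^\pm$ for every $i\in I$, so this generating vector is zero in $L_{\hat\fg,\hbar}(\ell,0)$. Hence every coefficient in the polynomial expansion in $z_1^{-1},\dots,z_{r\ell/r_i}^{-1}$ of $M_i^\pm(z_1,\dots,z_{r\ell/r_i})$ vanishes, and the conclusion $M_i^\pm(z_1,\dots,z_{r\ell/r_i})=0$ follows. There is no genuine obstacle left: the entire combinatorial/analytic difficulty---showing that the normalized iterated singular part is captured by a single vector up to $\C[z_j^{-1}][[\hbar]]$-scalar coefficients, using the exchange relation \eqref{eq:local-h-3} and the pole pattern of $f_{ii,\hbar}$---was handled in the proof of Lemma \ref{lem:Mi}. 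The only thing to be slightly careful about is that the projection really does commute with taking $\Sing$ and with the vertex operator calculus, which is automatic since $R_2^\ell$ is a closed ideal and $L_{\hat\fg,\hbar}(\ell,0)$ inherits its nonlocal vertex algebra structure (a fact used throughout Theorem \ref{thm:quotient-algs}).
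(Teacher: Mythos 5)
Your argument is correct and is essentially the paper's own proof: the paper simply remarks that the proposition ``is immediate from Lemma~\ref{lem:Mi},'' which is exactly the specialization-and-projection step you spell out. Your extra sentence about the projection commuting with $\Sing$ and the vertex calculus is a reasonable but unstated sanity check; it introduces no new ideas beyond the paper's route.
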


Combining Propositions \ref{prop:qCartan}, \ref{prop:qSerre} and \ref{prop:Mi}, we have that
\begin{prop}\label{prop:ideal-def-alt}
$R_1^\ell$ is the minimal closed ideal of $F_\tau(A,\ell)$ such that $[R_1^\ell]=R_1^\ell$, and contains all coefficients of $A_i(z)$ for $i\in I$ and all coefficients of $Q_{ij}^\pm(z_1,\dots,z_{m_{ij}})$ for $i,j\in I$ with $a_{ij}<0$.
Moreover, if $\ell\in\Z_+$, then $R_2^\ell$ is the minimal closed ideal of $V_{\hat\fg,\hbar}(\ell,0)$ such that $[R_2^\ell]=R_2^\ell$ and contains all coefficients of $M_i^\pm(z_1,\dots,z_{r\ell/r_i})$ for $i\in I$.
\end{prop}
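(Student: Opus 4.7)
The plan is to show both inclusions between $R_1^\ell$ and the candidate ideal $\widetilde R_1^\ell$ defined as the minimal closed ideal of $F_\tau(A,\ell)$ with $[\widetilde R_1^\ell]=\widetilde R_1^\ell$ containing the coefficients of all $A_i(z)$ and $Q_{ij}^\pm(z_1,\ldots,z_{m_{ij}})$, and then proceed analogously for $R_2^\ell$ using $M_i^\pm$. The key observation is that the original generators appearing in \eqref{eq:x+0x-}, \eqref{eq:x+1x-}, \eqref{eq:serre} are literally specific coefficients of $A_i(z)$ and $Q_{ij}^\pm$, while conversely Lemma \ref{lem:qSing}, Lemma \ref{lem:tech-2} and Lemma \ref{lem:qSerre}\eqref{eq:qSerre-1} force every coefficient of $A_i(z)$ and $Q_{ij}^\pm$ to lie in the $\C[z^{\pm 1},\hbar]$-module generated by those same original generators.

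For the inclusion $R_1^\ell\subseteq\widetilde R_1^\ell$, I first extract the coefficient of $z^{-1}$ in $A_i(z)$: expanding $(z+2r\ell\hbar)^{-1}$ via $\iota_{z,\hbar}$ and reading off $\mathrm{Res}_z\, z^0 A_i(z)$ yields exactly $(x_{i,\hbar}^+)_0 x_{i,\hbar}^- - (q_i-q_i^{-1})^{-1}(\vac - E(h_{i,\hbar}))$, i.e.\ generator \eqref{eq:x+0x-}, while $\mathrm{Res}_z\, z^1 A_i(z)$ differs from generator \eqref{eq:x+1x-} only by an overall sign. Similarly, the coefficient of $z_1^{-1}\cdots z_{m_{ij}}^{-1}$ in $Q_{ij}^\pm(z_1,\ldots,z_{m_{ij}})$ is $(x_{i,\hbar}^\pm)_0^{m_{ij}}x_{j,\hbar}^\pm$, which is generator \eqref{eq:serre}. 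Because $\widetilde R_1^\ell$ is closed and saturated and contains all these coefficients, it contains the original generators and hence the entire ideal $R_1^\ell$.

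For the reverse inclusion $\widetilde R_1^\ell\subseteq R_1^\ell$, I argue that all coefficients of $A_i(z)$ and $Q_{ij}^\pm$ lie in $R_1^\ell$. For $A_i(z)$, set
\[
\bar A_i(z)=Y_\tau(x_{i,\hbar}^+,z)x_{i,\hbar}^- - (q_i-q_i^{-1})^{-1}\bigl(\vac z^{-1}-E(h_{i,\hbar})(z+2r\ell\hbar)^{-1}\bigr),
\]
so that $A_i(z)=\mathrm{Sing}_z \bar A_i(z)$, and use \eqref{eq:qSing2eq} from Lemma \ref{lem:qSing} to obtain $\mathrm{Sing}_z z(z+2r\ell\hbar)\bar A_i(z)=0$. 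Then Lemma \ref{lem:tech-2} (with $k=-1$ and $n=2$ applied $\hbar$-adically) expresses every coefficient of $A_i(z)$ as a $\C[z^{-1}][[\hbar]]$-combination of $\mathrm{Res}_z z^n \bar A_i(z)$ for $n=0,1$, namely of the generators in \eqref{eq:x+0x-} and \eqref{eq:x+1x-}; all such combinations lie in the closed saturated ideal $R_1^\ell$. For $Q_{ij}^\pm$, Lemma \ref{lem:qSerre}\eqref{eq:qSerre-1} directly shows that its coefficients lie in $\C[z_1^{-1},\ldots,z_{m_{ij}}^{-1}][[\hbar]]\cdot\bigl((x_{i,\hbar}^\pm)_0^{m_{ij}}x_{j,\hbar}^\pm\bigr)\subseteq R_1^\ell$.

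The statement for $R_2^\ell$ is handled by the same recipe, replacing Lemma \ref{lem:qSerre} by Lemma \ref{lem:Mi}: the coefficient of $z_1^{-1}\cdots z_{r\ell/r_i}^{-1}$ in $M_i^\pm$ equals $(x_{i,\hbar}^\pm)_{-1}^{r\ell/r_i}x_{i,\hbar}^\pm$, giving generator \eqref{eq:integrable}, while Lemma \ref{lem:Mi} confines all coefficients of $M_i^\pm$ to the $\C[z_1^{-1},\ldots,z_{r\ell/r_i}^{-1}][[\hbar]]$-span of that generator. The main point requiring care is the bookkeeping of the $\hbar$-adic expansion of $(z+2r\ell\hbar)^{-1}$ together with the closure/saturation condition $[\cdot]=\cdot$, which is precisely what allows the Lemma \ref{lem:tech-2} style reduction from a ``leading two coefficients vanish'' statement to ``all coefficients vanish'' in the quotient; once this is accepted, the two characterizations of $R_1^\ell$ and $R_2^\ell$ coincide.
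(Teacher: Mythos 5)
Your proof is correct and follows essentially the same route as the paper, which presents this proposition as a direct consequence of Propositions \ref{prop:qCartan}, \ref{prop:qSerre}, and \ref{prop:Mi}; you usefully make the two inclusions explicit (the generators in \eqref{eq:x+0x-}--\eqref{eq:serre}, resp.\ \eqref{eq:integrable}, are specific coefficients of $A_i$, $Q_{ij}^\pm$, resp.\ $M_i^\pm$, and conversely all coefficients lie in the closed saturated ideal by Lemmas \ref{lem:tech-2}, \ref{lem:qSerre}, \ref{lem:Mi}). Two parenthetical imprecisions do not affect the structure of the argument: Lemma \ref{lem:tech-2} is applied with $k=0$ and $n=2$ (so that $z^k\hbar^n f(z/\hbar)=z(z+2r\ell\hbar)$ with $f(z)=z(z+2r\ell)$), not $k=-1$; and $\Res_z z\,\bar A_i(z)=(x_{i,\hbar}^+)_1 x_{i,\hbar}^- - 2r\ell\hbar(q_i-q_i\inv)\inv E(h_{i,\hbar})$ does not differ from \eqref{eq:x+1x-} by an overall sign but only in the sign of the $E(h_{i,\hbar})$-term, which indicates a sign typo in the paper's \eqref{eq:x+1x-} (the paper's own proof of Proposition \ref{prop:qCartan} implicitly identifies $\Res_z z\,\bar A_i(z)$ with that generator, so the two must be intended to coincide).
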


Theorem \ref{thm:quotient-algs} is immediate from Lemma \ref{lem:S-quotient-alg}, Proposition \ref{prop:ideal-def-alt} and the following three technical results.

\begin{lem}\label{lem:S-tau-Aij-h}
For $i,j\in I$, we have that
\begin{align*}
  &S_\tau(z_1)(A_i(z_2)\ot h_{j,\hbar})\\
  =&A_i(z_2)\ot h_{j,\hbar}+\Sing_{z_2}\(A_i(z_2)\ot \vac\ot\(e^{z_2\pd{z_1}}-1\)\(\tau_{ij}^{1,+}(-z_1)+\tau_{ji}^{2,+}(z_1)\) \),\\
  &S_\tau(z_1)(h_{j,\hbar}\ot A_i(z_2))\\
  =&h_{j,\hbar}\ot A_i(z_2)+\Sing_{z_2}\(\vac\ot A_i(z_2)\ot\(1-e^{-z_2\pd{z_1}}\)\(\tau_{ji}^{2,+}(-z_1)+\tau_{ij}^{1,+}(z_1)\)\),\\
  &S_\tau(z_1)(A_i(z_2)\ot x_{j,\hbar}^\pm)\\
  =&\Sing_{z_2}\(A_i(z_2)\ot x_{j,\hbar}^\pm\ot
  \exp\(\(e^{z_2\pd{z_1}}-1\) \log\frac{\tau_{ij}^{+,\pm}(z_1)}{\tau_{ji}^{\pm,+}(-z_1)} \)\),\\
  &S_\tau(z_1)(x_{j,\hbar}^\pm\ot A_i(z_2))\\
  =&\Sing_{z_2}\(x_{j,\hbar}^\pm\ot A_i(z_2)\ot \exp\(\(1-e^{-z_2\pd{z_1}}\) \log\frac{\tau_{ij}^{+,\pm}(-z_1)}{\tau_{ji}^{\pm,+}(z_1)} \)\).
\end{align*}
\end{lem}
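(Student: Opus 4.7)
My plan is to split $A_i(z_2)$ into its three constituent pieces, apply $S_\tau(z_1)$ to each via the hexagon identities \eqref{eq:qyb-hex-id}, \eqref{eq:qyb-hex-id-alt}, the basic formulas \eqref{eq:S-tau-1}--\eqref{eq:S-tau-4}, and the extension Lemma \ref{lem:S-special-tech-gen2}, then recombine via an elementary residue identity. I sketch the first of the four equations; the remaining three are analogous.

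Write $A_i(z_2) = B_i(z_2) - (q_i - q_i\inv)\inv z_2\inv \vac + (q_i - q_i\inv)\inv (z_2 + 2r\ell\hbar)\inv E(h_{i,\hbar})$ with $B_i(z_2) := \Sing_{z_2} Y_\tau(x_{i,\hbar}^+, z_2) x_{i,\hbar}^-$, and abbreviate $T_{ij}(z_1) := \tau_{ij}^{1,+}(-z_1) + \tau_{ji}^{2,+}(z_1)$. The $\vac$-piece is preserved by $S_\tau(z_1)$ thanks to \eqref{eq:qyb-vac-id}. The hexagon identity \eqref{eq:qyb-hex-id} rewrites $S_\tau(z_1)(Y_\tau(x_{i,\hbar}^+, z_2)x_{i,\hbar}^- \ot h_{j,\hbar})$ as $(Y_\tau(z_2)\ot 1) S_\tau^{23}(z_1) S_\tau^{13}(z_1+z_2) (x_{i,\hbar}^+ \ot x_{i,\hbar}^- \ot h_{j,\hbar})$; two applications of \eqref{eq:S-tau-2} give $S_\tau(z)(x_{i,\hbar}^\pm \ot h_{j,\hbar}) = x_{i,\hbar}^\pm \ot h_{j,\hbar} \pm x_{i,\hbar}^\pm \ot \vac \ot T_{ij}(z)$, where the sign-independence of $T_{ij}$ comes from the identity $\tau_{ij}^{1,\pm} = \tau_{ij}^{2,\pm}$ visible in \eqref{eq:tau-2} together with \eqref{eq:special-tau-tech0-2}. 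The cross products cancel and the $z_2$-shift forced by \eqref{eq:qyb-hex-id} yields
\begin{equation*}
S_\tau(z_1)(B_i(z_2)\ot h_{j,\hbar}) = B_i(z_2)\ot h_{j,\hbar} + \Sing_{z_2}(B_i(z_2)\ot \vac \ot (e^{z_2\pd{z_1}} - 1)T_{ij}(z_1)).
\end{equation*}

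For $E(h_{i,\hbar}) = c \cdot \exp(\tilde h_{-1})\vac$ with $\tilde h := -q^{-r\ell\partial}F(\partial) h_{i,\hbar} \in \hbar F_\tau(A,\ell)$ (since $F(z) \in \hbar\C[z^2][[\hbar]]$), I start from \eqref{eq:S-tau-1}, commute $-q^{-r\ell\partial}F(\partial)$ through $S_\tau(z_1)$ via \eqref{eq:qyb-shift-total1} (so that $\partial\ot 1$ becomes $\partial\ot 1 + \pd{z_1}$ and collapses to $\pd{z_1}$ on the vacuum slot), and simplify via the evenness of $F$ together with \eqref{eq:special-tau-tech1-3} to obtain $S_\tau(z_1)(\tilde h \ot h_{j,\hbar}) = \tilde h \ot h_{j,\hbar} + \vac \ot \vac \ot g_{ij}(z_1)$ with $g_{ij}(z_1) := (q^{-2r\ell\pd{z_1}} - 1) T_{ij}(z_1) = T_{ij}(z_1 - 2r\ell\hbar) - T_{ij}(z_1)$. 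Part (1) of Lemma \ref{lem:S-special-tech-gen2} lifts this through the exponential:
\begin{equation*}
S_\tau(z_1)(E(h_{i,\hbar}) \ot h_{j,\hbar}) = E(h_{i,\hbar}) \ot h_{j,\hbar} + E(h_{i,\hbar}) \ot \vac \ot g_{ij}(z_1).
\end{equation*}

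To assemble the three contributions, I use the elementary residue identity $\Sing_{z_2}[(z_2 + 2r\ell\hbar)\inv \phi(z_2)] = (z_2 + 2r\ell\hbar)\inv \phi(-2r\ell\hbar)$ for any $\phi(z_2) \in \C[[z_2]]\wh\ot\C((z_1))[[\hbar]]$ (immediate from the $\hbar$-adic expansion of $(z_2 + 2r\ell\hbar)\inv$), applied to $\phi(z_2) = (e^{z_2\pd{z_1}} - 1) T_{ij}(z_1)$; since $\phi(-2r\ell\hbar) = g_{ij}(z_1)$ and $\phi(0) = 0$, the $\vac z_2\inv$ term drops out and the $E(h_{i,\hbar})(z_2 + 2r\ell\hbar)\inv$ contribution matches the $(z_2 + 2r\ell\hbar)\inv g_{ij}(z_1)$ produced on the right, collapsing everything into $\Sing_{z_2}(A_i(z_2) \ot \vac \ot (e^{z_2\pd{z_1}} - 1) T_{ij}(z_1))$, as claimed. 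The remaining three identities are proved by the same recipe: the second-slot case uses \eqref{eq:qyb-hex-id-alt} and \eqref{eq:qyb-shift-total2} (producing the shift $1 - e^{-z_2\pd{z_1}}$); the $x_{j,\hbar}^\pm$-variants use \eqref{eq:S-tau-3}, \eqref{eq:S-tau-4} together with parts (2), (3) of Lemma \ref{lem:S-special-tech-gen2} and the identities \eqref{eq:special-tau-tech1-4}, \eqref{eq:special-tau-tech0-3-4}, which generate the multiplicative factor $\exp((e^{z_2\pd{z_1}} - 1) \log(\tau_{ij}^{+,\pm}(z_1)/\tau_{ji}^{\pm,+}(-z_1)))$ in place of the additive $T_{ij}$-shift. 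The main technical challenge is coordinating the two independent scalar shifts on the right---the $e^{z_2\pd{z_1}}$ from the hexagon identity and the $q^{-2r\ell\pd{z_1}}$ carried by $E(h_{i,\hbar})$---and verifying that the pole of $A_i(z_2)$ at $z_2 = -2r\ell\hbar$ interlocks these shifts so that everything closes up cleanly under $\Sing_{z_2}$.
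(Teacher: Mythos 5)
Your proposal is correct and follows essentially the same route as the paper: decompose $A_i(z_2)$ into the $\Sing_{z_2}Y_\tau(x_{i,\hbar}^+,z_2)x_{i,\hbar}^-$ piece, the $\vac z_2^{-1}$ piece, and the $E(h_{i,\hbar})(z_2+2r\ell\hbar)^{-1}$ piece; handle the first via the hexagon identity \eqref{eq:qyb-hex-id} and \eqref{eq:S-tau-2}, \eqref{eq:special-tau-tech0-2}; handle $E(h_{i,\hbar})$ via the shift law \eqref{eq:qyb-shift-total1}, the identity \eqref{eq:special-tau-tech1-3}, and Lemma \ref{lem:S-special-tech-gen2}(1); then close up with the two residue identities $\Sing_z z^{-1}(e^{z\partial_x}-1)=0$ and $\Sing_z(z+2r\ell\hbar)^{-1}(e^{z\partial_x}-1)=(q^{-2r\ell\partial_x}-1)(z+2r\ell\hbar)^{-1}$, which are exactly what the paper invokes.
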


\begin{proof}
From the relations \eqref{eq:qyb-hex-id}, \eqref{eq:S-tau-2} and \eqref{eq:special-tau-tech0-2}, we have that
\begin{align*}
  &S_\tau(z_1)\(\Sing_{z_2}Y_\tau(x_{i,\hbar}^+,z_2)x_{i,\hbar}^-\ot h_{j,\hbar}\)\\
  =&\Sing_{z_2}Y_\tau^{12}(z_2)S_\tau^{23}(z_1)S_\tau^{13}(z_1+z_2)(x_{i,\hbar}^+\ot x_{i,\hbar}^-\ot h_{j,\hbar})\\
  =&\Sing_{z_2}Y_\tau^{12}(z_2)S_\tau^{23}(z_1)\Big(x_{i,\hbar}^+\ot x_{i,\hbar}^-\ot h_{j,\hbar}\\
  &\quad+x_{i,\hbar}^+\ot x_{i,\hbar}^-\ot\vac
  \ot\ot \(\tau_{ji}^{1,+}(-z_1-z_2)+\tau_{ij}^{2,+}(z_1+z_2)\)\Big)\\
  =&\Sing_{z_2}Y_\tau^{12}(z_2)\Big(
    x_{i,\hbar}^+\ot x_{i,\hbar}^-\ot h_{j,\hbar}\\
  &\quad+x_{i,\hbar}^+\ot x_{i,\hbar}^-\ot\vac\ot\(e^{z_2\pd{z_1}}-1\)\(\tau_{ji}^{1,+}(-z_1)+\tau_{ij}^{2,+}(z_1)\)
  \Big)\\
  =&\Sing_{z_2}Y_\tau(x_{i,\hbar}^+,z_2)x_{i,\hbar}^-\ot h_{j,\hbar}+\Sing_{z_2,z}\\
  &\quad\(Y_\tau(x_{i,\hbar}^+,z_2)x_{i,\hbar}^-\ot \vac\ot \(e^{z_2\pd{z_1}}-1\)\(\tau_{ji}^{1,+}(-z_1)+\tau_{ij}^{2,+}(z_1)\)\).
\end{align*}
From the relations \eqref{eq:qyb-shift-total1}, \eqref{eq:S-tau-1} and \eqref{eq:special-tau-tech1-3}, we have that
\begin{align*}
  &S_\tau(z)\(q^{-r\ell\partial}F(\partial)h_{i,\hbar}\ot h_{j,\hbar}\)\\
  =&q^{-r\ell\partial\ot 1-r\ell\pd{z}}F\(\partial\ot 1+\pd{z}\)S(z)(h_{i,\hbar}\ot h_{j,\hbar})\\
  =&q^{-r\ell\partial\ot 1-r\ell\pd{z}}F\(\partial\ot 1+\pd{z}\)\(h_{i,\hbar}\ot h_{j,\hbar}+\vac\ot \vac\ot\(\tau_{ji}(-z)-\tau_{ij}(z)\)\)\\
  =&q^{-r\ell\partial}F(\partial)h_{i,\hbar}\ot h_{j,\hbar}+\vac\ot\vac\ot q^{-r\ell\pd{z}}F\(\pd{z}\)\(\tau_{ji}(-z)-\tau_{ij}(z)\)\\
  =&q^{-r\ell\partial}F(\partial)h_{i,\hbar}\ot h_{j,\hbar}+\vac\ot\vac\ot \(1-q^{-2r\ell\pd{z}}\)\(\tau_{ji}^{1,+}(-z)+\tau_{ij}^{2,+}(z)\).
\end{align*}
Then by using Lemma \ref{lem:S-special-tech-gen2}, we get that
\begin{align*}
  &S_\tau(z)\(\exp\(\(-q^{-r\ell\partial}F(\partial)h_{i,\hbar}\)_{-1}\)\vac\ot h_{j,\hbar}\)\\
  =&\exp\(\(-q^{-r\ell\partial}F(\partial)h_{i,\hbar}\)_{-1}\)\vac\ot h_{j,\hbar}\\
  & +\exp\(\(-q^{-r\ell\partial}F(\partial)h_{i,\hbar}\)_{-1}\)\vac\ot \vac\ot
  \(q^{-2r\ell\pd{z}}-1\)\(\tau_{ji}^{1,+}(-z)+\tau_{ij}^{2,+}(z)\).
\end{align*}
Hence, we have that
\begin{align}\label{eq:Aij-h-temp3}
  &S_\tau(z_1)\(A_i(z_2)\ot h_{j,\hbar}\)=\Sing_{z_2}Y_\tau(x_{i,\hbar}^+,z_2)x_{i,\hbar}^-\ot h_{j,\hbar}+\Sing_{z_2}\nonumber\\
  &\quad\(\Sing_{z_2}Y_\tau(x_{i,\hbar}^+,z_2)x_{i,\hbar}^-\ot \vac\ot \(e^{z_2\pd{z_1}}-1\)\big(\tau_{ji}^{1,+}(-z_1)+\tau_{ij}^{2,+}(z_1)\big)\)\nonumber\\
  &\quad-\frac{1}{q_i-q_i\inv}\(\vac\ot h_{j,\hbar}z_2\inv-E(h_{i,\hbar})\ot h_{j,\hbar}(z_2+2r\ell\hbar)\inv\)\nonumber\\
  &\quad+\frac{(z_2+2r\ell\hbar)\inv}{q_i-q_i\inv}E(h_{i,\hbar})\ot \vac\ot
  \(q^{-2r\ell\pd{z_1}}-1\)\big(\tau_{ji}^{1,+}(-z_1)+\tau_{ij}^{2,+}(z_1)\big)\nonumber\\
  =&A_i(z_2)\ot h_{j,\hbar}+\Sing_{z_2}A_i(z_2)\ot \vac\ot \(e^{z_2\pd{z_1}}-1\)\big(\tau_{ji}^{1,+}(-z_1)+\tau_{ij}^{2,+}(z_1)\big)\\
  &\quad+\frac{1}{q_i-q_i\inv}\Sing_{z_2}\(\vac\ot\vac \ot z_2\inv\(e^{z_2\pd{z_1}}-1\)\big(\tau_{ji}^{1,+}(-z_1)+\tau_{ij}^{2,+}(z_1)\big)\)\nonumber\\
  &\quad-\frac{1}{q_i-q_i\inv}\Sing_{z_2}\(E(h_{i,\hbar})\ot \vac
  \ot\frac{e^{z_2\pd{z_1}}-1}{z_2+2r\ell\hbar}\big(\tau_{ji}^{1,+}(-z_1)+\tau_{ij}^{2,+}(z_1)\big)\)\nonumber\\
  &\quad+\frac{(z_2+2r\ell\hbar)\inv}{q_i-q_i\inv}E(h_{i,\hbar})\ot \vac\ot
  \(q^{-2r\ell\pd{z_1}}-1\)\big(\tau_{ji}^{1,+}(-z_1)+\tau_{ij}^{2,+}(z_1)\big).\nonumber
\end{align}
Notice that
\begin{align*}
  &\Sing_zz\inv\(e^{z\pd{x}}-1\)=0,\\
  &\Sing_z(z+2r\ell\hbar)\inv \(e^{z\pd{x}}-1\)=\(q^{-2r\ell\pd{x}}-1\)(z+2r\ell\hbar)\inv.
\end{align*}
Combining these equations and \eqref{eq:Aij-h-temp3}, we get that
\begin{align*}
  &S_\tau(z_1)\(A_i(z_2)\ot h_{j,\hbar}\)\\
  =&A_i(z_2)\ot h_{j,\hbar}+\Sing_{z_2} A_i(z_2)\ot \vac\ot \(e^{z_2\pd{z_1}}-1\)\(\tau_{ji}^{1,+}(-z_1)+\tau_{ij}^{2,+}(z_1)\).
\end{align*}
Therefore, we complete the proof of the first equation.
The proof of the rest equations are similar.
\end{proof}

Similar to the proof of Lemma \ref{lem:S-tau-Aij-h}, we have the following two results.

\begin{lem}\label{lem:S-tau-Qij-h}
For $i,j,k\in I$ such that $a_{ij}<0$, we have that
\begin{align*}
  &S_\tau(z)\(Q_{ij}^\pm(z_1,\dots,z_{m_{ij}})\ot h_{k,\hbar}\)\\
  =&Q_{ij}^\pm(z_1,\dots,z_{m_{ij}})\ot h_{k,\hbar}
  \pm \Sing_{z_1,\dots,z_{m_{ij}}}\Bigg(Q_{ij}^\pm(z_1,\dots,z_{m_{ij}})\ot\vac\\
  &\quad\ot
    \(\tau_{kj}^{1,\pm}(-z)+\tau_{jk}^{2,\pm}(z)+\sum_{a=1}^{m_{ij}}\(\tau_{ki}^{1,\pm}(-z-z_a)+\tau_{ik}^{2,\pm}(z+z_a)\)
  \)\Bigg),\\
  &S_\tau(z)\(h_{k,\hbar}\ot Q_{ij}^\pm(z_1,\dots,z_{m_{ij}})\)\\
  =&h_{k,\hbar}\ot Q_{ij}^\pm(z_1,\dots,z_{m_{ij}})
  \mp \Sing_{z_1,\dots,z_{m_{ij}}}\Bigg(\vac\ot Q_{ij}^\pm(z_1,\dots,z_{m_{ij}})\\
  &\quad\ot \(\tau_{jk}^{2,\pm}(-z)+\tau_{kj}^{1,\pm}(z)
  +\sum_{a=1}^{m_{ij}}\(\tau_{ik}^{2,\pm}(-z+z_a)+\tau_{ki}^{1,\pm}(z-z_a)\)\)\Bigg),\\
  &S_\tau(z)\(Q_{ij}^\pm(z_1,\dots,z_{m_{ij}})\ot x_{k,\hbar}^\epsilon\)
  =\Sing_{z_1,\dots,z_{m_{ij}}}\Bigg(Q_{ij}^\pm(z_1,\dots,z_{m_{ij}})\ot x_{k,\hbar}^\epsilon\\
  &\quad
  \ot \tau_{kj}^{\epsilon,\pm}(-z)\inv\tau_{jk}^{\pm,\epsilon}(z)
  \prod_{a=1}^{m_{ij}}\tau_{ki}^{\epsilon,\pm}(-z-z_a)\inv\tau_{ik}^{\pm,\epsilon}(z+z_a)\Bigg),\\
  &S_\tau(z)\(x_{k,\hbar}^\epsilon\ot Q_{ij}^\pm(z_1,\dots,z_{m_{ij}})\)=\Sing_{z_1,\dots,z_{m_{ij}}}\Bigg(x_{k,\hbar}^\epsilon\ot Q_{ij}^\pm(z_1,\dots,z_{m_{ij}})\\
  &\quad\ot \tau_{jk}^{\epsilon,\pm}(-z)\inv \tau_{kj}^{\pm,\epsilon}(z)
    \prod_{a=1}^{m_{ij}}\tau_{ik}^{\epsilon,\pm}(-z+z_a)\inv\tau_{ki}^{\pm,\epsilon}(z-z_a)\Bigg).
\end{align*}
\end{lem}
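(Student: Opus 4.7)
The plan is to mirror the proof of Lemma \ref{lem:S-tau-Aij-h} for each of the four identities, the only new ingredient being that we now iterate a hexagon identity $m_{ij}$ times rather than once. The key reduction, proved by induction on $m$ using \eqref{eq:qyb-hex-id}, is the formula
\[S_\tau(z)\bigl(Y_\tau(u_1,z_1)\cdots Y_\tau(u_m,z_m)v\otimes w\bigr)=\bigl(Y_\tau(u_1,z_1)\otimes 1\bigr)\cdots\bigl(Y_\tau(u_m,z_m)\otimes 1\bigr)\,\mathcal{S},\]
where $\mathcal{S}=S_\tau^{m+1,m+2}(z)\,S_\tau^{m,m+2}(z+z_m)\cdots S_\tau^{1,m+2}(z+z_1)$ is applied to $u_1\otimes\cdots\otimes u_m\otimes v\otimes w$. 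Specializing to $m=m_{ij}$, $u_a=x_{i,\hbar}^\pm$, $v=x_{j,\hbar}^\pm$, and $w\in\{h_{k,\hbar},\,x_{k,\hbar}^\epsilon\}$, each of the four displayed assertions follows once every factor of $\mathcal{S}$ has been evaluated.

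For the first identity, every factor of $\mathcal{S}$ acts on a pair of the form $(x_{\bullet,\hbar}^\pm,h_{k,\hbar})$ and, by \eqref{eq:S-tau-2}, equals the identity plus a correction whose $(m_{ij}+2)$-nd tensor slot is $\vac$ times a scalar built from $\tau^{1,\pm}$ and $\tau^{2,\pm}$. By the vacuum axiom \eqref{eq:qyb-vac-id-alt}, every subsequent $S_\tau$-factor acting with $\vac$ in its right slot is the identity there, so distinct corrections compose additively; summing them yields exactly the combination $\tau_{kj}^{1,\pm}(-z)+\tau_{jk}^{2,\pm}(z)+\sum_a(\tau_{ki}^{1,\pm}(-z-z_a)+\tau_{ik}^{2,\pm}(z+z_a))$ of the statement. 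Applying the iterated $Y_\tau$'s outside restores $Q_{ij}^\pm$ in the first tensor factor, and $\Sing_{z_1,\dots,z_{m_{ij}}}$ can be pulled back to the outside since each $\tau$-correction is regular in the $z_a$. The second identity is obtained by the same argument with \eqref{eq:qyb-hex-id-alt} and \eqref{eq:S-tau-3} in place of \eqref{eq:qyb-hex-id} and \eqref{eq:S-tau-2}; the sign reversal and the swap $\tau^{1,\pm}\leftrightarrow\tau^{2,\pm}$ prescribed by the statement are already built into those two formulas.

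For the third and fourth identities, the iterated-hexagon reduction is unchanged, but the building block \eqref{eq:S-tau-4} is now purely multiplicative: each $S_\tau^{a,m_{ij}+2}$ acts on a pair of $x$-generators and leaves them untouched apart from multiplication by a scalar $\tau^{\epsilon_1,\epsilon_2}_{\bullet\bullet}(z')\tau^{\epsilon_2,\epsilon_1}_{\bullet\bullet}(-z')^{-1}$. Composition of these commuting scalar factors simply multiplies them, and the product over all $m_{ij}+1$ factors is precisely the one displayed in the lemma. The main obstacle is the combinatorial bookkeeping of the iteration --- tracking the superscripts $\epsilon_1,\epsilon_2$, the shifts $z\pm z_a$, and the overall $\pm$-signs --- together with the verification that intermediate states carrying $\vac$ in the final slot are preserved by all subsequent $S_\tau$-actions, which is what distinguishes the additive behaviour of the first two identities from the multiplicative behaviour of the last two. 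Once this bookkeeping is set up, the remaining steps are direct evaluations of the explicit formulas \eqref{eq:S-tau-2}--\eqref{eq:S-tau-4}.
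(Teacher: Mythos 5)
Your proposal is correct and follows the approach the paper intends; the paper itself offers no separate argument for this lemma, saying only that it is proved ``similar to the proof of Lemma~\ref{lem:S-tau-Aij-h}'', and the iterated hexagon identity you write down is exactly the natural generalization of the single hexagon step used there. Two small points of precision are worth recording. First, when you push $\Sing_{z_1,\dots,z_{m_{ij}}}$ inside, the justification is not merely that the $\tau$-corrections are regular in the $z_a$, but that for regular $g$ one has $\Sing_{z_1,\dots,z_m}(f\cdot g)=\Sing_{z_1,\dots,z_m}\bigl(\Sing_{z_1,\dots,z_m}(f)\cdot g\bigr)$, applied with $f=Y_\tau(x_{i,\hbar}^\pm,z_1)\cdots Y_\tau(x_{i,\hbar}^\pm,z_{m_{ij}})x_{j,\hbar}^\pm$; this is what lets $Q_{ij}^\pm$ reappear inside the $\Sing$. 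Second, for the last two identities the raw output of the iterated hexagon is $\tau_{ki}^{\epsilon,\pm}(z\mp z_a)\,\tau_{ik}^{\pm,\epsilon}(-z\pm z_a)^{-1}$ per factor, whereas the statement records $\tau_{ik}^{\epsilon,\pm}(-z\pm z_a)^{-1}\,\tau_{ki}^{\pm,\epsilon}(z\mp z_a)$; these agree because for the specific $\tau$ of Section~\ref{sec:V-tau-L-tau} one has $\tau_{ki}^{\epsilon_1,\epsilon_2}(z')\tau_{ik}^{\epsilon_1,\epsilon_2}(-z')=\tau_{ki}^{\epsilon_2,\epsilon_1}(z')\tau_{ik}^{\epsilon_2,\epsilon_1}(-z')$, which follows from \eqref{eq:special-tau-tech0-3-4} and $g_{ik,\hbar}(z)g_{ki,\hbar}(-z)=1$. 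Neither point affects the validity of your argument, but both deserve a sentence if this were written up in full.
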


\begin{lem}\label{lem:S-tau-Mij-h}
For any $\ell\in \Z_+$ and $i,j\in I$, we have that
\begin{align*}
  &S_\tau(z)\(M_i^\pm(z_1,\dots,z_{r\ell/r_i})\ot h_{j,\hbar}\)
  =M_i^\pm(z_1,\dots,z_{r\ell/r_i})\ot h_{j,\hbar}\\
  &\quad\pm\Sing_{z_1,\dots,z_{r\ell/r_i}}z_1\inv\cdots z_{r\ell/r_i}\inv \\
  &\quad\Bigg(
  M_i^\pm(z_1,\dots,z_{r\ell/r_i})\ot \vac\ot
    \sum_{a=1}^{r\ell/r_i+1}\(\tau_{ji}^{1,\pm}(-z-z_a)+\tau_{ij}^{2,\pm}(z+z_a)\)
  \Bigg),\\
  &S_\tau(z)\(h_{j,\hbar}\ot M_i^\pm(z_1,\dots,z_{r\ell/r_i})\)
  =h_{j,\hbar}\ot M_i^\pm(z_1,\dots,z_{r\ell/r_i})\\
  &\quad\mp\Sing_{z_1,\dots,z_{r\ell/r_i}}z_1\inv \cdots z_{r\ell/r_i}\inv\\
  &\quad\(
    \vac\ot M_i^\pm(z_1,\dots,z_{r\ell/r_i})\ot
    \sum_{a=1}^{r\ell/r_i+1}\(\tau_{ij}^{1,\pm}(-z+z_a)+\tau_{ji}^{2,\pm}(z-z_a)\)
  \),\\
  &S_\tau(z)\(M_i^\pm(z_1,\dots,z_{r\ell/r_i})\ot x_{j,\hbar}^\epsilon\)
  =\Sing_{z_1,\dots,z_{r\ell/r_i}}z_1\inv\cdots z_{r\ell/r_i}\inv\\
  &\quad\(
    M_i^\pm(z_1,\dots,z_{r\ell/r_i})\ot x_{j,\hbar}^\epsilon\ot
    \prod_{a=1}^{r\ell/r_i+1}\tau_{ji}^{\epsilon,\pm}(-z-z_a)\inv \tau_{ij}^{\pm,\epsilon}(z+z_a)
  \),\\
  &S_\tau(z)\(x_{j,\hbar}^\epsilon\ot M_i^\pm(z_1,\dots,z_{r\ell/r_i})\)
  =\Sing_{z_1,\dots,z_{r\ell/r_i}}z_1\inv\cdots z_{r\ell/r_i}\inv\\
  &\quad\(
    x_{j,\hbar}^\epsilon\ot M_i^\pm(z_1,\dots,z_{r\ell/r_i})\ot
    \prod_{a=1}^{r\ell/r_i+1}\tau_{ij}^{\epsilon,\pm}(-z+z_a)\inv\tau_{ji}^{\pm,\epsilon}(z-z_a)
  \),
\end{align*}
where $z_{r\ell/r_i+1}=0$.
\end{lem}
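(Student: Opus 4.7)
The plan is to reduce each of the four formulas to the elementary base cases \eqref{eq:S-tau-2}--\eqref{eq:S-tau-4} by iterating the hexagon identities \eqref{eq:qyb-hex-id} and \eqref{eq:qyb-hex-id-alt}, following the same template used in the proofs of Lemmas \ref{lem:S-tau-Aij-h} and \ref{lem:S-tau-Qij-h} (as signalled by the ``similar to'' remark). Since $S_\tau(z)$ is $\C[[\hbar]]$-linear and continuous, and the variables $z_1,\dots,z_{r\ell/r_i}$ inside $M_i^\pm$ are independent of $z$, the operation $S_\tau(z)$ commutes both with the scalar prefactor $z_1^{-1}\cdots z_{r\ell/r_i}^{-1}$ and with the singular-part projection $\Sing_{z_1,\dots,z_{r\ell/r_i}}$. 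Hence it suffices to compute $S_\tau(z)$ on the raw product $Y_\tau(x_{i,\hbar}^\pm,z_1)\cdots Y_\tau(x_{i,\hbar}^\pm,z_{r\ell/r_i})\,x_{i,\hbar}^\pm$ tensored with the relevant generator, and then apply $\Sing$ with the prefactor at the end.

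For the first identity, I peel the $Y_\tau(x_{i,\hbar}^\pm,z_a)$ factors off the left slot one at a time via \eqref{eq:qyb-hex-id}, each peel-off shifting the $S_\tau$-argument from $z$ to $z+z_a$. The final ``seed'' $x_{i,\hbar}^\pm$ contributes via the base case at argument $z$, which is the $a=r\ell/r_i+1$ term with $z_{r\ell/r_i+1}=0$. Because the base case \eqref{eq:S-tau-2} is additive in form, the corrections from successive peel-offs \emph{add}, producing the stated sum. The second identity is dual, using \eqref{eq:qyb-hex-id-alt} to peel off on the right slot so that the shift becomes $z\mapsto z-z_a$, with \eqref{eq:S-tau-3} as the base. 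The apparent superscript discrepancy between ``$\tau_{ij}^{1,\pm}+\tau_{ji}^{2,\pm}$'' in the statement and the pattern ``$\tau_{ij}^{2,\pm}+\tau_{ji}^{1,\pm}$'' coming from \eqref{eq:S-tau-3} is absorbed by the identity $\tau_{ij}^{1,\pm}=\tau_{ij}^{2,\pm}$ holding for the particular $\tau$ fixed in \eqref{eq:tau-2}.

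For the third and fourth identities the structure is the same, except that the base case is now \eqref{eq:S-tau-4}, which is of \emph{multiplicative} form $u\otimes v\otimes f(z)$ with a scalar $f$. Each hexagon peel-off therefore contributes a scalar factor of shape $\tau_{ij}^{\pm,\epsilon}(z+z_a)\tau_{ji}^{\epsilon,\pm}(-z-z_a)^{-1}$, or its reflection $z\mapsto -z$, $z_a\mapsto -z_a$ for the fourth identity via \eqref{eq:qyb-hex-id-alt}, and these compound into the displayed products. The seed $x_{i,\hbar}^\pm$ again contributes the $a=r\ell/r_i+1$ factor with $z_{r\ell/r_i+1}=0$.

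The main obstacle is purely bookkeeping: each hexagon application produces two $S^{ij}$ operators at shifted arguments, and bringing them into an order where the recursion on the number of $Y_\tau$ factors becomes transparent relies on the quantum Yang-Baxter equation \eqref{eq:qyb} together with commutativity of $S_\tau$-operators in disjoint tensor slots. Structuring the whole argument as an induction on $r\ell/r_i$---with the step case removing one $Y_\tau(x_{i,\hbar}^\pm,z_1)$ by a single hexagon and invoking the inductive hypothesis on the remaining $r\ell/r_i-1$ factors---isolates this bookkeeping from the algebraic content and gives a uniform proof of all four identities.
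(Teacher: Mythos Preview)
Your proposal is correct and follows essentially the same approach as the paper, which omits the details by noting that the argument is ``similar to the proof of Lemma~\ref{lem:S-tau-Aij-h}.'' Your observation that the equality $\tau_{ij}^{1,\pm}=\tau_{ij}^{2,\pm}$ from \eqref{eq:tau-2} reconciles the superscript pattern is on point, and the inductive peeling via \eqref{eq:qyb-hex-id}/\eqref{eq:qyb-hex-id-alt} together with the additive base cases \eqref{eq:S-tau-2}--\eqref{eq:S-tau-3} and multiplicative base case \eqref{eq:S-tau-4} is exactly the intended mechanism.
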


\section{$\phi$-coordinated modules}\label{sec:phi-mods}

In this section, we recall the construction of $\hbar$-adic nonlocal vertex algebras and their $\phi$-coordinated modules introduced in \cite{Li-phi-coor}.

First, we fix an associate $\phi(z_1,z)=z_1e^z$, which is a particular associate of
the additive formal group $F_a(z_1,z_2)=z_1+z_2$.
Let $V$ be a nonlocal vertex algebra.
Recall from \cite{Li-phi-coor} that a \emph{$\phi$-coordinated $V$-module} is a vector space $W$ equipped with a linear map $Y_W^\phi(\cdot,z):V\to \E(W)$, satisfying the condition that $Y_W^\phi(\vac,z)=1_W$
and that for $u,v\in V$, there exists positive integer $k$ such that
\begin{align}
  &(1-z_2/z_1)^kY_W^\phi(u,z_1)Y_W^\phi(v,z_2)\in \E^{(2)}(W),\label{eq:phi-local}\\
  &\left.\((1-z_2/z_1)^kY_W^\phi(u,z_1)Y_W^\phi(v,z_2)\)\right|_{z_1=z_2e^{z_0}}\label{eq:phi-asso}\\
  &\quad=(1-e^{-z_0})^kY_W^\phi(Y(u,z_0)v,z_2).\nonumber
\end{align}
Then we have that

\begin{lem}\label{lem:valocal}
Let $V$ be a nonlocal vertex algebra, and let $(W,Y_W^\phi)$ be a $\phi$-coordinated $V$-module. Suppose
\begin{align*}
  \sum_{i\ge 1}a_i\ot b_i\ot f_i(z),\quad\sum_{j\ge 1}\al_j\ot \beta_j\ot g_j(z)\in V\ot V\ot \C(z),
\end{align*}
such that
\begin{align}\label{eq:valocal}
  &\sum_{i\ge 1}\iota_{z_1,z_2}f_i(e^{z_1-z_2})Y(a_i,z_1)Y(b_i,z_2)\\
  =&\sum_{j\ge 1}\iota_{z_2,z_1}g_j(e^{z_1-z_2})Y(\al_j,z_2)Y(\beta_j,z_1).\nonumber
\end{align}
Then we have that
\begin{align*}
  &\sum_{i\ge 1}\iota_{z_1,z_2}f_i(z_1/z_2)Y_W^\phi(a_i,z_1)Y_W^\phi(b_i,z_2)\\
  =&\sum_{j\ge 1}\iota_{z_2,z_1}g_j(z_1/z_2)Y_W^\phi(\al_j,z_2)Y_W^\phi(\beta_j,z_1).
\end{align*}
\end{lem}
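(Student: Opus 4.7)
The plan is to reduce both sides of the claimed identity to a common ``iterate'' form $Y_W^\phi(\,\cdot\,,z_2)$ applied to an element of $V((z_0))$, and to match these iterates via the hypothesis \eqref{eq:valocal}. The essential tools are the $\phi$-weak associativity \eqref{eq:phi-asso} and weak associativity for the nonlocal vertex algebra $V$.

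First I would reduce to the case $f_i,g_j\in\C[z]$. Choose a common denominator $h(z)\in\C[z]$ of the $f_i,g_j$ with $h(1)\neq 0$. Since $h(e^{z_1-z_2})$ is a polynomial in $e^{z_1-z_2}$, it lies in $\C[[z_1-z_2]]$ and its two $\iota$-expansions coincide; multiplying \eqref{eq:valocal} by $h(e^{z_1-z_2})$ reduces the hypothesis to the case where $f_i,g_j$ are replaced by the polynomials $hf_i,hg_j$. The conclusion for polynomial data, divided through by $h(z_1/z_2)$ (invertible in $\C((z_1/z_2))$ since $h(1)\neq 0$), then recovers the general case.

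With $f_i,g_j$ polynomial, pick $N$ large enough that both $(1-z_2/z_1)^N Y_W^\phi(a_i,z_1)Y_W^\phi(b_i,z_2)$ and $(1-z_1/z_2)^N Y_W^\phi(\alpha_j,z_2)Y_W^\phi(\beta_j,z_1)$ lie in $\E^{(2)}(W)$ for every $i,j$, as permitted by \eqref{eq:phi-local}. Under the substitution $z_1=z_2 e^{z_0}$, \eqref{eq:phi-asso} turns the LHS of the desired identity, after multiplication by $(1-z_2/z_1)^N$, into $(1-e^{-z_0})^N\sum_i f_i(e^{z_0})Y_W^\phi(Y(a_i,z_0)b_i,z_2)$. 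An analogous application of \eqref{eq:phi-asso}, obtained after relabeling $z_1\leftrightarrow z_2$ to accommodate the reversed order of operators on the right, converts the RHS, multiplied by $(1-z_1/z_2)^N$, into the parallel expression with $g_j$ and $Y(\alpha_j,z_0)\beta_j$ in place of $f_i$ and $Y(a_i,z_0)b_i$. Meanwhile, the hypothesis \eqref{eq:valocal}, combined with weak associativity in $V$ (apply the weak associator to each side by extracting a residue in $z_1$ after substituting $z_1=z_2+z_0$), yields the $V$-side identity
\[
\sum_i f_i(e^{z_0})Y(a_i,z_0)b_i=\sum_j g_j(e^{z_0})Y(\alpha_j,z_0)\beta_j\quad\text{in }V((z_0)).
\]
Feeding this identity through $Y_W^\phi(\,\cdot\,,z_2)$, cancelling the common non-zero-divisor $(1-e^{-z_0})^N$, and re-substituting $e^{z_0}\mapsto z_1/z_2$, produces the desired conclusion.

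The main obstacle is the asymmetric treatment of the reversed-order product $Y_W^\phi(\alpha_j,z_2)Y_W^\phi(\beta_j,z_1)$: \eqref{eq:phi-asso} is stated for the outer-left variable $z_1$, so a direct use requires relabeling $(z_1,z_2)\leftrightarrow(z_2,z_1)$, together with careful bookkeeping of the unit discrepancy between $(1-z_1/z_2)^N$ and $(1-z_2/z_1)^N$. This unit cancels precisely against the analogous unit that appears on the $V$-side in passing between the $\iota_{z_1,z_2}$ and $\iota_{z_2,z_1}$ expansions in \eqref{eq:valocal}, so the bookkeeping works out cleanly but demands attention.
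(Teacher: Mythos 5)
Your plan is structurally similar to the paper's, but it contains a genuine gap in the handling of the reversed-order product, which also makes the $V$-side identity you write down false.

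When you apply $\phi$-associativity to $Y_W^\phi(\alpha_j,z_2)Y_W^\phi(\beta_j,z_1)$ "after relabeling $z_1\leftrightarrow z_2$," the resulting identity uses the substitution $z_2=z_1 e^{z_0}$ and produces an iterate anchored at $z_1$, whereas the left side is treated with the substitution $z_1=z_2 e^{z_0}$ and produces an iterate anchored at $z_2$. To put both on the same footing (say, substituting $z_1=z_2 e^{z_0}$ throughout) one must also flip $z_0\to-z_0$, which yields $Y_W^\phi(Y(\alpha_j,-z_0)\beta_j,\, z_2 e^{z_0})$; collapsing the shifted anchor requires the translation rule $Y_W^\phi(v,\phi(z,z_0))=Y_W^\phi(e^{z_0\partial}v,z)$ (\cite[Lemma 3.7]{Li-phi-coor}), so the iterate that actually appears is $e^{z_0\partial}Y(\alpha_j,-z_0)\beta_j$, not $Y(\alpha_j,z_0)\beta_j$. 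Correspondingly, the $V$-side identity obtained from \eqref{eq:valocal} by acting on $\vac$ is $\sum_i f_i(e^{z})Y(a_i,z)b_i=\sum_j g_j(e^{z})e^{z\partial}Y(\alpha_j,-z)\beta_j$, not the symmetric-looking one you state. Your version is simply false: taking $V$ a vertex algebra, $f_1=g_1=1$, $a_1=\alpha_1^{\mathrm{op}}$, etc., with $u,v$ mutually local of order zero, your identity would read $Y(u,z_0)v=Y(v,z_0)u$, whereas skew-symmetry gives $Y(u,z_0)v=e^{z_0\partial}Y(v,-z_0)u$. The "unit discrepancy cancels" remark does not repair this, because the discrepancy here is not a unit factor but a shift of the expansion point together with the sign of $z_0$ and the operator $e^{z_0\partial}$.

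There is a second, smaller gap at the end. Matching the two sides after multiplying by $(1-z_2/z_1)^{2N}$ and then "cancelling" is not legitimate, since that factor is a zero-divisor in $\End(W)[[z_1^{\pm1},z_2^{\pm1}]]$, and "re-substituting $e^{z_0}\mapsto z_1/z_2$" is not a well-defined operation on formal series. The paper closes this by invoking the $\phi$-coordinated Jacobi identity \cite[Lemma 5.8]{Li-phi-coor} to express the difference of the two products weighted by delta functions as the iterate, then takes $\Res_{z_0}$ and uses the regularity $\sum_i f_i(1+z_0)Y(a_i,\log(1+z_0))b_i\in V[[z_0]]$ (the analogue of \eqref{eq:valocal-temp5.5}) to conclude that this residue vanishes. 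That regularity argument and the delta-function bookkeeping are the real content of the final step and cannot be replaced by an informal resubstitution.

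A minor further issue: the reduction to polynomial $f_i,g_j$ assumes a common denominator $h$ with $h(1)\neq0$, but the $f_i,g_j$ arising in the applications of this lemma (see the auxiliary $f_{-i},g_{-i}$ introduced in Proposition \ref{prop:vacom}) do have poles at $z=1$, so this normalization is not in general available and the reduction as stated does not go through without extra care.
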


\begin{proof}
By letting the both hand sides of \eqref{eq:valocal} act on $\vac$, we get that
\begin{align}\label{eq:valocal-temp1}
  &\sum_{i\ge 1}\iota_{z_1,z_2}f_i(e^{z_1-z_2})Y(a_i,z_1-z_2)b_i\\
  =&\sum_{j\ge 1}\iota_{z_2,z_1}g_j(e^{z_1-z_2})e^{(z_1-z_2)\partial}Y(\al_j,z_2-z_1)\beta_j.\nonumber
\end{align}
Let $k$ be a positive integer such that ($i,j\ge 1$)
\begin{align}
  &z^{k}f_i(e^z),\quad z^{k}g_j(e^z)\in \C[[z]], \quad
  z^{k}Y(a_i,z)b_i,\quad z^{k}Y(\al_j,z)\beta_j\in V[[z]].\label{eq:valocal-temp2-3}
\end{align}
By multiplying $(z_1-z_2)^{2k}$ on both hand sides of \eqref{eq:valocal-temp1}, we get that
\begin{align}\label{eq:valocal-temp4}
  &\sum_{i\ge 1}\((z_1-z_2)^{k}f_i(e^{z_1-z_2})\)\((z_1-z_2)^{k}Y(a_i,z_1-z_2)b_i\)\nonumber\\
  =&\sum_{j\ge 1}\((z_1-z_2)^{k}g_j(e^{z_1-z_2})\)e^{(z_1-z_2)\partial}\((z_1-z_2)^{k}Y(\al_j,z_2-z_1)\beta_j\).
\end{align}
From \eqref{eq:valocal-temp2-3}, we can take $z_2=0$ in \eqref{eq:valocal-temp4}:
\begin{align*}
  &z^{2k}\sum_{i\ge 1}f_i(e^z)Y(a_i,z)b_i
  =\sum_{j\ge 1}z^{k}g_j(e^z)e^{z\partial}z^{k}Y(\al_j,-z)\beta_j.
\end{align*}
Since $z^{k}$ is invertible, we get that
\begin{align}\label{eq:valocal-temp5}
  \sum_{i\ge 1}f_i(e^z)Y(a_i,z)b_i
  =\sum_{j\ge 1}g_j(e^z)e^{z\partial}Y(\al_j,-z)\beta_j.
\end{align}

From \eqref{eq:valocal-temp2-3}, we get that
\begin{align*}
  &\left.\((z_1-z_2)^{2k}\sum_{i\ge 1}f_i(e^{z_1-z_2})Y(a_i,z_1)Y(b_i,z_2)\)\right|_{z_1=z_2+z_0}\\
  =&\sum_{i\ge 1}z_0^kf_i(e^{z_0})z_0^kY_\E\(Y(a_i,z_2),z_0\)Y(b_i,z_2)\\
  =&\sum_{i\ge 1}z_0^kf_i(e^{z_0})z_0^kY\(Y(a_i,z_0)b_i,z_2\),
\end{align*}
where the last equation follows from \eqref{eq:weak-asso}.
We note that the condition \eqref{eq:valocal} shows that
\begin{align*}
  \sum_{i\ge 1}f_i(e^{z_1-z_2})Y(a_i,z_1)Y(b_i,z_2)\in\E^{(2)}(V).
\end{align*}
Combining these relations, we get that
\begin{align*}
  &z_0^{2k}\left.\(\sum_{i\ge 1}f_i(e^{z_1-z_2})Y(a_i,z_1)Y(b_i,z_2)\)\right|_{z_1=z_2+z_0}\\
  =&\sum_{i\ge 1}z_0^kf_i(e^{z_0})z_0^kY\(Y(a_i,z_0)b_i,z_2\).
\end{align*}
Then we have that
\begin{align*}
  &\sum_{i\ge 1}f_i(e^{z_0})Y\(Y(a_i,z_0)b_i,z_2\)\\
  =&\left.\(\sum_{i\ge 1}f_i(e^{z_1-z_2})Y(a_i,z_1)Y(b_i,z_2)\)\right|_{z_1=z_2+z_0}
  \in\E(V)[[z_0]].
\end{align*}
Acting on $\vac$ and taking $z_2\to 0$, we get that
\begin{align*}
  \sum_{i\ge 1}f_i(e^{z_0})Y(a_i,z_0)b_i\in V[[z_0]].
\end{align*}
Viewing $\log(1+z_0)$ as an element in $\C[[z_0]]$, we get that
\begin{align}\label{eq:valocal-temp5.5}
  \sum_{i\ge 1}f_i(1+z_0)Y(a_i,\log(1+z_0))b_i\in V[[z_0]].
\end{align}

We replace $k$ with a larger one if necessary so that ($i,j\ge 1$):
\begin{align*}
  (1-z_2/z_1)^{k}Y_W^\phi(a_i,z_1)Y_W^\phi(b_i,z_2),\,\,
  (1-z_2/z_1)^{k}Y_W^\phi(\al_j,z_2)Y_W^\phi(\beta_j,z_1)
  \in \E^{(2)}(W).
\end{align*}
From the weak associativity of $\phi$-coordinated modules \eqref{eq:phi-asso}, we get that
\begin{align}
  &\left.\((1-z/z_1)^{k}Y_W^\phi(a_i,z_1)Y_W^\phi(b_i,z)\)\right|_{z_1=\phi(z,z_0)}\label{eq:valocal-temp6a}\\
  =&(1-e^{-z_0})^{k}Y_W^\phi(Y(a_i,z_0)b_i,z),\nonumber\\
  &\left.\((1-z/z_1)^{k}Y_W^\phi(\al_j,z)Y_W^\phi(\beta_j,z_1)\)\right|_{z=\phi(z_1,-z_0)}\label{eq:valocal-temp6b}\\
  =&(1-e^{-z_0})^{k}Y_W^\phi(Y(\al_j,-z_0)\beta_j,z_1).\nonumber
\end{align}
From the equation \eqref{eq:valocal-temp6b} and \cite[Remark 2.8]{Li-phi-coor}, we have that
\begin{align}
  &\left.\((1-z/z_1)^{k}Y_W^\phi(\al_j,z)Y_W^\phi(\beta_j,z_1)\)\right|_{z_1=\phi(z,z_0)}\nonumber\\
  =&\left.\(\left.\((1-z/z_1)^{k}Y_W^\phi(\al_j,z)Y_W^\phi(\beta_j,z_1)\)\right|_{z=\phi(z_1,-z_0)}\)
  \right|_{z_1=\phi(z,z_0)}\nonumber\\
  =&\left.\((1-e^{-z_0})^{k}Y_W^\phi(Y(\al_j,-z_0)\beta_j,z_1)\)
  \right|_{z_1=\phi(z,z_0)}\nonumber\\
  =&(1-e^{-z_0})^{k}Y_W^\phi(Y(\al_j,-z_0)\beta_j,\phi(z,z_0))\nonumber\\
  =&(1-e^{-z_0})^{k}Y_W^\phi(e^{z_0\partial}Y(\al_j,-z_0)\beta_j,z),\label{eq:valocal-temp6}
\end{align}
where the \eqref{eq:valocal-temp6} follows from \cite[Lemma 3.7]{Li-phi-coor}.
Combining this with equations \eqref{eq:valocal-temp5} and \eqref{eq:valocal-temp6a}, we get
\begin{align}
  &\left.\((1-z/z_1)^{2k}\sum_{i\ge 1}f_i(z_1/z)Y_W^\phi(a_i,z_1)Y_W^\phi(b_i,z)\)\right|_{z_1=\phi(z,z_0)}\nonumber\\
  =&\sum_{i\ge 1}\left.\(\((1-z/z_1)^{k}f_i(z_1/z)\)\((1-z/z_1)^{k}Y_W^\phi(a_i,z_1)Y_W^\phi(b_i,z)\)\)\right|_{z_1=\phi(z,z_0)}\nonumber\\
  =&(1-e^{-z_0})^{2k}\sum_{i\ge 1}f_i(e^{z_0})Y_W^\phi(Y(a_i,z_0)b_i,z)\label{eq:valocal-temp6.5}\\
  =&(1-e^{-z_0})^{2k}\sum_{j\ge 1}g_j(e^{z_0})Y_W^\phi(e^{z_0\partial}Y(\al_j,-z_0)\beta_j,z)\nonumber\\
  =&\sum_{j\ge 1}\left.\(\((1-z/z_1)^{k}g_j(z_1/z)\)\((1-z/z_1)^{k}Y_W^\phi(\al_j,z)Y_W^\phi(\beta_j,z_1)\)\)\right|_{z_1=\phi(z,z_0)}\nonumber\\
  =&\left.\((1-z/z_1)^{2k}\sum_{j\ge 1}g_j(z_1/z)Y_W^\phi(\al_j,z)Y_W^\phi(\beta_j,z_1)\)\right|_{z_1=\phi(z,z_0)}.\nonumber
\end{align}
Notice that
\begin{align*}
  &(1-z_2/z_1)^{2k}\sum_{i\ge 1}f_i(z_1/z_2)Y_W^\phi(a_i,z_1)Y_W^\phi(b_i,z_2)\in \E^{(2)}(W),\\
  &(1-z_2/z_1)^{2k}\sum_{j\ge 1}g_j(z_1/z_2)Y_W^\phi(\al_j,z_2)Y_W^\phi(\beta_j,z_1)
  \in \E^{(2)}(W).
\end{align*}
Then we get from \cite[Remark 2.8]{Li-phi-coor} that
\begin{align*}
  &(1-z_2/z_1)^{2k}\sum_{i\ge 1}f_i(z_1/z_2)Y_W^\phi(a_i,z_1)Y_W^\phi(b_i,z_2)\\
  =&
  (1-z_2/z_1)^{2k}\sum_{j\ge 1}g_j(z_1/z_2)Y_W^\phi(\al_j,z_2)Y_W^\phi(\beta_j,z_1).
\end{align*}
Combining this with equation \eqref{eq:valocal-temp6.5} and \cite[Lemma 5.8]{Li-phi-coor}, we get that
\begin{align*}
  &(z_2z_0)\inv\delta\(\frac{z_1-z_2}{z_2 z_0}\)\sum_{i\ge 1}f_i(z_1/z_2)Y_W^\phi(a_i,z_1)Y_W^\phi(b_i,z_2)\\
  -&(z_2z_0)\inv\delta\(\frac{z_2-z_1}{-z_2 z_0}\)\sum_{j\ge 1}g_j(z_1/z_2)Y_W^\phi(\al_j,z_2)Y_W^\phi(\beta_j,z_1)\\
  =&z_1\inv\delta\(\frac{z_2(1+z_0)}{z_1}\)\sum_{i\ge 1}f_i(1+z_0)Y_W^\phi(Y(a_i,\log(1+z_0))b_i,z_2).
\end{align*}
Taking $\Res_{z_0}$ on both hand sides, we get that
\begin{align*}
  &\sum_{i\ge 1}f_i(z_1/z_2)Y_W^\phi(a_i,z_1)Y_W^\phi(b_i,z_2)
  -\sum_{j\ge 1}g_j(z_1/z_2)Y_W^\phi(\al_j,z_2)Y_W^\phi(\beta_j,z_1)\\
  =&\Res_{z_0} z_1\inv\delta\(\frac{z_2(1+z_0)}{z_1}\)\sum_{i\ge 1}f_i(1+z_0)Y_W^\phi(Y(a_i,\log(1+z_0))b_i,z_2)
  =0,
\end{align*}
where the last equation follows from \eqref{eq:valocal-temp5.5}.
We complete the proof.
\end{proof}

\begin{prop}\label{prop:vacom}
Let $(W,Y_W^\phi)$ be a $\phi$-coordinated module of a nonlocal vertex algebra $V$. Let
\begin{align*}
  &\sum_{i\ge 1}a_i\ot b_i\ot f_i(z),\quad\sum_{j\ge 1}\al_j\ot \beta_j\ot g_j(z)\in V\ot V\ot V\ot \C(x),\\
  &\te{and}\quad \sum_{k\ge 0}\gamma_k\in V
\end{align*}
be finite sums, such that
\begin{align}
  &\sum_{i\ge 1}\iota_{z_1,z_2}f_i(e^{z_1-z_2})Y(a_i,z_1)Y(b_i,z_2)\nonumber\\
  -&\sum_{j\ge 1}\iota_{z_2,z_1}g_j(e^{z_1-z_2})Y(\al_j,z_2)Y(\beta_j,z_1)\nonumber\\
  &\qquad=\sum_{k\ge 0}Y(\gamma_k,z_2)\frac{1}{k!}\pdiff{z_2}{k}z_1\inv\delta\(\frac{z_2}{z_1}\).\label{eq:vacom}
\end{align}
Then we have that
\begin{align}
  &\sum_{i\ge 1}\iota_{z_1,z_2}f_i(z_1/z_2)Y_W^\phi(a_i,z_1)Y_W^\phi(b_i,z_2)\nonumber\\
  -&\sum_{j\ge 1}\iota_{z_2,z_1}g_j(z_1/z_2)Y_W^\phi(\al_j,z_2)Y_W^\phi(\beta_j,z_1)\nonumber\\
  &\qquad=\sum_{k\ge 0}Y_W^\phi(\gamma_k,z_2)\frac{1}{k!}\(z_2\pd{z_2}\)^k\delta\(\frac{z_2}{z_1}\).\label{eq:phi-com}
\end{align}
\end{prop}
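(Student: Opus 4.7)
The strategy is to adapt the proof of Lemma \ref{lem:valocal}, carefully tracking the additional delta-function contributions. Let $N$ be the largest index with $\gamma_k\ne 0$ in \eqref{eq:vacom}, and multiply both sides of that identity by the power series $(1-e^{-(z_1-z_2)})^{N+1}$ in $z_1-z_2$. Since $(1-e^{-(z_1-z_2)})^{N+1}$ lies in $(z_1-z_2)^{N+1}\C[[z_1-z_2]]^\times$ and $(z_1-z_2)^{k+1}\pdiff{z_2}{k}z_1\inv\delta(z_2/z_1)=0$ for each $k\le N$, the right-hand side of \eqref{eq:vacom} is annihilated. What remains is a locality identity on $V$ to which Lemma \ref{lem:valocal} applies, with modified rational functions $\tilde f_i(w)=(1-w\inv)^{N+1}f_i(w)$ and $\tilde g_j(w)=(1-w\inv)^{N+1}g_j(w)$.

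Transferring via Lemma \ref{lem:valocal} yields $(1-z_2/z_1)^{N+1}A(z_1,z_2)=0$, where
\begin{equation*}
A(z_1,z_2)=\sum_{i\ge 1}\iota_{z_1,z_2}f_i(z_1/z_2)Y_W^\phi(a_i,z_1)Y_W^\phi(b_i,z_2)-\sum_{j\ge 1}\iota_{z_2,z_1}g_j(z_1/z_2)Y_W^\phi(\al_j,z_2)Y_W^\phi(\beta_j,z_1).
\end{equation*}
Since $A(z_1,z_2)$ is a formal series on $W$ annihilated by a power of $1-z_2/z_1$, by standard formal calculus it admits a unique expansion
\begin{equation*}
A(z_1,z_2)=\sum_{k=0}^N B_k(z_2)\,\frac{1}{k!}\(z_2\pd{z_2}\)^k\delta(z_2/z_1)
\end{equation*}
for certain $B_k(z_2)\in\End(W)[[z_2^{\pm 1}]]$.

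It remains to identify $B_k(z_2)=Y_W^\phi(\gamma_k,z_2)$. Mirroring the proof of Lemma \ref{lem:valocal}, I would first apply \eqref{eq:vacom} to $\vac$ to obtain an iterate-style identity on $V$ expressing the $\gamma_k$'s in terms of the $f_i$, $g_j$, and the products $Y(a_i,z)b_i$, $Y(\al_j,z)\beta_j$ (an analogue of \eqref{eq:valocal-temp5}, now carrying extra contributions from $Y(\gamma_k,z_2)\vac=e^{z_2\partial}\gamma_k$). Then, using the weak associativity \eqref{eq:phi-asso} term-by-term on each pair $(a_i,b_i)$ and $(\al_j,\beta_j)$, one computes the substitution $A(z_1,z_2)|_{z_1=z_2 e^{z_0}}$ after multiplying by a suitable power of $1-z_2/z_1$, and compares it against the substitution of the delta-function expansion via the identity $(z_2 z_0)\inv\delta\bigl((z_1-z_2)/(z_2 z_0)\bigr)=z_1\inv\delta\bigl(z_2(1+z_0)/z_1\bigr)$; taking $\Res_{z_0}$ at the end, as in the final step of the proof of Lemma \ref{lem:valocal}, pins down $B_k(z_2)=Y_W^\phi(\gamma_k,z_2)$. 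The main technical obstacle is precisely this identification: one must align the substitution rule for derivatives of the delta function in the $\phi$-coordinated setting with the iterate formula on $V$, which is where the Jacobi-type identity for $\phi$-coordinated modules (Lemma 5.8 of \cite{Li-phi-coor}) used in the proof of Lemma \ref{lem:valocal} enters decisively.
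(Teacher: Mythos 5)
Your route is genuinely different from the paper's, and its first step is sound, but the identification step is only sketched and hides the bulk of the work. The paper reduces Proposition~\ref{prop:vacom} to Lemma~\ref{lem:valocal} in one move by \emph{augmenting} both sides of \eqref{eq:vacom}: for each $k\ge 0$ it adjoins the summand with $a_{-k}=\beta_{-k}=\vac$, $b_{-k}=\al_{-k}=\gamma_k$, and the single rational function $f_{-k}(z)=g_{-k}(z)=\tfrac{1}{2\cdot k!}\bigl(-z\tfrac{\partial}{\partial z}\bigr)^k\tfrac{z+1}{z-1}$. The point is that the difference of the two expansions $\iota_{z_1,z_2}f_{-k}-\iota_{z_2,z_1}g_{-k}$ gives exactly $\tfrac{1}{k!}\partial_{z_2}^k z_1^{-1}\delta(z_2/z_1)$ after the substitution $z\mapsto e^{z_1-z_2}$ and exactly $\tfrac{1}{k!}\bigl(z_2\tfrac{\partial}{\partial z_2}\bigr)^k\delta(z_2/z_1)$ after $z\mapsto z_1/z_2$. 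With this augmentation, \eqref{eq:vacom} \emph{and} the desired \eqref{eq:phi-com} are simultaneously pure-locality identities of the form covered by Lemma~\ref{lem:valocal}, so no separate coefficient identification is ever needed.

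Your plan instead kills the delta terms first; step~(1) is correct (multiplying by $(1-e^{-(z_1-z_2)})^{N+1}$ modifies the $f_i,g_j$ by the rational factor $(1-w^{-1})^{N+1}$, and because $(1-z_2/z_1)^{N+1}$ is a Laurent polynomial, factoring it out of the transferred identity yields $(1-z_2/z_1)^{N+1}A(z_1,z_2)=0$). The existence of an expansion $A=\sum_k B_k(z_2)\tfrac{1}{k!}(z_2\partial_{z_2})^k\delta(z_2/z_1)$ is standard. But the pin-down $B_k(z_2)=Y_W^\phi(\gamma_k,z_2)$ is exactly where Lemma~\ref{lem:valocal}'s argument must be re-run with the delta functions present: the step \eqref{eq:valocal-temp5.5} (that $\sum_i f_i(1+z_0)Y(a_i,\log(1+z_0))b_i$ lies in $V[[z_0]]$) fails here, because $\sum_i f_i(e^{z_1-z_2})Y(a_i,z_1)Y(b_i,z_2)$ is no longer in $\E^{(2)}(V)$ — its failure to be so is precisely where the $\gamma_k$ live — and extracting the singular part through the substitution $z_1=z_2(1+z_0)$ and the Jacobi-type identity of \cite[Lemma~5.8]{Li-phi-coor} requires considerable care. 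So your approach is plausible but, even after the details are supplied, amounts to re-deriving a delta-function-aware version of Lemma~\ref{lem:valocal}; the paper's augmentation trick makes that unnecessary.
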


\begin{proof}
For any integer $i\ge 0$, we set $b_{-i}=\al_{-i}=\gamma_i$, $a_{-i}=\beta_{-i}=\vac$ and
\begin{align*}
  f_{-i}(z)=g_{-i}(z)=\frac{1}{2i!}\(-z\pd{z}\)^i\frac{z+1}{z-1}\in \C(z).
\end{align*}
It is straightforward to verify that
\begin{align*}
  &\iota_{z_1,z_2}f_{-i}(z_1/z_2)-\iota_{z_2,z_1}g_{-i}(z_1/z_2)
  =\frac{1}{i!}\(z_2\pd{z_2}\)^i\delta\(\frac{z_2}{z_1}\),\\
 &\iota_{z_1,z_2}f_{-i}(e^{z_1-z_2})-\iota_{z_2,z_1}g_{-i}(e^{z_1-z_2})
 =\frac{1}{i!}\pdiff{z_2}{i}z_1\inv\delta\(\frac{z_2}{z_1}\).
\end{align*}
Then the equation \eqref{eq:vacom} is equivalent to
\begin{align*}
  &\sum_{i\in\Z}\iota_{z_1,z_2}f_i(e^{z_1-z_2})Y(a_i,z_1)Y(b_i,z_2)\\
  =&\sum_{j\in\Z}\iota_{z_2,z_1}g_j(e^{z_1-z_2})Y(\al_j,z_2)Y(\beta_j,z_1),
\end{align*}
and the equation \eqref{eq:phi-com} is equivalent to
\begin{align*}
  &\sum_{i\in\Z}\iota_{z_1,z_2}f_i(z_1/z_2)Y_W^\phi(a_i,z_1)Y_W^\phi(b_i,z_2)\\
  =&\sum_{j\in\Z}\iota_{z_2,z_1}g_j(z_1/z_2)Y_W^\phi(\al_j,z_2)Y_W^\phi(\beta_j,z_1).
\end{align*}
Therefore, this proposition follows immediate from Lemma \ref{lem:valocal}.
\end{proof}

Let $V$ be an $\hbar$-adic nonlocal vertex algebra. A \emph{$\phi$-coordinated $V$-module} is a topologically free $\C[[\hbar]]$-module $W$ equipped with a $\C[[\hbar]]$-linear map $Y_W^\phi(\cdot,x):V\to \E_\hbar(W)$, such that $(W/\hbar^nW,Y_{W,n}^\phi)$ is a $\phi$-coordinated $V/\hbar^nV$-module, where $Y_{W,n}^\phi:V/\hbar^nV\to \E(W/\hbar^nW)$ is the $\C[[\hbar]]$-linear map induced from $Y_W^\phi$.
As an immediate $\hbar$-adic analogue of Proposition \ref{prop:vacom}, we have that

\begin{prop}\label{prop:vacom-h}
Let $V$ be an $\hbar$-adic nonlocal vertex algebra, and let $(W,Y_W^\phi)$ be a $\phi$-coordinated $V$-module. Suppose that
\begin{align*}
  &\sum_{i\ge 1}a_i\ot b_i\ot f_i(z),\quad\sum_{j\ge 1}\al_j\ot \beta_j\ot g_j(z)\in V\wh\ot V\wh\ot V\wh\ot \C(z)[[\hbar]],\\
  &\te{and}\quad
  \sum_{k\ge 0}\gamma_k\in V,
\end{align*}
such that
\begin{align*}
  &\sum_{i\ge 1}\iota_{z_1,z_2}f_i(e^{z_1-z_2})Y(a_i,z_1)Y(b_i,z_2)\\
  -&\sum_{j\ge 1}\iota_{z_2,z_1}g_j(e^{z_1-z_2})Y(\al_j,z_2)Y(\beta_j,z_1)\\
  &\qquad=\sum_{k\ge 0}Y(\gamma_k,z_2)\frac{1}{k!}\pdiff{z_2}{k}z_1\inv\delta\(\frac{z_2}{z_1}\).
\end{align*}
Then we have that
\begin{align*}
  &\sum_{i\ge 1}\iota_{z_1,z_2}f_i(z_1/z_2)Y_W^\phi(a_i,z_1)Y_W^\phi(b_i,z_2)\\
  -&\sum_{j\ge 1}\iota_{z_2,z_1}g_j(z_1/z_2)Y_W^\phi(\al_j,z_2)Y_W^\phi(\beta_j,z_1)\\
  &\qquad=\sum_{k\ge 0}Y_W^\phi(\gamma_k,z_2)\frac{1}{k!}\(z_2\pd{z_2}\)^k\delta\(\frac{z_2}{z_1}\).
\end{align*}
\end{prop}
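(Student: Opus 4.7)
The plan is to reduce Proposition \ref{prop:vacom-h} to its classical counterpart, Proposition \ref{prop:vacom}, by passing to the quotients $V/\hbar^n V$ and $W/\hbar^n W$ and then assembling the results back via the inverse limit that characterizes $\hbar$-adically complete modules.

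First, I would fix an integer $n \ge 1$ and consider the canonical projections $\pi_n : V \to V/\hbar^n V$ and $\wt\pi_n : W \to W/\hbar^n W$. By the very definition of a $\phi$-coordinated $V$-module in the $\hbar$-adic setting, the induced map $Y_{W,n}^\phi : V/\hbar^n V \to \E(W/\hbar^n W)$ endows $W/\hbar^n W$ with the structure of a $\phi$-coordinated module for the (non $\hbar$-adic) nonlocal vertex algebra $V/\hbar^n V$. Because the given elements live in the complete tensor products $V\wh\ot V\wh\ot V \wh\ot \C(z)[[\hbar]]$, each of them reduces modulo $\hbar^n$ to a \emph{finite} sum of the form $\sum_i a_i^{(n)} \ot b_i^{(n)} \ot f_i^{(n)}(z)$ with entries in $V/\hbar^n V$ and in $\C(z)$ respectively; similarly the finite sum $\sum_k \gamma_k$ projects to its finite image.

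Next, projecting the hypothesized identity of Proposition \ref{prop:vacom-h} through $\pi_n$ (applied termwise to the vertex operators) gives precisely the hypothesis of Proposition \ref{prop:vacom} in the nonlocal vertex algebra $V/\hbar^n V$. Applying that proposition to the $\phi$-coordinated $V/\hbar^n V$-module $W/\hbar^n W$ yields the desired identity in $\End(W/\hbar^n W)[[z_1^{\pm 1}, z_2^{\pm 1}]]$, namely
\begin{align*}
&\sum_{i\ge 1}\iota_{z_1,z_2}f_i^{(n)}(z_1/z_2)Y^\phi_{W,n}(a_i^{(n)},z_1)Y^\phi_{W,n}(b_i^{(n)},z_2)\\
-&\sum_{j\ge 1}\iota_{z_2,z_1}g_j^{(n)}(z_1/z_2)Y^\phi_{W,n}(\al_j^{(n)},z_2)Y^\phi_{W,n}(\beta_j^{(n)},z_1)\\
&\qquad=\sum_{k\ge 0}Y^\phi_{W,n}(\gamma_k^{(n)},z_2)\frac{1}{k!}\(z_2\pd{z_2}\)^k\delta\(\frac{z_2}{z_1}\).
\end{align*}

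Finally, I would assemble these identities across all $n$. Since $\E_\hbar(W)$ is the inverse limit of the $\E(W/\hbar^n W)$ and since the difference of the two sides of the proposed identity, regarded as an element of $\End_{\C[[\hbar]]}(W)[[z_1^{\pm 1}, z_2^{\pm 1}]]$, lies in $\hbar^n \End_{\C[[\hbar]]}(W)[[z_1^{\pm 1}, z_2^{\pm 1}]]$ for every $n$, it must vanish by the separatedness of $W$ (i.e.\ $\cap_{n \ge 1} \hbar^n W = 0$). The only technical point worth care is the matching of the two truncation processes, namely verifying that the reduction modulo $\hbar^n$ commutes with the formal manipulations $\iota_{z_1,z_2}$, with the $\phi$-coordinated module map $Y_W^\phi$, and with the formal delta and its derivatives; this is routine once one uses that $f_i(z), g_j(z)$ and their truncations agree modulo $\hbar^n$ by construction. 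I expect this bookkeeping of the mod $\hbar^n$ reductions together with the finiteness of the truncated sums to be the only real point requiring attention, since no new vertex-algebraic idea is needed beyond Proposition \ref{prop:vacom}.
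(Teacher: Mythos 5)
Your proposal is correct, and it is precisely the argument the paper has in mind when it declares the result an ``immediate $\hbar$-adic analogue'' of Proposition \ref{prop:vacom} without further comment: reduce modulo $\hbar^n$ (using that the reduction of an element of $V\wh\ot V\wh\ot\C(z)[[\hbar]]$ is a finite $\C(z)$-combination after absorbing the powers of $\hbar$ into the $V$-factors, and that $(W/\hbar^nW,Y_{W,n}^\phi)$ is by definition a $\phi$-coordinated $V/\hbar^nV$-module), apply Proposition \ref{prop:vacom} over the base ring $\C[\hbar]/\hbar^n$, and conclude via the fact that $\pi_n^{(2)}$ has kernel $\hbar^n\E_\hbar^{(2)}(W)$ together with separatedness of $W$.
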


Let $Z^\phi(z_1,z_2):\E_\hbar(W)\wh\ot \E_\hbar(W)\wh\ot \C(z)[[\hbar]]\to\End(W)[[z_1^{\pm 1},z_2^{\pm 1}]]$ be the $\C[[\hbar]]$-module map
defined by
\begin{align*}
  Z^\phi(z_1,z_2)(a(z)\ot b(z)\ot f(z))=f(z_1/z_2)a(z_1)b(z_2).
\end{align*}
Recall from \cite{Li-phi-coor} that a subset $U$ of $\E_\hbar(W)$ is said to be \emph{$\hbar$-adically $S_{trig}$-local} if for any $a(z),b(z)\in U$, there is
$A(z)\in\(\C U\ot \C U\ot \C(z)\)[[\hbar]]$, such that
\begin{align*}
  a(z_1)b(z_2)\sim Z^\phi(z_2,z_1)\(A(z)\).
\end{align*}

Let $U$ be an $\hbar$-adically $S_{trig}$-local subset of $\E_\hbar(W)$.
For $a(z),b(z)\in U$, the locality implies that for any positive integer $n$, there is positive integer $k_n$ such that
\begin{align*}
  (1-z_1/z_2)^{k_n}\pi_n(a(z_1))\pi_n(b(z_2))\in \E^{(2)}(W/\hbar^nW).
\end{align*}
The following is a partial $\hbar$-adic analogue of \cite[Definition 4.4]{Li-phi-coor}:
\begin{align*} 
  &Y_\E^\phi(a(z),z_0)b(z)=\sum_{n\in\Z}a(z)_n^\phi b(z)z_0^{-n-1}\\
  =&\varprojlim_{n>0} \(1-e^z_0\)^{-k_n}
    \left.\((1-z_1/z)^{k_n}a(z_1)b(z)\)\right|_{z_1=ze^{z_0}}.
\end{align*}
We have the partial $\hbar$-adic analogue of \cite[Theorem 4.8]{Li-phi-coor}:
\begin{thm} \label{thm:phi-construction}
Let $U$ be an $\hbar$-adic $S_{trig}$-local subset of $\E_\hbar(W)$. Then there is a minimal $\hbar$-adically $S_{trig}$-local subset $\<U\>_\phi\subset \E_\hbar(W)$ containing $U$ and $1_W$, such that
\begin{itemize}
  \item[(1)] $\<U\>_\phi$ is topologically free and $[\<U\>_\phi]=\<U\>_\phi$.
  \item[(2)] $\<U\>_\phi$ is $Y_\E^{\phi}$ closed, that is, for $a(x),b(x)\in \<U\>_\phi$, $a(x)_nb(x)\in \<U\>_\phi$.
\end{itemize}
Then $(\<U\>_\phi,Y_\E^{\phi},1_W)$ carries the structure of an $\hbar$-adic weak quantum vertex algebra and $W$ is a faithful $\phi$-coordinated $\<U\>_\phi$-module with $Y_W(a(z),z_0)=a(z_0)$ for $a(z)\in\<U\>_\phi$.
\end{thm}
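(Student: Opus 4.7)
The plan is to construct $\<U\>_\phi$ by reducing modulo $\hbar^n$ for each $n$, applying the classical (non-$\hbar$-adic) counterpart \cite[Theorem 4.8]{Li-phi-coor} at each level, and then assembling an inverse limit. This strategy parallels the proof of Theorem \ref{thm:construction} given in \cite{Li-h-adic}; it has the advantage that all the nontrivial structural content (closure under $Y_\E^\phi$, the $S_{trig}$-locality, the weak associativity in its $\phi$-coordinated form, and the module property of $W$) can be transferred from the classical case level by level.

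First, I would observe that $\hbar$-adic $S_{trig}$-locality of $U$ means that each projection $\pi_n(U) \subset \E(W/\hbar^n W)$ is $S_{trig}$-local in the classical sense. Li's classical theorem then produces a minimal $S_{trig}$-local subset $\<\pi_n(U)\>_\phi \subset \E(W/\hbar^n W)$ containing $\pi_n(U)$ and $1_{W/\hbar^n W}$, which is $Y_\E^\phi$-closed and carries the structure of a weak quantum vertex algebra with $W/\hbar^n W$ as a faithful $\phi$-coordinated module. Using minimality together with the surjectivity of the transition maps $\E(W/\hbar^{n+1}W) \to \E(W/\hbar^n W)$, one checks that these subsets form a compatible inverse system, and I would define
\begin{equation*}
\<U\>_\phi \;=\; \varprojlim_n \<\pi_n(U)\>_\phi \;\subset\; \varprojlim_n \E(W/\hbar^n W) \;=\; \E_\hbar(W).
\end{equation*}
As an inverse limit of topologically free quotients with surjective transitions, $\<U\>_\phi$ is topologically free, and the equality $[\<U\>_\phi]=\<U\>_\phi$ holds by the construction.

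Second, I would transfer the structural axioms level by level. For $a(z), b(z) \in \<U\>_\phi$ and each $n$, the element $\pi_n(Y_\E^\phi(a(z), z_0) b(z))$ equals $Y_\E^\phi(\pi_n(a), z_0) \pi_n(b)$, and its coefficients in $z_0$ lie in $\<\pi_n(U)\>_\phi$ by the classical theorem; this establishes $Y_\E^\phi$-closure and shows that $Y_W^\phi(a(z), z_0) = a(z_0)$ defines a $\phi$-coordinated $\<U\>_\phi$-module structure on $W$. The vacuum property, weak associativity (in the $\phi$-coordinated form), and $\hbar$-adic $S_{trig}$-locality of $\<U\>_\phi$ all follow similarly by reduction mod $\hbar^n$ and appeal to the classical counterparts. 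Faithfulness of $W$ is immediate because $a(z) \in \<U\>_\phi$ acts as zero iff each $\pi_n(a)$ acts as zero on $W/\hbar^n W$, whence $\pi_n(a) = 0$ for every $n$ by the classical faithfulness, forcing $a(z) = 0$.

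The main obstacle, as I see it, is producing the $\hbar$-adic $S_{trig}$-locality data for $\<U\>_\phi$: for every pair $a(z), b(z) \in \<U\>_\phi$ one must exhibit
\begin{equation*}
A(z) \in \bigl(\C\<U\>_\phi \wh\ot \C\<U\>_\phi \wh\ot \C(z)\bigr)[[\hbar]]
\end{equation*}
such that $a(z_1)b(z_2) \sim Z^\phi(z_2, z_1)(A(z))$. At each level $n$ the classical theorem supplies locality data $A_n(z)$ with values in $\C\<\pi_n(U)\>_\phi \otimes \C\<\pi_n(U)\>_\phi \otimes \C(z)$, and the task is to arrange these to form a coherent inverse system whose limit lies in the completed tensor product and whose rational-function denominators stabilize. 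Using uniqueness of the classical locality data up to the standard $\delta$-function ambiguity, one can normalize the $A_n(z)$ so that they agree under the transition maps; the limit $A(z)$ then exists in the required space and certifies the $\hbar$-adic $S_{trig}$-locality of $\<U\>_\phi$, completing the proof.
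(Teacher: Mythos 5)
Note first that the paper gives no proof of this theorem; it records it as a ``partial $\hbar$-adic analogue'' of \cite[Theorem 4.8]{Li-phi-coor}, just as Theorem \ref{thm:construction} is quoted from \cite{Li-h-adic} without proof. So your proposal must stand on its own.

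There is a genuine gap. You define $\<U\>_\phi$ as $\varprojlim_n \<\pi_n(U)\>_\phi$, where $\<\pi_n(U)\>_\phi$ is the closure supplied by the classical \cite[Theorem 4.8]{Li-phi-coor}. But that classical closure is a $\C$-subspace of $\E(W/\hbar^n W)$, not a $\C[[\hbar]]/\hbar^n$-submodule, so the inverse limit is a priori only a $\C$-subspace of $\E_\hbar(W)$: it is not closed under multiplication by $\hbar$, and hence need not be a $\C[[\hbar]]$-submodule, let alone topologically free or $[\cdot]$-saturated. The defect is visible already for $U=\emptyset$: at each level the classical closure is $\C\cdot 1_{W/\hbar^nW}$, every transition map is a $\C$-isomorphism, and the inverse limit is $\C\cdot 1_W$, which is not a $\C[[\hbar]]$-module, whereas the theorem requires $\C[[\hbar]]\cdot 1_W$. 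Your appeal to ``minimality together with surjectivity of the transition maps'' is also incomplete: minimality yields $\<\pi_n(U)\>_\phi$ inside the image of $\<\pi_{n+1}(U)\>_\phi$ under the transition map, but the containment in the other direction, which is what makes the family an inverse system at all, must come from a generation-by-iterated-$Y_\E^\phi$-products argument rather than from minimality. The construction that actually works, paralleling the proof of Theorem \ref{thm:construction} in \cite{Li-h-adic}, builds $\<U\>_\phi$ directly inside $\E_\hbar(W)$ as the smallest $\hbar$-adically closed $\C[[\hbar]]$-submodule containing $U\cup\{1_W\}$, closed under the $Y_\E^\phi$-products, and satisfying $[\<U\>_\phi]=\<U\>_\phi$; the mod-$\hbar^n$ reductions are then used to verify $S_{trig}$-locality, weak associativity, and the $\phi$-coordinated module axioms, not to define the object itself.
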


The following result is a partial $\hbar$-adic analogue of \cite[Theorem 2.21]{JKLT-G-phi-mod}:
\begin{prop}\label{prop:YEcom-h}
Let $W$ be a topologically free $\C[[\hbar]]$-module, and let $V\subset \E_\hbar(W)$ be an $\hbar$-adic $S_{trig}$-local subset such that $V=\<V\>_\phi$.
Suppose that
\begin{align*}
  &\sum_{i\ge 1}a_i(z)\ot b_i(z)\ot f_i(z),\quad\sum_{j\ge 1}\al_j(z)\ot \beta_j(z)\ot g_j(z)\in V\wh\ot V\wh\ot \C(z)[[\hbar]],\\
  &\te{and}\quad
  \sum_{k\ge 1}\gamma_k(z)\in V,
\end{align*}
such that the following relation holds on $W$:
\begin{align*}
  &\sum_{i\ge 1}\iota_{z_1,z_2}f_i(z_1/z_2)a_i(z_1)b_i(z_2)
  -\sum_{j\ge 1}\iota_{z_2,z_1}g_j(z_1/z_2)\al_j(z_2)\beta_j(z_1)\\
  &\quad=\sum_{k\ge 1}\gamma_k(z_2)\frac{1}{k!}\(z_2\pd{z_2}\)^k\delta\(\frac{z_2}{z_1}\).
\end{align*}
Then we have that
\begin{align*}
  &\sum_{i\ge 1}\iota_{z_1,z_2}f_i(e^{z_1-z_2})Y_\E^\phi(a_i(z),z_1)Y_\E^\phi(b_i(z),z_2)\\
  -&\sum_{j\ge 1}\iota_{z_2,z_1}g_j(e^{z_1-z_2})Y_\E^\phi(\al_j(z),z_2)Y_\E^\phi(\beta_j(z),z_1)\\
  &\qquad=\sum_{k\ge 1}Y_\E^\phi(\gamma_k(z),z_2)\frac{1}{k!}\pdiff{z_2}{k}z_1\inv\delta\(\frac{z_2}{z_1}\).
\end{align*}
\end{prop}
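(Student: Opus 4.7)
The plan is to reduce the statement to its non-$\hbar$-adic counterpart \cite[Theorem 2.21]{JKLT-G-phi-mod} by working modulo $\hbar^n$ for each positive integer $n$, and then reassembling via the inverse limit description of $\E_\hbar^{(k)}(W)$. The hypothesis that $V = \langle V\rangle_\phi$ is topologically free is essential here: it ensures that $V/\hbar^n V$ sits inside $\E(W/\hbar^n W)$ as an honest weak quantum vertex algebra with $W/\hbar^n W$ as a faithful $\phi$-coordinated module, which is exactly the setup the classical theorem needs.

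First I would fix $n \ge 1$ and push the hypothesized identity through the reduction $\pi_n^{(2)}$. Because the sums are $\hbar$-adically convergent and the coefficients $f_i(z), g_j(z)$ lie in $\C(z)[[\hbar]]$, only finitely many terms survive modulo $\hbar^n$, so $\pi_n$ applied to $\sum_i a_i(z)\otimes b_i(z)\otimes f_i(z)$, $\sum_j \alpha_j(z)\otimes \beta_j(z)\otimes g_j(z)$, and $\sum_k \gamma_k(z)$ produces genuine finite sums over the classical tensor products built from $V/\hbar^n V$. Then \cite[Theorem 2.21]{JKLT-G-phi-mod} applied to the weak quantum vertex algebra $V/\hbar^n V$ acting $\phi$-coordinatedly on $W/\hbar^n W$ gives the $Y_\E^\phi$-identity modulo $\hbar^n$.

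Second, I would appeal to \cite[Remark 4.7]{Li-h-adic}, which identifies $\E_\hbar^{(2)}(W)$ with $\varprojlim_n \E^{(2)}(W/\hbar^n W)$, together with the fact that $Y_\E^\phi$ was defined precisely as an inverse limit. Concatenating the identities obtained modulo $\hbar^n$ for all $n$ produces the claimed identity in $\E_\hbar^{(2)}(W)$; the $\delta$-function and its derivatives $\partial_{z_2}^{(k)} z_1\inv \delta(z_2/z_1)$ are defined componentwise in the same sense, so no additional convergence check is needed beyond what is already built into the $\hbar$-adic topology.

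The main technical obstacle is verifying that $Y_\E^\phi$ commutes with $\pi_n$ in a controlled way: one must exhibit, for each $n$, a single integer $k_n$ large enough that $(z_1 - z_2)^{k_n} \pi_n(a(z_1))\pi_n(b(z_2)) \in \E^{(2)}(W/\hbar^n W)$ simultaneously for all pairs $(a,b)$ among the finitely many $(a_i,b_i)$ and $(\alpha_j,\beta_j)$ surviving modulo $\hbar^n$, and similarly for the compositions appearing inside $Y_\E^\phi$. Such a $k_n$ exists precisely because only finitely many pairs are involved at level $n$ and each individually admits such a $k$ by $\hbar$-adic $S_{\mathrm{trig}}$-locality. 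Once $k_n$ is chosen, the substitution $z_1 = z e^{z_0}$ used in the definition of $Y_\E^\phi$ commutes with $\pi_n$, which is the only compatibility one actually needs to chain the classical conclusion up through the inverse limit.
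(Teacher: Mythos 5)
Your proposal is correct and matches the paper's intended approach: the paper labels this a ``partial $\hbar$-adic analogue'' of the classical \cite[Theorem 2.21]{JKLT-G-phi-mod} and supplies no further proof, implicitly delegating to precisely the reduce-mod-$\hbar^n$ then inverse-limit argument you outline. One point worth stating more explicitly is that the identification $\pi_n(V)\cong V/\hbar^n V$ (rather than a further quotient) rests on the saturation property $[\langle V\rangle_\phi]=\langle V\rangle_\phi$ guaranteed by Theorem \ref{thm:phi-construction}, not merely on topological freeness of $V$.
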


Now we start to determine the category of $\phi$-coordinated $V_{\hat\fg,\hbar}(\ell,0)$-modules.

\begin{de}\label{de:cat-M-phi}
Let $\ell\in\C$. Define $\mathcal M_\ell^\phi$ to be the category consisting of topologically free $\C[[\hbar]]$-modules $W$, equipped with fields $\psi_{i,q}(z),y_{i,q}^\pm(z)\in\E_\hbar(W)$ that satisfy the following relations:
\begin{align}
  & [\psi_{i,q}(z_1),\psi_{j,q}(z_2)]\label{eq:cat-M-phi-1}\\
  =&[r_ia_{ij}]_{q^{z_2\pd{z_2}}}[r\ell]_{q^{z_2\pd{z_2}}}
  \(\iota_{z_1,z_2}q^{-r\ell z_2\pd{z_2}}-\iota_{z_2,z_1}q^{r\ell z_2\pd{z_2}}\)\frac{z_2/z_1}{(1-z_2/z_1)^2},\nonumber\\
  & [\psi_{i,q}(z_1),y_{j,q}^\pm(z_2)]\label{eq:cat-M-phi-2}\\
  =&\pm y_{j,q}^\pm(z_2)[r_ia_{ij}]_{q^{z_2\pd{z_2}}}
  \(\iota_{z_1,z_2}q^{-r\ell z_2\pd{z_2}}-\iota_{z_2,z_1}q^{r\ell z_2\pd{z_2}}\)\frac{1+z_2/z_1}{2-2z_2/z_1},\nonumber\\
  &(1-z_2/z_1)^{\delta_{ij}}(1-q^{-2r\ell}z_2/z_1)^{\delta_{ij}}\label{eq:cat-M-phi-3}\\
  &\quad\times\(y_{i,q}^+(z_1)y_{j,q}^-(z_2)
    -\iota_{z_2,z_1}g_{ij,q}(z_1/z_2) y_{j,q}^-(z_2)y_{i,q}^+(z_1)\)=0,\nonumber\\
  & \iota_{z_1,z_2}f_{ij,q}(z_1,z_2)y_{i,q}^\pm(z_1)y_{j,q}^\pm(z_2)
    =C_{ij}\iota_{z_2,z_1}f_{ji,q}(z_2,z_1)y_{j,q}^\pm(z_2)y_{i,q}^\pm(z_1),\label{eq:cat-M-phi-4}
\end{align}
where
\begin{align}\label{eq:def-fq}
  f_{ij,q}(z_1,z_2)=\iota_{z_1,z_2}(z_1-q_i^{ a_{ij}}z_2)(z_1-z_2)^{-\delta_{ij}},\quad g_{ij,q}(z)=\frac{q_i^{a_{ij}}-z}{1-q_i^{a_{ij}}z}.
\end{align}
\end{de}

For an object $(W,\psi_{i,q}(z),y_{i,q}^\pm(z))$ of $\mathcal M_\ell^\phi$, we set
\begin{align}\label{eq:def-UW}
  U_W=\set{\psi_{i,q}(z),y_{i,q}^\pm(z)}{i\in I}.
\end{align}
Then it is immediate from equations \eqref{eq:cat-M-phi-1}, \eqref{eq:cat-M-phi-2}, \eqref{eq:cat-M-phi-3} and \eqref{eq:cat-M-phi-4} that
$U_W$ is an $\hbar$-adically $S_{trig}$-local subset of $\E_\hbar(W)$.

\begin{de}
Let $\ell\in\C$. Define $\mathcal R_\ell^\phi$ to be the full subcategory of $\mathcal M_\ell^\phi$ consisting of objects $(W,\psi_{i,q}(z),y_{i,q}^\pm(z))$ such that
\begin{align}
  &y_{i,q}^+(z_1)y_{j,q}^-(z_2)-
  \iota_{z_2,z_1}g_{ij}(z_1/z_2)y_{j,q}^-(z_2)y_{i,q}^+(z_1)\nonumber\\
  &\qquad=\frac{\delta_{ij}}{q_i-q_i\inv}\(\delta\(\frac{z_1}{z_2}\)
    -E(\psi_{i,q}(z))\delta\(\frac{q^{-2r\ell}z_2}{z_1}\)\),\label{eq:cat-M-phi-5}\\
  &\(\(y_{i,q}^\pm(z)\)_0^\phi\)^{m_{ij}}y_{j,q}^\pm(z)=0,\quad\te{if }a_{ij}<0.\label{eq:cat-M-phi-6}
\end{align}
\end{de}

\begin{prop}\label{prop:V-tau-phi}
Let $(W,\psi_{i,q}(z),y_{i,q}^\pm(z))$ be an object of $\mathcal R_\ell^\phi$.
Then $W$ becomes a $\phi$-coordinated $V_{\hat\fg,\hbar}(\ell,0)$-module such that
\begin{align*}
  Y_W^\phi(h_{i,\hbar},z)=\psi_{i,q}(z),\quad Y_W^\phi(x_{i,\hbar}^\pm,z)=y_{i,q}^\pm(z),\quad i\in I.
\end{align*}
On the other hand, let $(W,Y_W^\phi)$ be a $\phi$-coordinated $V_{\hat\fg,\hbar}(\ell,0)$-module.
Then $$(W,Y_W^\phi(h_{i,\hbar},z),Y_W^\phi(x_{i,\hbar}^\pm,z))$$ is an object of $\mathcal R_\ell^\phi$.
\end{prop}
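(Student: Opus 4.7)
The plan uses the universal property of $F_\tau(A,\ell)$ in Proposition \ref{prop:universal-nonlocal-va} combined with the translation between multiplicative ($z_1/z_2$) and additive ($e^{z_1-z_2}$) identities provided by Propositions \ref{prop:vacom-h} and \ref{prop:YEcom-h}.

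For the forward direction, start with an object $(W,\psi_{i,q}(z),y_{i,q}^\pm(z))$ of $\mathcal R_\ell^\phi$ and consider the set $U_W$ from \eqref{eq:def-UW}. The relations \eqref{eq:cat-M-phi-1}--\eqref{eq:cat-M-phi-4} show that $U_W$ is $\hbar$-adically $S_{trig}$-local, so Theorem \ref{thm:phi-construction} produces an $\hbar$-adic weak quantum vertex algebra $\<U_W\>_\phi\subset \E_\hbar(W)$ for which $W$ is a faithful $\phi$-coordinated module with $Y_W^\phi(a(z),z_0)=a(z_0)$. Applying Proposition \ref{prop:YEcom-h} to each of \eqref{eq:cat-M-phi-1}--\eqref{eq:cat-M-phi-4} yields the corresponding $(e^{z_1-z_2})$-form relations inside $\<U_W\>_\phi$; a direct substitution (sending $z_1/z_2 \mapsto e^{z_1-z_2}$ and $q^{r\ell z\pd{z}}\mapsto q^{r\ell \pd{z}}$) identifies these with \eqref{eq:local-h-1}--\eqref{eq:local-h-4}. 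Thus $\<U_W\>_\phi$ lies in $\mathcal M_\tau$, and Proposition \ref{prop:universal-nonlocal-va} provides an $\hbar$-adic nonlocal vertex algebra homomorphism $\varphi:F_\tau(A,\ell)\to \<U_W\>_\phi$ sending $h_{i,\hbar}\mapsto \psi_{i,q}(z)$ and $x_{i,\hbar}^\pm \mapsto y_{i,q}^\pm(z)$.

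To factor $\varphi$ through $V_{\hat\fg,\hbar}(\ell,0)=F_\tau(A,\ell)/R_1^\ell$, I recall from Proposition \ref{prop:ideal-def-alt} that $R_1^\ell$ is the minimal closed ideal with $[R_1^\ell]=R_1^\ell$ containing all coefficients of $A_i(z)$ ($i\in I$) and $Q_{ij}^\pm(z_1,\dots,z_{m_{ij}})$ ($a_{ij}<0$). Applying Proposition \ref{prop:YEcom-h} once more to the $\phi$-coordinated assumptions \eqref{eq:cat-M-phi-5} and \eqref{eq:cat-M-phi-6} yields the additive analogues \eqref{eq:local-h-5} and the quantum Serre relations inside $\<U_W\>_\phi$, whence $\varphi(A_i(z))=0=\varphi(Q_{ij}^\pm(z_1,\dots,z_{m_{ij}}))$. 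Therefore $\varphi$ descends to $\bar\varphi:V_{\hat\fg,\hbar}(\ell,0)\to \<U_W\>_\phi$, and pulling back via $\bar\varphi$ equips $W$ with the desired $\phi$-coordinated $V_{\hat\fg,\hbar}(\ell,0)$-module structure acting on generators as prescribed. Conversely, given $(W,Y_W^\phi)$, set $\psi_{i,q}(z)=Y_W^\phi(h_{i,\hbar},z)$ and $y_{i,q}^\pm(z)=Y_W^\phi(x_{i,\hbar}^\pm,z)$. The additive relations \eqref{eq:local-h-1}--\eqref{eq:local-h-4} hold in $F_\tau(A,\ell)$ and pass to the quotient; \eqref{eq:local-h-5} holds in $V_{\hat\fg,\hbar}(\ell,0)$ by Proposition \ref{prop:qCartan}, and the quantum Serre identities vanish by Proposition \ref{prop:qSerre}. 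Proposition \ref{prop:vacom-h} converts each of these into its multiplicative counterpart \eqref{eq:cat-M-phi-1}--\eqref{eq:cat-M-phi-6} on $W$, exhibiting $(W,\psi_{i,q}(z),y_{i,q}^\pm(z))$ as an object of $\mathcal R_\ell^\phi$.

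The main obstacle is the explicit bookkeeping for the change of variables $z_1-z_2 \leftrightarrow \log(z_1/z_2)$: one must verify that the translation operator $q^{\pm r\ell\pd{z}}$ in \eqref{eq:local-h-1}--\eqref{eq:local-h-2} corresponds to the multiplicative scaling $q^{\pm r\ell z\pd{z}}$ in \eqref{eq:cat-M-phi-1}--\eqref{eq:cat-M-phi-2}, that $e^{-z_1+z_2}/(1-e^{-z_1+z_2})^2$ matches $(z_2/z_1)/(1-z_2/z_1)^2$, and most delicately that the shifted delta-functions $z_1^{-1}\delta((z_2\pm 2r\ell\hbar)/z_1)$ appearing in \eqref{eq:local-h-5} correspond to $\delta(q^{\mp 2r\ell}z_2/z_1)$ in \eqref{eq:cat-M-phi-5} when acting on vertex operators involving the exponential factor $E(h_{i,\hbar})$ built from $F(\pd{z})$ in \eqref{eq:def-op-F}. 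Once these matchings are established, both halves of the correspondence follow by a single invocation of the abstract translation principle in Propositions \ref{prop:vacom-h} and \ref{prop:YEcom-h}.
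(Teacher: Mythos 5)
Your proof follows essentially the same strategy as the paper's: in the forward direction use $S_{\mathrm{trig}}$-locality plus Theorem \ref{thm:phi-construction} to build $\<U_W\>_\phi$, translate the multiplicative relations into the additive ones of $\mathcal M_\tau$ via Proposition \ref{prop:YEcom-h}, invoke the universal property (Proposition \ref{prop:universal-nonlocal-va}), then factor through $V_{\hat\fg,\hbar}(\ell,0)$; in the converse direction pull back through Proposition \ref{prop:vacom-h}. However, you yourself isolate ``the main obstacle'' (matching the $F(\partial)$-exponential defining $E(h_{i,\hbar})$ against its $z\partial_z$-counterpart when the shifted delta-functions appear) and do not actually carry it out, whereas the paper does: in the forward direction, after applying Proposition \ref{prop:YEcom-h} to \eqref{eq:cat-M-phi-5} it acts on $1_W$ and takes $\Sing_{z_1}\Res_{z_2}z_2\inv$ to extract $\varphi(A_i(z))=0$ from the resulting additive identity; in the converse direction it explicitly unwinds \eqref{eq:def-E} to verify $Y_W^\phi(E(h_{i,\hbar}),z)=E(Y_W^\phi(h_{i,\hbar},z))$, which is the delicate point you flag. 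A smaller slip: Proposition \ref{prop:YEcom-h} is not what handles \eqref{eq:cat-M-phi-6} (nor is Proposition \ref{prop:vacom-h} what handles the quantum Serre relation in the converse) --- the Serre identity is already a statement about $\phi$-modes inside $\<U_W\>_\phi$ and transports directly through the homomorphism $\varphi$ in the forward direction, and through the $\phi$-coordinated module axiom $Y_W^\phi(u_nv,z)=Y_W^\phi(u,z)_n^\phi Y_W^\phi(v,z)$ together with Proposition \ref{prop:qSerre} in the converse.
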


\begin{proof}
Let $(W,\psi_{i,q}(z),y_{i,q}^\pm(z))$ be an object of $\mathcal R_\ell^\phi$. Then it is an object of $\mathcal M_\ell^\phi$.
Recall the $\hbar$-adically $S_{trig}$-local subset $U_W$ (see \eqref{eq:def-UW} for details).
From Theorem \ref{thm:phi-construction}, we get an $\hbar$-adic nonlocal vertex algebra $\<U_W\>_\phi$, and $W$ becomes a $\phi$-coordinated $\<U_W\>_\phi$-module with module action $Y_W^\phi(a(z),z_0)=a(z_0)$ for $a(z)\in \<U_W\>_\phi$.
From Proposition \ref{prop:YEcom-h} and equations \eqref{eq:cat-M-phi-1}, \eqref{eq:cat-M-phi-2}, \eqref{eq:cat-M-phi-3}, \eqref{eq:cat-M-phi-4},
we get that $(W,Y_\E^\phi(\psi_{i,q}(z_1),z), Y_\E^\phi(y_{i,q}^\pm(z_1),z))$ is an object of $\mathcal M_\tau$ (see Definition \ref{de:cat-M-tau}).
By using Proposition \ref{prop:universal-nonlocal-va}, we get an $\hbar$-adic nonlocal vertex algebra homomorphism
$\varphi:F_\tau(A,\ell)\to \<U_W\>_\phi$ such that
\begin{align*}
  \varphi(h_{i,\hbar})=\psi_{i,q}(z),\quad \varphi(x_{i,\hbar}^\pm)=y_{i,q}^\pm(z),\quad i\in I.
\end{align*}
Since $(W,\psi_{i,q}(z),y_{i,q}^\pm(z))$ be an object of $\mathcal R_\ell^\phi$.
We apply Proposition \ref{prop:YEcom-h} to \eqref{eq:cat-M-phi-5} and get that
\begin{align*}
  &Y_\E^\phi(y_{i,q}^+(z),z_1)Y_\E^\phi(y_{j,q}^-(z),z_2)
  -\iota_{z_2,z_1}g_{ij,\hbar}(z_1-z_2)
  Y_\E^\phi(y_{j,q}^-(z),z_2)Y_\E^\phi(y_{i,q}^+(z),z_1)
  \\
  &\quad=\delta_{ij}(q_i-q_i\inv)\inv
  \(z_1\inv\delta\(\frac{z_2}{z_1}\)-
  Y_\E^\phi\( E(\psi_{i,q}(z)),  z_2\)z_1\inv\delta\(\frac{z_2-2r\ell\hbar}{z_1}\)\).
\end{align*}
Let the both hand sides act on $1_W$, and take $\Sing_{z_1}\Res_{z_2}z_2\inv$, we get that
\begin{align*}
  &\Sing_{z_1}Y_\E^\phi(y_{i,q}^+(z),z_1)y_{j,q}^-
  =\delta_{ij}(q_i-q_i\inv)\inv\(1_W z_1\inv-
  \theta_{i,q}(z)(z_1+2r\ell\hbar)\inv\).
\end{align*}
Combining this with \eqref{eq:cat-M-phi-6} and Definition \ref{de:V-tau}, we get that $\varphi$ factor through $V_{\hat\fg,\hbar}(\ell,0)$.
Therefore, $W$ becomes a $\phi$-coordinated $V_{\hat\fg,\hbar}(\ell,0)$-module such that ($i\in I$):
\begin{align*}
  &Y_W^\phi(h_{i,\hbar},z_0)=Y_W^\phi(\varphi(h_{i,\hbar}),z_0)=Y_W^\phi(\psi_{i,q}(z),z_0)=\psi_{i,q}(z_0),\\
  &Y_W^\phi(x_{i,\hbar}^\pm,z_0)=Y_W^\phi(\varphi(x_{i,\hbar}^\pm),z_0)=Y_W^\phi(y_{i,q}(z)^\pm,z_0)=y_{i,q}^\pm(z_0).
\end{align*}

On the other hand, let $(W,Y_W^\phi)$ be a $\phi$-coordinated $V_{\hat\fg,\hbar}(\ell,0)$-module.
From Proposition \ref{prop:vacom-h} and equations \eqref{eq:local-h-1}, \eqref{eq:local-h-2}, \eqref{eq:local-h-3}, \eqref{eq:local-h-4}, \eqref{eq:local-h-5}, we have that
\begin{align*}
  & [Y_W^\phi(h_{i,\hbar},z_1),Y_W^\phi(h_{j,\hbar},z_2)]\\
  &\quad=[r_ia_{ij}]_{q^{z_2\pd{z_2}}}[r\ell]_{q^{z_2\pd{z_2}}}
  \(\iota_{z_1,z_2}q^{-r\ell z_2\pd{z_2}}-\iota_{z_2,z_1}q^{r\ell z_2\pd{z_2}}\)\frac{z_2/z_1}{(1-z_2/z_1)^2},\\
  & [Y_W^\phi(h_{i,\hbar},z_1),Y_W^\phi(x_{j,\hbar}^\pm,z_2)]\\
  &\quad=\pm Y_W^\phi(x_{j,\hbar}^\pm,z_2)[r_ia_{ij}]_{q^{z_2\pd{z_2}}}
  \(\iota_{z_1,z_2}q^{-r\ell z_2\pd{z_2}}-\iota_{z_2,z_1}q^{r\ell z_2\pd{z_2}}\)\frac{1+z_2/z_1}{2-2z_2/z_1},\\
  &(1-z_2/z_1)^{\delta_{ij}}(1-q^{-2r\ell}z_2/z_1)^{\delta_{ij}}\\
  &\quad\times\(y_{i,q}^+(z_1)y_{j,q}^-(z_2)
    -\iota_{z_2,z_1}g_{ij,q}(z_1/z_2) y_{j,q}^-(z_2)y_{i,q}^+(z_1)\)=0,\\
  & \iota_{z_1,z_2}f_{ij,q}(z_1,z_2)Y_W^\phi(x_{i,\hbar}^\pm,z_1)Y_W^\phi(x_{j,\hbar}^\pm,z_2)\\
  &\quad  =C_{ij}\iota_{z_2,z_1}f_{ji,q}(z_2,z_1)Y_W^\phi(x_{j,\hbar}^\pm,z_2)Y_W^\phi(x_{i,\hbar}^\pm,z_1),
\end{align*}
and
\begin{align*}
  &Y_W^\phi(x_{i,\hbar}^+,z_1)Y_W^\phi(x_{j,\hbar}^-,z_2)
  -\iota_{z_2,z_1}g_{ij,q}(z_1/z_2)Y_W^\phi(x_{j,\hbar}^-,z_2)Y_W^\phi(x_{i,\hbar}^+,z_1)\\
  =&\frac{\delta_{ij}}{q_i-q_i\inv}\(\delta\(\frac{z_1}{z_2}\)
    -Y_W^\phi(E(h_{i,\hbar}),z)\delta\(\frac{q^{-2r\ell}z_2}{z_1}\)\)\\
  =&\frac{\delta_{ij}}{q_i-q_i\inv}\Bigg(\delta\(\frac{z_1}{z_2}\)-\(\frac{F(r_i+r\ell)}{F(r_i-r\ell)}\)^\half \\
   &\quad\times Y_W^\phi\(
    \exp\(\(-q^{-r\ell\partial} F\(\partial\)h_{i,\hbar}\)_{-1}\)\vac,z\)\delta\(\frac{q^{-2r\ell}z_2}{z_1}\)\Bigg)\\
  =&\frac{\delta_{ij}}{q_i-q_i\inv}\(\delta\(\frac{z_1}{z_2}\)
    -E\(Y_W^\phi(h_{i,\hbar},z)\)\delta\(\frac{q^{-2r\ell}z_2}{z_1}\)\).
\end{align*}
From Definition \ref{de:V-tau}, we also have that
\begin{align*}
  &0=Y_W^\phi\(\(x_{i,\hbar}^\pm\)_0^{m_{ij}}x_{j,\hbar}^\pm,z\)
  =\(\(Y_W^\phi(x_{i,\hbar}^\pm,z)\)_0^\phi\)^{m_{ij}}Y_W^\phi(x_{j,\hbar}^\pm,z)
\end{align*}
for $i,j\in I$ with $a_{ij}<0$.
Therefore, $(W,Y_W^\phi(h_{i,\hbar},z),Y_W^\phi(x_{i,\hbar}^\pm,z))$ is an object of $\mathcal R_\ell^\phi$.
\end{proof}

\section{Quantum affinization algebras}\label{sec:qaff}

Let $\U_\hbar^l(\hat\fg)$ be the unital associative algebra over $\C[[\hbar]]$ topologically generated by the elements \eqref{eq:tqagenerators} subject to relations (Q1)-(Q6). Then $\U_\hbar(\hat\fg)$ is naturally a quotient algebra of $\U_\hbar^l(\hat\fg)$, and $\U_\hbar^l(\hat\fg)$ is naturally a quotient algebra of $\U_\hbar^f(\hat\fg)$.
We call a $\U_\hbar^f(\hat\fg)$-module $W$ a \emph{restricted module} if $W$ is topologically free and
\begin{align*}
  \phi_{i,q}^\pm(z),\quad x_{i,q}^\pm(z)\in\E_\hbar(W).
\end{align*}
In addition, a $\U_\hbar(\hat\fg)$-module (resp. $\U_\hbar^l(\hat\fg)$-module) is said to be \emph{restricted}, if it is restricted as a $\U_\hbar^f(\hat\fg)$-module.
For $i_1,\dots, i_m\in I$, we set
\begin{align}\label{eq:def-x-no}
  &x_{i_1,\dots,i_m,q}^\pm(z_1,\dots,z_m)\\
  =&\(\prod_{1\le r<s\le m}f_{i_r,i_s,q}^\pm(z_r,z_s)\)x_{i_1,q}^\pm(z_1)\cdots x_{i_m,q}^\pm(z_m),\nonumber
\end{align}
where $f_{ij,q}^\pm(z_1,z_2)=(z_1-q_i^{\pm a_{ij}}z_2)(z_1-z_2)^{-\delta_{ij}}$ is defined in \eqref{eq:def-fq} and $C_{ij}$ is defined in \eqref{eq:def-C}.
The following result is proved in \cite[Proposition 5.8]{CJKT-qeala-II-twisted-qaffinization}:
\begin{lem}
Let $W$ be a restricted $\U_\hbar^l(\hat\fg)$-module.
Then for positive integer $m$, and $i_1,\dots,i_m\in I$,
\begin{align}
  x_{i_1,\dots,i_m,q}^\pm(z_1,\dots,z_m)\in\E_\hbar^{(m)}(W).
\end{align}
Moreover, for $\sigma\in S_m$, we have
\begin{align*}
  &x_{i_{\sigma(1)},\dots,i_{\sigma(m)},q}^\pm(z_{\sigma(1)},\dots,z_{\sigma(m)})\\
  =&\(\prod_{\substack{1\le r<s\le m\\ \sigma(r)>\sigma(s)}}C_{i_r,i_s}\)
  x_{i_1,\dots,i_m,q}^\pm(z_1,\dots,z_m).
\end{align*}
\end{lem}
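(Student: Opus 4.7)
The plan is to induct on $m$, taking the base case $m=2$ directly from (Q6) and then building up via pairwise commutation.

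For $m=2$, I first exploit the symmetry $r_ia_{ij}=r_ja_{ji}$, which gives $q_i^{\pm a_{ij}}=q_j^{\pm a_{ji}}$ and hence $G_{ij,q}^\pm(z_1,z_2)=-F_{ji,q}^\pm(z_2,z_1)$. With this, (Q6) becomes the cleanly symmetric
\begin{align*}
F_{ij,q}^\pm(z_1,z_2)\,x_{i,q}^\pm(z_1)x_{j,q}^\pm(z_2)=-F_{ji,q}^\pm(z_2,z_1)\,x_{j,q}^\pm(z_2)x_{i,q}^\pm(z_1).
\end{align*}
When $i\neq j$ we have $f_{ij,q}^\pm=F_{ij,q}^\pm$, and since the LHS lies in $\Hom_{\C[[\hbar]]}(W,W((z_1))((z_2))[[\hbar]])$ while the RHS lies in $\Hom_{\C[[\hbar]]}(W,W((z_2))((z_1))[[\hbar]])$, equality forces both to lie in $\E_\hbar^{(2)}(W)$; the swap identity is then read off with sign $C_{ij}=-1$. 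When $i=j$, I use that $(F_{ii,q}^\pm+G_{ii,q}^\pm)(z,z)=0$ together with (Q6) to deduce that $F_{ii,q}^\pm(z_1,z_2)x_{i,q}^\pm(z_1)x_{i,q}^\pm(z_2)$ is divisible by $(z_1-z_2)$ as a $\C[[\hbar]]$-linear map into the ambient Laurent-series space, so that multiplication by $(z_1-z_2)^{-1}$ produces a well-defined element of $\E_\hbar^{(2)}(W)$; the resulting swap-sign is $C_{ii}=1$.

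For the inductive step, assume the result for $m$ and write
\begin{align*}
&x_{i_1,\dots,i_{m+1},q}^\pm(z_1,\dots,z_{m+1})\\
&\qquad=\Bigl(\prod_{r=1}^m f_{i_r,i_{m+1},q}^\pm(z_r,z_{m+1})\Bigr)\cdot x_{i_1,\dots,i_m,q}^\pm(z_1,\dots,z_m)\cdot x_{i_{m+1},q}^\pm(z_{m+1}).
\end{align*}
By hypothesis $x_{i_1,\dots,i_m,q}^\pm\in \E_\hbar^{(m)}(W)$, so all remaining singularities are pairwise ones between $z_{m+1}$ and each $z_r$. Iterating the $m=2$ analysis, the factor $f_{i_r,i_{m+1},q}^\pm(z_r,z_{m+1})$ is exactly what is needed to clear the $(z_r-z_{m+1})$-type singularity arising from commuting $x_{i_{m+1},q}^\pm(z_{m+1})$ into the product; thus the full expression lies in $\E_\hbar^{(m+1)}(W)$.

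For the $S_m$-symmetry, it suffices to verify the claim for adjacent transpositions $\sigma=(r,r+1)$, since these generate $S_m$. Inside the product, one swaps $x_{i_r,q}^\pm(z_r)$ and $x_{i_{r+1},q}^\pm(z_{r+1})$ using the $m=2$ case; this is legitimate because, after multiplication by $\prod f^\pm$, the product already lies in $\E_\hbar^{(m)}$ in the variables $(z_r,z_{r+1})$. The transformation $f_{i_r,i_{r+1},q}^\pm(z_r,z_{r+1})\mapsto C_{i_ri_{r+1}}f_{i_{r+1},i_r,q}^\pm(z_{r+1},z_r)$ then supplies exactly one factor of $C_{i_r,i_{r+1}}$; composing such swaps produces the claimed product $\prod_{\sigma(r)>\sigma(s)} C_{i_ri_s}$.

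The main obstacle is the $i=j$ sub-case of the base step. What is needed is the $\hbar$-adic analogue of the classical fact that locality of the form $A(z_1,z_2)=B(z_2,z_1)$ (with $A\in W((z_1))((z_2))[[\hbar]]$ and $B\in W((z_2))((z_1))[[\hbar]]$) forces both sides into $W((z_1,z_2))[[\hbar]]$ after multiplication by an appropriate polynomial in $(z_1-z_2)$. In the $\hbar$-adic setting this must be applied modulo $\hbar^n$ for every $n$ and then reassembled via the identification $\E_\hbar^{(2)}(W)\cong\varprojlim_n \E^{(2)}(W/\hbar^n W)$; once this divisibility is secured, the division by $(z_1-z_2)^{\delta_{ij}}$ in the definition of $f_{ij,q}^\pm$ is well defined and the rest of the argument proceeds cleanly.
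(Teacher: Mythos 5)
The paper does not prove this lemma at all; it is quoted verbatim from \cite[Proposition 5.8]{CJKT-qeala-II-twisted-qaffinization}, so there is no in-paper argument against which to compare your proposal. On its own terms your route --- extract $m=2$ from (Q6) via $q_i^{a_{ij}}=q_j^{a_{ji}}$, hence $G_{ij,q}^\pm(z_1,z_2)=-F_{ji,q}^\pm(z_2,z_1)$, and then build up by adjacent transpositions --- is the natural one, and your sign bookkeeping against $C_{ij}=-(-1)^{\delta_{ij}}$ is consistent. The $i\neq j$ subcase is handled correctly by the two-sided iterated Laurent comparison.

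The genuine soft spot is the divisibility by $(z_1-z_2)$ when $i=j$. Put $\Psi(z_1,z_2)=F_{ii,q}^\pm(z_1,z_2)\,x_{i,q}^\pm(z_1)x_{i,q}^\pm(z_2)$; your two-sided argument already gives $\Psi\in\E_\hbar^{(2)}(W)$, but the purely diagonal identity $(F_{ii,q}^\pm+G_{ii,q}^\pm)(z,z)=0$ does not by itself yield $\Psi(z,z)=0$, because $x_{i,q}^\pm(z_1)x_{i,q}^\pm(z_2)$ cannot be evaluated on the diagonal independently of the prefactor. What you actually need is the full relation $F_{ii,q}^\pm(z_2,z_1)=-G_{ii,q}^\pm(z_1,z_2)$ (the very one you already used for $i\neq j$): combined with (Q6) it gives $\Psi(z_2,z_1)=F_{ii,q}^\pm(z_2,z_1)x_{i,q}^\pm(z_2)x_{i,q}^\pm(z_1)=-G_{ii,q}^\pm(z_1,z_2)x_{i,q}^\pm(z_2)x_{i,q}^\pm(z_1)=-\Psi(z_1,z_2)$. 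Thus $\Psi$ is antisymmetric, so $2\Psi(z,z)=0$, hence $\Psi(z,z)=0$ over $\C[[\hbar]]$, which legitimizes the division by $(z_1-z_2)$ and gives $C_{ii}=1$. The inductive step is also under-specified: ``commuting $x_{i_{m+1},q}^\pm(z_{m+1})$ into the product'' has to be run modulo each $\hbar^n$ using the swap identities as operator relations, and then closed by the observation that a formal series lying simultaneously in every iterated Laurent ring $(W/\hbar^nW)((z_{\sigma(1)}))\cdots((z_{\sigma(m)}))$ over $\sigma\in S_m$ in fact lies in $(W/\hbar^nW)((z_1,\dots,z_m))$. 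That multi-variable intersection fact is the real technical engine of the lemma and should be made explicit rather than left implicit in the phrase ``iterating the $m=2$ analysis.''
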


We need the following special case of \cite[Theorem 5.17]{CJKT-qeala-II-twisted-qaffinization}:
\begin{prop}
A restricted $\U_\hbar^l(\hat\fg)$-module is a restricted $\U_\hbar(\hat\fg)$-module if and only if
\begin{align*}
  x_{i,\dots,i,j,q}^\pm(q_i^{\pm a_{ij}}z,q_i^{\pm a_{ij}\pm 2}z,\dots,q_i^{\mp a_{ij}}z,z)=0\quad\te{for }i,j\in I\,\,\te{with}\,\,a_{ij}<0.
\end{align*}
\end{prop}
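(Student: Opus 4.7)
The plan is to show the equivalence between the shuffle-type quantum Serre relation (Q7) and the single-point vanishing of the normal-ordered product, by using (Q6) as the bridge and a $q$-combinatorial identity to relate the shuffle sum to a single evaluation.

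First, I would use (Q6) to move the factor $x_{j,q}^\pm(w)$ to the rightmost position in every term of the shuffle sum (Q7). Each transposition $x_{i,q}^\pm(z)x_{j,q}^\pm(w) \to x_{j,q}^\pm(w)x_{i,q}^\pm(z)$ contributes the rational factor $F^\pm_{ij,q}(z,w)/G^\pm_{ij,q}(z,w)$, so after clearing these denominators (which are analytic on a restricted module by the definition of $x_{i_1,\dots,i_m,q}^\pm$) the sum in (Q7) is rewritten as a single ordered product $x_{i,q}^\pm(z_{\sigma(1)})\cdots x_{i,q}^\pm(z_{\sigma(m_{ij})})x_{j,q}^\pm(w)$ multiplied by a symmetric rational expression in the $z_a$'s and $w$, with the $q$-binomial coefficients absorbed.

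Second, using the (anti-)symmetry of the normal-ordered field $x_{i,\dots,i,j,q}^\pm(z_1,\dots,z_{m_{ij}},w)$ in $(z_1,\dots,z_{m_{ij}})$ that comes from (Q6), I would collect the $S_{m_{ij}}$-shuffle sum into a single term times a symmetric scalar factor. The key $q$-combinatorial identity is a quantum analogue of the Vandermonde/Chu identity: the sum
\[
\sum_{\sigma\in S_{m_{ij}}}\sum_{k=0}^{m_{ij}}(-1)^k\binom{m_{ij}}{k}_{q_i}\prod_{r\le k}\frac{z_{\sigma(r)}-q_i^{\pm a_{ij}}w}{q_i^{\pm a_{ij}}z_{\sigma(r)}-w}\prod_{r>k}\frac{q_i^{\pm a_{ij}}w-z_{\sigma(r)}}{w-q_i^{\pm a_{ij}}z_{\sigma(r)}}
\]
vanishes identically after multiplying by $\prod_{r<s}(z_r-q_i^{\pm 2}z_s)$ precisely when the normal-ordered product $x^\pm_{i,\dots,i,j,q}$ is evaluated at the geometric tower $z_k=q_i^{\pm(a_{ij}+2(k-1))}w$. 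This is the point at which (Q7) reduces to the single vanishing condition stated in the proposition, because the factors $f_{ii,q}^\pm(z_r,z_s)=(z_r-q_i^{\pm 2}z_s)/(z_r-z_s)$ in the normal ordering kill all but one shuffle when evaluated on this tower.

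Conversely, given the single-point vanishing, the fact that the normal-ordered field is a rational section in the $z_a$'s (with controlled denominators involving only $(z_r-z_s)^{\delta_{ii}}$) together with the $q$-commutation of $x_{i,q}^\pm(z)$'s, lets us recover (Q7) by multiplying by the $q$-binomial prefactors and symmetrizing. The main obstacle is the verification of the $q$-combinatorial identity above; this is where the detailed structure of $\binom{m_{ij}}{k}_{q_i}$ and the geometric progression $z_k=q_i^{\pm(a_{ij}+2(k-1))}w$ come together, and it is essentially the content of \cite[Theorem 5.17]{CJKT-qeala-II-twisted-qaffinization}. Once that identity is in hand, both directions of the equivalence are formal consequences, completing the proof.
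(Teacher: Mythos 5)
The paper does not actually prove this proposition: it is introduced with the words ``We need the following special case of [CJKT, Theorem~5.17]'' and is quoted directly from \cite{CJKT-qeala-II-twisted-qaffinization} without argument. So there is no in-paper proof to compare against, and a blind attempt at a self-contained proof is in a sense going beyond what the paper does.

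That said, your sketch contains a concrete error at the crux. You claim to move $x_{j,q}^\pm(w)$ to the far right in the $k$-th summand of (Q7), and you write the resulting scalar factor as
\[
\prod_{r\le k}\frac{z_{\sigma(r)}-q_i^{\pm a_{ij}}w}{q_i^{\pm a_{ij}}z_{\sigma(r)}-w}\,
\prod_{r>k}\frac{q_i^{\pm a_{ij}}w-z_{\sigma(r)}}{w-q_i^{\pm a_{ij}}z_{\sigma(r)}}.
\]
But the second fraction equals the first after multiplying numerator and denominator by $-1$, so the full product is just $\prod_{r=1}^{m_{ij}}\frac{z_{\sigma(r)}-q_i^{\pm a_{ij}}w}{q_i^{\pm a_{ij}}z_{\sigma(r)}-w}$, independent of $k$. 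Your proposed ``key $q$-combinatorial identity'' would then factor as $\bigl(\sum_\sigma\prod_r\cdots\bigr)\bigl(\sum_{k=0}^{m_{ij}}(-1)^k\binom{m_{ij}}{k}_{q_i}\bigr)$, and the $k$-sum is not zero in general, so the identity as written is false. What (Q6) actually gives, since $x_{j,q}^\pm(w)$ sits in position $k+1$ and must be commuted past $x_{i,q}^\pm(z_{\sigma(k+1)}),\dots,x_{i,q}^\pm(z_{\sigma(m_{ij})})$ only, is the $k$-dependent factor
\[
\prod_{r>k}\frac{F^\pm_{ij,q}(z_{\sigma(r)},w)}{G^\pm_{ij,q}(z_{\sigma(r)},w)}
=\prod_{r>k}\frac{z_{\sigma(r)}-q_i^{\pm a_{ij}}w}{q_i^{\pm a_{ij}}z_{\sigma(r)}-w},
\]
with no contribution from $r\le k$. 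One must also be careful about which $\iota$-expansion each factor carries and about clearing denominators via the normal-ordering prefactors $\prod f_{i_a,i_b,q}^\pm$, which is precisely why the claim is delicate. You correctly identify that the actual verification of the equivalence is the content of \cite[Theorem 5.17]{CJKT-qeala-II-twisted-qaffinization}; since the paper's own treatment is exactly that citation, the honest conclusion is that you have rediscovered the reference rather than supplied an independent proof, and the one explicit formula you committed to is incorrect.
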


From the definition of $\U_\hbar(\hat\fg)$, we immediately get that
\begin{lem}\label{lem:sl2-inj}
For each $i\in I$, there is a $\C$-algebra homomorphism from $\jmath_i:\U_\hbar(\wh\ssl_2)\to\U_\hbar(\hat\fg)$ given by $\hbar\mapsto r_i\hbar$, $c\mapsto {rc}/{r_i}$, and
\begin{align}
  h_{1,q}(0)\mapsto \frac{h_{i,q}(0)}{r_i},\quad h_{1,q}(m)\mapsto \frac{h_{i,q}(m)}{[r_i]_q},\quad
  x_{i,q}^\pm(n) \mapsto x_{i,q}^\pm(n).
\end{align}
\end{lem}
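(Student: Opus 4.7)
The plan is to verify directly that the assignment extends to a $\C$-algebra homomorphism by showing every defining relation (Q1)--(Q7) of $\U_\hbar(\wh\ssl_2)$ maps to a valid relation of $\U_\hbar(\hat\fg)$. First note that under the rescaling $\hbar\mapsto r_i\hbar$, the deformation parameter of the source algebra becomes $e^{r_i\hbar}=q^{r_i}=q_i$; in particular $[r_i]_q=(q^{r_i}-q^{-r_i})/(q-q\inv)\in r_i+\hbar\C[[\hbar]]$ is invertible in $\C[[\hbar]]$, so the expression $h_{i,q}(m)/[r_i]_q$ is a well-defined element of $\U_\hbar(\hat\fg)$ and the map $\jmath_i$ is at least defined on generators.

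The next step is the key calculation: the images of the Drinfeld series under $\jmath_i$ coincide exactly with the $i$-th series of $\U_\hbar(\hat\fg)$, that is
\begin{align*}
\jmath_i\bigl(\phi_{1,q_i}^{\pm}(z)\bigr)=\phi_{i,q}^{\pm}(z),\qquad
\jmath_i\bigl(x_{1,q_i}^{\pm}(z)\bigr)=x_{i,q}^{\pm}(z).
\end{align*}
For the zero mode, $q_i^{\pm h_{1,q_i}(0)}\mapsto q_i^{\pm h_{i,q}(0)/r_i}=q^{\pm h_{i,q}(0)}$; for the higher modes one uses the identity $(q_i-q_i\inv)/(q-q\inv)=[r_i]_q$, so that $(q_i-q_i\inv)h_{1,q_i}(m)\mapsto(q-q\inv)h_{i,q}(m)$, and the exponential factor matches.

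With this identification, the verification of (Q1)--(Q7) is essentially a substitution exercise. Relations (Q1) are preserved trivially. For (Q2)--(Q5), the substitution $c\mapsto rc/r_i$ is arranged precisely so that the shift $q_i^{\pm c}$ appearing in the source becomes $q^{\pm rc}$ in the target, matching the argument shifts in the $\hat\fg$-relations; moreover $g_{11,q_i}(z)=(q_i^2-z)/(1-q_i^2z)=g_{ii,q}(z)$ since $a_{ii}=2$, so the structure functions agree on the nose. For (Q6), the polynomials $F^{\pm}_{11,q_i}(z_1,z_2)=z_1-q_i^{\pm2}z_2$ and $G^{\pm}_{11,q_i}(z_1,z_2)=q_i^{\pm2}z_1-z_2$ coincide with $F^{\pm}_{ii,q}$, $G^{\pm}_{ii,q}$. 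Finally (Q7) is vacuous in $\U_\hbar(\wh\ssl_2)$ because $a_{11}=2>0$, so no Serre relation needs to be checked.

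The only mildly delicate point is bookkeeping the two different normalizations of the Cartan modes (the zero mode $h_{1,q}(0)\mapsto h_{i,q}(0)/r_i$ versus the nonzero modes $h_{1,q}(m)\mapsto h_{i,q}(m)/[r_i]_q$) and confirming that these are precisely the scalings demanded by the $q^{h(0)}$-prefactor and the exponential tail of $\phi^{\pm}$ respectively; once this compatibility is pinned down, every relation matches term by term and no further obstacle arises.
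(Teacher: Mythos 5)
Your proof is correct and is essentially the argument the paper has in mind: the paper states that the lemma follows immediately from the definition of $\U_\hbar(\hat\fg)$ and gives no further details, and your verification spells out exactly that routine check — confirming that $[r_i]_q$ is invertible, that the two normalizations of the Cartan modes reproduce $\jmath_i(\phi_{1,q_i}^\pm(z))=\phi_{i,q}^\pm(z)$, that $c\mapsto rc/r_i$ aligns the central shifts, that the structure functions $g_{11,q_i}$, $F^\pm_{11,q_i}$, $G^\pm_{11,q_i}$ match $g_{ii,q}$, $F^\pm_{ii,q}$, $G^\pm_{ii,q}$ because $a_{11}=a_{ii}=2$, and that (Q7) is vacuous.
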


Let $d$ be the derivation of $\U_\hbar(\hat\fg)$ defined by
\begin{align}
  [d,\phi_{i,q}^\pm(z)]=-z\pd{z}\phi_{i,q}^\pm(z),\quad [d,x_{i,q}^\pm(z)]=-z\pd{z}x_{i,q}^\pm(z)\quad \te{for } i\in I,
\end{align}
and let $\h:=\oplus_{i\in I}\C h_{i,q}(0)\oplus\C c\oplus \C d$.
For a $\U_\hbar(\hat\fg)$-module $W$ and $\lambda\in \h^\ast$, we denote
\begin{align*}
W_\lambda=\set{v\in W}{h.v=\lambda(h)v,\,h\in\h}.
\end{align*}
A $\U_\hbar(\hat\fg)$-module $W$ is called a \emph{weight module} if $W=\oplus_{\lambda\in\h^\ast}W_\lambda$.
And a weight module $W$ for $\U_\hbar(\hat\fg)$ is said to be \emph{integrable}, if $x_{i,q}^\pm(m)$ acts locally nilpotently for any $i\in I$, $m\in\Z$.
It is immediate to see that $W$ is integrable if and only if $W$ is integrable as a $\U_\hbar(\wh\ssl_2)$-module through $\jmath_i$ for all $i\in I$ (see Lemma \ref{lem:sl2-inj}).
Then we get from \cite[Theorem 7, Theorem 8]{DM-int-rep-qaff} that

\begin{prop}\label{prop:int}
A nontrivial restricted weight $\U_\hbar(\hat\fg)$-module $W$ of level $\ell$ is integrable if and only if
\begin{align*}
  \ell\in\Z_+,\quad\te{and}\quad
  x_{i,\dots,i,q}^\pm(q^{\pm 2r\ell}z,q^{\pm 2r\ell\mp 2r_i}z,\dots,q^{\pm 2r_i}z,z)=0
  \quad\te{for }i\in I.
\end{align*}
\end{prop}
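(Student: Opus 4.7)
The plan is to reduce the problem to the quantum affine $\wh\ssl_2$ case through the family of algebra homomorphisms $\jmath_i:\U_\hbar(\wh\ssl_2)\to\U_\hbar(\hat\fg)$ of Lemma \ref{lem:sl2-inj}, and then invoke the Ding--Miwa integrability criterion for $\U_\hbar(\wh\ssl_2)$ established in \cite{DM-int-rep-qaff}.

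First, for every $i\in I$, pulling a restricted weight $\U_\hbar(\hat\fg)$-module $W$ of level $\ell$ back along $\jmath_i$ yields a restricted weight $\U_\hbar(\wh\ssl_2)$-module $\jmath_i^*W$ on which the center acts by $r\ell/r_i$, since $\jmath_i(c)=rc/r_i$. The weight hypothesis on $W$ is inherited by $\jmath_i^*W$ because $h_{1,q}(0)\mapsto h_{i,q}(0)/r_i$ acts diagonalizably, and restrictedness is inherited because the series $x_{i,q}^\pm(z)=\jmath_i(x_{1,q}^\pm(z))$ lies in $\E_\hbar(W)$. As recorded just above the statement, $W$ is integrable for $\U_\hbar(\hat\fg)$ if and only if $\jmath_i^*W$ is integrable for $\U_\hbar(\wh\ssl_2)$ for every $i\in I$.

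Second, I apply \cite[Theorems 7 and 8]{DM-int-rep-qaff} to each $\jmath_i^*W$: a nontrivial restricted weight $\U_\hbar(\wh\ssl_2)$-module of level $N$ is integrable if and only if $N\in\Z_+$ and the $\wh\ssl_2$ analogue of the normal ordered product (with $N+1$ factors of $x_{1,q}^\pm$) vanishes when the entries are specialized to the geometric progression $(q^{\pm 2N}z,q^{\pm 2N\mp 2}z,\dots,q^{\pm 2}z,z)$. Under $\jmath_i$ the $\wh\ssl_2$-deformation parameter rescales as $\hbar_{\wh\ssl_2}=r_i\hbar$, so $q=e^{\hbar}$ on the $\wh\ssl_2$ side becomes $q^{r_i}=q_i$ on the $\hat\fg$ side, and $x_{1,q}^\pm$ is carried to $x_{i,q}^\pm$. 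Since $a_{ii}=2$, the auxiliary factors $f_{ii,q}^\pm(z_r,z_s)$ appearing in \eqref{eq:def-x-no} agree with the ones implicit in the $\wh\ssl_2$ normal ordering, up to the relabeling $q\mapsto q_i$. Hence for each $i\in I$ the Ding--Miwa condition for $\jmath_i^*W$ translates into $r\ell/r_i\in\Z_+$ together with the vanishing
\begin{align*}
x_{i,\dots,i,q}^\pm(q^{\pm 2r\ell}z,q^{\pm 2r\ell\mp 2r_i}z,\dots,q^{\pm 2r_i}z,z)=0,
\end{align*}
having $r\ell/r_i+1$ factors.

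Finally, the conditions $r\ell/r_i\in\Z_+$ for every $i\in I$, combined with the fact that $r$ is the least common multiple of $\{r_i\}_{i\in I}$ (so that $\te{gcd}\{r/r_i\}_{i\in I}=1$), force $\ell\in\Z_+$. The converse is immediate: if $\ell\in\Z_+$ and the displayed vanishing holds for every $i$, then each $\jmath_i^*W$ satisfies the Ding--Miwa criterion, hence is integrable, whence $W$ is integrable. The main obstacle is step two: carefully tracking how the geometric progression and the number of factors in the Ding--Miwa condition transform under $\jmath_i$, in particular verifying that $q^{\pm 2k}\mapsto q^{\pm 2kr_i}$ and that the level $N=r\ell/r_i$ emerges correctly from the rescaling $\jmath_i(\hbar)=r_i\hbar$ and $\jmath_i(c)=rc/r_i$.
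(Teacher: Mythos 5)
Your proposal is correct and follows essentially the same route as the paper: reduce to $\U_\hbar(\wh\ssl_2)$ through the homomorphisms $\jmath_i$ of Lemma~\ref{lem:sl2-inj} and then invoke the Ding--Miwa criterion from \cite{DM-int-rep-qaff}. The paper states this as immediate from that citation; your elaboration of the $q\mapsto q_i$ rescaling, the translation of the geometric progression and factor count, and the lcm argument forcing $\ell\in\Z_+$ fills in exactly the bookkeeping the paper leaves implicit.
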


We need another presentation of restricted $\U_\hbar(\hat\fg)$-modules.
A straightforward calculation shows that
\begin{lem}\label{lem:q-rel-alt-pre}
Let $\ell\in\C$ and let $W$ be a restricted $\U_\hbar^f(\hat\fg)$-module of level $\ell$.
Define $\hat x_{i,q}^+(z)=x_{i,q}^+(z)$ and
\begin{align*}
  &\hat\phi_{i,q}(z)=F\(z\pd{z}\)\inv\log\frac{\phi_{i,q}^+(zq^{\half r\ell})}{\phi_{i,q}^-(zq^{-\half r\ell})},\quad
  \hat x_{i,q}^-(z)=x_{i,q}^-(zq^{-r\ell})\phi_{i,q}^+(zq^{-\half r\ell})\inv,
\end{align*}
where $F(z)$ is defined in \eqref{eq:def-op-F}.
Then $(W,\hat\phi_{i,q}(z),\hat x_{i,q}^\pm(z))$ is an object of $\mathcal M_\ell^\phi$.

On the other hand, let $(W,\psi_{i,q}(z),y_{i,q}^\pm(z))$ be an object of $\mathcal M_\ell^\phi$.
Define $\check y_{i,q}^+(z)=y_{i,q}^+(z)$ and
\begin{align}
  &\psi_{i,q}^\pm(z)=\sum_{\pm n>0}\psi_{i,q}(n)z^{-n}+\half \psi_{i,q}(0),\label{eq:def-psi+-}\\
  &\qquad\te{where }
  \psi_{i,q}(z)=\sum_{n\in\Z}\psi_{i,q}(n)z^{-n},\nonumber\\
  &\check\psi_{i,q}^\pm(z)=\exp\(\pm F\(z\pd{z}\)\psi_{i,q}^\pm(zq^{\mp\half r\ell})\),\label{eq:def-check-op1}\\
  &\check y_{i,q}^-(z)=y_{i,q}^-(zq^{r\ell})\check\psi_{i,q}^+(zq^{\half r\ell}).\label{eq:def-check-op2}
\end{align}
Then $W$ becomes a restricted $\U_\hbar^f(\hat\fg)$-module of level $\ell$ with module action
\begin{align*}
  \phi_{i,q}^\pm(z)=\check\psi_{i,q}^\pm(z),\quad x_{i,q}^\pm(z)=\check y_{i,q}^\pm(z),\quad i\in I.
\end{align*}
\end{lem}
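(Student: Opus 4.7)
The claim is a purely computational translation between two presentations of the same data, so the plan is to verify the four relations \eqref{eq:cat-M-phi-1}--\eqref{eq:cat-M-phi-4} defining $\mathcal M_\ell^\phi$ against the relations (Q1)--(Q4), (Q5.5), (Q6) of $\U_\hbar^f(\hat\fg)$, in both directions. The basic observation is that $\hat\phi_{i,q}(z)$ converts the pair of half-series $(\phi^+_{i,q},\phi^-_{i,q})$ into an additive, doubly-infinite current: the series $\log\big(\phi^+_{i,q}(zq^{r\ell/2})/\phi^-_{i,q}(zq^{-r\ell/2})\big)$ has coefficients which are $\hbar\cdot h_{i,q}(n)$ up to an explicit scalar, and applying $F(z\partial_z)^{-1}$ renormalizes these coefficients. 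The inverse is well-defined because $F(n)\in\hbar\C[[\hbar]]^\times$ for every integer $n$, and the factor $F(0)=2\hbar$ cancels the $\hbar$ coming from the constant term.

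For the forward direction, (Q1) trivially gives the commutativity statements among $\hat\phi_{i,q}$ and with the image of $c$. Relation \eqref{eq:cat-M-phi-1} comes from taking $\log$ of (Q2) and applying $F(z\partial_z)^{-1}$ in both variables; the factor $g_{ij,q}(q^{\pm rc}z_2/z_1)^{\mp 1}$ turns into the rational function on the right of \eqref{eq:cat-M-phi-1}. Relation \eqref{eq:cat-M-phi-2} follows similarly from (Q3) and (Q4) after combining them. Relation \eqref{eq:cat-M-phi-4} is (Q6) repackaged via the definition of $f_{ij,q}$. Finally, \eqref{eq:cat-M-phi-3} follows from (Q5.5) combined with the definition $\hat x^-_{i,q}(z)=x^-_{i,q}(zq^{-r\ell})\phi^+_{i,q}(zq^{-r\ell/2})^{-1}$: the shift $zq^{-r\ell}$ in the argument absorbs one of the $q^{\pm r\ell}$ shifts in (Q5.5), converting $(1-q^{r\ell}z_2/z_1)(1-q^{-r\ell}z_2/z_1)$ into $(1-z_2/z_1)(1-q^{-2r\ell}z_2/z_1)$.

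The reverse direction reverses the same computations. Starting from $(W,\psi_{i,q},y_{i,q}^\pm)\in\mathcal M_\ell^\phi$, the decomposition \eqref{eq:def-psi+-} of $\psi_{i,q}(z)$ into singular and regular parts followed by $\pm F(z\partial_z)$ and exponentiation yields $\check\psi_{i,q}^\pm(z)$ satisfying (Q1)--(Q2); the shift by $q^{\mp r\ell/2}$ in the argument encodes the level $\ell$ central charge. Relations (Q3)--(Q4) and (Q6) follow by exponentiating \eqref{eq:cat-M-phi-2} and \eqref{eq:cat-M-phi-4}, and (Q5.5) follows from \eqref{eq:cat-M-phi-3} together with the definition \eqref{eq:def-check-op2} of $\check y^-_{i,q}(z)$, which undoes the shift introduced in the forward direction.

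The main obstacle is keeping track of the interchange between expansion regions $\iota_{z_1,z_2}$ and $\iota_{z_2,z_1}$, combined with the various shifts by powers of $q^{r\ell}$ produced by the operators $q^{\pm r\ell z\partial_z}$. The key identities are precisely those of Lemma \ref{lem:special-tau-tech0} and Lemma \ref{lem:special-tau-tech1}: they provide the exact arithmetic relating $g_{ij,q}(z)$, its logarithm, and the functions $\tau_{ij}^{\epsilon_1,\epsilon_2}(z)$, and in particular the identity $F(\partial_z)$ transforming difference-of-$\tau$'s on the left of \eqref{eq:special-tau-tech1-3} into the right of \eqref{eq:special-tau-tech1-4} is what makes the logarithmic/additive form of the relations in $\mathcal M_\ell^\phi$ match the exponential/multiplicative form in the quantum affine algebra. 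Once these dictionary identities are applied, each verification becomes a one-line manipulation.
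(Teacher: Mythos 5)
Your proposal correctly identifies the nature of the argument (a direct relation-by-relation verification) and the key change of variables: the operator $F(z\partial_z)^{-1}\log$ converts the multiplicative pair $(\phi^+_{i,q},\phi^-_{i,q})$ into the additive doubly-infinite current $\hat\phi_{i,q}$, and the argument shifts $z\mapsto zq^{-r\ell}$ and $z\mapsto zq^{\mp r\ell/2}$ relocate the central-charge dependence from the coefficients (as in (Q2)--(Q5.5)) into fixed $q$-powers in the rational functions appearing in \eqref{eq:cat-M-phi-1}--\eqref{eq:cat-M-phi-3}. This is exactly what the paper means by ``a straightforward calculation,'' and your observation that $F(m)^{-1}(q-q^{-1})=m/[m]_q$ is a unit in $\C[[\hbar]]$ for all $m$ (with $F(0)=2\hbar$ cancelling the extra $\hbar$ in the constant term) is the right well-definedness check --- though note the cancellation is needed for every mode, not only the constant term, since $q-q^{-1}\in\hbar\C[[\hbar]]^\times$ appears in each nonzero coefficient of $\log(\phi^+/\phi^-)$.

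One small inaccuracy: Lemmas \ref{lem:special-tau-tech0} and \ref{lem:special-tau-tech1} are not really the engine here. Those identities concern the $\tau$-functions appearing in the definition of $F_\tau(A,\ell)$ and its quantum Yang--Baxter operator; for this lemma the relations \eqref{eq:cat-M-phi-1}--\eqref{eq:cat-M-phi-4} are matched directly against (Q1)--(Q6) via the explicit substitutions, and the needed arithmetic is just the $q$-integer identities $F(m)=(q^m-q^{-m})/m$, $r_ia_{ij}=r_ja_{ji}$, and the commutation of $\phi^+$ past $x^-$ from (Q3), which produces the $g_{ij,q}$ factor after the shift $z_2\mapsto z_2q^{-r\ell}$. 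You may want to make that last point explicit: the definition $\hat x^-_{i,q}(z)=x^-_{i,q}(zq^{-r\ell})\phi^+_{i,q}(zq^{-r\ell/2})^{-1}$ feeds the $\phi^+$-inverse factor into the $y^+y^-$ exchange, and (Q3) with argument $q^{r\ell/2}\cdot z_2q^{-r\ell}/(z_1q^{-r\ell/2})=z_2/z_1$ is precisely what supplies $g_{ij,q}(z_1/z_2)$ in \eqref{eq:cat-M-phi-3}. Likewise, checking \eqref{eq:cat-M-phi-4} for the minus sign requires a few more $\phi^+$--$x^-$ commutations, not just ``repackaging'' (Q6). But these are routine and your overall plan is sound and matches the paper's (unstated) computation.
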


We need the following technical result given in \cite{JKLT-Quantum-lattice-va}.

\begin{lem}\label{lem:tech-4}
Let $W$ be a topologically free $\C[[\hbar]]$-module, and
\begin{align*}
  &\al(z)\in \Hom(W,W\wh\ot\C[z,z\inv][[\hbar]]),\quad \beta(z)\in \E_\hbar(W),\\
  &\qquad
  \gamma(z)\in\Hom(W,W\wh\ot\C(z)[[\hbar]]),
\end{align*}
be such that
\begin{align*}
  &[\al(z_1),\al(z_2)]=0=[\beta(z_1),\beta(z_2)]=[\al(z_1),\gamma(z_2)]=[\beta(z_1),\gamma(z_2)],\\
  &[\al(z_1),\beta(z_2)]=\iota_{z_1,z_2}\gamma(z_2/z_1).
\end{align*}
Suppose that
\begin{align*}
  \exp\(A(z)\)=\sum_{n\ge 0}\frac{A(z)^n}{n!}
\end{align*}
is well defined in $\Hom(W,W[[z,z\inv]])$ for $A(z)=\al(z),\beta(z),\gamma(z)$ or $\al(z)+\beta(z)$.
Then we have that
\begin{align*}
  &\exp\(\(\al(z)+\beta(z)\)_{-1}^\phi\)1_W\\
  =&\exp\(\beta(z)\)\exp\(\al(z)\)\exp\(\Res_zz\inv\gamma(e^{-z})/2\).
\end{align*}
\end{lem}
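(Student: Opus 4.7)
The plan is to reduce the identity to a Baker--Campbell--Hausdorff computation in the $\hbar$-adic weak quantum vertex algebra $\langle U\rangle_\phi \subset \E_\hbar(W)$ generated by $\al(z)$, $\beta(z)$, $\gamma(z)$ (constructed via Theorem \ref{thm:phi-construction}), in which the $\phi$-coordinated modes $\al_{-1}^\phi$ and $\beta_{-1}^\phi$ will play the roles of two non-commuting operators whose commutator is a central scalar.

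First I observe that the creation-type computation $Y_\E^\phi(a(z),z_0)1_W = a(ze^{z_0})$, immediate from the definition of $Y_\E^\phi$ applied to $1_W$, gives $(a(z))_{-1}^\phi 1_W = a(z)$ for $a(z)\in\langle U\rangle_\phi$ by extracting the $z_0^0$-coefficient. Since $\al$ commutes with itself and $\beta$ with itself, the iterated $\phi$-products require no regularization: $\al(z_1)\al(z)$ is already polynomial in $z_1^{\pm 1}, z^{\pm 1}$, so $Y_\E^\phi(\al(z),z_0)\al(z)=\al(ze^{z_0})\al(z)$, whose $z_0^0$-coefficient is $\al(z)^2$. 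By induction, $(\al_{-1}^\phi)^p 1_W = \al(z)^p$ and $(\beta_{-1}^\phi)^q 1_W = \beta(z)^q$, so $\exp(\al_{-1}^\phi)1_W = \exp(\al(z))$ and $\exp(\beta_{-1}^\phi)1_W = \exp(\beta(z))$, both well-defined by hypothesis.

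Next I compute the key mixed contraction. Using $[\al(z_1),\beta(z)] = \iota_{z_1,z}\gamma(z/z_1)$, the substitution $z_1 = ze^{z_0}$ converts the commutator into the expansion of $\gamma(e^{-z_0})$ around $z_0 = 0$; the regularizing factor $(1-z_1/z)^k$ becomes $(1-e^{z_0})^k$ and cancels the normalizing $(1-e^{z_0})^{-k}$ in the definition of $Y_\E^\phi$. Extracting the $z_0^0$-coefficient yields
\begin{align*}
  [\al_{-1}^\phi, \beta_{-1}^\phi] = \Res_{z_0}z_0^{-1}\gamma(e^{-z_0})\cdot \mathrm{Id}.
\end{align*}
Since $\gamma$ commutes with both $\al$ and $\beta$, this commutator is central in $\langle U\rangle_\phi$. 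The classical BCH identity for centrally-commuting operators then gives
\begin{align*}
  \exp(\al_{-1}^\phi + \beta_{-1}^\phi) = \exp(\beta_{-1}^\phi)\exp(\al_{-1}^\phi)\exp\!\big(\tfrac12[\al_{-1}^\phi, \beta_{-1}^\phi]\big),
\end{align*}
and applying both sides to $1_W$ using the previous paragraph produces exactly the stated identity.

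The main obstacle lies in rigorously establishing the contraction formula: one must verify that the formal substitution $z_1 = ze^{z_0}$ applied to the expansion $\iota_{z_1,z}\gamma(z/z_1)$ produces the correct formal expansion of $\gamma(e^{-z_0})$ around $z_0 = 0$, and that $(1-z_1/z)^k$ transforms coherently to $(1-e^{z_0})^k$ so that the cancellations occur in a common ambient algebra. Secondary technical points include checking that the commutator is genuinely central as an operator identity on $\langle U\rangle_\phi$, and ensuring that all infinite sums (including the expansions $\exp(\al(z))$, $\exp(\beta(z))$ as operators on $W$) converge in the $\hbar$-adic topology, which is precisely the content of the hypothesis on well-definedness of $\exp(A(z))$ for $A(z)\in\{\al(z),\beta(z),\gamma(z),\al(z)+\beta(z)\}$.
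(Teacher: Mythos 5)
The paper cites this lemma from \cite{JKLT-Quantum-lattice-va} without reproducing a proof, so there is no proof in this paper to compare against; I can only assess your attempt on its own terms.

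Your BCH strategy is the right one, and the ingredients you isolate (the creation property $a(z)_{-1}^\phi 1_W = a(z)$, the fact that $(\al_{-1}^\phi)^n 1_W = \al(z)^n$ because $\al$ is self-commuting with Laurent-polynomial coefficients, and the contraction $\Res_{z_0} z_0^{-1}\gamma(e^{-z_0})$ coming from the cancellation of $(1-z_1/z)^k$ against $(1-e^{z_0})^{-k}$ under $z_1 = ze^{z_0}$) are correct. But as written, there are two genuine gaps beyond the convergence/regularization issues you flag at the end.

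First, your contraction paragraph only computes the $\gamma$-term that appears inside $\al(z)_{-1}^\phi \beta(z)$; you do not compute $\beta(z)_{-1}^\phi \al(z)$ (which, because $\beta(z_1)\al(z)$ is already compatible with $k=0$, equals $\beta(z)\al(z)$ with no contraction term), nor do you check the commutator on elements other than $1_W$. What you have established is $[\al_{-1}^\phi,\beta_{-1}^\phi]1_W = \Res_{z_0} z_0^{-1}\gamma(e^{-z_0})\cdot 1_W$, not $[\al_{-1}^\phi,\beta_{-1}^\phi] = \Res_{z_0} z_0^{-1}\gamma(e^{-z_0})\cdot \mathrm{Id}$ as an operator identity on $\langle U\rangle_\phi$. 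Centrality, which BCH requires, follows once you know the commutator is a scalar multiple of the identity, but that scalar identity is precisely what has not been shown; a check on spanning monomials $\beta(z)^q\al(z)^p$ (or an appeal to a mode-commutator formula for $Y_\E^\phi$) is needed and you only wave at it.

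Second, even granting the operator BCH identity $\exp(A+B)=\exp(B)\exp(A)\exp([A,B]/2)$, applying it to $1_W$ yields $e^{\kappa/2}\,\exp(\beta_{-1}^\phi)\exp(\al(z))$, and your first paragraph only tells you that $\exp(\beta_{-1}^\phi)1_W = \exp(\beta(z))$, not that $\exp(\beta_{-1}^\phi)\exp(\al(z)) = \exp(\beta(z))\exp(\al(z))$. This last equality is true (it follows because $\beta(z_1)\exp(\al(z))$ is already in $\E^{(2)}_\hbar$ without regularization, so $\beta(z)_{-1}^\phi\exp(\al(z)) = \beta(z)\exp(\al(z))$, and then by induction), but it is a nontrivial step that mirrors your contraction computation and is not covered by ``the previous paragraph.'' Without it the chain of equalities does not close.
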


Let $U_W=\set{\hat \phi_{i,q}(z),\hat x_{i,q}^\pm(z)}{i\in I}$. From Lemma \ref{lem:q-rel-alt-pre} , we have that $U_W$ is a $\hbar$-adically $S_{trig}$-subset.
Then by using Theorem \ref{thm:phi-construction}, we get an $\hbar$-adic nonlocal vertex algebra $\<U_W\>_\phi$.

\begin{lem}\label{lem:exp}
For $i\in I$, we have that
\begin{align*}
  E\(\hat\phi_{i,q}(z)\)
  =\phi_{i,q}^-(zq^{-\frac 32 r\ell})\phi_{i,q}^+(zq^{-\half r\ell})\inv.
\end{align*}
\end{lem}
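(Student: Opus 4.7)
To prove the lemma, I would compute $E(\hat\phi_{i,q}(z))$ via the correspondence from Proposition \ref{prop:V-tau-phi}, identifying it with $Y_W^\phi(E(h_{i,\hbar}),z)$, and then apply the exponential formula of Lemma \ref{lem:tech-4}. The first step is to transport the interior of the exponential defining $E(h_{i,\hbar})$ to the $\phi$-coordinated setting: using $Y_W^\phi(\partial v,z) = z\partial_z Y_W^\phi(v,z)$ (valid because $\phi(z_1,z) = z_1 e^z$), together with the defining formula $\hat\phi_{i,q}(z) = F(z\partial_z)^{-1}\log[\phi_{i,q}^+(zq^{\half r\ell})/\phi_{i,q}^-(zq^{-\half r\ell})]$, a direct calculation collapses the $F(z\partial_z)$ and the shift $q^{-r\ell z\partial_z}$ to yield
\begin{align*}
Y_W^\phi\bigl(-q^{-r\ell\partial}F(\partial)h_{i,\hbar},z\bigr) = \log\phi_{i,q}^-(zq^{-\tfrac{3}{2}r\ell}) - \log\phi_{i,q}^+(zq^{-\half r\ell}) =: A(z).
\end{align*}

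I would then decompose $A(z) = \alpha(z) + \beta(z)$ with $\beta(z) = \log\phi_{i,q}^-(zq^{-\tfrac{3}{2}r\ell})$ (positive powers of $z$) and $\alpha(z) = -\log\phi_{i,q}^+(zq^{-\half r\ell})$ (non-positive powers); since the coefficients $h_{i,q}(m)$ of each log have the same sign of $m$, $[\alpha(z_1),\alpha(z_2)] = [\beta(z_1),\beta(z_2)] = 0$. The mixed commutator is a scalar obtainable from (Q2) at level $\ell$: substituting $Z_1 = z_1 q^{-\half r\ell}$ and $Z_2 = z_2 q^{-\tfrac{3}{2}r\ell}$ in the (Q2) formula so that the $q$-shifts inside $g_{ii,q}$ collapse to $z_2/z_1$ and $q^{-2r\ell}z_2/z_1$, taking logs, and accounting for the sign in $\alpha$, one obtains $[\alpha(z_1),\beta(z_2)] = \iota_{z_1,z_2}\gamma(z_2/z_1)$ with $\gamma(u) = \log[g_{ii,q}(u)/g_{ii,q}(q^{-2r\ell}u)]$. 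Lemma \ref{lem:tech-4} then yields
\begin{align*}
\exp(A(z)_{-1}^\phi)\,1_W = \phi_{i,q}^-(zq^{-\tfrac{3}{2}r\ell})\,\phi_{i,q}^+(zq^{-\half r\ell})^{-1}\cdot\exp\bigl(\tfrac{1}{2}\Res_z z^{-1}\gamma(e^{-z})\bigr),
\end{align*}
so multiplying by the normalization $C = (F(r_i+r\ell)/F(r_i-r\ell))^{\half}$ appearing in $E(h_{i,\hbar})$ reduces the lemma to the scalar identity $\Res_z z^{-1}\gamma(e^{-z}) = \log[F(r_i-r\ell)/F(r_i+r\ell)]$.

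The main obstacle is this residue computation, which must be carried out $\hbar$-adically. The key simplification is the sinh representation $g_{ii,q}(e^{-z}) = -\sinh(r_i\hbar + z/2)/\sinh(r_i\hbar - z/2)$, obtained by factoring $e^{2r_i\hbar} - e^{-z}$ and $1 - e^{2r_i\hbar - z}$ via $e^a - e^b = 2e^{(a+b)/2}\sinh((a-b)/2)$; applying this to both factors of $\gamma(e^{-z})$ gives
\begin{align*}
\gamma(e^{-z}) = \log\frac{\sinh(r_i\hbar + z/2)\sinh((r_i-r\ell)\hbar - z/2)}{\sinh(r_i\hbar - z/2)\sinh((r_i+r\ell)\hbar + z/2)}.
\end{align*}
Careful $\hbar$-adic Laurent expansion of this expression around $z = 0$---at leading order, $\gamma(u) = 8r_ir\ell\hbar^2 u/(1-u)^2 + O(\hbar^3)$ combined with $e^{-z}/(1-e^{-z})^2 = z^{-2} - \tfrac{1}{12} + O(z)$ gives the $z^0$ coefficient $-2r_ir\ell\hbar^2/3$, matching $\log[F(r_i-r\ell)/F(r_i+r\ell)] = -2r_ir\ell\hbar^2/3 + O(\hbar^4)$---and using the factorizations $q^{2r_i}-q^{-2r\ell} = q^{r_i-r\ell}(r_i+r\ell)F(r_i+r\ell)$ and $q^{2r_i-2r\ell}-1 = q^{r_i-r\ell}(r_i-r\ell)F(r_i-r\ell)$ to organize all orders, one completes the identification.
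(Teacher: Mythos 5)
Your setup mirrors the paper's own proof: collapse $-q^{-r\ell z\partial_z}F(z\partial_z)\hat\phi_{i,q}(z)$ into $\log\phi^-_{i,q}(zq^{-\frac32 r\ell})-\log\phi^+_{i,q}(zq^{-\frac12 r\ell})$, split into the negative- and non-negative-mode pieces $\beta(z),\alpha(z)$, compute the mixed commutator (you pull it from (Q2) rather than from \eqref{eq:cat-M-phi-1}, but the resulting $\gamma(u)=\log\bigl(g_{ii,q}(u)/g_{ii,q}(q^{-2r\ell}u)\bigr)$ is the same), and invoke Lemma \ref{lem:tech-4}. That correctly reduces the lemma to the scalar identity $\Res_z z^{-1}\gamma(e^{-z})=\log\frac{F(r_i-r\ell)}{F(r_i+r\ell)}$, exactly as in the paper.

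The gap is that you never actually prove this residue identity. Your sinh representation of $\gamma(e^{-z})$ is algebraically correct, but each individual $\log\sinh(a\hbar\pm z/2)$ is not a Laurent series in $z$ over $\C[[\hbar]]$ (at $\hbar=0$ it carries a $\log z$ singularity), so the $z^0$-coefficient cannot be read off factor by factor: naively setting $z=0$ in the four sinh factors gives $\log\frac{\sinh((r_i-r\ell)\hbar)}{\sinh((r_i+r\ell)\hbar)}$, which is wrong by the $\hbar$-independent constant $\log\frac{r_i-r\ell}{r_i+r\ell}$, and indeed is not even in $\hbar\C[[\hbar]]$ as the true answer must be. Your leading-order check in $\hbar^2$ cannot detect a spurious $\hbar^0$ term, and the ``factorizations'' you quote are true identities but you give no mechanism by which they ``organize all orders.'' The paper supplies the missing device: since $\Res_z z^{-1}\partial_z^n$ annihilates $\log z$, $z$ and $1$ for $n\ge 2$, one may replace $\log(1-e^{-z})$ by the power series $\log\frac{e^{z/2}-e^{-z/2}}{z/2}$ inside the residue; transposing $(q^{-2r_i\partial_z}-q^{2r_i\partial_z})(q^{2r\ell\partial_z}-1)$ onto $z^{-1}$ then turns the residue into an evaluation of that series at four shifted points, giving $\log\frac{F(r_i-r\ell)}{F(r_i+r\ell)}$ cleanly. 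Some such step --- equivalently, isolating the $\log(z\pm a)$ parts of each $\log(1-e^{-(z\pm a)})$ and using $P(-w)=w+P(w)$ for $P(w)=\log\frac{1-e^{-w}}{w}$ --- is indispensable, and it is precisely what your sketch omits.
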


\begin{proof}
Recall the definition of $\hat\phi_{i,q}^\pm(z)$ in \eqref{eq:def-psi+-}.
From the equation \eqref{eq:cat-M-phi-1}, we get that
\begin{align}\label{eq:Q1+-}
  [\hat\phi_{i,q}^+(z_1),\hat\phi_{i,q}^-(z_2)]=[r_ia_{ii}]_{q^{z_2\pd{z_2}}}[r\ell]_{q^{z_2\pd{z_2}}}\iota_{z_1,z_2}q^{-r\ell z_2\pd{z_2}}\frac{z_2/z_1}{(1-z_2/z_1)^2}.
\end{align}
Let
\begin{align*}
  \wt\phi_{i,q}^\pm(z)=-q^{-r\ell z\pd{z}}F\(z\pd{z}\)\hat\phi_{i,q}^\pm(z).
\end{align*}
We note from \eqref{eq:def-check-op1} and \eqref{eq:def-check-op2} that
\begin{align}\label{eq:exp=check-psi}
  \exp\(\wt\phi_{i,q}^+(z)\)=\phi_{i,q}^\pm(zq^{- r\ell\pm\half r\ell})^{\mp 1}.
\end{align}
By using the relation \eqref{eq:Q1+-}, we have that
\begin{align*}
  &-z_1\pd{z_1}z_2\pd{z_2}[\wt\phi_{i,q}^+(z_1),\wt\phi_{i,q}^-(z_2)]\\
  =&\(z_2\pd{z_2}\)^2F\(z_2\pd{z_2}\)^2[r_ia_{ii}]_{q^{z_2\pd{z_2}}}[r\ell]_{q^{z_2\pd{z_2}}}\iota_{z_1,z_2}q^{-r\ell z_2\pd{z_2}}
  \frac{z_2/z_1}{(1-z_2/z_1)^2}\\
  =&\(q^{z_2\pd{z_2}}-q^{-z_2\pd{z_2}}\)^2[r_ia_{ii}]_{q^{z_2\pd{z_2}}}[r\ell]_{q^{z_2\pd{z_2}}}\iota_{z_1,z_2}q^{-r\ell z_2\pd{z_2}}
  \frac{z_2/z_1}{(1-z_2/z_1)^2}\\
  =&\(q^{r_ia_{ii}z_2\pd{z_2}}-q^{-r_ia_{ii}z_2\pd{z_2}}\)\(1-q^{-2r\ell z_2\pd{z_2}}\)\iota_{z_1,z_2}\frac{z_2/z_1}{(1-z_2/z_1)^2}\\
  =&\(q^{r_ia_{ii}z_2\pd{z_2}}-q^{-r_ia_{ii}z_2\pd{z_2}}\)\(1-q^{-2r\ell z_2\pd{z_2}}\)\iota_{z_1,z_2}z_1\pd{z_1}z_2\pd{z_2}\log\(1-z_2/z_1\).
\end{align*}
Notice that $\Res_{z_2}z_2\inv\circ \(z_2\pd{z_2}\)=0$. Then we have that
\begin{align*}
  &\Res_{z_2}z_2\inv[\wt\phi_{i,q}^+(z_1),\wt\phi_{i,q}^-(z_2)]\\
  =&q^{-r\ell z_1\pd{z_1}}F\(z_1\pd{z_1}\)\Res_{z_2}z_2\inv [\hat\phi_{i,q}^+(z_1),\hat\phi_{i,q}^-(z_2)]\\
  =&q^{-r\ell z_1\pd{z_1}}F\(z_1\pd{z_1}\)\Res_{z_2}z_2\inv [r_ia_{ii}]_{q^{z_2\pd{z_2}}}[r\ell]_{q^{z_2\pd{z_2}}}\iota_{z_1,z_2}q^{-r\ell z_2\pd{z_2}}\frac{z_2/z_1}{(1-z_2/z_1)^2}\\
  =&q^{-r\ell z_1\pd{z_1}}F\(z_1\pd{z_1}\)\Res_{z_2}z_2\inv r_ia_{ii}r\ell\iota_{z_1,z_2}\frac{z_2/z_1}{(1-z_2/z_1)^2}=0.
\end{align*}
Hence, we get that
\begin{align*}
  &[\wt\phi_{i,q}^+(z_1),\wt\phi_{i,q}^-(z_2)]\\
  =&\(q^{r_ia_{ii}z_2\pd{z_2}}-q^{-r_ia_{ii}z_2\pd{z_2}}\)\(q^{-2r\ell z_2\pd{z_2}}-1\)\iota_{z_1,z_2}\log\(1-z_2/z_1\)\\
  =&\(q^{r_ia_{ii}z_2/z_1\pd{z_2/z_1}}-q^{-r_ia_{ii}z_2/z_1\pd{z_2/z_1}}\)\(q^{-2r\ell z_2/z_1\pd{z_2/z_1}}-1\)\iota_{z_1,z_2}\log\(1-z_2/z_1\)\\
  =&\iota_{z_1,z_2}\gamma(z_2/z_1),
\end{align*}
where
\begin{align*}
  \gamma(z)=\(q^{r_ia_{ii}z\pd{z}}-q^{-r_ia_{ii}z\pd{z}}\)\(q^{-2r\ell z\pd{z}}-1\)\log\(1-z\).
\end{align*}
Since $\Res_z z\inv \pdiff{z}{2}\log z=0=\Res_z z\inv \pdiff{z}{2}z=\Res_z z\inv \pdiff{z}{2}1$, we get that
\begin{align*}
  &\Res_z z\inv\gamma(e^{-z})
  =\Res_z z\inv
  \(q^{-2r_i\pd{z}}-q^{2r_i\pd{z}}\)\(q^{2r\ell \pd{z}}-1\)\log\(1-e^{-z}\)\\
  =&\Res_z z\inv
  \(q^{-2r_i\pd{z}}-q^{2r_i\pd{z}}\)\(q^{2r\ell \pd{z}}-1\)\log\frac{e^{\half z}-e^{-\half z}}{z/2}\\
  =&\Res_z\log\frac{e^{\half z}-e^{-\half z}}{z/2}  \(q^{2r_i\pd{z}}-q^{-2r_i\pd{z}}\)\(q^{-2r\ell \pd{z}}-1\)z\inv\\
  =&\Res_z\(\frac{1}{z+2r_i\hbar-2r\ell\hbar}-\frac{1}{z-2r_i\hbar-2r\ell\hbar}-\frac{1}{z+2r_i\hbar}+\frac{1}{z-2r_i\hbar}\)
  \log\frac{e^{\half z}-e^{-\half z}}{z/2}\\
  =&\log \frac{F(r_i)F(-r_i+r\ell)}{F(-r_i)F(r_i+r\ell)}=\log \frac{F(r_i-r\ell)}{F(r_i+r\ell)}.
\end{align*}
By using Lemma \ref{lem:tech-4}, we get that
\begin{align*}
  &E\(\hat\phi_{i,q}(z)\)
  =\(\frac{F(r_i+r\ell)}{F(r_i-r\ell)}\)^\half
  \exp\(\(\wt\phi_{i,q}^+(z)+\wt\phi_{i,q}^-(z)\)_{-1}^\phi\)1_W\\
  =&\exp\(\wt\phi_{i,q}^-(z)\)\exp\(\wt\phi_{i,q}^+(z)\)
  =\phi_{i,q}^-(zq^{-\frac 32 r\ell})\phi_{i,q}^+(zq^{-\half r\ell})\inv,
\end{align*}
where the last equation follows from \eqref{eq:exp=check-psi}.
We complete the proof.
\end{proof}

For any positive integer $m$ and $i_1,\dots,i_m\in I$, we define
\begin{align}
  &\hat x_{i_1,\dots,i_m,q}^\pm(z_1,\dots,z_m)\\
  =&\(\prod_{1\le a<b\le m}f_{i_a,i_b,q}^+(z_a,z_b)\)\hat x_{i_1,q}^\pm(z_1)\cdots \hat x_{i_m,q}^\pm(z_m).
\end{align}
From the relation \eqref{eq:cat-M-phi-4}, we have that
\begin{align*}
  \hat x_{i_1,\dots,i_m,q}^\pm(z_1,\dots,z_m)\in\E_\hbar^{(m)}(W).
\end{align*}
Then from a similar argument to the proof of Lemma \ref{lem:qSerre}, we get the following two results.

\begin{lem}\label{lem:qSerre-va}
For $i,j\in I$ with $a_{ij}<0$, we have that
\begin{align*}
  \(\(\hat x_{i,q}^\pm(z)\)_0^\phi\)^{m_{ij}}\hat x_{j,q}^\pm(z)=c \hat x_{i,\dots,i,j,q}^\pm(q_i^{a_{ij}}z,q_i^{a_{ij}+2}z,\dots,q_i^{-a_{ij}}z,z)
\end{align*}
for some invertible element $c\in \C[[\hbar]]$.
\end{lem}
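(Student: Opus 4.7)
The plan is to mirror, step for step, the inductive argument in the proof of Lemma \ref{lem:qSerre}, but in the multiplicative $\phi$-coordinated setting provided by Theorem \ref{thm:phi-construction} applied to the $\hbar$-adically $S_{trig}$-local subset $U_W$. More precisely, we strengthen the assertion and prove by induction on $k \geq 0$ that
\[
\left(\left(\hat x_{i,q}^\pm(z)\right)_0^\phi\right)^{k}\hat x_{j,q}^\pm(z) = c_k\,\hat x_{i,\dots,i,j,q}^\pm\!\left(q_i^{2(k-1)+a_{ij}}z,\,q_i^{2(k-2)+a_{ij}}z,\,\ldots,\,q_i^{a_{ij}}z,\,z\right)
\]
for some invertible $c_k \in \C[[\hbar]]^\times$; the statement of the lemma then corresponds to $k = m_{ij} = 1-a_{ij}$. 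The base case $k=0$ (with $c_0 = 1$) is trivial.

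For the inductive step, one computes the product
\[
\hat x_{i,q}^\pm(z_1) \cdot \hat x_{i,\ldots,i,j,q}^\pm\!\left(q_i^{2(k-1)+a_{ij}}z_2,\,\ldots,\,q_i^{a_{ij}}z_2,\,z_2\right)
\]
and unpacks it via the normal-ordering recipe for $\hat x_{i_1,\ldots,i_m,q}^\pm$ together with the $S_{trig}$-locality \eqref{eq:cat-M-phi-4}. The key algebraic ingredient is the multiplicative telescoping identity
\[
\prod_{a=1}^{k} f_{ii,q}\!\left(z_1,\,q_i^{2(a-1)+a_{ij}}z_2\right) \cdot f_{ij,q}(z_1, z_2) = z_1 - q_i^{2k+a_{ij}}z_2,
\]
which one verifies by collapsing $\prod_{a=1}^k \frac{z_1 - q_i^{2a+a_{ij}}z_2}{z_1 - q_i^{2(a-1)+a_{ij}}z_2}$ to $\frac{z_1-q_i^{2k+a_{ij}}z_2}{z_1-q_i^{a_{ij}}z_2}$ and then multiplying by $f_{ij,q}(z_1,z_2)=z_1-q_i^{a_{ij}}z_2$. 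This is the multiplicative counterpart of the additive telescoping identity $f_{ij,\hbar}(z_1-z_2)\prod_{a=1}^k f_{ii,\hbar}(\cdots) = q_i^{-k-a_{ij}/2}e^{(z_1-z_2)/2} - q_i^{k+a_{ij}/2}e^{-(z_1-z_2)/2}$ used in the proof of Lemma \ref{lem:qSerre}. Together with the membership of $\hat x_{i,i,\ldots,i,j,q}^\pm(z_1,q_i^{2(k-1)+a_{ij}}z_2,\ldots,z_2)$ in $\E_\hbar^{(2)}(W)$, this shows that $\hat x_{i,q}^\pm(z_1)\cdot \hat x_{i,\ldots,i,j,q}^\pm(\cdots)$ has at most a simple multiplicative pole at $z_1 = q_i^{2k+a_{ij}}z_2$, with residue (up to a definite nonvanishing scalar factor) equal to $\hat x_{i,i,\ldots,i,j,q}^\pm(q_i^{2k+a_{ij}}z_2,q_i^{2(k-1)+a_{ij}}z_2,\ldots,z_2)$.

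The final step is to extract this residue through the $\phi$-coordinated mode $(\hat x_{i,q}^\pm(z))_0^\phi$. Substituting $z_1 = z_2 e^{z_0}$ and taking $\Res_{z_0}$ in the formula defining $Y_\E^\phi$, the simple pole at $z_1 = q_i^{2k+a_{ij}}z_2$ translates into a simple pole at $z_0 = r_i(2k+a_{ij})\hbar$ inside $\C[[\hbar]][[z_0]]$, whose residue picks up the shifted product $\hat x_{i,\ldots,i,j,q}^\pm(q_i^{2k+a_{ij}}z,\ldots,z)$ up to an invertible $\C[[\hbar]]$-factor; combined with the inductive hypothesis, this gives $c_{k+1}$ as the product of $c_k$ and this invertible factor, completing the induction. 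The main obstacle is the careful bookkeeping required to translate the additive residue manipulations of Lemma \ref{lem:qSerre} into their multiplicative $\phi$-coordinated analogues, via the mechanism of Proposition \ref{prop:vacom-h}/Proposition \ref{prop:YEcom-h}, and in particular verifying at each step that the accumulated scalar remains invertible in $\C[[\hbar]]^\times$; once this is set up, the remainder of the argument is a direct transcription of the proof of Lemma \ref{lem:qSerre}.
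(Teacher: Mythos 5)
Your proof matches what the paper intends: the paper's proof of Lemma \ref{lem:qSerre-va} is simply the declaration that it follows ``from a similar argument to the proof of Lemma \ref{lem:qSerre},'' and your inductive transcription---with the multiplicative telescoping identity $\prod_{a=1}^{k} f_{ii,q}(z_1, q_i^{2(a-1)+a_{ij}}z_2)\, f_{ij,q}(z_1,z_2) = z_1 - q_i^{2k+a_{ij}}z_2$ in place of the additive trigonometric product used in Lemma \ref{lem:qSerre}---is exactly that argument, carried out in the $\phi$-coordinated setting of Theorem \ref{thm:phi-construction}. The only cosmetic discrepancy is that your inductive formula lists the shifted arguments in descending order $q_i^{2(k-1)+a_{ij}}z,\dots,q_i^{a_{ij}}z$ while the lemma statement writes them in ascending order $q_i^{a_{ij}}z,\dots,q_i^{-a_{ij}}z$; this is immaterial since transposing two copies of the index $i$ contributes $C_{ii}=1$ to the normal-ordered product $\hat x_{i,\dots,i,j,q}^\pm$.
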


\begin{lem}\label{lem:qInt-va}
Suppose that $\ell\in\Z_+$.
For any $i\in I$, we have that
\begin{align*}
  \(\(\hat x_{i,q}^\pm(z)\)_{-1}^\phi\)^{r\ell/r_i}\hat x_{i,q}^\pm(z)=c \hat x_{i,\dots,i,q}^\pm(q^{2r\ell}z,q^{2r\ell-2r_i}z,\dots,q^{2r_i}z,z)
\end{align*}
for some invertible element $c\in \C[[\hbar]]$.
\end{lem}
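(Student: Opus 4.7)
The plan is to prove Lemma \ref{lem:qInt-va} by induction on the exponent $k$ (where $k = r\ell/r_i$ in the final statement), following exactly the pattern of the proof of Lemma \ref{lem:qSerre}, but with the $S$-locality \eqref{eq:cat-M-phi-4} specialized to the single-root case $i = j$. Writing $y(z) = \hat x_{i,q}^\pm(z)$ for brevity, the inductive hypothesis asserts that
\[
\bigl((y(z))_{-1}^\phi\bigr)^k y(z) = c_k\, \hat x_{i,\ldots,i,q}^\pm(q^{2kr_i}z, q^{2(k-1)r_i}z,\ldots, q^{2r_i}z, z)
\]
for some invertible $c_k \in \C[[\hbar]]^\times$, and the inductive step computes $(y(z))_{-1}^\phi$ acting on the right-hand side via the $\phi$-coordinated weak associativity \eqref{eq:phi-asso} and extracts the constant coefficient in $z_0$.

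For the inductive step I use \eqref{eq:cat-M-phi-4} with $i = j$ to obtain that, after multiplication by the appropriate product of $f_{ii,q}$-factors (which telescopes so that only the outer factor $(z_1 - q^{2(k+1)r_i}z)/(z_1 - z)$ survives), the product $y(z_1)\,\hat x_{i,\ldots,i,q}^\pm(q^{2kr_i}z,\ldots, z)$ extends to a field in $\E_\hbar^{(2)}(W)$ and agrees, up to an invertible scalar rescaling, with $\hat x_{i,i,\ldots,i,q}^\pm(z_1, q^{2kr_i}z,\ldots, z)$. Multiplying by $(z_1 - z)$ makes the expression manifestly regular at the pole $z_1 = z$, so that the substitution $z_1 = ze^{z_0}$ is valid; extracting the $z_0^0$-coefficient then yields the desired identity for exponent $k+1$, with $c_{k+1} = c_k \cdot d_k$ for some scalar $d_k$ built from the $f_{ii,q}$-values at the shifted points.

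The main obstacle is checking that each $d_k$ lies in $\C[[\hbar]]^\times$, which parallels exactly the bookkeeping of the scalar $d_k(r_i(ka_{ii} + a_{ij})\hbar) \in \C[[\hbar]]^\times$ arising in the proof of Lemma \ref{lem:qSerre}. The key fact is that $1 - q^{2m r_i} = -2m r_i \hbar + O(\hbar^2)$ lies in $\C[[\hbar]]^\times$ whenever $m \neq 0$: consequently every $f_{ii,q}$-factor $f_{ii,q}(q^{2br_i}z, q^{2ar_i}z)$ with $b \neq a$ and $b \neq a+1$ evaluates to an invertible element, while the potentially singular case $b = a + 1$ is precisely canceled by the pole-clearing factor $(z_1 - z)$ supplied by the telescoped product. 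This matching of $\hbar$-orders in numerator and denominator guarantees $d_k \in \C[[\hbar]]^\times$ and completes the induction.
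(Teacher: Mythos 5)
Your overall plan — induction on $k$, specializing the trigonometric $S$-locality \eqref{eq:cat-M-phi-4} to $i=j$, observing that the $f_{ii,q}^+$-product telescopes to $(z_1-q^{2(k+1)r_i}z)/(z_1-z)$, and then invoking weak associativity to extract the $z_0^0$-coefficient — is precisely the argument the paper has in mind (it simply cites the proof of Lemma \ref{lem:qSerre} as the template, exactly as you do). So the structure is right.

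There is, however, a concrete error in your justification of invertibility. You assert as the key fact that $1-q^{2mr_i}=-2mr_i\hbar+O(\hbar^2)$ \emph{lies in $\C[[\hbar]]^\times$} for $m\neq 0$. This is false: the constant term of $1-q^{2mr_i}$ is $0$, so $1-q^{2mr_i}\in\hbar\C[[\hbar]]$ is a non-unit. What is true, and what the argument actually needs, is that $1-q^{2mr_i}$ has $\hbar$-adic valuation \emph{exactly} $1$ when $m\neq 0$, i.e.\ $\hbar^{-1}(1-q^{2mr_i})\in\C[[\hbar]]^\times$. In the inductive step the scalar picked up comes from the residue of the telescoped factor at its pole divided by the corresponding power of $\hbar$: writing $a=2(k+1)r_i\hbar$, the computation of the $z_0^0$-coefficient yields $c_{k+1}/c_k=(1-e^{-a})/a=1-a/2+O(\hbar^2)$, a ratio of two elements each of $\hbar$-order exactly one, which is why it is a unit. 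Your phrasing, besides being literally wrong, would also fail to distinguish the case $m=0$ (where the factor genuinely vanishes and the argument breaks down), so it is not a harmless slip.

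Two smaller imprecisions in the same paragraph: the pole of $y(z_1)\,\hat x_{i,\dots,i,q}^\pm(q^{2kr_i}z,\dots,z)$ sits at $z_1=q^{2(k+1)r_i}z$, not at $z_1=z$; a single factor of $(z_1-z)$ does not clear it (one uses the exact linear factor $(z_1-q^{2(k+1)r_i}z)$, or a high enough power of $(1-z/z_1)$ working modulo $\hbar^n$). Also, for $b=a+1$ the evaluated factor $f_{ii,q}^+(q^{2(a+1)r_i}z,q^{2ar_i}z)$ is \emph{zero}, not singular, so ``the potentially singular case $b=a+1$'' misidentifies what actually happens at consecutive indices. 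These are bookkeeping matters that do not change the strategy, but the $\C[[\hbar]]^\times$ claim should be corrected before the proof can be considered complete.
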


Then we have that
\begin{prop}\label{prop:q-rel-alt}
Let $W$ be a restricted $\U_\hbar(\hat\fg)$-module of level $\ell$. Then $(W,\hat\phi_{i,q}(z),\hat x_{i,q}^\pm(z))$ is an object of $\mathcal R_\ell^\phi$. On the other hand, let $(W,\psi_{i,q}(z),y_{i,q}^\pm(z))$ be an object of $\mathcal R_\ell^\phi$.
Then $W$ is a restricted $\U_\hbar(\hat\fg)$-module of level $\ell$ with the module action defined by
\begin{align*}
  \phi_{i,q}^\pm(z)=\check\psi_{i,q}^\pm(z),\quad x_{i,q}^\pm(z)=\check y_{i,q}^\pm(z)\quad\te{for } i\in I.
\end{align*}
\end{prop}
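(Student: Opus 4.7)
\bigskip

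\textbf{Proof proposal for Proposition \ref{prop:q-rel-alt}.}

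The plan is to bootstrap from Lemma \ref{lem:q-rel-alt-pre}, which already gives a bijection between restricted $\U_\hbar^f(\hat\fg)$-modules of level $\ell$ and objects of $\mathcal M_\ell^\phi$ via the substitutions $\hat\phi_{i,q}(z)$, $\hat x_{i,q}^\pm(z)$ one way, and $\check\psi_{i,q}^\pm(z)$, $\check y_{i,q}^\pm(z)$ the other. Since $\mathcal R_\ell^\phi \subset \mathcal M_\ell^\phi$ is cut out by the two extra relations \eqref{eq:cat-M-phi-5} and \eqref{eq:cat-M-phi-6}, and $\U_\hbar(\hat\fg)$ is cut out inside $\U_\hbar^l(\hat\fg) \supset \U_\hbar^f(\hat\fg)$ by the full $[x^+,x^-]$-relation (Q5) and the Drinfeld quantum Serre relation (Q7), the whole proposition reduces to matching these two pairs of relations under the change of variables. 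I would treat each correspondence separately.

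First I would handle (Q5) $\Leftrightarrow$ \eqref{eq:cat-M-phi-5}. Starting from a restricted $\U_\hbar(\hat\fg)$-module, I would substitute $\hat x_{i,q}^+(z) = x_{i,q}^+(z)$ and $\hat x_{i,q}^-(z) = x_{i,q}^-(zq^{-r\ell})\phi_{i,q}^+(zq^{-\half r\ell})\inv$ into the left-hand side of \eqref{eq:cat-M-phi-5}. Using (Q3), (Q4) to move $\phi^+$ past $x^+$ produces the factor $\iota_{z_2,z_1}g_{ij,q}(z_1/z_2)$ needed to cancel the second term, and then (Q5) produces the two delta-function terms. The key computation is identifying the coefficient of the second delta: after the substitutions it becomes $\phi_{i,q}^-(zq^{-\frac 32 r\ell})\phi_{i,q}^+(zq^{-\half r\ell})\inv \, \delta(q^{-2r\ell}z_2/z_1)$, which by Lemma \ref{lem:exp} is precisely $E(\hat\phi_{i,q}(z))\delta(q^{-2r\ell}z_2/z_1)$. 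This chain of identifications is reversible, so in the opposite direction, given $(W,\psi_{i,q},y_{i,q}^\pm)\in\mathcal R_\ell^\phi$, applying Lemma \ref{lem:exp} to the image under $\check{(\,\cdot\,)}$ and running the computation backwards yields (Q5) for $\check y_{i,q}^\pm$ and $\check \psi_{i,q}^\pm = \phi_{i,q}^\pm$.

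Second I would handle (Q7) $\Leftrightarrow$ \eqref{eq:cat-M-phi-6}. Here I invoke the characterization quoted from \cite{CJKT-qeala-II-twisted-qaffinization}: a restricted $\U_\hbar^l$-module is a $\U_\hbar$-module if and only if
\[
x_{i,\dots,i,j,q}^\pm(q_i^{\pm a_{ij}}z,\,q_i^{\pm a_{ij}\pm 2}z,\,\dots,\,q_i^{\mp a_{ij}}z,\,z) = 0
\quad\text{whenever }a_{ij}<0.
\]
Since $\hat x_{i,q}^+ = x_{i,q}^+$ and the vanishing of the ``$+$''-normal ordered product at these specialized arguments transfers verbatim to $\hat x_{i,\dots,i,j,q}^+$, Lemma \ref{lem:qSerre-va} then gives, up to an invertible scalar, the equivalence with $\bigl((\hat x_{i,q}^+(z))_0^\phi\bigr)^{m_{ij}}\hat x_{j,q}^+(z)=0$, which is exactly \eqref{eq:cat-M-phi-6}. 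For the ``$-$''-case I would run the same argument after absorbing the twist by $\phi_{i,q}^+$ that distinguishes $\hat x^-$ from $x^-$; this only shifts the spectral arguments by a common factor and contributes overall invertible prefactors coming from the $f_{ij,q}^+$'s, so the vanishing statements are equivalent. The reverse direction uses Lemma \ref{lem:qSerre-va} the same way.

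The main obstacle I expect is the bookkeeping in the $[x^+,x^-]$ computation: correctly tracking the arguments $q^{\pm r\ell/2}$, $q^{\pm r\ell}$, and $q^{-2r\ell}$ as one shifts between $x_{i,q}^-$ and $\hat x_{i,q}^-$, and verifying that the exponential $E(\hat\phi_{i,q}(z))$ produced by Lemma \ref{lem:exp} assembles exactly into $\phi_{i,q}^-(zq^{-\frac 32 r\ell})\phi_{i,q}^+(zq^{-\half r\ell})\inv$ at the correct spectral point. Once that single identity is nailed down, both directions of the proposition follow by combining Lemma \ref{lem:q-rel-alt-pre} (to cover (Q1)--(Q4), (Q6)), the delta-function identity above (to cover (Q5)), and Lemma \ref{lem:qSerre-va} together with the $\U_\hbar^l$-to-$\U_\hbar$ criterion (to cover (Q7)).
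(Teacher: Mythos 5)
Your proposal takes essentially the same route as the paper: reduce via Lemma \ref{lem:q-rel-alt-pre} to checking that the extra $\U_\hbar(\hat\fg)$-relations (Q5), (Q7) correspond precisely to \eqref{eq:cat-M-phi-5} and \eqref{eq:cat-M-phi-6}, identify the coefficient of the second delta-function with $E(\hat\phi_{i,q}(z))$ via Lemma \ref{lem:exp}, and match the quantum Serre condition with the $\bigl((\,\cdot\,)_0^\phi\bigr)^{m_{ij}}$-vanishing via Lemma \ref{lem:qSerre-va} together with the criterion from \cite{CJKT-qeala-II-twisted-qaffinization}. The reasoning is correct (the slip "$\U_\hbar^l\supset\U_\hbar^f$" should be "quotient of," and the reversal of the spectral arguments in the "$-$" normal-ordered product is harmless because $C_{ii}=1$, but neither affects the argument).
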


\begin{proof}
From Lemma \ref{lem:q-rel-alt-pre}, we get that $W$ is a restricted $\U_\hbar^f(\hat\fg)$-module of level $\ell$ if and only if $(W,\hat\phi_{i,q}(z),\hat x_{i,q}^\pm(z))$ is an object of $\mathcal M_\ell^\phi$.
Then by a straightforward calculation, we get that $W$ is a restricted $\U_\hbar(\hat\fg)$-module of level $\ell$ if and only if
\begin{align*}
  &\hat x_{i,q}^+(z_1)\hat x_{j,q}^-(z_2)-g_{ji,q}(z_1/z_2)\hat x_{j,q}^-(z_2)\hat x_{i,q}^+(z_1)\\
  &\qquad=\frac{\delta_{ij}}{q_i-q_i\inv}
  \(\delta\(\frac{z_2}{z_1}\)-\phi_{i,q}^-(zq^{-\frac 32 r\ell})\phi_{i,q}^+(zq^{-\half r\ell})\inv
    \delta\(\frac{z_2 q^{-2r\ell}}{z_1}\)\),\\
  &\hat x_{i,\dots,i,j,q}^\pm(q_i^{a_{ij}}z,q_i^{a_{ij}+2}z,\dots,q_i^{-a_{ij}}z,z)=0,\quad \te{if }a_{ij}<0.
\end{align*}
Therefore, the proposition follows immediate from Lemmas \ref{lem:exp} and \ref{lem:qSerre-va}.
\end{proof}

\begin{prop}\label{prop:int-alt}
Let $\ell\in\Z_+$,
and let $W$ be a restricted weight $\U_\hbar(\hat\fg)$-module of level $\ell$.
Then $W$ is integrable if and only if
\begin{align}\label{eq:cat-M-phi-7}
  &\(\(\hat x_{i,q}^\pm(z)\)_{-1}^\phi\)^{r\ell/r_i}\hat x_{i,q}^\pm(z)=0,\quad i\in I.
\end{align}
\end{prop}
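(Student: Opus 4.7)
The plan is to combine Lemma~\ref{lem:qInt-va} with Proposition~\ref{prop:int}. Since the scalar $c\in\C[[\hbar]]^\times$ appearing in Lemma~\ref{lem:qInt-va} is invertible, condition~\eqref{eq:cat-M-phi-7} is equivalent, for each choice of sign, to the vanishing
\begin{align*}
  \hat x^\pm_{i,\dots,i,q}(q^{2r\ell}z,q^{2r\ell-2r_i}z,\dots,q^{2r_i}z,z)=0,\quad i\in I.
\end{align*}
On the other hand, as $\ell\in\Z_+$, Proposition~\ref{prop:int} tells us that integrability is equivalent to
\begin{align*}
  x^\pm_{i,\dots,i,q}(q^{\pm 2r\ell}z,q^{\pm 2r\ell\mp 2r_i}z,\dots,q^{\pm 2r_i}z,z)=0,\quad i\in I.
\end{align*}
It therefore suffices to match these two conditions for each sign separately.

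\textbf{The $+$ case.} This is immediate: by definition $\hat x^+_{i,q}(z)=x^+_{i,q}(z)$, so $\hat x^+_{i_1,\dots,i_m,q}=x^+_{i_1,\dots,i_m,q}$, and the two conditions coincide on the nose.

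\textbf{The $-$ case.} Set $\hat z_a=q^{2r\ell-2r_i a}z$ for $a=0,1,\dots,m$ with $m=r\ell/r_i$. I would substitute the formula $\hat x^-_{i,q}(w)=x^-_{i,q}(wq^{-r\ell})\phi^+_{i,q}(wq^{-\half r\ell})^{-1}$ from Lemma~\ref{lem:q-rel-alt-pre} into the product defining $\hat x^-_{i,\dots,i,q}(\hat z_0,\dots,\hat z_m)$, and then use relation (Q3) to move each inverse $\phi^+$-factor to the right of all the $x^-$'s. This produces an expression of the shape
\begin{align*}
  (\text{scalar in }\C[[\hbar]]^\times)\cdot x^-_{i,\dots,i,q}(\hat z_0 q^{-r\ell},\dots,\hat z_m q^{-r\ell})\cdot\prod_{a=0}^m\phi^+_{i,q}(\hat z_a q^{-\half r\ell})^{-1}.
\end{align*}
Since each $\phi^+_{i,q}(w)$ acts invertibly (being an exponential of a topologically nilpotent series in $\hbar$), the vanishing of $\hat x^-_{i,\dots,i,q}$ at the points $\hat z_0,\dots,\hat z_m$ is equivalent to the vanishing of $x^-_{i,\dots,i,q}$ at the points $q^{r\ell-2r_i a}z$ ($a=0,\dots,m$). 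The operators $x^-_{i,\dots,i,q}$ are symmetric in their arguments (since $C_{ii}=1$), and the $\C[[\hbar]]$-automorphism $z\mapsto q^{-r\ell}z$ transforms these points into $q^{-2r_i a}z$; after reversing their order, they become $q^{-2r\ell}z,q^{-2r\ell+2r_i}z,\dots,z$, which is precisely the $-$ condition of Proposition~\ref{prop:int}.

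\textbf{Main obstacle.} The delicate point is verifying that the scalar prefactor produced by the $\phi^+$-commutation is indeed invertible in $\C[[\hbar]]$. Each time one pushes $\phi^+_{i,q}(\hat z_a q^{-\half r\ell})^{-1}$ past $x^-_{i,q}(\hat z_{a+1}q^{-r\ell})$ via (Q3), relation (Q3) contributes a factor $\iota\, g_{ii,q}(q^{r\ell/2}\hat z_{a+1}q^{-r\ell}/(\hat z_a q^{-\half r\ell}))^{-1}=\iota\, g_{ii,q}(q_i^{-2})^{-1}$, which has a simple pole at this specialization; however, the symmetrizing prefactor $f^+_{ii,q}(\hat z_a,\hat z_{a+1})=(\hat z_a-q_i^2\hat z_{a+1})/(\hat z_a-\hat z_{a+1})$ already built into $\hat x^-_{i,\dots,i,q}$ vanishes simply at the same locus. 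The technical core of the proof is to show that these zero-pole pairs (together with the regular contributions for $b>a+1$) combine to an invertible element of $\C[[\hbar]]$, along precisely the same lines as the normal-ordered cancellations carried out in the proof of Lemma~\ref{lem:qSerre}. Once this cancellation is verified, the chain of equivalences outlined above completes the proof.
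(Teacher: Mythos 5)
Your overall strategy is the same as the paper's: the paper's proof is a two-sentence reduction citing Proposition~\ref{prop:int}, the definition of $\hat x^\pm_{i,q}(z)$ in Lemma~\ref{lem:q-rel-alt-pre}, Lemma~\ref{lem:qInt-va} and Proposition~\ref{prop:q-rel-alt}, and your outline expands exactly that reduction. The $+$ case is, as you say, immediate. However, several details in your $-$ case are off, and the issue you flag as the ``main obstacle'' is not actually an obstacle.

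First, when you push $\phi^+_{i,q}(z_1)^{-1}$ to the right past $x^-_{i,q}(z_2)$ using (Q3), the factor picked up is $\iota_{z_1,z_2}g_{ii,q}(q^{\half rc}z_2/z_1)$, not its inverse: you wrote $g^{-1}$, but then claimed a \emph{pole} at $z_2/z_1=q_i^{-2}$, which is the behaviour of $g$ (the inverse $g^{-1}$ has a \emph{zero} there, and if the factor really were $g^{-1}$ your computation would force $\hat x^-_{i,\dots,i,q}$ to be identically zero, which is absurd). Second, your displayed ``shape'' silently omits the $f^-_{ii,q}$ normalization: to rewrite $\prod_a x^-_{i,q}(\,\cdot\,)$ as $x^-_{i,\dots,i,q}(\,\cdot\,)$ you must also divide by $\prod_{a<b}f^-_{ii,q}(z_a,z_b)$, so the scalar prefactor is $\prod_{a<b}f^+_{ii,q}(z_a,z_b)\,g_{ii,q}(z_b/z_a)\,/\,f^-_{ii,q}(z_a,z_b)$. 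Most importantly, a one-line computation with the explicit formulas
\begin{align*}
f^\pm_{ii,q}(z_a,z_b)=\frac{z_a-q_i^{\pm2}z_b}{z_a-z_b},\qquad
g_{ii,q}(z_b/z_a)=\frac{q_i^2z_a-z_b}{z_a-q_i^2z_b}
\end{align*}
shows that this ratio is \emph{identically} equal to $q_i^2$, so the total prefactor is simply $q_i^{2\binom{m+1}{2}}\in\C[[\hbar]]^\times$. There is no pole/zero cancellation to verify ``along the lines of Lemma~\ref{lem:qSerre}''; the singularities cancel exactly before any specialization, and the specialization at the geometric points $\hat z_a$ is then unambiguous. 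With these corrections, the rest of your argument (using $C_{ii}=1$ to reorder, and the rescaling $z\mapsto q^{-r\ell}z$ to match the $-$ condition in Proposition~\ref{prop:int}) goes through as you describe.
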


\begin{proof}
From Proposition \ref{prop:int} and the definition of $\hat x_{i,q}^\pm(z)$ (see Lemma \ref{lem:q-rel-alt-pre}), we have that $W$ is integrable if and only if
\begin{align*}
  \hat x_{i,\dots,i,q}^\pm(q^{2r\ell}z,q^{2r\ell- 2r_i}z,\dots,q^{2r_i}z,z)=0
  \quad\te{for }i\in I.
\end{align*}
Then the proposition follows from Lemma \ref{lem:qInt-va} and Proposition \ref{prop:q-rel-alt}.
\end{proof}

Combining Propositions \ref{prop:V-tau-phi} and \ref{prop:q-rel-alt}, we get that
\begin{thm}\label{thm:res}
Let $\ell\in\C$.
Then the category of restricted $\U_\hbar(\hat\fg)$-modules of level $\ell$ is isomorphic to the category of $\phi$-coordinated $V_{\hat\fg,\hbar}(\ell,0)$-modules.
To be more precise, let $W$ be a restricted $\U_\hbar(\hat\fg)$-module of level $\ell$.
Then $W$ becomes a $\phi$-coordinated $V_{\hat\fg,\hbar}(\ell,0)$-module such that
\begin{align*}
  Y_W^\phi(h_{i,\hbar},z)=\hat\phi_{i,q}(z),\quad Y_W^\phi(x_{i,\hbar}^\pm,z)=\hat x_{i,q}^\pm(z),\quad i\in I.
\end{align*}
On the other hand, let $(W,Y_W^\phi)$ be a $\phi$-coordinated $V_{\hat\fg,\hbar}(\ell,0)$-module. Then $W$ becomes a restricted $\U_\hbar(\hat\fg)$-module of level $\ell$ such that
\begin{align*}
  \hat\phi_{i,q}(z)=Y_W^\phi(h_{i,\hbar},z),\quad \hat x_{i,q}^\pm(z)=Y_W^\phi(x_{i,\hbar}^\pm,z),\quad i\in I.
\end{align*}
\end{thm}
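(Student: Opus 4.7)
The plan is to obtain Theorem \ref{thm:res} as the composition of the two equivalences already developed in the preceding sections. Specifically, Proposition \ref{prop:V-tau-phi} furnishes a bijection between the category of $\phi$-coordinated $V_{\hat\fg,\hbar}(\ell,0)$-modules and the category $\mathcal R_\ell^\phi$, while Proposition \ref{prop:q-rel-alt} furnishes a bijection between the category of restricted $\U_\hbar(\hat\fg)$-modules of level $\ell$ and the same category $\mathcal R_\ell^\phi$. Composing these two equivalences should yield exactly the statement of Theorem \ref{thm:res}, with the intertwining data on fields given by the change-of-variables formulas of Lemma \ref{lem:q-rel-alt-pre}.

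More concretely, I would proceed as follows. First, given a restricted $\U_\hbar(\hat\fg)$-module $W$ of level $\ell$, pass to the object $(W,\hat\phi_{i,q}(z),\hat x_{i,q}^\pm(z))$ of $\mathcal R_\ell^\phi$ via Proposition \ref{prop:q-rel-alt}; then apply Proposition \ref{prop:V-tau-phi} to get the $\phi$-coordinated $V_{\hat\fg,\hbar}(\ell,0)$-module structure $Y_W^\phi$ on $W$ with
\begin{align*}
  Y_W^\phi(h_{i,\hbar},z)=\hat\phi_{i,q}(z),\qquad Y_W^\phi(x_{i,\hbar}^\pm,z)=\hat x_{i,q}^\pm(z).
\end{align*}
Conversely, given a $\phi$-coordinated $V_{\hat\fg,\hbar}(\ell,0)$-module $(W,Y_W^\phi)$, Proposition \ref{prop:V-tau-phi} produces an object of $\mathcal R_\ell^\phi$ by setting $\psi_{i,q}(z)=Y_W^\phi(h_{i,\hbar},z)$ and $y_{i,q}^\pm(z)=Y_W^\phi(x_{i,\hbar}^\pm,z)$; then Proposition \ref{prop:q-rel-alt}, together with the formulas \eqref{eq:def-check-op1}--\eqref{eq:def-check-op2}, produces a restricted $\U_\hbar(\hat\fg)$-module structure of level $\ell$ on $W$.

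The final step is to verify that these two assignments are mutual inverses. For this I would check that the transformations $\phi_{i,q}^\pm(z)\leadsto \hat\phi_{i,q}(z)$ and $x_{i,q}^\pm(z)\leadsto \hat x_{i,q}^\pm(z)$ of Lemma \ref{lem:q-rel-alt-pre} are indeed reciprocal to the transformations $\psi_{i,q}(z)\leadsto \check\psi_{i,q}^\pm(z)$ and $y_{i,q}^\pm(z)\leadsto \check y_{i,q}^\pm(z)$; this is a purely formal computation using $F(z\pd{z})$ and its inverse on formal series. Naturality of the correspondence with respect to module homomorphisms is automatic since each of these transformations is applied field-by-field and commutes with $\C[[\hbar]]$-module maps. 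Taken together, this gives an isomorphism of categories.

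The main (and essentially only) obstacle has already been overcome in the preceding results. The delicate part is Proposition \ref{prop:q-rel-alt}, where one translates the $R$-matrix/Drinfeld-type relations (Q1)-(Q7) into the $\phi$-coordinated relations \eqref{eq:cat-M-phi-1}--\eqref{eq:cat-M-phi-6}; this relies crucially on Lemma \ref{lem:exp} to rewrite $E(\hat\phi_{i,q}(z))$ in terms of $\phi_{i,q}^\pm$, and on Lemmas \ref{lem:qSerre-va}, \ref{lem:qInt-va} to identify the normal-ordered quantum Serre and integrability relations with the $\phi$-coordinated vertex-algebraic relations. Once these identifications are in place, the remaining verifications for Theorem \ref{thm:res} are bookkeeping.
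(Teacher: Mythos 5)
Your proposal is correct and follows exactly the paper's route: the paper derives Theorem \ref{thm:res} immediately by combining Proposition \ref{prop:V-tau-phi} (identifying $\phi$-coordinated $V_{\hat\fg,\hbar}(\ell,0)$-modules with $\mathcal R_\ell^\phi$) with Proposition \ref{prop:q-rel-alt} (identifying restricted $\U_\hbar(\hat\fg)$-modules of level $\ell$ with $\mathcal R_\ell^\phi$). The extra remarks you make about checking mutual inverses and naturality are reasonable bookkeeping but do not change the argument.
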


A $\phi$-coordinated $V_{\hat\fg,\hbar}(\ell,0)$-module $(W,Y_W^\phi)$ is called a \emph{weight module} if $\(h_{i,\hbar}\)_0^\phi$ acts semi-simply for all $i\in I$, and there exists
a $\C[[\hbar]]$-module map $\partial:W\to W$, such that
\begin{align}
  [\partial, Y_W^\phi(u,z)]=z\pd{z}Y_W^\phi(u,z)\quad \te{for }u\in V_{\hat\fg,\hbar}(\ell,0).
\end{align}
A $\phi$-coordinated $L_{\hat\fg,\hbar}(\ell,0)$ is called a \emph{weight module}
if it is a weight module viewed as a $\phi$-coordinated $V_{\hat\fg,\hbar}(\ell,0)$-module.
Combining Proposition \ref{prop:int-alt} with Theorem \ref{thm:res}, we get that
\begin{thm}\label{thm:int}
Let $\ell\in \Z_+$. Then the category isomorphism obtained in Theorem \ref{thm:res} induces a category isomorphism between the category of restricted integrable $\U_\hbar(\hat\fg)$-modules of level $\ell$ and the category of $\phi$-coordinated weight modules of $L_{\hat\fg,\hbar}(\ell,0)$.
\end{thm}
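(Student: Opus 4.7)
The plan is to refine the categorical isomorphism of Theorem \ref{thm:res} to the stated sub-categories, using Proposition \ref{prop:int-alt} to characterize integrability and Lemma \ref{lem:qInt-va} to transfer the vanishing of the quantum integrable relations to the vanishing of the generators of $R_2^\ell$. Throughout I will rely on the identifications $Y_W^\phi(h_{i,\hbar},z)=\hat\phi_{i,q}(z)$ and $Y_W^\phi(x_{i,\hbar}^\pm,z)=\hat x_{i,q}^\pm(z)$ produced by Theorem \ref{thm:res}.

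For the forward direction, let $W$ be a restricted integrable $\U_\hbar(\hat\fg)$-module of level $\ell$. Theorem \ref{thm:res} already gives a $\phi$-coordinated $V_{\hat\fg,\hbar}(\ell,0)$-module structure on $W$, and by Proposition \ref{prop:int-alt} integrability is equivalent to $\((\hat x_{i,q}^\pm(z))_{-1}^\phi\)^{r\ell/r_i}\hat x_{i,q}^\pm(z)=0$ for all $i\in I$. Lemma \ref{lem:qInt-va}, read inside $\<U_W\>_\phi\subset \E_\hbar(W)$ via Theorem \ref{thm:phi-construction}, rewrites the left hand side as $c\,Y_W^\phi\((x_{i,\hbar}^\pm)_{-1}^{r\ell/r_i}x_{i,\hbar}^\pm,z\)$ with $c\in\C[[\hbar]]^\times$, so $Y_W^\phi$ annihilates each generator of $R_2^\ell$. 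Since $Y_W^\phi$ is $\hbar$-adically continuous and $R_2^\ell$ is, by Definition \ref{de:L-tau}, the minimal closed ideal with $[R_2^\ell]=R_2^\ell$ containing those generators, the action factors through $L_{\hat\fg,\hbar}(\ell,0)$. To verify the weight module condition, note that $(h_{i,\hbar})_0^\phi$ corresponds (up to an invertible scalar in $\C[[\hbar]]$, as in the calculation of Lemma \ref{lem:exp}) to the Cartan element $h_{i,q}(0)$, which acts semi-simply on the weight $\U_\hbar(\hat\fg)$-module $W$; taking $\partial:=-d$ for $d$ the derivation of $\U_\hbar(\hat\fg)$, the identities $[d,\hat\phi_{i,q}(z)]=-z\pd{z}\hat\phi_{i,q}(z)$ and $[d,\hat x_{i,q}^\pm(z)]=-z\pd{z}\hat x_{i,q}^\pm(z)$ (direct consequences of the definition of $d$) extend by the generation of $L_{\hat\fg,\hbar}(\ell,0)$ to $[\partial,Y_W^\phi(u,z)]=z\pd{z}Y_W^\phi(u,z)$ for every $u$.

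Conversely, let $W$ be a $\phi$-coordinated weight $L_{\hat\fg,\hbar}(\ell,0)$-module. Theorem \ref{thm:res} then produces a restricted $\U_\hbar(\hat\fg)$-module of level $\ell$. Since $Y_W^\phi\((x_{i,\hbar}^\pm)_{-1}^{r\ell/r_i}x_{i,\hbar}^\pm,z\)=0$, the same application of Lemma \ref{lem:qInt-va} in reverse yields $\((\hat x_{i,q}^\pm(z))_{-1}^\phi\)^{r\ell/r_i}\hat x_{i,q}^\pm(z)=0$. The semi-simplicity of $(h_{i,\hbar})_0^\phi$ translates into the semi-simplicity of $h_{i,q}(0)$; $c$ acts as $\ell\cdot\id$ by the level-$\ell$ assumption; and setting $d:=-\partial$ for the $\partial$ given by the weight module hypothesis makes $d$ act as the expected derivation of $\U_\hbar(\hat\fg)$ on $W$. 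Simultaneous diagonalization of the mutually commuting semi-simple operators $h_{i,q}(0)$, $c$ and $d$ exhibits $W=\oplus_{\lambda\in\h^\ast}W_\lambda$, so $W$ is a weight $\U_\hbar(\hat\fg)$-module; Proposition \ref{prop:int-alt} then upgrades it to integrable.

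The principal obstacle is the careful formulation of the identity $Y_W^\phi\((x_{i,\hbar}^\pm)_{-1}^{r\ell/r_i}x_{i,\hbar}^\pm,z\)=c\,\((\hat x_{i,q}^\pm(z))_{-1}^\phi\)^{r\ell/r_i}\hat x_{i,q}^\pm(z)$ in the $\phi$-coordinated setting, which is essentially Lemma \ref{lem:qInt-va} transported from the universal module $F_\tau(A,\ell)$ to a general $W$; this transport uses the functoriality of Theorem \ref{thm:phi-construction} together with the weak associativity of $\phi$-coordinated modules captured by Proposition \ref{prop:vacom-h}. A secondary subtlety is arguing that $\partial$ in the weight module definition can be promoted to a semi-simple operator identifying with $-d$; this should follow from simultaneous diagonalization against the commuting semi-simple operators $(h_{i,\hbar})_0^\phi$ and local finiteness of the weight components guaranteed by the restricted condition together with the integrability of the generators $x_{i,q}^\pm(m)$.
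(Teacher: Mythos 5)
Your proof follows the same route the paper takes: the paper's own argument is literally the one-liner ``Combining Proposition \ref{prop:int-alt} with Theorem \ref{thm:res}'', and you correctly unpack what that combination amounts to. The forward direction is handled exactly as intended: integrability translates through Proposition \ref{prop:int-alt} into the vanishing of $\bigl((\hat x_{i,q}^\pm(z))_{-1}^\phi\bigr)^{r\ell/r_i}\hat x_{i,q}^\pm(z)$, and since the $\hbar$-adic nonlocal vertex algebra map $\varphi:F_\tau(A,\ell)\to\langle U_W\rangle_\phi$ of Proposition \ref{prop:V-tau-phi} sends $(x_{i,\hbar}^\pm)_{-1}^{r\ell/r_i}x_{i,\hbar}^\pm$ exactly (not merely up to a unit $c$) to that expression, $\varphi$ kills the generators of $R_2^\ell$ and hence factors through $L_{\hat\fg,\hbar}(\ell,0)$. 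One small bookkeeping remark: Lemma \ref{lem:qInt-va} is not what identifies $Y_W^\phi\bigl((x_{i,\hbar}^\pm)_{-1}^{r\ell/r_i}x_{i,\hbar}^\pm,z\bigr)$ with the $\phi$-coordinated iterated mode; that identification is just the homomorphism property of $\varphi$. Lemma \ref{lem:qInt-va} is instead used inside Proposition \ref{prop:int-alt} to match the mode condition with the normal-ordered product criterion of Proposition \ref{prop:int}, so in your chain of reasoning that lemma is already consumed. Your observations about $\partial$ and $-d$ matching, and $(h_{i,\hbar})_0^\phi$ corresponding to $h_{i,q}(0)$, are exactly the bookkeeping that the paper leaves implicit.

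The one point you flag in the converse that genuinely deserves scrutiny is the semi-simplicity of $\partial$. Proposition \ref{prop:int-alt} is stated for restricted \emph{weight} $\U_\hbar(\hat\fg)$-modules, which per the paper's definition requires $d$ to act semi-simply (the grading over $\h^\ast$ includes $d$). The definition of a $\phi$-coordinated weight module, however, only asks for the existence of some $\partial$ intertwining with $z\partial_z$, with no semi-simplicity hypothesis, so one cannot directly invoke Proposition \ref{prop:int-alt} in the converse without first arguing that $\partial$ (or a modification of it) acts semi-simply. Your suggested fix via simultaneous diagonalization and local finiteness has a circularity risk, since establishing local finiteness typically already uses integrability. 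That said, this gap is present in the paper's own (non-)proof, so you are not losing anything relative to the source; you have, in fact, exposed a spot where the argument could be tightened, either by strengthening the definition of $\phi$-coordinated weight module to require semi-simple $\partial$, or by checking that the Ding--Miwa criterion in Proposition \ref{prop:int} does not actually use semi-simplicity of $d$.
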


\begin{thm}
Suppose that the GCM $A$ is of finite type.
Let $\ell\in \Z_+$. Then there is a vertex algebra isomorphism $L_{\hat\fg}(\ell,0)\to L_{\hat\fg,\hbar}(\ell,0)/\hbar L_{\hat\fg,\hbar}(\ell,0)$ defined by
\begin{align*}
  h_i\mapsto h_{i,\hbar},\quad x_i^\pm\mapsto x_{i,\hbar}^\pm,\quad i\in I.
\end{align*}
\end{thm}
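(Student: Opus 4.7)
By Proposition \ref{prop:classical-limit-V-L-pre}, the assignment $h_i\mapsto h_{i,\hbar}$, $x_i^\pm\mapsto x_{i,\hbar}^\pm$ already defines a surjective vertex algebra homomorphism $\pi:L_{\hat\fg}(\ell,0)\twoheadrightarrow L_{\hat\fg,\hbar}(\ell,0)/\hbar L_{\hat\fg,\hbar}(\ell,0)$, so the task reduces to proving that $\pi$ is injective. My plan is to combine simplicity of the source with non-vanishing of the target. Since $A$ is of finite type and $\ell\in\Z_+$, the affine vertex algebra $L_{\hat\fg}(\ell,0)$ is simple as a vertex algebra: by Remark \ref{rem:aff-Lie-alg} it is the simple quotient $\hat\fg$-module of $V_{\hat\fg}(\ell,0)$, and for an affine vertex algebra every vertex algebra ideal is in particular a $\hat\fg$-submodule stable under the canonical derivation. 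Hence $\ker\pi$ is either $0$, which gives the desired isomorphism, or all of $L_{\hat\fg}(\ell,0)$; in the latter case $L_{\hat\fg,\hbar}(\ell,0)/\hbar L_{\hat\fg,\hbar}(\ell,0)$ vanishes, and since $L_{\hat\fg,\hbar}(\ell,0)$ is topologically free over $\C[[\hbar]]$ this forces $L_{\hat\fg,\hbar}(\ell,0)=0$ itself.

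The crucial step is therefore to rule this out by exhibiting a nonzero $\phi$-coordinated weight $L_{\hat\fg,\hbar}(\ell,0)$-module $W$; on any such $W$ one has $Y_W^\phi(\vac,z)=1_W\neq 0$, which forces $\vac\neq 0$ in $L_{\hat\fg,\hbar}(\ell,0)$. By Theorem \ref{thm:int}, such modules correspond bijectively to nonzero restricted integrable $\U_\hbar(\hat\fg)$-modules of level $\ell$. For $\fg$ of finite type and $\ell\in\Z_+$ these are abundant: any irreducible integrable highest weight module of level $\ell$ over $\U_\hbar(\hat\fg)$, constructed through the Drinfeld--Jimbo presentation and converted to the Drinfeld presentation by the standard equivalence, supplies one.

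The main technical obstacle will be confirming that the chosen highest weight module can be realized as a topologically free $\C[[\hbar]]$-module on which the Drinfeld fields $\phi_{i,q}^\pm(z)$ and $x_{i,q}^\pm(z)$ lie in $\E_\hbar(W)$, i.e.\ that it is restricted in the sense of Section \ref{sec:qaff}. This is handled by working with a suitable integral form (for example Lusztig's lattice form, or, in type $ADE$, the Frenkel--Jing realization of \cite{FJ-vr-qaffine}): the weight decomposition together with local nilpotency of the $x_{i,q}^\pm(n)$ yields both integrability and the restricted property. Once such a $W$ is exhibited, the argument above gives $\ker\pi=0$ and $\pi$ is the desired isomorphism.
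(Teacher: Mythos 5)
Your proposal is correct and follows essentially the same route as the paper's proof: reduce to injectivity of the surjection from Proposition \ref{prop:classical-limit-V-L-pre}, use simplicity of $L_{\hat\fg}(\ell,0)$ (the paper phrases this as simplicity of the $\hat\fg$-module and the fact that $\varphi$ is a $\hat\fg$-module map, which is equivalent to your vertex-algebra-ideal argument), deduce via topological freeness that the only alternative is $L_{\hat\fg,\hbar}(\ell,0)=0$, and rule that out through Theorem \ref{thm:int} by the existence of a nonzero restricted integrable $\U_\hbar(\hat\fg)$-module of level $\ell$. The paper settles the final existence point by citing \cite[Section 4.13]{Lusztig-qdeform-simple-mod}, whereas you sketch it via integral forms and the Frenkel--Jing realization; either is acceptable, though the paper's citation is the cleaner reference.
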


\begin{proof}
Recall from Proposition \ref{prop:classical-limit-V-L-pre} that there is a surjective vertex algebra homomorphism $\varphi:L_{\hat\fg}(\ell,0)\to L_{\hat\fg,\hbar}(\ell,0)/\hbar L_{\hat\fg,\hbar}(\ell,0)$
determined by $h_i\mapsto h_{i,\hbar}$ and $x_i^\pm\mapsto x_{i,\hbar}^\pm$ for $i\in I$.
It is straightforward to verify that both $L_{\hat\fg}(\ell,0)$ and $L_{\hat\fg,\hbar}(\ell,0)/\hbar L_{\hat\fg,\hbar}(\ell,0)$ are $\hat\fg$-modules such that
\begin{align*}
  &a_i(z).u=Y(a_i,z)u\,\,\te{for }u\in L_{\hat\fg}(\ell,0)\quad\te{and}\quad\\
  &a_i(z).v=Y_\tau(a_{i,\hbar},z)v\,\,\te{for }v\in L_{\hat\fg,\hbar}(\ell,0)/\hbar L_{\hat\fg,\hbar}(\ell,0),
\end{align*}
where $a=h$ or $x^\pm$.
Then $\varphi$ is also a surjective $\hat\fg$-module homomorphism.
Since the GCM $A$ is of finite type, we get from Remark \ref{rem:aff-Lie-alg} that $L_{\hat\fg}(\ell,0)$ is a simple $\hat\fg$-module.
Then $\varphi$ is either an isomorphism or $0$.

Suppose that $\varphi=0$. Since $L_{\hat\fg,\hbar}(\ell,0)$ is topologically free, we get that
$L_{\hat\fg,\hbar}(\ell,0)=0$.
Then $L_{\hat\fg,\hbar}(\ell,0)$ has only trivial $\phi$-coordinated modules.
From Theorem \ref{thm:int}, we get that the $\U_\hbar(\hat\fg)$ has only trivial restricted integrable modules of level $\ell$,
which contradict to \cite[Section 4.13]{Lusztig-qdeform-simple-mod}.
Therefore, $\varphi$ must be an isomorphism.
\end{proof}

\end{document}